\documentclass[11pt,letterpaper]{amsart}
\usepackage[pagewise]{lineno}

\usepackage{amssymb,amsmath,amsthm,amscd,epsf,latexsym,verbatim,graphicx,amsfonts,hyperref,epstopdf,xcolor,wasysym}
\usepackage{cancel}
\input epsf.tex
\usepackage{enumerate}
\usepackage[utf8]{inputenc}
\usepackage[T1]{fontenc}
\usepackage{graphicx}
\usepackage{palatino, url, multicol}
\usepackage{subcaption}
\usepackage[margin=1.5in]{geometry}           
\DeclareGraphicsRule{.tif}{png}{.png}{`convert #1 `dirname #1`/`basename #1 .tif`.png}
\usepackage{tikz}
\usetikzlibrary{patterns}
\usepackage{mathtools}
\usepackage[ruled, vlined]{algorithm2e}
\usepackage{tikz-cd}
\usetikzlibrary{shapes.geometric, arrows}
\usepackage{amsmath}
\usepackage{mathdots}
\usepackage[all]{xy}
\usepackage{stackengine} 
\usepackage{caption} 
\DeclareCaptionLabelFormat{cont}{#1~#2\alph{ContinuedFloat}}
\captionsetup[ContinuedFloat]{labelformat=cont}

\usepackage{marginnote}


\theoremstyle{plain}
\newtheorem{theorem}{Theorem}[section]
\newtheorem{conjecture}[theorem]{Conjecture}
\newtheorem{lemma}[theorem]{Lemma}
\newtheorem{proposition}[theorem]{Proposition}
\newtheorem{corollary}[theorem]{Corollary}
\newtheorem{definition}[theorem]{Definition}
\newtheorem{remark}[theorem]{Remark}
\newtheorem{example}[theorem]{Example}

\newtheorem*{theoremcontinuousdiskextension}{Theorem~\ref{t:continuousdiskextension}}
\newtheorem*{renormalizationtheorem}{Theorem~\ref{T:q-root}}
\newtheorem*{cylindercorollary}{Corollary~\ref{C:cylinder}}


\setcounter{totalnumber}{50}
\setcounter{topnumber}{50}
\setcounter{bottomnumber}{50}

\title{Master Teapots and Entropy Algorithms for the Mandelbrot Set }

\author{Kathryn Lindsey, Giulio Tiozzo, Chenxi Wu}

\date{September 30, 2024}

\newif\ifdraft\drafttrue 
\draftfalse

\def\0{{\mathbf 0}}

\begin{document}
\maketitle

\begin{figure}
\begin{center}
\includegraphics[width = 0.9 \textwidth]{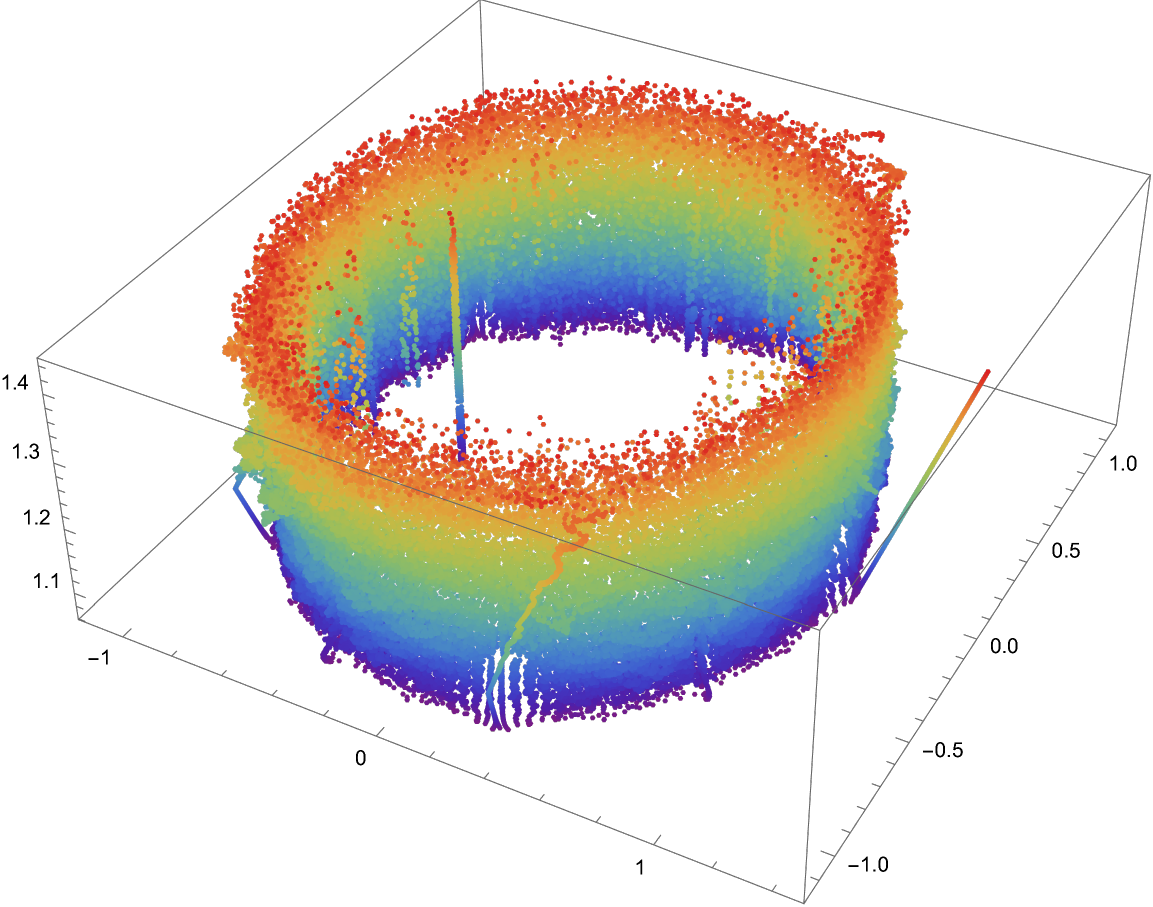}
\end{center}
\caption{The Master Teapot $\Upsilon_{1/5}$ for the $1/5$-vein.}
\end{figure}

\begin{abstract} 
We construct an analogue of W. Thurston's ``Master teapot" for each principal vein in the Mandelbrot set, 
and generalize geometric properties known for the corresponding object for real maps. 
In particular, we show that eigenvalues outside the unit circle move continuously, while 
we show ``persistence" for roots inside the unit circle. As an application, this shows that the outside 
part of the corresponding ``Thurston set" is path connected. 
In order to do this, we define a version of kneading theory for principal veins, and we prove the equivalence of 
several algorithms that compute the core entropy. 
\end{abstract}

\section{Introduction}

The \emph{core entropy} $h(f)$ of a postcritically finite (PCF) polynomial $f:\mathbb{C} \to \mathbb{C}$ is the topological entropy of the restriction of $f$ to its Hubbard tree.  It is well known that for any PCF polynomial $f$, the number $\lambda(f) = e^{h(f)}$, which is called the \emph{growth rate} of $f$, is a  \emph{weak Perron number} -- a real, positive algebraic integer that is greater than or equal to the moduli of its Galois conjugates. 

The study of algebraic properties of growth rates of polynomial maps has gained renewed interest in recent years, in particular due to W. Thurston, 
who plotted the closure of the set of Galois conjugates of growth rates of real PCF quadratic polynomials, defining what is 
known as the \emph{Thurston set} or \emph{entropy spectrum}, also nicknamed ``the bagel" for its shape (see Figure \ref{F:bagel3}, \cite{thurston}, \cite{TiozzoGaloisConjugates}).

All Galois conjugates of the growth rate are also eigenvalues of the transition matrix associated to the Markov partition for the 
polynomial. From a dynamical point of view, eigenvalues of the Markov transition matrix determine statistical properties of the dynamical system, with respect to the measure of maximal entropy: for instance, simplicity of the leading eigenvalue is equivalent to ergodicity, and absence of other eigenvalues of maximal modulus is equivalent to mixing;  the spectral gap yields the rate of mixing (see e.g. \cite[Ch.1]{Baladi-book}). 

On the other hand, another development has been the study of core entropy for complex quadratic polynomials, 
extending the well-known theory of topological entropy for real unimodal maps, going back to Milnor-Thurston \cite{MilnorThurston}. 
W. Thurston initiated the study of core entropy in his Cornell seminar  \cite{ThurstonPeople}, leaving several open questions. 

For quadratic polynomials, each rational angle $\theta \in \mathbb{Q}/\mathbb{Z}$ determines a postcritically finite parameter $c_{\theta}$ in the Mandelbrot set, and we denote by $h(\theta)$ the core entropy of the polynomial $f_{c_\theta}(z) := z^2 + c_{\theta}$.  
It was proven in \cite{TiozzoContinuity}, \cite{DudkoSchleicher} that the core entropy function $h:\mathbb{Q} / \mathbb{Z} \to \mathbb{R}$ extends to a continuous function from $\mathbb{R}/\mathbb{Z}$ to $\mathbb{R}$. See also \cite{GaoYanTiozzo} for the higher degree case. 

The goal of this paper is to study the eigenvalues associated to PCF quadratic polynomials; in particular, we associate to 
any principal vein in the Mandelbrot set a fractal 3-dimensional object, generalizing what Thurston called the \emph{Master Teapot}, 
and study its geometry. 
As observed in  \cite[Figure 7.7]{thurston}, there appear to be two different patterns. On the one hand, the roots outside the unit circle seem to move continuously with the parameter\footnote{See also the video \url{https://vimeo.com/259921275}. For visualizations of the Master Teapot, see also
\url{http://www.math.toronto.edu/tiozzo/teapot.html}. For the Thurston set and various related sets, see e.g. \url{http://www.math.toronto.edu/tiozzo/gallerynew.html}.}; on the other hand, roots inside the circle do not move continuously, but rather they display \emph{persistence}: namely, 
the set of roots increases as one progresses towards the tip of the vein. In this paper, we will rigorously prove these two phenomena 
for the teapots associated to principal veins in the Mandelbrot set. 

\subsection{Continuity of eigenvalues} 

For a postcritically finite polynomial $f$, let $T_f$ be its Hubbard tree (see Section \ref{S:background-M} for these and other basic definitions). The postcritical set of $f$ together with the branch points of $T_f$ determine a Markov partition for the action of $f : T_f \to T_f$.  Denote by $M_f$ the transition matrix associated to this Markov partition.  We consider the set $Z(f)$ of eigenvalues of $M_f$:
$$Z(f) \coloneqq \{\lambda \in \mathbb{C} \mid \textrm{det}(M_f - \lambda I) = 0\}.$$
The growth rate of $f$ is one element of the set $Z(f)$.  For a rational angle $\theta \in \mathbb{Q}/\mathbb{Z}$, we define $Z(\theta)$ to be $Z(f_{c_\theta})$.

Denote  as $Com^+(\mathbb{C})$ the collection of compact subsets of $\mathbb{C}\setminus\overline{\mathbb{D}}$, with the Hausdorff topology.  Define $Z^+  :  \mathbb{Q} / \mathbb{Z} \to Com^+(\mathbb{C})$ as the unit circle together with the set of eigenvalues of modulus greater than $1$, i.e.
$$Z^+(\theta) := S^1 \cup \left(Z(\theta) \cap (\mathbb{C} \setminus \mathbb{D}\right) ).$$

The first main result is the following:

\begin{theorem} \label{t:continuousdiskextension}
The map $Z^+ : \mathbb{Q} / \mathbb{Z} \to Com^+(\mathbb{C})$ admits a continuous extension from $\mathbb{R}/\mathbb{Z} \to Com^+(\mathbb{C})$.
\end{theorem}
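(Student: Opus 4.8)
\medskip
\noindent\textbf{Proof proposal.}
The plan is to deduce the Hausdorff continuity of $Z^{+}$ from the continuous dependence of the \emph{zeros} of a single holomorphic function attached to each angle, thereby sidestepping the fact that the transition matrix $M_{f_\theta}$ --- indeed, the combinatorial type of the Hubbard tree --- varies discontinuously with $\theta$. The first step is to use the kneading theory and the equivalence of the algorithms that compute the core entropy, both developed in this paper, to attach to each rational $\theta$ a power series
\[
\Delta_\theta(t)=1+\sum_{n\ge 1}a_n(\theta)\,t^{n},
\]
the analogue for $f_\theta$ of the Milnor--Thurston kneading determinant, with the following two properties:
\begin{enumerate}[(a)]
\item $a_n(\theta)$ depends only on the first $n$ symbols of the kneading sequence of $f_\theta$ (the itinerary of the critical point in $T_{f_\theta}$), and $|a_n(\theta)|\le P(n)$ for a fixed polynomial $P$, uniformly in $\theta$; in particular $\Delta_\theta$ is holomorphic on the open unit disk $\mathbb{D}$;
\item there is an identity $\det(I-tM_{f_\theta})=\Delta_\theta(t)\,E_\theta(t)$ with $E_\theta$ an explicit factor having no zeros in $\mathbb{D}$ (a product of terms $1-t^{k}$, up to a unit), so that the zeros of $\Delta_\theta$ in $\mathbb{D}$, counted with multiplicity, are exactly the reciprocals $\{\,1/\lambda:\lambda\in Z(\theta),\ |\lambda|>1\,\}$.
\end{enumerate}
Because the kneading sequence of $f_{\theta'}$ agrees with that of $f_\theta$ to higher and higher order as $\theta'\to\theta$, and admits a limit at every angle of $\mathbb{R}/\mathbb{Z}$, property (a) lets $\theta\mapsto\Delta_\theta$ extend to $\mathbb{R}/\mathbb{Z}$, continuously for the topology of coefficientwise convergence. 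This is precisely the step that fails for the characteristic polynomial $\det(I-tM_{f_\theta})$ itself, whose degree and coefficients are not controlled uniformly in $\theta$; the point of passing to $\Delta_\theta$ is to repair this.

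Property (a) also makes $\{\Delta_\theta\}_{\theta\in\mathbb{R}/\mathbb{Z}}$ a normal family on $\mathbb{D}$, since $|\Delta_\theta(t)|\le 1+\sum_{n\ge1}P(n)\rho^{n}<\infty$ on $\{|t|\le\rho\}$, uniformly in $\theta$, for every $\rho<1$. Hence, for $\theta_k\to\theta_\infty$ in $\mathbb{R}/\mathbb{Z}$, coefficientwise convergence upgrades to uniform convergence $\Delta_{\theta_k}\to\Delta_{\theta_\infty}$ on each disk $\{|t|\le\rho\}$ with $\rho<1$; the limit $\Delta_{\theta_\infty}$ is holomorphic on $\mathbb{D}$ with $\Delta_{\theta_\infty}(0)=1$, so $\Delta_{\theta_\infty}\not\equiv0$ and its zeros in $\mathbb{D}$ are isolated, bounded away from $0$, and can accumulate only on $S^{1}$.

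The last step is a Hurwitz-type argument exploiting that $S^{1}$ is built into every $Z^{+}(\theta)$. Fix $\theta_k\to\theta_\infty$ and $\varepsilon>0$, and choose $\rho<1$ so close to $1$ that the annulus $\{\,1<|z|<1/\rho\,\}$ lies in the $\varepsilon$-neighbourhood of $S^{1}$. Any eigenvalue of modulus $<1/\rho$ --- equivalently, any zero $t$ of the relevant $\Delta_\theta$ with $\rho<|t|<1$ --- is then automatically within $\varepsilon$ of the copy of $S^{1}$ in the other set. For the eigenvalues of modulus $\ge 1/\rho$, i.e.\ the zeros of $\Delta_\theta$ in $\{|t|\le\rho\}$, apply Hurwitz's theorem on a slightly larger disk $\{|t|\le\rho'\}$, $\rho<\rho'<1$: for $k$ large, every zero of $\Delta_{\theta_k}$ in $\{|t|\le\rho\}$ lies within any prescribed $\delta$ of one of the finitely many zeros of $\Delta_{\theta_\infty}$ in $\{|t|\le\rho'\}$, and conversely every zero of $\Delta_{\theta_\infty}$ in $\{|t|\le\rho\}$ has a zero of $\Delta_{\theta_k}$ within $\delta$; applying $t\mapsto 1/t$, which is Lipschitz away from $0$, the corresponding eigenvalue sets are $\varepsilon$-close. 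Combining the two ranges gives $d_{\mathrm{Haus}}\!\left(Z^{+}(\theta_k),Z^{+}(\theta_\infty)\right)\to 0$, and since $\Delta_{\theta_\infty}$ depends only on $\theta_\infty$, the assignment $\theta\mapsto S^{1}\cup\{\,1/t:\Delta_\theta(t)=0,\ |t|<1\,\}$ is a well-defined continuous extension of $Z^{+}$ to $\mathbb{R}/\mathbb{Z}$.

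The heart of the matter, and the main obstacle, is the first step: constructing $\Delta_\theta$ and establishing both the identity $\det(I-tM_{f_\theta})=\Delta_\theta(t)\,E_\theta(t)$ with full control of the factor $E_\theta$ --- so that no eigenvalue of modulus $>1$ is lost and none is created spuriously --- and the uniform polynomial bound on the $a_n(\theta)$. This is exactly what the kneading theory for principal veins and the equivalence of the entropy algorithms are designed to provide. A secondary point requiring care is the behaviour of the kneading data near the endpoints of a vein and at parabolic or irrational angles, where the tree combinatorics degenerates: one must check that $\theta\mapsto\Delta_\theta$ is genuinely continuous there, and that the constructions attached to different veins fit together over all of $\mathbb{Q}/\mathbb{Z}$.
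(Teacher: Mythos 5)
Your analytic endgame (uniform polynomial bounds on the coefficients giving a normal family on $\mathbb{D}$, uniform convergence on compact subsets, and a Rouch\'e/Hurwitz argument in which the copy of $S^1$ inside $Z^+$ absorbs zeros escaping to the boundary) is essentially the same as the paper's, cf.\ Lemma~\ref{l:Hausdorffconvergence} and the proof of Theorem~\ref{t:continuousdiskextension}. But the proposal has a genuine gap exactly where you locate "the heart of the matter": the function $\Delta_\theta$ with your properties (a) and (b) is never constructed, and the source you point to cannot supply it. The kneading theory developed in this paper (Section~\ref{S:kneading-veins}) is defined only for parameters on a \emph{principal vein}, whereas Theorem~\ref{t:continuousdiskextension} concerns all of $\mathbb{Q}/\mathbb{Z}$; moreover its input is the itinerary with respect to a partition of the Hubbard tree, and the tree combinatorics (hence the itinerary) does not vary continuously with the external angle across limbs. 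The object that actually plays the role of $\Delta_\theta$ for arbitrary angles is Thurston's labeled wedge $\mathcal{W}_\theta$ built from the doubling orbit of $\theta$ and its spectral determinant $P_{\Gamma_\theta}$, and relating its zeros in $\mathbb{D}$ to the eigenvalues of $M_{f_\theta}$ is itself nontrivial: it goes through $P_{Th}=P_{Mar}\,Q$ with $Q$ cyclotomic-up-to-$t^k$ (Theorem~\ref{T:Th-Mar}) together with Theorem~\ref{t:approxroots}, which requires the new multicycle counting bound for finite covers (Lemma~\ref{l:cycle-upper-bound}) and the uniform convergence $P_{\Gamma_k}\to P_\Gamma$ (Theorem~\ref{t:approximations}).

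A second, related flaw is your claim that the combinatorial data "admits a limit at every angle of $\mathbb{R}/\mathbb{Z}$." At a purely periodic rational angle the one-sided limits of the wedge (equivalently, of the kneading data) differ from each other and from the data at $\theta$ itself (Lemma~\ref{l:limitonlydiffers}); the same phenomenon occurs for real kneading sequences at critically periodic parameters. Well-definedness and continuity of the extension at such angles is therefore not a "secondary point requiring care" but the core of the argument: one must show that the three limit objects nevertheless have the same zero set in $\mathbb{D}$. The paper does this by proving that the characteristic polynomials of all finite covers $\Gamma_k$ differ from that of the finite model $\Gamma_1$ only by cyclotomic factors and powers of $x$ (Theorem~\ref{t:cyclotomicfactors}, via the semiconjugacy to $L_k$ in Lemmas~\ref{l:Lkconjonkernel} and~\ref{l:charpolyLk}), and that the isomorphism of the finite models for $\theta,\theta^+,\theta^-$ (Lemmas~\ref{l:limitonlydiffers} and~\ref{l:isomorphicfinitemodels}) then forces the disk-roots of the three infinite spectral determinants to coincide. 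Without an argument of this kind, your proposed extension is neither well defined nor continuous at periodic angles, so the proof as sketched does not close.
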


Since the growth rate is the leading eigenvalue, this is a generalization of the main theorem of \cite{TiozzoContinuity} to all eigenvalues. 
In the proof, we adapt to the new situation the combinatorial tools such as the \emph{wedge} and the \emph{spectral determinant} from  \cite{TiozzoContinuity}.

\subsection{Entropy algorithms} 

The second focus of this paper is relating various algorithms for computing the core entropy of a quadratic polynomial. We prove that they all produce the same polynomials, up to cyclotomic factors. 

The easiest way to compute the entropy of a PCF map is by using the characteristic polynomial $P_{Mar}(t)$ of the transition matrix. This approach is the simplest, but has several drawbacks, as e.g. the shape of the Hubbard tree is not stable under perturbations of the parameter.

For this reason, Thurston came up with a different algorithm to compute core entropy (see \cite{ThurstonPeople}, \cite{Gao}), 
which is more stable, and is used e.g. in \cite{TiozzoContinuity} to prove the continuity. This gives rise to what we call the \emph{Thurston polynomial} $P_{Th}(t)$. 

A third way to compute entropy is through the celebrated \emph{kneading theory} of Milnor-Thurston \cite{MilnorThurston}, which applies 
to real multimodal maps. In this paper, we establish a new version of kneading theory which can be applied to complex polynomials 
lying on a principal vein. This gives rise to a  new \emph{principal vein kneading polynomial} $D(t)$. 
We developed this version so that it would have the property that the map from itineraries (of the critical point) to the kneading determinants is continuous. This continuity is needed for our proof of Theorem  \ref{t:persistence}. 

We prove that the roots of the polynomials given by these three algorithms coincide, off the unit circle. 

\begin{theorem} \label{T:equalpolys}
For any postcritically finite parameter the following 2 polynomials have the same roots off the unit circle:
\begin{enumerate}
\item the polynomial $P_{Th}(t)$ that we get from Thurston's algorithm;
\item the polynomial $P_{Mar}(t)$ that we get from the Markov partition.
\end{enumerate}
If, furthermore, the parameter is critically periodic and belongs to a principal vein (so that the  principal vein kneading polynomial is defined),  a third polynomial that has the same roots off the unit circle is
\begin{enumerate}
\item[(3)] the principal vein kneading polynomial $D(t)$.
\end{enumerate}
\end{theorem}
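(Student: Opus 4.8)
The plan is to prove the equivalence in two logically separate parts. First, I would show that $P_{Th}(t)$ and $P_{Mar}(t)$ have the same roots off the unit circle. The natural strategy is to exhibit both as (essentially) the characteristic polynomial of closely related linear operators on related vector spaces, and to show that the two operators are conjugate after passing to an invariant complement or quotient on which the discrepancy is supported by roots of unity. Concretely: the Markov partition gives a transition matrix $M_f$ on the space spanned by the edges of the Hubbard tree $T_f$, while Thurston's algorithm operates on the space spanned by (unordered) pairs of postcritical points, with the wedge relation built in. I would set up the explicit linear map sending an edge to the wedge of its endpoints (or the appropriate boundary-type map), check that it intertwines $M_f$ with Thurston's operator, and then analyze its kernel and cokernel. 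The kernel/cokernel should be spanned by cycles coming from the combinatorics of the tree and of the orbit, and the induced action there should have characteristic polynomial a product of cyclotomic factors, so that $P_{Th}$ and $P_{Mar}$ differ only by such factors. This is in essence the computation already used in \cite{TiozzoContinuity} to identify the leading eigenvalue; here it must be pushed to cover the whole spectrum, not just the Perron root.

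Second, assuming the parameter is critically periodic and lies on a principal vein, I would compare the principal vein kneading polynomial $D(t)$ with $P_{Mar}(t)$. For real unimodal maps this is the classical Milnor--Thurston identity relating the kneading determinant to the zeta function / characteristic polynomial of the Markov transition matrix (see \cite{MilnorThurston}), and the mechanism is a formal power series identity: the kneading determinant equals, up to an explicit factor of the form $(1-t)^{\pm 1}$ or $1/(1 \pm t)$, the reciprocal of $\det(I - tM_f)$. I would carry out the analogous computation in the principal-vein setting, using the new kneading theory developed earlier in the paper. The key bookkeeping device is to interpret the entries of the kneading matrix as generating functions counting how the itinerary of the critical value crosses each "turning" coordinate of the vein, and to recognize the resulting determinant, via a Cramer's-rule / transfer-operator argument, as a ratio of determinants of $I - tM_f$ restricted to appropriate subspaces. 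The cyclotomic ambiguity again comes from the elementary prefactors and from finite-dimensional corrections at the endpoints of the orbit.

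The main obstacle I expect is the second comparison: the classical Milnor--Thurston kneading calculus is tailored to interval maps, where the combinatorics is linearly ordered, whereas on a principal vein the Hubbard tree branches and the "multimodal" structure is genuinely tree-like. So I would need to verify that the principal-vein kneading matrix, as defined in this paper, still satisfies the crucial algebraic identity (the determinant telescoping that makes the Milnor--Thurston argument work), and to correctly identify which invariant subspace of the edge space the kneading determinant actually computes the characteristic polynomial of. A secondary technical point, relevant throughout, is controlling exactly which cyclotomic factors appear: since the statement only claims agreement of roots off the unit circle, I can afford to be cavalier about multiplicities and about factors of $t$, $1-t$, $1+t$, and cyclotomic polynomials, which streamlines the argument considerably but still requires a clean lemma stating that all three constructions produce weak Perron numbers with the same non-cyclotomic part. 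I would isolate that as an intermediate claim and prove it once, then deduce the theorem.
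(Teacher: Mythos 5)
For the comparison of $P_{Th}$ and $P_{Mar}$, your plan is essentially the one the paper follows (Theorem \ref{T:Th-Mar}): one exhibits an intertwining linear map between the space spanned by unordered pairs of postcritical angles and the space spanned by the Markov edges, deduces $P_{Th}(t)=P_{Mar}(t)Q(t)$ with $Q$ the characteristic polynomial of the action on the kernel, and shows all nonzero roots of $Q$ lie on the unit circle (Proposition \ref{L:Q-cycl}). Two remarks. First, the map goes in the opposite direction from the one you suggest: it is the \emph{elementary decomposition} map $\pi$ sending an angle pair to the sum of Markov edges along the arc joining the landing points; ``edge $\mapsto$ wedge of its endpoints'' is problematic, since an endpoint of a Markov edge may be a branch point (not a landing point of a postcritical angle) or may be the landing point of several angles. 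Second, the substance you leave as a heuristic --- why the kernel action is cyclotomic --- is exactly the work of the section: one exhibits a finite, forward-invariant spanning set of $\ker\pi$ (elementary triples and elementary stars of bounded norm; Lemmas \ref{L:kernel-combo}, \ref{L:permute}, \ref{L:elem-star}, \ref{L:S-forward-inv}) and then applies the elementary Lemma \ref{L:finite-cyclo}. Your ``spanned by cycles coming from the combinatorics'' gesture points in the right direction but does not identify this invariant set, which is the heart of the proof.

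For part (3) there is a genuine gap. The principal vein kneading determinant $D(t)$ is not the determinant of a kneading matrix: it is a single scalar power series built from the simplified (first-return) itinerary of the critical value and the affine models $F_{j,q,z}$. So the Cramer's-rule/telescoping identity you propose to ``verify'' has no matrix to apply to as stated, and proving a Milnor--Thurston-type determinant/zeta identity for these tree maps is precisely the unproved step on which your plan rests --- you correctly flag it as the main obstacle, but do not resolve it, and it is not a routine verification. The paper bypasses it entirely: in the critically periodic case an elementary telescoping computation (Lemma \ref{L:periodicP}) shows that $D(t)$ and the kneading polynomial $P_f(1/t)$ have the same roots inside the unit disk, and the identification of the roots of $P_f$ with the eigenvalues of the Markov matrix is then proved directly, with no zeta functions: from a root $z$ of $P_f$ one builds an explicit eigenvector of the transition matrix by assigning to each postcritical point its coordinate in the slope-$z$ piecewise-linear model and to each Markov edge the (signed) difference of endpoint coordinates; conversely, from an eigenvector one builds a marking function forcing $P_f(z)=0$. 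Note this gives equality of eigenvalue \emph{sets} only (Jordan blocks are not controlled), which is all the theorem claims. A zeta-function identity in the spirit of your second paragraph does appear in the paper, but only in the Appendix and only for real critically periodic maps. So to complete your proposal you would either have to prove the tree-map determinant identity from scratch or replace that step by the direct eigenvector construction.
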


\begin{figure}
\begin{center}
\includegraphics[width = 0.8 \textwidth]{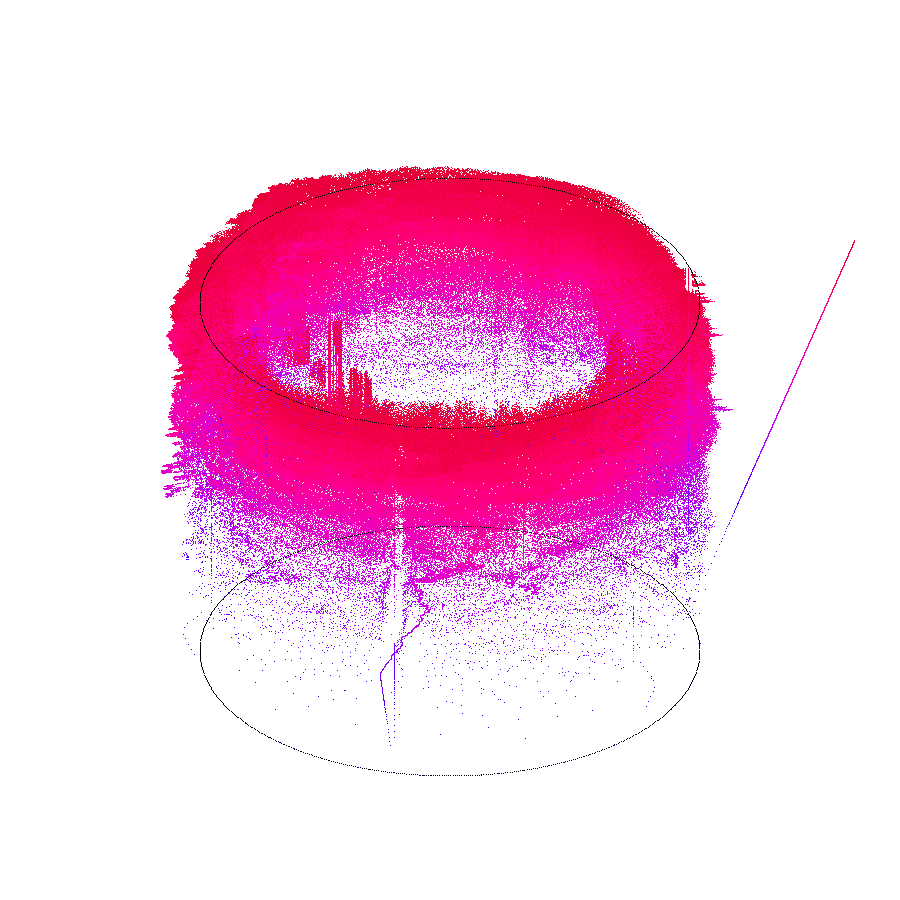}
\end{center}
\caption{The Master Teapot $\Upsilon_{1/3}$ for the $1/3$-vein. We plotted the roots associated to all critically periodic parameters with simplified itinerary of period up to $20$, obtaining $\sim 2.8 \times 10^6$ points.}
\end{figure}

\subsection{Teapots for principal veins} 

Thirdly, we investigate the multivalued function $Z$ restricted to \emph{principal veins} of the Mandelbrot set; the behavior of $Z$ on a principal vein is encapsulated in the geometry of various \emph{Master Teapots} associated to that vein.  

For natural numbers $p < q$ with $p$ and $q$ coprime, Branner-Douady \cite{BrannerDouady} showed the existence of the \emph{$\frac{p}{q}$-principal vein}, that is, a continuous arc that connects the ``tip'' of the $\frac{p}{q}$-limb of the Mandelbrot set to the main cardioid.

We denote as $\mathcal{V}_{p/q}$ the set of parameters in the Mandelbrot set which lie on the $\tfrac{p}{q}$-principal vein. 
We are particularly interested in the set $\mathcal{V}_{p/q}^{per}$ of all parameters $c \in \mathcal{V}_{p/q}$ such that the map $f_c:z \mapsto z^2+c$ is critically periodic. 
Finally, we define $\Theta_{p/q}^{per}$ to be the set of all angles $\theta \in \mathbb{Q}/\mathbb{Z}$ such that the external ray of angle $\theta$ lands at the root of a hyperbolic component on the $\tfrac{p}{q}$-principal vein.

For each $\lambda$ that arises as a growth rate associated to the $\tfrac{p}{q}$-principal vein, define 
$$\mathcal{Z}(\lambda) \coloneqq \{ z \in \mathbb{C} \mid \textrm{det}(M_{\theta} - zI) = 0\textrm{ for every } \theta \in \Theta^{per}_{p/q} \textrm{ such that } \lambda = e^{h(\theta)}\}.$$
Note that $\mathcal{Z}(\lambda)$ equals the set of eigenvalues of $\textrm{det}(M_{c(\lambda)} - zI)$, where $c(\lambda)$ is the critically periodic parameter of growth rate $\lambda$ closest to the main cardioid in the vein (c.f. Lemma \ref{l:closestrepresentative}). 
 
\begin{definition}
We define the \emph{$\frac{p}{q}$-Master Teapot} to be the set 
$$
\Upsilon_{p/q} \coloneqq \overline{ \left\{(z,\lambda) \in \mathbb{C} \times \mathbb{R} \mid \lambda = e^{h(\theta)}  \textrm{ for some } \theta \in \Theta^{per}_{p/q}, \    z\in \mathcal{ Z}(\lambda) \right\} },
$$
where the overline in the notation above denotes the topological closure.
\end{definition}
 
Note that $\Upsilon_{1/2}$ is Thurston's original Master Teapot from \cite{thurston}. Then, the Persistence Theorem of \cite{BrayDavisLindseyWu} states that if a point $z \in \mathbb{D}$ is in the height-$\lambda$ slice of the Master Teapot $\Upsilon_{1/2}$, then $z$ is also in all the higher slices, i.e. for $z \in \mathbb{D}$, $(z,\lambda) \in \Upsilon_{1/2}$ implies $\{z\} \times [\lambda,2] \in \Upsilon_{1/2}$. The present work generalizes this to all principal veins.  

In order to formulate the next theorem precisely, let us recall that a calculation shows that the core entropy of the tip of the $\frac{p}{q}$-principal vein equals $\log \lambda_q$, where $\lambda_q$ is the largest root of the polynomial $P(x) := x^q - x^{q-1} - 2$. Thus, by monotonicity, the growth rates of all parameters within such vein are contained in the interval $[1, \lambda_q]$. 
Note that $\lambda_q \to 1$ as $q \to \infty$.

We prove the persistence property for all principal veins: 

\begin{theorem}[Persistence Theorem]  \label{t:persistence}
Let $p < q$ coprime, and let $\Upsilon_{p/q}$ be the $\tfrac{p}{q}$-Master Teapot. 
If a point $(z,\lambda)$ belongs to $\Upsilon_{p/q}$ with $z \in \mathbb{D}$, then 
the ``vertical segment" $\{z\} \times [\lambda, \lambda_{q}]$ also lies in $\Upsilon_{p/q}$.
\end{theorem}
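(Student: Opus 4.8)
The plan is to reduce the Persistence Theorem to a statement purely about the principal vein kneading polynomial $D(t)$ and its behavior under "sliding toward the tip" of the vein, exploiting the continuity of the kneading determinant built into our version of kneading theory. The basic strategy mirrors \cite{BrayDavisLindseyWu}: a point $z\in\mathbb{D}$ that is a root of the transition matrix of a parameter $c_0$ of growth rate $\lambda_0$ should persist as a root for all parameters $c$ between $c_0$ and the tip, i.e.\ for all $\lambda\in[\lambda_0,\lambda_q]$. Since by Theorem~\ref{T:equalpolys} the roots off the unit circle of $P_{Mar}$, $P_{Th}$ and $D$ agree, it suffices to work with whichever polynomial is most convenient for each part of the argument; persistence is cleanest to see through a combinatorial expansion, and $D(t)$ is the one engineered to vary continuously with the itinerary of the critical point.

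First I would set up the parametrization: as $c$ moves along $\mathcal{V}_{p/q}$ from a given critically periodic parameter toward the tip, the itinerary (kneading sequence) of the critical point changes by a well-understood combinatorial operation — essentially a monotone "unfolding" that appends admissible blocks — and correspondingly $D_c(t)$ is obtained from $D_{c_0}(t)$ by a limiting/refinement procedure. Second, I would prove a \emph{monotonicity lemma for roots in the disk}: if $z\in\mathbb{D}$ and $D_{c_0}(z)=0$, then the combinatorial relation between $D_{c_0}$ and $D_c$ forces $D_c(z)=0$ as well. The mechanism here is that inside the unit disk the relevant power series converge absolutely, so the kneading determinant at $z$ is a genuine analytic function of the itinerary in the product topology; the "tip" itinerary is a limit of the intermediate ones, and more importantly each intermediate kneading determinant factors through (is divisible, in the appropriate sense, by) the earlier one when evaluated inside $\mathbb{D}$. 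Concretely, one shows that the truncated kneading determinants form a telescoping sequence in which the earlier polynomial's vanishing at $z\in\mathbb{D}$ propagates — this is where the specific normalization of $D(t)$ that makes itinerary $\mapsto$ kneading determinant continuous is essential, and it is the reason we introduced that version rather than the classical Milnor–Thurston one. Third, I would take closures: persistence for the critically periodic parameters dense in the vein, combined with continuity (Theorem~\ref{t:continuousdiskextension} handles the outside, and the inside is handled by the kneading-determinant continuity on $\mathbb{D}$), gives that $\{z\}\times[\lambda_0,\lambda_q]\subseteq\Upsilon_{p/q}$ after passing to the closure in the definition of $\Upsilon_{p/q}$.

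The main obstacle I anticipate is the \emph{divisibility/propagation step} inside the disk: showing rigorously that a root $z\in\mathbb{D}$ of $D_{c_0}$ must be a root of $D_c$ for the intermediate parameters $c$. For real maps this is the content of the "internal address increases monotonically and the kneading determinant picks up new factors without losing old ones" phenomenon, but on a principal vein the Hubbard tree has a branch point and the transition matrix is genuinely higher-dimensional, so I cannot simply invoke a one-variable argument. I expect to need: (i) a careful description, via the kneading theory of Section~(the one defining $D$), of how the itinerary changes along the vein — in particular that it changes by \emph{prefix-compatible} moves, so that the associated formal power series stabilize coefficient-by-coefficient; and (ii) a lemma that the kneading determinant, as a function on the space of admissible itineraries with the product topology, is not merely continuous but has the property that $D_{c}(z)$ lies in the ideal generated by $D_{c_0}(z)$ in the ring of convergent power series at each $z\in\mathbb{D}$ — equivalently, a "wedge"-type factorization as in \cite{TiozzoContinuity} adapted to the disk. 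Once (i) and (ii) are in place, the rest is a routine closure argument, and the bound $\lambda_q$ enters only as the growth rate of the tip, computed from $P(x)=x^q-x^{q-1}-2$ as stated.
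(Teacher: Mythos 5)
Your central step (ii) --- that for any two parameters $c_0 <_{\mathcal M} c$ on the vein the kneading determinant of $c$ is ``divisible'' by that of $c_0$ inside $\mathbb D$, so that a root $z\in\mathbb D$ of $D_{c_0}$ propagates to \emph{every} later parameter --- is false, and the theorem cannot be proved this way. Divisibility of kneading polynomials along the vein holds only under tuning (Lemma~\ref{L:tuned}), i.e.\ within a single small Mandelbrot set; two critically periodic parameters on the vein that are not related by renormalization need not share any root off the unit circle. Persistence is a statement about the \emph{teapot} (the closure over all heights of the slices built from the minimal parameter of each growth rate), not a parameterwise nesting of spectra. Relatedly, your appeal to prefix-stabilization plus continuity of $D$ in the itinerary addresses the wrong side of the circle: the coefficients of $D(t)$ of low order are determined by the \emph{prefix} of the itinerary, and (via Lemma~\ref{L:periodicP}) roots of $D$ in the unit disk in $t$ correspond to eigenvalues \emph{outside} the unit circle; this is exactly the continuity phenomenon of Theorem~\ref{t:continuousdiskextension}. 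Eigenvalues inside the circle are governed by the \emph{suffix} of the (finite, periodic) itinerary (Lemma~\ref{lem:same_prefix_suffix}), and they do not vary continuously along the vein --- which is precisely why the correct statement is persistence rather than continuity.

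The paper's proof instead proceeds by an approximation-by-concatenation argument. Given $(z,y)$ in the teapot with $|z|<1$ and a target height $y'\in(y,\lambda_q)$, one picks simplified itineraries $w_1$ (entropy near $y$, kneading polynomial with a root near $z$) and $w_2$ (entropy near $y'$), and produces a critically periodic parameter on the vein whose simplified itinerary has the form $w_2^n w' w_1^n$. By Proposition~\ref{prop:lim}, the prefix $w_2^n$ forces the growth rate to be close to $y'$ while the suffix $w_1^n$ forces a root of the kneading polynomial close to $z$; letting $n\to\infty$ and taking closures gives $(z,y')\in\Upsilon_{p/q}$. Two ingredients your sketch omits are essential here: (a) the combinatorial realization of such concatenated words as itineraries of actual parameters on the vein, which the paper obtains by transporting the real one-dimensional result (Theorem~\ref{T:concat-real}, via dominant words and irreducibility of Parry polynomials from \cite{BrayDavisLindseyWu}) through the Branner--Douady surgery and the recoding map (Lemma~\ref{L:R-concat}, Theorem~\ref{main_comb}); and (b) \emph{minimality} of the constructed parameter. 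Since the slice of $\Upsilon_{p/q}$ at height $\lambda$ records only $\mathcal Z(\lambda)$, the eigenvalues of the minimal parameter of that growth rate (Lemma~\ref{l:closestrepresentative}), exhibiting \emph{some} parameter of the right entropy with a root near $z$ is not enough; Theorem~\ref{main_comb} guarantees the concatenated parameter is the smallest one with its entropy. Without a substitute for (a), (b), and the prefix/suffix root-control of Proposition~\ref{prop:lim}, the proposal does not yield the theorem.
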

 
Finally, because of renormalization, points in the teapot behave nicely under taking $q^{th}$ roots. 
  
\begin{theorem} \label{T:q-root}
If $(z,\lambda) \in \Upsilon_{1/2}$ with $|z| \neq 1$, then for any $q$, if $w^q = z$ then the point 
$(w,+\sqrt[q]{\lambda})$ belongs to $\Upsilon_{p/q}.$
\end{theorem}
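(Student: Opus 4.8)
The plan is to exploit the renormalization structure that relates quadratic polynomials on the $1/2$-vein to those on the $p/q$-vein. Recall that the $p/q$-limb of the Mandelbrot set is the image, under $q$-fold renormalization (tuning), of a copy of the whole Mandelbrot set; in particular the $p/q$-principal vein is the tuned image of the $1/2$-principal vein (which is just the real segment $[-2,-3/4]$ in the real line). Concretely, if $\theta \in \Theta^{per}_{1/2}$ corresponds to a critically periodic parameter on the real vein with Hubbard tree $T_c$ and transition matrix $M_c$, then its tuned counterpart on the $p/q$-vein has a Hubbard tree that looks like $q$ ``copies'' of $T_c$ arranged cyclically around a central branch point, and the first-return map to one copy is (up to conjugacy) the original map $f_c$. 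Under such a construction the transition matrix of the renormalized map is, in a suitable basis, a block-cyclic matrix: it is $q$ blocks permuted cyclically, with the composition around the cycle equal to $M_c$ (plus possibly some extra edges near the central branch point that contribute only cyclotomic factors). The spectrum of a block-cyclic matrix of this shape consists exactly of the $q$-th roots of the eigenvalues of $M_c$, together with some roots of unity. This is the algebraic heart of the statement.

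First I would make precise the combinatorial renormalization: given $\theta \in \Theta^{per}_{1/2}$ with growth rate $\lambda$, identify the angle $\theta' \in \Theta^{per}_{p/q}$ that is its $p/q$-tuning, and describe the Hubbard tree $T_{c(\theta')}$ explicitly as $q$ isomorphic subtrees $T_0, \dots, T_{q-1}$ glued at a central point, with $f_{c(\theta')}$ cyclically permuting them (following Branner--Douady \cite{BrannerDouady} and the standard tuning picture). Second, I would pass to the Markov partition and write down $M_{c(\theta')}$ in block form: after ordering the edges by which subtree they lie in, the matrix has the shape of a cyclic block permutation whose $q$-fold product is conjugate to $M_{c(\theta)}$, up to extra pieces supported near the branch point. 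Third, I would compute $\det(M_{c(\theta')} - zI)$: for such a block-cyclic matrix one gets $\det(M_{c(\theta')} - zI) = \pm\det(M_{c(\theta)} - z^q I)$, possibly times a product of cyclotomic polynomials in $z$ coming from the central vertex; hence off the unit circle, $z$ is an eigenvalue of $M_{c(\theta')}$ if and only if $z^q$ is an eigenvalue of $M_{c(\theta)}$. In particular $h(\theta') = \tfrac 1q h(\theta)$, so $e^{h(\theta')} = \sqrt[q]{\lambda}$, consistent with the statement. Fourth, I would translate this into the language of the sets $\mathcal Z$: if $(z,\lambda) \in \Upsilon_{1/2}$ with $|z|\neq 1$ and comes from an actual critically periodic angle (not a limit point), then $z^q \in \mathcal Z(\lambda)$, so any $w$ with $w^q = z$ lies in $\mathcal Z(\sqrt[q]\lambda)$ computed from the tuned parameter, giving $(w, \sqrt[q]\lambda) \in \Upsilon_{p/q}$. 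Finally, I would handle the closure: points of $\Upsilon_{1/2}$ with $|z| \neq 1$ that are not themselves realized by a periodic angle are limits of such points, and since $w \mapsto w^q$ is continuous and $|z|\neq 1$ is an open condition, taking limits and using that $\Upsilon_{p/q}$ is closed finishes the argument.

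The main obstacle, I expect, is making the block-cyclic description of $M_{c(\theta')}$ genuinely correct rather than morally correct: one has to be careful about what happens at the central branch point of the tuned Hubbard tree (it may or may not be a postcritical point or a branch point contributing to the Markov partition), about whether the identification of the first-return map with $f_{c(\theta)}$ is an honest conjugacy of Markov maps or only a semiconjugacy, and about extra edges/vertices that appear in the tuning and contribute additional factors to the characteristic polynomial. The claim is that all such extra factors are products of cyclotomic polynomials — powers of $(z^q - 1)$ or $(z - \zeta)$ for roots of unity $\zeta$ — so they contribute no eigenvalues off the unit circle; verifying this requires a careful look at the local picture near the branch point, and is the one place where I would slow down and do the computation honestly. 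A secondary, more bookkeeping-type difficulty is matching conventions: the theorem as stated compares $\Upsilon_{1/2}$ (indexed by $\lambda$) with $\Upsilon_{p/q}$ (indexed by $\sqrt[q]\lambda$) and implicitly uses Lemma~\ref{l:closestrepresentative} to know that $\mathcal Z(\sqrt[q]\lambda)$ is computed from a single well-chosen parameter; one must check that the tuned parameter $c(\theta')$ is indeed (or is equivalent, off the unit circle, to) the ``closest representative'' in the $p/q$-vein, so that no eigenvalues are lost in the definition of $\mathcal Z$. With the renormalization dictionary from \cite{BrannerDouady} and Theorem~\ref{T:equalpolys} in hand (which lets us freely move between the Markov polynomial and the other algorithmic polynomials if one description is more convenient), these should all be manageable.
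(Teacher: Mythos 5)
Your overall route is the paper's route: tune the real parameter $c$ by $c_2$, the center of the period-$q$ hyperbolic component at the base of $\mathcal{V}_{p/q}$, show that the relevant polynomial of $c'=\tau_{c_2}(c)$ is $P_c(t^q)$ up to cyclotomic factors, deduce that its roots off the unit circle are exactly the $q$-th roots of those of $P_c$ and that $h(f_{c'})=\tfrac1q h(f_c)$, and finish by a closure/continuity argument. Where you differ is the mechanism for the spectral identity. You propose to exhibit the Markov matrix of the tuned Hubbard tree as a block-cyclic matrix whose $q$-fold composition recovers $M_c$, and to check by hand that the junk at the central branch point only contributes cyclotomic factors; the paper instead gets the identity in one line from its tuning formula for the principal-vein kneading polynomial (Lemma \ref{L:tuned}), applied with $P_{c_2}(t)=1-t^q$, which gives $P_{c'}(t)=(1-t^q)\,P_c(t^q)/(1+t^q)$, and then Theorem \ref{T:equalpolys} transfers these roots to eigenvalues of the Markov matrix off the unit circle. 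The verification you single out as your main obstacle (conjugacy vs.\ semiconjugacy of the first-return map, extra edges near the branch point, cyclotomic error factors) is precisely what the kneading formalism of Sections \ref{S:kneading-veins}--\ref{S:renorm} was built to bypass, so the matrix computation, while plausible, is strictly more work than invoking Lemma \ref{L:tuned}.

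The point you dismiss as secondary bookkeeping is in fact the one substantive step missing from your sketch. By definition, $\mathcal{Z}(\sqrt[q]{\lambda})$ is an intersection over \emph{all} angles with that growth rate (equivalently, by Lemma \ref{l:closestrepresentative}, the spectrum of the parameter closest to the main cardioid), so you must actually prove that $c'$ is minimal; you should also start from $c$ minimal, which is legitimate because $z\in\mathcal{Z}(\lambda)$ is the spectrum of the minimal real parameter. Note also that your premise that the $p/q$-principal vein is the tuned image of the real vein is false as stated: the image of $\tau_{c_2}$ covers only the initial segment of the vein with growth rates $\le\sqrt[q]{2}$, while the vein continues to the non-renormalizable tip of growth rate $\lambda_q>\sqrt[q]{2}$. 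The correct argument, as in the paper, is that since $\sqrt[q]{\lambda}\le\sqrt[q]{2}$ and entropy is monotone along the vein, every critically periodic parameter on $\mathcal{V}_{p/q}$ of growth rate $\le\sqrt[q]{\lambda}$ is of the form $\tau_{c_2}(\tilde c)$ with $\tilde c$ real; since $\tau_{c_2}$ preserves $<_{\mathcal{M}}$ and divides entropy by $q$ (Corollary \ref{C:tuned}), it carries minimal parameters to minimal parameters, so $c'$ is minimal and $w\in\mathcal{Z}(\sqrt[q]{\lambda})$. With that supplied, your closure step is fine and matches the paper's.
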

 
\begin{corollary} \label{C:cylinder}
The unit cylinder $[1, \lambda_q] \times S^1$ is contained in $\Upsilon_{p/q}$.
\end{corollary}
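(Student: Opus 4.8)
The plan is to deduce the corollary from the two preceding theorems: Theorem~\ref{t:continuousdiskextension} (continuity of eigenvalues outside the disk) and Theorem~\ref{T:q-root} ($q$-th root behavior under renormalization). First I would observe that since every slice $\{\lambda\}\times\mathcal Z(\lambda)$ always contains the full unit circle $S^1$ by definition of $Z^+$ (the map $Z^+$ sends $\theta$ to $S^1$ together with the outside eigenvalues), it suffices to show that for every $\lambda\in[1,\lambda_q]$ there is actually some $\theta\in\Theta^{per}_{p/q}$ with $e^{h(\theta)}=\lambda$; then $\{\lambda\}\times S^1\subseteq\Upsilon_{p/q}$, and taking the union over all such $\lambda$ and the closure gives the cylinder. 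Alternatively, and more robustly, one uses the continuous extension: the set $\{e^{h(\theta)} : \theta\in\Theta^{per}_{p/q}\}$ is dense in $[1,\lambda_q]$ because the core entropy function is continuous and monotone along the vein, ranging from $0$ at the main cardioid to $\log\lambda_q$ at the tip, so its closure is all of $[1,\lambda_q]$; combined with the fact that $S^1\subseteq Z^+(\theta)$ for every $\theta$ and that $Z^+$ has a continuous extension to $\mathbb R/\mathbb Z$, the closure $\Upsilon_{p/q}$ contains $S^1\times\{\lambda\}$ for every $\lambda$ in the closure of the entropy values, i.e.\ for all $\lambda\in[1,\lambda_q]$.

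The key steps, in order, are: (1) recall that by construction $S^1\subseteq\mathcal Z(\lambda)$ for every growth rate $\lambda$ arising from the vein, since the eigenvalue set $Z(\theta)$ together with $S^1$ is exactly what enters the definition of the teapot's fibers via $\mathcal Z(\lambda)$ (or, if one prefers, note that the constant loop at any eigenvalue of modulus $1$ on the unit circle is forced in by the continuity statement of Theorem~\ref{t:continuousdiskextension}, whose target $Com^+(\mathbb C)$ consists of compact sets containing $S^1$); (2) establish that the set of achievable growth rates $\{e^{h(\theta)}:\theta\in\Theta^{per}_{p/q}\}$ is dense in the interval $[1,\lambda_q]$, using continuity and monotonicity of core entropy along the principal vein together with the computation (stated in the excerpt) that the tip has entropy $\log\lambda_q$ and the main cardioid has entropy $0$; (3) conclude that $\bigcup_{\lambda}\{\,S^1\times\{\lambda\}\,\}$, with the union over the dense set of achievable $\lambda$, lies in $\Upsilon_{p/q}$ by definition, and then take the topological closure — which is built into the definition of $\Upsilon_{p/q}$ — to fill in the remaining heights and obtain the full cylinder $[1,\lambda_q]\times S^1$.

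I expect the main (though modest) obstacle to be step (2): one must be careful that the angles in $\Theta^{per}_{p/q}$ — those landing at roots of hyperbolic components on the vein — really do have entropies dense in the whole interval, and in particular that there is no gap near $\lambda=1$ (the main cardioid end) or near $\lambda=\lambda_q$ (the tip). Density near $1$ follows because hyperbolic components accumulate on the main cardioid along the vein and their entropies tend to $0$; density near $\lambda_q$ follows because the periodic parameters accumulate on the tip and, by continuity of $h$ (invoking the extension theorem cited in the introduction, \cite{TiozzoContinuity}, \cite{DudkoSchleicher}), their entropies converge to $\log\lambda_q$. A secondary point to verify is that the closure operation in the definition of $\Upsilon_{p/q}$ genuinely produces the \emph{closed} interval $[1,\lambda_q]$ of heights and not a proper subset — but this is immediate since $[1,\lambda_q]$ is the closure of the dense set of achievable growth rates, and for each fixed $\lambda$ in that closure a sequence $(z_n,\lambda_n)\to(z,\lambda)$ with $z_n=z\in S^1$ and $\lambda_n\to\lambda$ witnesses $(z,\lambda)\in\Upsilon_{p/q}$.
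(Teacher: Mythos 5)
Your step (1) rests on a misreading of the definitions, and unfortunately it is the whole content of the corollary. The unit circle appears in $Z^+(\theta) := S^1 \cup \left(Z(\theta)\cap(\mathbb{C}\setminus\mathbb{D})\right)$ only as a device for the continuity statement of Theorem~\ref{t:continuousdiskextension} (it keeps the sets in $Com^+(\mathbb{C})$ compact and nonempty so the Hausdorff topology behaves); it is \emph{not} part of the definition of the teapot. The fibers of $\Upsilon_{p/q}$ are built from $\mathcal{Z}(\lambda)$, which is an intersection of honest eigenvalue sets of finite Markov matrices $M_\theta$ --- a finite set of algebraic numbers, nowhere near containing all of $S^1$. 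So it is simply false that ``$S^1\subseteq\mathcal{Z}(\lambda)$ by definition,'' and the parenthetical fallback (that the target $Com^+(\mathbb{C})$ of $Z^+$ consists of sets containing $S^1$) does not help either: that convention says nothing about which points of the circle are limits of actual eigenvalues, which is what membership in the closed set $\Upsilon_{p/q}$ requires. Your step (2), density of the achievable growth rates in $[1,\lambda_q]$, is fine but irrelevant once step (1) collapses: dense heights with finite fibers do not produce a cylinder.

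The genuine work is to show that every point of $S^1$ is accumulated by eigenvalues at heights converging into $[1,\lambda_q]$, and the paper does this with the two theorems you did not use in an essential way. Starting from a point $(z,\lambda)\in\Upsilon_{1/2}$ with $|z|\neq 1$, one applies Theorem~\ref{T:q-root} repeatedly (first $n$ times with $p/q=1/2$, then once more into the $p/q$-vein) to conclude that all $2^n q$-th roots $w$ of $z$, paired with the height $\sqrt[2^n q]{\lambda}$, lie in $\Upsilon_{p/q}$. As $n\to\infty$ these root sets have moduli tending to $1$ and arguments becoming dense in the circle, while the heights tend to $1$; since $\Upsilon_{p/q}$ is closed, this gives $\{1\}\times S^1\subseteq\Upsilon_{p/q}$. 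Finally, the Persistence Theorem (Theorem~\ref{t:persistence}, applied to the approximating points with $|w|<1$ before passing to the closure) propagates these points vertically up to height $\lambda_q$, yielding the full cylinder $[1,\lambda_q]\times S^1$. If you want to salvage your write-up, replace step (1) by this renormalization-plus-persistence argument; continuity of $Z^+$ alone cannot do the job.
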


\subsection{The Thurston set} 
W. Thurston \cite{thurston} also investigated the one-complex-dimensional set obtained by projecting the Master Teapot (or a variant thereof) 
to its $z$-coordinate. 
This set displays a lot of structure, and is known as the \emph{Thurston set} or \emph{entropy spectrum}, also nicknamed ``the bagel" for its shape (see Figure \ref{F:bagel3}). 

The Thurston set has attracted considerable attention recently, and is also related to several other sets defined by taking roots of polynomials with restricted digits, as well as limit sets of iterated function systems (see, among others, \cite{BouschConnexite}, \cite{BouschPaires}, \cite{thompson}, \cite{CalegariKochWalker}, \cite{LindseyWu}, \cite{PerezSilvestri}). 

In \cite[Appendix]{TiozzoGaloisConjugates}, variations of the Thurston set are proposed and drawn for each principal vein. 
In particular, one considers the \emph{Thurston set} $\Sigma_{p/q}$ \emph{for the principal $p/q$-vein}, defined as

$$\Sigma_{p/q} := \overline{ \left\{z \in \mathbb{C}  \mid \textrm{det}(M_{\theta}-zI)=0  \textrm{ for some } \theta \in \Theta^{per}_{p/q}    \right\} }.$$

Using Theorem \ref{t:continuousdiskextension}, we obtain 

\begin{theorem}  \label{T:bagel-connected}
For any $(p, q)$ coprime, the Thurston set
$$\Sigma_{p/q} \cap \{ z   \in \mathbb{C}  \ : \ |z| \geq 1\}$$
is path connected.
\end{theorem}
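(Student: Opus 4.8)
The plan is to deduce path connectedness of $\Sigma_{p/q} \cap \{|z| \geq 1\}$ from the continuity of the multivalued eigenvalue function provided by Theorem~\ref{t:continuousdiskextension}. The key observation is that, up to the unit circle, $\Sigma_{p/q}$ is the union over $\theta \in \Theta^{per}_{p/q}$ of the finite sets $Z(\theta) \cap \{|z| \geq 1\}$, together with their closure; and that $Z^+(\theta) = S^1 \cup (Z(\theta) \cap (\mathbb{C}\setminus\mathbb{D}))$ varies continuously in the Hausdorff topology on compact subsets of $\mathbb{C}\setminus\overline{\mathbb{D}}$ (more precisely, on $\mathbb{C}$ once we always adjoin $S^1$) as $\theta$ ranges over the closed arc of angles corresponding to $\mathcal{V}_{p/q}$. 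The first step is therefore to identify the relevant parameter interval: the angles $\theta \in \Theta^{per}_{p/q}$ accumulate, as one moves along the vein, onto the angle $\theta_{p/q}$ of the tip and onto the angles of the root of the main cardioid; I would let $I \subset \mathbb{R}/\mathbb{Z}$ be the corresponding closed interval and restrict the continuous extension of $Z^+$ to $I$.

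Next I would set up the "total space" argument. Consider the set
$$
\mathcal{G} := \{(z,\theta) \in \mathbb{C} \times I \mid z \in Z^+(\theta)\},
$$
the graph of the continuous (in the Hausdorff sense) compact-set-valued map $Z^+|_I$. Since $I$ is connected and $Z^+$ is continuous with each fiber $Z^+(\theta)$ nonempty and connected-to-$S^1$ (indeed each fiber contains all of $S^1$), $\mathcal{G}$ is a connected subset of $\mathbb{C} \times I$: any two points $(z_0,\theta_0)$ and $(z_1,\theta_1)$ can be joined by first moving within the fiber over $\theta_0$ — which is path connected because $Z^+(\theta_0) \supseteq S^1$ and the finitely many exterior eigenvalues can each be connected to $S^1$ inside $\mathcal{G}$ by the standard "a root can be followed to the boundary" continuity argument used in the proof of Theorem~\ref{t:continuousdiskextension} — then Hausdorff-continuity of $Z^+$ lets us move the base point from $\theta_0$ to $\theta_1$ while the companion point stays in the (moving) fiber, using the fact that $S^1$ persists in every fiber as a "rail" to fall back on. One then projects $\mathcal{G}$ to the $z$-coordinate: the image is a path connected (hence in particular connected) subset of $\mathbb{C}$ containing $S^1$ and every exterior eigenvalue that occurs for some $\theta \in \Theta^{per}_{p/q} \subseteq I$.

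It remains to compare this projection with $\Sigma_{p/q} \cap \{|z|\geq 1\}$. By definition $\Sigma_{p/q}$ is the closure of $\bigcup_{\theta \in \Theta^{per}_{p/q}} \{z : \det(M_\theta - zI) = 0\}$, so $\Sigma_{p/q} \cap \{|z| \geq 1\}$ is the closure, intersected with $\{|z|\geq 1\}$, of $\bigcup_{\theta} (Z(\theta) \cap \{|z|\geq 1\})$. The projection $\pi(\mathcal{G})$ contains $S^1$ and all exterior eigenvalues over $\theta \in \Theta^{per}_{p/q}$, so $\pi(\mathcal{G}) \supseteq \bigcup_{\theta}(Z(\theta) \cap \{|z|\geq 1\})$; conversely $\pi(\mathcal{G}) \subseteq \{|z|\geq 1\}$ and is contained in the closure of this union, because $Z^+|_I$ is the Hausdorff limit of $Z^+$ along $\Theta^{per}_{p/q}$ (which is dense in $I$). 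Hence $\pi(\mathcal{G})$ and $\Sigma_{p/q}\cap\{|z|\geq 1\}$ have the same closure; but a set with path connected (in particular connected) dense subset, once we check it is itself the closure, need not be path connected — so here I would instead argue directly that $\Sigma_{p/q} \cap \{|z| \geq 1\}$ is the closure of the path connected set $\pi(\mathcal{G})$, and invoke the fact that the closure of a path connected set need not be path connected in general but \emph{is} here because $\pi(\mathcal{G})$ is actually closed: each fiber $Z^+(\theta)$ is finite-plus-$S^1$ and varies continuously over the compact interval $I$, so $\mathcal{G}$ is compact, hence $\pi(\mathcal{G})$ is compact, hence closed, hence equal to $\Sigma_{p/q}\cap\{|z|\geq 1\}$, and path connected.

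The main obstacle is the middle step: rigorously establishing that $\mathcal{G}$ (equivalently the relation "$z$ is an exterior eigenvalue or on $S^1$ for parameter $\theta$") is path connected, i.e.\ that one can genuinely \emph{move} a chosen exterior eigenvalue continuously as $\theta$ varies over $I$ without it being annihilated or born in a way that breaks the path — and, at the accumulation angles $\theta_{p/q}$ and the cardioid root where $\theta \notin \Theta^{per}_{p/q}$, that the fibers of the continuous extension still attach to the $S^1$ rail. This is exactly the place where the strength of Theorem~\ref{t:continuousdiskextension} is used: Hausdorff-continuity of $Z^+$ on all of $I$ guarantees no eigenvalue can escape to infinity or disappear into $\overline{\mathbb{D}}$ discontinuously, so any point of an exterior fiber is connected within $\mathcal{G}$ either to $S^1$ directly or along a continuous path of eigenvalues that eventually meets $S^1$; combined with the $S^1$ rail this yields path connectedness of $\mathcal{G}$ and of its projection.
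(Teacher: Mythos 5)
There is a genuine gap, and it sits exactly where you place your parameter space. You run the graph argument over the full closed interval $I$ of external angles between the cardioid end and the tip angle $\theta_{p/q}$, and you justify the inclusion $\pi(\mathcal{G}) \subseteq \Sigma_{p/q}$ by asserting that $\Theta^{per}_{p/q}$ is dense in $I$. That is false: $\Theta_{p/q}$ (the closure of $\Theta^{per}_{p/q}$) is a nowhere dense compact subset of $I$, whose complementary gaps correspond to decorations and limbs hanging off the vein. For an angle $\theta$ in the interior of such a gap, $Z^+(\theta)$ consists of eigenvalues of a parameter \emph{not} on the vein — for instance its leading eigenvalue can exceed $\lambda_q$, whereas every eigenvalue occurring in $\Sigma_{p/q}$ has modulus at most $\lambda_q$ — so $\pi(\mathcal{G})$ is strictly larger than $\Sigma_{p/q} \cap \{|z|\geq 1\}$ and your final identification collapses. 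Restricting instead to $\Theta_{p/q}$ fixes the inclusion but destroys connectedness of the base, and this is precisely the difficulty the paper's proof is built to handle: it passes to the quotient $\mathcal{V}^{comb}_{p/q} = \Theta_{p/q}/\sim$, which \emph{is} an interval, and proves (Lemma \ref{L:same-eigen}, via tuning by the basilica, Lemma \ref{L:tuned}) that angles identified by $\sim$ — in particular the two endpoints of each gap — have the same eigenvalues off the unit circle, so that $Z^+$ descends to a continuous map on the quotient. Without an analogue of this lemma your base space is either too big (wrong projection) or disconnected (no path argument).

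Two further points need repair. First, a fiber $Z^+(\theta)$ is $S^1$ together with finitely many isolated exterior points, so it is \emph{not} path connected, and Hausdorff continuity by itself does not let you "follow a root" to the circle; in the paper the connection is made by moving the base point toward the cardioid end $\eta_0$, using that the top eigenvalue $\lambda(\eta)$ is monotone along the vein with $\lambda(\eta_0)=1$, which forces the followed point to land on $S^1$ there. Second, your rail $S^1 \times I$ lies in $\mathcal{G}$ only because $S^1$ is adjoined to $Z^+$ by definition; for the projection to equal $\Sigma_{p/q}\cap\{|z|\geq 1\}$ you must also know that $S^1 \subseteq \Sigma_{p/q}$. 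In the paper this is not free: it is Corollary \ref{C:cylinder} (the unit cylinder lies in the teapot), whose proof uses the renormalization theorem (Theorem \ref{T:q-root}) and the Persistence Theorem. So while your overall strategy — continuity of $Z^+$ plus a connected total space projected to the $z$-coordinate — matches the paper's, the argument as written omits the combinatorial quotient, the basilica-tuning lemma, the monotonicity-to-the-cardioid step, and the cylinder input, and the density claim it substitutes for them is incorrect.
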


The analogous property for the real case is proven in \cite{TiozzoGaloisConjugates}. 

\begin{remark}
With the above definitions, $\Sigma_{p/q}$ is not the projection of $\Upsilon_{p/q}$ onto the horizontal coordinate. 
The issue is that multiple different critically periodic parameters in a principal vein can have the same core entropy while having different characteristic polynomials $\chi(t) =  \textrm{det}(M_c -t I)$.
Rather, $\Sigma_{p/q}$ is the projection of a ``combinatorial" version of the Master Teapot (see Section \ref{ss:combinatorialveins}).
This version of the Thurston set differs slightly from the one considered in \cite{BrayDavisLindseyWu}. 
\end{remark}

\begin{figure}
\begin{center}
\includegraphics[width = 0.8 \textwidth]{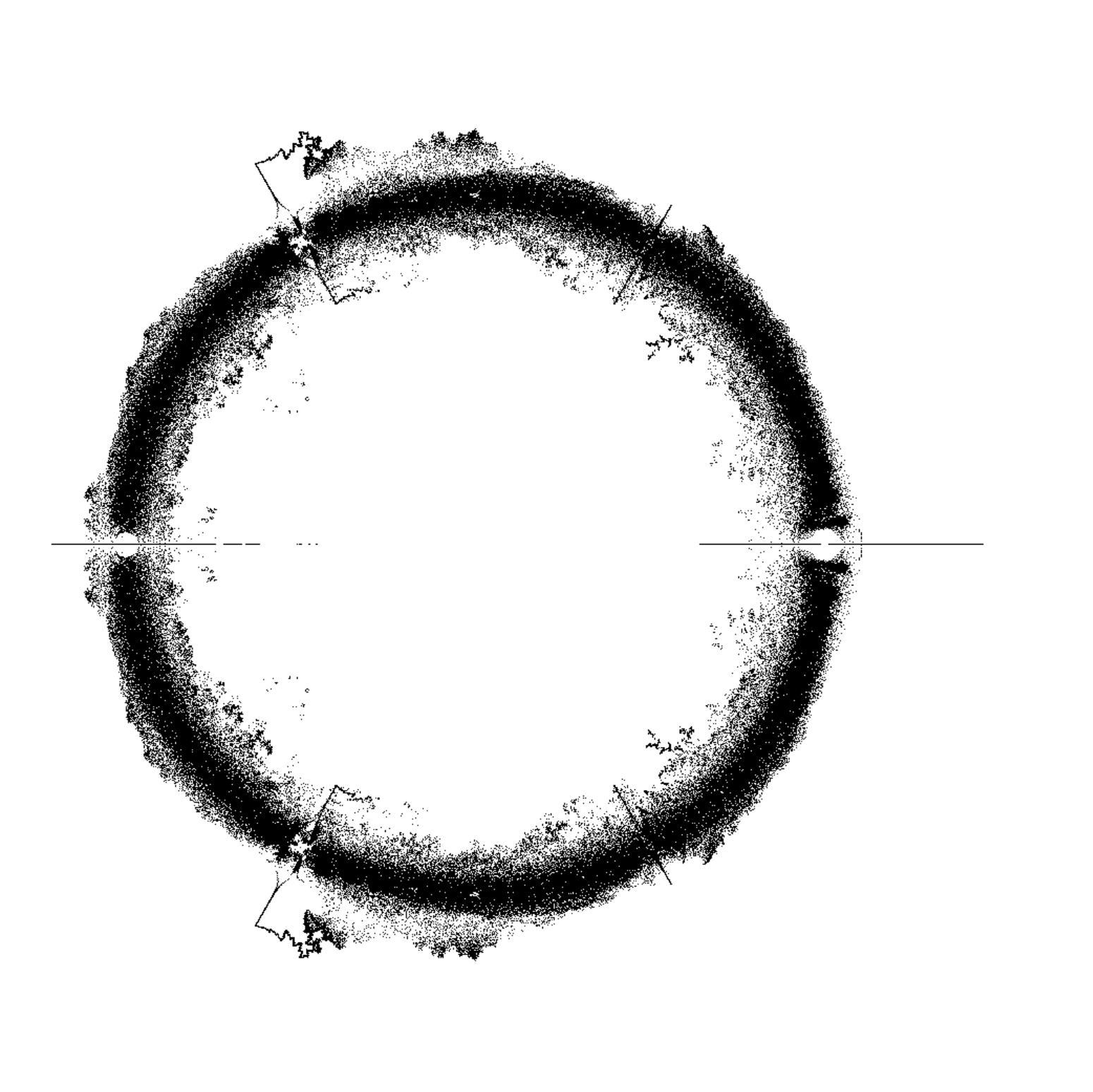}
\end{center}
\caption{The Thurston set $\Sigma_{1/3}$ associated to the $1/3$-vein.} 
\label{F:bagel3}
\end{figure}

\medskip
Note that there is a difference between the Galois conjugates of $\lambda$ and the eigenvalues of a matrix $M_\theta$ with $\lambda = e^{h(\theta)}$. In fact, the characteristic polynomial of $M_\theta$ need not be irreducible. Informed by the real case ($\tfrac{p}{q} = \tfrac{1}{2}$), we conjecture:

 \begin{conjecture}
$$\Upsilon_{p/q} = \overline{\{ (z, \lambda) \in \mathbb{C} \times \mathbb{R} \mid  \lambda = e^{h(\theta)}  \textrm{ for some } \theta \in \Theta^{per}_{p/q}, z \textrm{ is a Galois conjugate of }\lambda 
 \}}.$$
\end{conjecture}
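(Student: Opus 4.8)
The displayed statement is a \emph{conjecture}, so what follows is a strategy rather than a complete argument, together with an indication of where it would get stuck.

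One inclusion is immediate. If $\lambda = e^{h(\theta)}$ for some $\theta \in \Theta^{per}_{p/q}$, then $\lambda$ is an eigenvalue of the integer matrix $M_\theta$, so the minimal polynomial of $\lambda$ over $\mathbb{Q}$ divides the monic integer polynomial $\det(M_\theta - tI)$; hence every Galois conjugate of $\lambda$ is a root of $\det(M_\theta - tI)$ for \emph{every} such $\theta$, i.e.\ lies in $\mathcal{Z}(\lambda)$. Taking closures, the right-hand side of the conjectured identity is contained in $\Upsilon_{p/q}$. For the reverse inclusion it suffices, since $\Upsilon_{p/q}$ is by definition a closure, to approximate each pair $(z,\lambda)$ with $\lambda = e^{h(\theta)}$ and $z \in \mathcal{Z}(\lambda)$ by pairs $(z',\lambda')$ for which $z'$ is a Galois conjugate of $\lambda' = e^{h(\theta')}$, $\theta' \in \Theta^{per}_{p/q}$, and then to split according to $|z|$.

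The structural input I would try to establish is a \emph{genericity} statement: for a dense set of angles $\theta \in \Theta^{per}_{p/q}$, the characteristic polynomial $\det(M_\theta - tI)$ is irreducible up to cyclotomic factors --- equivalently (via Theorem~\ref{T:equalpolys}) its roots off the unit circle are precisely the Galois conjugates of $e^{h(\theta)}$ lying off the unit circle. Granting this: (i) if $|z| > 1$, perturb $\theta$ to a generic nearby $\theta'$; by Theorem~\ref{t:continuousdiskextension} the set $Z^+(\theta')$ is Hausdorff-close to $Z^+(\theta)$, so some eigenvalue $z'$ of $M_{\theta'}$ of modulus $>1$ is close to $z$, and for generic $\theta'$ this $z'$ is a Galois conjugate of $e^{h(\theta')}$, while continuity of $h$ forces $e^{h(\theta')} \to \lambda$. (ii) If $|z| < 1$, continuity fails, but Theorem~\ref{t:persistence} provides one-sided control: a point of $\mathbb{D}$ appearing in the height-$\lambda$ slice appears in all higher slices. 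Re-examining its proof, I would use this to write the interior eigenvalues at $\theta$ as limits of interior eigenvalues at angles $\theta'$ slightly closer to the tip, which can be taken generic; continuity of $h$ keeps the corresponding growth rates converging to $\lambda$, and genericity makes the approximants Galois conjugates. (iii) If $|z| = 1$, by Corollary~\ref{C:cylinder} the whole cylinder $[1,\lambda_q]\times S^1$ lies in $\Upsilon_{p/q}$, so one must also place it in the right-hand side; this should follow from the density of Galois conjugates of growth rates of large period near the unit circle (known for the real vein), again combined with continuity of $h$ to match heights.

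The \textbf{main obstacle} is the genericity statement. There is no general mechanism forcing $\det(M_\theta - tI)$ to be irreducible off the circle: cyclotomic factors are always present, and genuine reducibility really does occur --- for instance at renormalizable parameters, where the characteristic polynomial factors in a way dictated by the renormalization --- so one must show that such coincidences are non-generic along a principal vein, and even the right notion of genericity (dense, residual, full measure in an appropriate parameter) is unclear. Proving it seems to demand a new Galois-theoretic input tailored to the kneading combinatorics of veins developed here, or an analytic argument exploiting the wedge and spectral-determinant machinery behind Theorem~\ref{t:continuousdiskextension}; the renormalization result Theorem~\ref{T:q-root} transports only part of the structure from $\Upsilon_{1/2}$ and, moreover, the identity is not known even in the real case. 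A secondary technical point is step (ii): Theorem~\ref{t:persistence} is stated globally for the teapot, and converting it into lower-semicontinuity of the interior eigenvalue sets along the vein requires going back into its proof.
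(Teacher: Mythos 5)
The statement you were asked about is stated in the paper as a \emph{conjecture}; the paper gives no proof of it, so there is no argument of the authors to compare yours against. Your proposal is appropriately honest about this, and the one piece you do prove is correct: since $\lambda=e^{h(\theta)}$ is a root of the monic integer polynomial $\det(M_\theta-tI)$ for \emph{every} $\theta\in\Theta^{per}_{p/q}$ with that growth rate, its minimal polynomial divides each of these characteristic polynomials, so every Galois conjugate of $\lambda$ lies in $\mathcal{Z}(\lambda)$, and the right-hand side is contained in $\Upsilon_{p/q}$ after taking closures. That inclusion is the easy half, and your write-up of it is fine.

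The reverse inclusion is where the genuine gap sits, and you have correctly located it: everything hinges on the unproven ``genericity'' claim that along the vein one can approximate by parameters whose characteristic polynomial is irreducible up to cyclotomic factors, and no mechanism in the paper (Theorems \ref{t:continuousdiskextension}, \ref{T:equalpolys}, \ref{T:q-root}, \ref{t:persistence}) supplies it; renormalizable parameters show reducibility really occurs, so this is essentially the content of the conjecture rather than a technical lemma. One further weakness you should flag more sharply is step (ii): for $|z|<1$ you cannot ``perturb $\theta'$ to a nearby generic angle'' at the end, because eigenvalues inside the disk do \emph{not} vary continuously with the angle --- that is precisely why the paper proves persistence instead of continuity there. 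So genericity would have to be built directly into the concatenated itineraries $w_1^N w w_0^N$ produced in the proof of Theorem \ref{t:persistence} (in the spirit of the dominant-word/irreducible-Parry-polynomial arguments of Bray--Davis--Lindsey--Wu quoted as Proposition \ref{P:minimal}), not obtained by an a posteriori perturbation; your sketch does not do this, and whether it can be done is open. In short: the easy inclusion is proved, the hard inclusion is not, and the proposal should be read as a plausible research program whose key irreducibility input is exactly what is missing.
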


\subsection*{Structure of the paper}
In Section \ref{S:background-M} we give some background on the Mandelbrot set, while in Section \ref{S:background-graph} 
we discuss some background on the graphs and combinatorial structures we use. 
In Section \ref{s:relatingThurstonAndMarkovPolys}, we relate the Thurston polynomial to the Markov polynomial, proving the first part of Theorem \ref{T:equalpolys}. 
In Sections \ref{sec:coversofthefinitemodel} and \ref{S:continuous-ext} we discuss the dependence of the eigenvalues on the external angle, proving Theorem \ref{t:continuousdiskextension}. 
Then, in Section \ref{S:kneading-veins} we develop our new kneading theory for principal veins, proving the second part of Theorem \ref{T:equalpolys}. 
In Section \ref{S:surgery} we discuss how to interpret the Branner-Douady surgery in terms of itineraries, and define the procedure of \emph{recoding} we need to compare itineraries on different veins. In Section \ref{S:renorm} we discuss how to describe renormalization (and tuning) 
in terms of our kneading polynomials. Using renormalization, we prove Theorem \ref{T:q-root}.
In Section \ref{s:persistence}, we prove the Persistence Theorem, Theorem \ref{t:persistence}. 
Finally, in Section \ref{ss:combinatorialveins} we apply these results to combinatorial veins and the Thurston set, proving Theorem \ref{T:bagel-connected}. 
In the Appendix, we show the useful fact (probably well-known, but we could not find a reference) that the Markov polynomial 
and the Milnor-Thurston kneading polynomial coincide for real critically periodic parameters. 

\subsection*{Acknowledgements} 
G. T. is partially supported by NSERC grant RGPIN-2017-06521 and an Ontario Early Researcher Award ``Entropy in dynamics, geometry, and probability". K.L. is partially supported by NSF grant \#1901247.

\section{Background on the Mandelbrot set and veins} \label{S:background-M}

\subsection{The Mandelbrot set}
 
\subsubsection{First definitions} 
Every quadratic polynomial on $\mathbb{C}$ is conformally equivalent to a unique polynomial of the form $f_c(z)=z^2+c$. The \emph{filled Julia set} for $f_c$, denoted $\mathcal{K}(f_c)$, consists of all points $z \in \mathbb{C}$ whose orbit under $f_c$ is bounded; the \emph{Julia set} $\mathcal{J}(f_c)$ is the boundary of $\mathcal{K}(f_c)$.  
The Mandelbrot set $\mathcal{M}$ is the set of parameters $c$ for which the filled Julia set for the map $f_c$ is connected.  A parameter $c \in \mathcal{M}$ is said to be \emph{postcritically finite} if $\{f_c^n(0) \mid n \in \mathbb{N}\}$ is a finite set.  A parameter $c \in \mathcal{M}$ is said to be \emph{critically periodic} if there exists $n \in \mathbb{N}$ such that $f_c^n(0) = 0$, and \emph{critically preperiodic} if it is not critically periodic but there exist $m > n > 0$ such that $f_c^n(0) = f_c^m(0)$.
  
\subsubsection{Hubbard trees} 
Let $f_c$ be a quadratic polynomial for which the Julia set is connected and locally connected (hence, also path connected). 
Then any two points $x, y$ in the filled Julia set are connected by a \emph{regulated arc}, i.e. a continuous arc which 
lies completely in $\mathcal{K}(f_c)$ and is canonically chosen (see e.g. \cite{DHOrsay}). We denote such regulated arc by $[x, y]$. 
Then we define the \emph{Hubbard tree} $T_{f_c}$ as the union 
$$T_{f_c} := \bigcup_{i, j \geq 0} [f_c^i(0), f_c^j(0)].$$
In particular, if $f_c$ is postcritically finite, the above hypotheses are satisfied, and the Hubbard tree $T_{f_c}$ is topologically a finite tree.
Moreover, one has $f_c(T_{f_c}) \subseteq T_{f_c}$.   
  
\subsubsection{B\"{o}ttcher coordinates}  \label{sss:BottcherCoords}
For $c \in \mathcal{M}$, \emph{B\"{o}ttcher coordinates} on $\hat{\mathbb{C}} \setminus \mathcal{K}(f_c)$ are the ``polar coordinates'' induced by the unique Riemann mapping $\Phi_c: \hat{ \mathbb{C}} \setminus \overline{\mathbb{D}} \to \hat{\mathbb{C}} \setminus \mathcal{K}(f_c)$ that satisfies $\Phi_c(\infty) = \infty$ and $\Phi_c'(\infty) = 1$.  The map $\Phi_c$ conjugates the dynamics outside of $\mathcal{K}(f_c)$ to the squaring map: $\Phi_c \circ f_c = (\Phi_c)^2$ on $\hat{\mathbb{C}} \setminus \mathcal{K}(f_c)$.  Similarly, B\"{o}ttcher coordinates on $\hat{\mathbb{C}} \setminus \mathcal{M}$ come from the unique Riemann mapping $\Phi_{\mathcal{M}} : \hat{ \mathbb{C}}  \setminus \overline{\mathbb{D}} \to \hat{\mathbb{C}} \setminus \mathcal{M}$ that satisfies $\Phi_{\mathcal{M}}(\infty) = \infty$ and $\Phi_{\mathcal{M}}'(\infty) = 1$.   By Carath\'{e}odory's Theorem, the maps $\Phi_c$ or $\Phi_{\mathcal{M}}$ extend continuously to the unit circle if and only if $\mathcal{K}(f_c)$ or $\mathcal{M}$, respectively, are locally connected.  The \emph{dynamical ray} of angle $\theta$ for $f_c$, which we denote by $R_c(\theta)$, is the image of the map $r \mapsto \Phi_c(r e^{2 \pi i \theta})$, and it is said to \emph{land} at $z \in \mathbb{C}$ if $\lim_{r \searrow 1} \Phi_c(re^{2 \pi i\theta}) = z$. 
Similarly, the \emph{parameter ray} of angle $\theta$, denoted by $R_{\mathcal{M}}(\theta)$, is the image of the  map $r \mapsto \Phi_{\mathcal{M}}(r e^{2 \pi i \theta})$, and
 it is said to \emph{land} at $z \in \mathbb{C}$ if $\lim_{r \searrow 1^+} \Phi_{\mathcal{M}}(re^{2 \pi i\theta}) = z$.
 It is conjectured that $\mathcal{M}$ is locally connected, and it is well-known \cite[Theorem 13.1]{DHOrsay} that every rational parameter ray $R_{\mathcal{M}}(\theta)$ lands.  
For any $c \in \mathcal{M}$, the landing point of $R_c(0)$ is a fixed point of $f_c$ and is called the \emph{$\beta$-fixed point}; the \emph{$\alpha$-fixed point} is the other fixed point of $f_c$ (see also Figure~\ref{f:Htree1/5}).  A point $c \in \partial \mathcal{M}$ is said to be \emph{biaccessible} (resp. \emph{$k$-accessible}) if it is the landing point of precisely $2$ (resp. $k$) parameter rays.  
  
\subsubsection{Hyperbolic and critically periodic parameters}
A parameter $c \in \mathcal{M}$ is said to be \emph{hyperbolic} if the critical point for $f_c$ tends  to the (necessarily unique) attracting cycle in $\mathbb{C}$.  
The hyperbolic parameters of $\mathcal{M}$ form an open set; connected components of this set are called \emph{hyperbolic components}.  Each hyperbolic component $H$ is conformally equivalent to $\mathbb{D}$ under the map $\lambda$ which assigns to each $c \in H$ the multiplier of its (unique) attracting cycle.  The \emph{center} of $H$ is the parameter $\lambda^{-1}(0)$, and (continuously extending $\lambda^{-1}$ to the unit circle) the \emph{root} of $H$ is $\lambda^{-1}(1)$.  Critically periodic parameters are precisely those parameters that are centers of hyperbolic components. The set of all real hyperbolic parameters is dense in $\mathcal{M} \cap \mathbb{R} = [-2,1/4]$; in particular, every component of the interior of $\mathcal{M}$ which meets the real line is hyperbolic \cite{Lyubich}.  Every critically periodic parameter $c$ is the center of a hyperbolic component of the Mandelbrot set. 
 
\subsubsection{Parabolic parameters}  \label{sss:parabolic}
A parameter $c \in \mathcal{M}$ is called \emph{parabolic} if $f_c$ has a periodic orbit with some root of unity as the multiplier (and such a point is called a parabolic periodic point).  The root of each hyperbolic component of $\mathcal{M}$ is a parabolic parameter.  Every parabolic periodic cycle of a polynomial attracts the forward orbit of a critical point, so quadratics have at most one parabolic periodic orbit.  For a parabolic parameter $c$, the unique Fatou component of $\mathcal{K}(f_c)$ containing the critical value of $f_c$ is called the \emph{characteristic Fatou component}, and it has a unique parabolic periodic point on its boundary; this parabolic point is called \emph{characteristic periodic point} of the parabolic orbit. The characteristic periodic point is the landing point of at least two dynamical rays, and the two rays closest to the critical value on either side are called \emph{characteristic rays} (\cite{Schleicher}).

\subsubsection{Rational angles and postcritically finite maps} \label{ss:anglestotrees}
A rational angle $\theta = a/b$, written in lowest terms, is periodic (resp. preperiodic) under the doubling map if and only if $b$ is odd (resp. even). 
If $\theta$ is periodic, the landing point of $R_{\mathcal{M}}(\theta)$ is a parabolic parameter, which is the root of some hyperbolic component.  We associate to $\theta$ the map, which we call $f_{c_\theta}$, that is the center of this hyperbolic component. 
 Note that topological entropy is constant on the closure of a hyperbolic component.  
If $\theta$ is preperiodic, the landing point of $R_{\mathcal{M}}(\theta)$ is a critically preperiodic parameter $c_\theta$, and we call the map associated to this parameter $f_{c_\theta}$. 

  
  \subsection{Veins in parameter space} \label{ss:veins}
A \emph{vein} in the Mandelbrot $\mathcal{M}$ set is a continuous, injective arc in $\mathcal{M}$.  It is known (\cite[Corollary A]{BrannerDouady}) that there is a vein connecting the landing point in $\mathcal{M}$ of any external ray of angle $p/2^q$, for $p,q \in \mathbb{N}$, to the main cardioid.  

For any integers $p,q$ such that $0 < p < q$ and $p$ and $q$ are coprime, the \emph{$\frac{p}{q}$-limb} in the Mandelbrot set consists of the set of parameters $c \in \mathcal{M}$ such that $f_c$ has rotation number $\frac{p}{q}$ around the $\alpha$-fixed point of $f_c$. 
In each such limb, there exists a unique parameter $t_{p/q}$ such that the critical point, $0$, maps under $f_{t_{p/q}}$ to the $\beta$-fixed point (i.e. the landing point of the dynamical ray of angle $0$) of $f_{t_{p/q}}$ in precisely $q$ steps. 

The \emph{$\frac{p}{q}$-principal vein}, $0 < p < q$ coprime, which we denote by $\mathcal{V}_{p/q}$, is the vein joining $t_{p/q}$ to the main cardioid.  The Hubbard tree $T_f$ associated to any map $f$ in the $\frac{p}{q}$-principal vein is a $q$-pronged star (see e.g. \cite[Proposition 15.3]{TiozzoThesis}), whose center point $\alpha_f$ is the $\alpha$-fixed point.  Moreover, deleting $\alpha_f$ and $0$ from the Hubbard tree $T_f$ yields of a decomposition of $T_f$ into $q+1$ arcs: 
$$T_f \setminus \{\alpha_f, 0\} = I_0 \sqcup I_1 \sqcup \ldots \sqcup I_q$$
where the critical point, $0$, separates $I_0$ and $I_1$, the $\alpha$-fixed point separates $I_1, \ldots, I_q$, and the dynamics are:
\begin{itemize}
\item $f(I_0) \subseteq I_0 \cup I_1 \cup I_2$. 
\item $f: I_k \to I_{k+1}$ homeomorphically for $1 \leq k \leq q-1$,
\item $f:I_q \to I_0 \cup I_1$ homeomorphically.
\end{itemize}
  (See Figure \ref{f:Htree1/5}, which shows the combinatorial model of the Hubbard tree for angle $\theta=1/5$.)

\begin{figure} 
\begin{minipage}{0.49 \textwidth}
\includegraphics[width = 0.99 \textwidth]{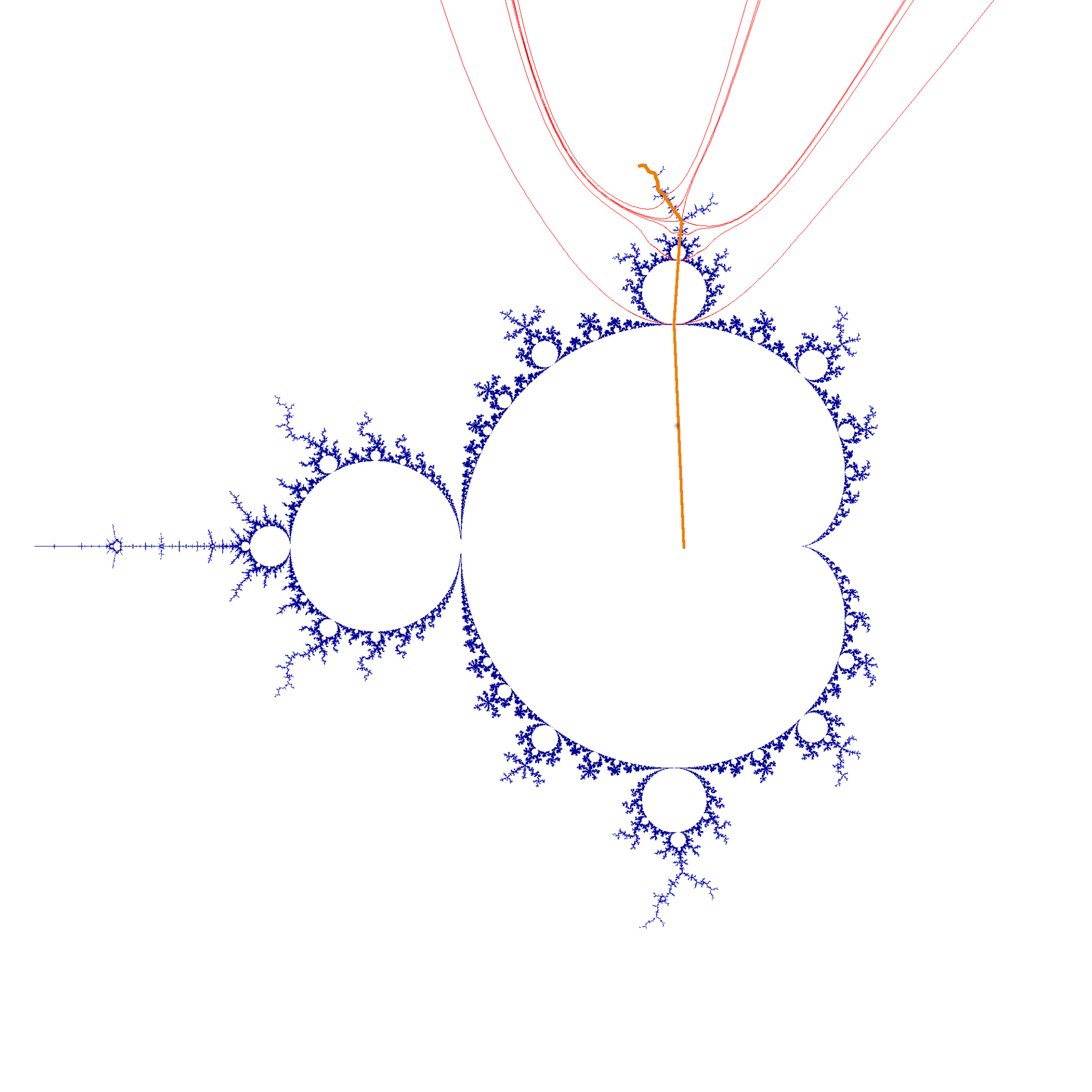}
\end{minipage}
\begin{minipage}{0.49 \textwidth}
\begin{tikzpicture}[scale=1.75]
\filldraw [black] (-.35,.6) circle (1pt);
\node at (-.5,.43)  {$\alpha$};
\filldraw [black] (0,0) circle (1pt);
\node  at (.2,.1)  {$x_0$};
\filldraw [black] (-.15,1) circle (1pt);
\node at (-.15,1.2)  {$x_1$};
\filldraw [black] (-1.2,.7) circle (1pt);
\node  at (-1.4,.7)  {$x_2$};
\filldraw [black] (.76,-.6) circle (1pt);
\node at (1.06,.-.6)  {$x_3$};
\node at (.5,-.2) {$I_0$};
\node at (-.25,.2) {$I_1$};
\node at (-.1,.75) {$I_2$};
\node at (-.8,.8) {$I_3$};
\draw (-.35,.6)-- (-.15,1);
\draw (-.35,.6)--(-1.2,.7);
\draw(-.35,.6)--(0,0);
\draw (0,0) -- (.76,-.6);
\end{tikzpicture}
\end{minipage}
\caption{Left: The $1/3$-principal vein $\mathcal{V}_{1/3}$, joining the center of the main cardioid to the parameter 
$c = t_{1/3} \approx -0.228 + 1.115 i$. For this parameter, the point $f_c^2(c) = \beta$ is fixed, so the characteristic angle 
$\theta$ must satisfy $2^2 \theta \equiv 0 \mod 1$, hence $\theta = \frac{1}{4}$ (the angle $\theta = \frac{3}{4}$ gives rise to the complex conjugate vein, with $\frac{p}{q} = \frac{2}{3}$). 
In red, some external rays landing on the vein. Right: The combinatorial model of the Hubbard tree for parameters on the $1/3$-principal vein. } 
\label{f:Htree1/5}
\end{figure}

In particular, for $f = f_{t_{p/q}}$, the map associated to the tip of the $\frac{p}{q}$-principal vein, the dynamics are given by 
 $$\begin{array}{ll}
 f(I_0) = I_0 \cup I_1 \cup I_2  & \\
 f(I_k) = I_{k+1} & \qquad \textup{for }k = 1, \dots, q-1 \\
 f(I_q) = I_0 \cup I_1 & 
 \end{array}$$
Hence, the entropy of $f_{t_{p/q}}$ equals $\log \lambda_{q}$, where 
 $\lambda_q$ is the largest root of polynomial $P(x) = x^q - x^{q-1} - 2$.
To see this, consider a piecewise linear model of slope $\lambda$, hence of entropy $\log \lambda$, that has the same dynamics as $f$ above. 
If we suppose that $I_1$ has length $|I_1| = 1$, then the length of $f^q(I_1)$ is  $|f^q(I_1)| = |I_0 \cup I_1| = \lambda^q$
 hence $|I_0| = \lambda^q -1$ and so, since $f$ is piecewise linear with slope $\lambda$, we get $|I_2| = |f(I_1)| = \lambda |I_1| = \lambda$ 
 and $|f(I_0)| = \lambda |I_0|  = \lambda ( \lambda^q - 1)$. On the other hand, we know that $f(I_0) = I_0 \cup I_1 \cup I_2$, so $|f(I_0)| =  |I_0|+ |I_1| + |I_2| = \lambda^q + \lambda$ which implies, by comparing the previous two equations and dividing by $\lambda \neq 0$, that $\lambda^q = \lambda^{q-1} + 2$. This polynomial is also computed in \cite{JungBiaccessibility}.


\section{Background on spectral determinant and growth rates} \label{S:background-graph}

\subsection{Directed graphs}

A \emph{directed graph} is an ordered pair $\Gamma = (\mathcal{V},\mathcal{E})$ where $\mathcal{V}$ is a set and $\mathcal{E}$ is a subset of $\mathcal{V} \times \mathcal{V}$.  Elements of $\mathcal{V}$ are called \emph{vertices} and elements of $\mathcal{E}$ are called \emph{edges}.   Given an edge $e=(v_1,v_2)$, the \emph{source} of $e$, denoted $s(e)$, is the vertex $v_1$, and the \emph{target} of $e$, denoted $t(e)$, is the vertex $v_2$; we say that such an edge ``goes from $v_1$ to $v_2$.'' The \emph{outgoing degree} (resp. \emph{incoming degree}) of a vertex $v$, denoted $\textrm{Out}(v)$ (resp. $\textrm{In}(v)$), is the cardinality of the set of edges whose source (resp. target) is $v$.  A directed graph such that $\textrm{Out}(v)$ and $\textrm{In}(v)$ are finite for every $v \in \mathcal{V}$ is said to be \emph{locally finite}. 
A directed graph for which there exists $n \in \mathbb{N}$ such that $\textrm{Out}(v) \leq n$ for all $v \in \mathcal{V}$ is said to have \emph{bounded outgoing degree}.  A directed graph is \emph{countable} if $\mathcal{V}$ is countable.

\subsubsection{Paths and cycles} \label{ss:pathsandcycles} A \emph{path} in a directed graph based at a vertex $v$ is a sequence $(e_1,\ldots,e_n)$ of edges such that $s(e_1) = v$ and $t(e_i) = s(e_{i+1})$ for $1 \leq i \leq n-1$.  Such a path is said to have \emph{length} $n$ and its \emph{vertex support} is the set $\{s(e_1),\ldots,s(e_n)\} \cup \{t(e_n)\}$.  A \emph{closed path} based at $v$ is a path $e_1,\ldots,e_n$ such that $v=s(e_1) = t(e_n)$.  Note that a closed path can intersect itself, and closed paths based at different vertices are considered different.   A \emph{simple cycle} is a closed path which does not self-intersect, modulo cyclical equivalence (meaning two such paths are considered the same simple cycle if the edges are cyclically permuted).  A \emph{multicycle} is the union of finitely many simple cycles with pairwise disjoint vertex supports. The length of a multicycle is the sum of the lengths of the simple cycles that comprise it. A directed countable graph is said to have \emph{bounded cycles} if it has bounded outgoing degree and for each positive integer $n$, it has at most finitely many simple cycles of length $n$. 
 
 \subsubsection{Graph maps and quotients}
 Let $\Gamma_1=(\mathcal{V}_1,\mathcal{E}_1)$ and $\Gamma_2=(\mathcal{V}_2,\mathcal{E}_2)$ be locally finite directed graphs. A \emph{graph map} from $\Gamma_1$ to $\Gamma_2$ is a map $\pi:\mathcal{V}_1 \to \mathcal{V}_2$ such that for every edge $(v,w)$ in $\mathcal{E}_1$,  $(\pi(v_1),\pi(v_2))$ is an edge in $\mathcal{E}_2$.  We will denote such a map $\pi:\Gamma_1 \to \Gamma_2$.  Such a graph map $\pi$ also induces maps, which (abusing notation) we also denote by $\pi$, $\pi:\mathcal{E}_1 \to \mathcal{E}_2$ and $\pi:\textrm{Out}(v) \to \textrm{Out}(\pi(v))$ for each $v \in \mathcal{V}_1$.  A \emph{weak graph map} $\pi:\Gamma_1 \to \Gamma_2$ is a graph map $\pi:\Gamma_1 \to \Gamma_2$ such that  the map $\pi:\mathcal{V}_1 \to \mathcal{V}_2$ between vertex sets is surjective, and the induced map $\pi:\textrm{Out}(v) \to \textrm{Out}(\pi(v))$ is a bijection or each $v \in \mathcal{V}_1$. 

For a directed, locally finite graph $G=(\mathcal{V},\mathcal{E})$, an equivalence relation $\sim$ on $\mathcal{V}$ is called \emph{edge-compatible} if whenever $v_1 \sim v_2$, for every vertex $w\in \mathcal{V}$ the total number of edges from $v_1$ to members of the equivalence class of $w$ equals the total number of edges from $v_2$ to members of the equivalence class of $w$.  For such a graph $\Gamma = (\mathcal{V},\mathcal{E})$ and edge-compatible equivalence relation $\sim$, the \emph{quotient graph} $\overline{\Gamma} = \Gamma / \sim$ is defined as follows.  Define the vertex set of $\overline{\Gamma}$ to be the set $\overline{\mathcal{V}} := \mathcal{V} / \sim$, and for each pair of vertices $[v]$ and $[w]$ in the quotient graph, define the number of edges from $[v]$ to $[w]$ in the quotient graph to be the total number of edges from the fixed vertex $v \in \mathcal{V}$ to all members of the equivalence class of $w$ in $\mathcal{G}$.

  \subsubsection{Adjacency operator and incidence matrix}
  Given a (directed) finite or countable graph $\Gamma$ with vertex set $\mathcal{V}$ that has bounded outgoing degree, we define the \emph{adjacency operator} $A:\ell^1(\mathcal{V}) \to \ell^1(\mathcal{V})$ to be the linear operator  on $\ell^1(\mathcal{V})$ such that, denoting by $e_i$ the sequence that has $1$ at position $i$ and $0$ otherwise, the $j^{\textrm{th}}$ component of $A(e_i)$ is $(A(e_i))_j := \#(i \to j)$, the number of edges from $i$ to $j$. Note that for each pair $i,j$ of vertices and each $n$, the coefficient $(A^n(e_i))_j$ equals the number of paths of length $n$ from $i$ to $j$.  
  
  When $\Gamma$ is a finite graph, $\ell^1(\mathcal{V}) \cong \mathbb{R}^{|\mathcal{V}|}$ is a finite-dimensional vector space, with a privileged choice of basis $\{e_i, i \in \mathcal{V}\}$, and $A$ is a linear map; in this case, $A$ can be represented by a (finite) square matrix, with one row/column for each $v \in V$, which we call the \emph{incidence matrix} associated to $\Gamma$.  (We only define ``the'' incidence matrix up to permutation of the rows/columns, which will be sufficient for our purposes, since eigenvalues are invariant under elementary row operations.)  For such a finite graph $\Gamma$, the characteristic polynomial for the action of $A$ is the polynomial $\chi_{\Gamma}(t) = \det(A - tI)$.  

\subsection{Spectral determinant} \label{ss:spectraldeterminant}

For a directed finite or countable graph $\Gamma$ with bounded cycles, 
define the \emph{spectral determinant} $P_{\Gamma}(t)$ of $\Gamma$ by 
\begin{equation} \label{eq:spectraldeterminant}
P_{\Gamma}(t) := \sum_{\gamma \textrm{ multicycle}} (-1)^{C(\gamma)}t^{\ell(\gamma)},
\end{equation}
where $\ell(\gamma)$ denotes the length of the multicycle $\gamma$, while $C(\gamma)$ is the number of connected components of $\gamma$.  Note that the empty cycle is considered a multicycle, so $P_{\Gamma}$ starts with the constant term $1$. 
When $\Gamma$ is a finite directed graph, it is well known (see e.g. \cite[Section 1.2.1]{BrouwerHaemers}) that
$$P_{\Gamma}(t) = \det(I - t A) = (-t)^d \chi_\Gamma(t^{-1}),$$ 
i.e. the spectral determinant coincides, up to a factor of $(-t)^d$, with the (reciprocal of the) characteristic polynomial of the adjacency matrix.


\subsection{Labeled wedges and associated graphs}

We now introduce the notion of  labeled wedges, which will be then used to encode the combinatorial dynamics and compute the core entropy. 
The \emph{unlabeled wedge} is the set $$\Sigma \coloneqq \{(i,j) \in \mathbb{N}^2: 1 \leq i  \leq  j\}.$$ A \emph{labeling of $\Sigma$} is a map from $\Sigma$ to the $3$-element set $\{N,S,  E  \}$, i.e. a map $\Phi:\Sigma \to \{N,S, E\}$.  ($N$ stands for `non-separated,' $S$ for `separated,'  and $E$ for `equivalent.'). A \emph{labeled wedge} $\mathcal{W}$ is
the set $\Sigma$ together with a labeling $\Phi$ of $\Sigma$; we will write $(i,j) \in \mathcal{W}$ to mean the point $(i,j) \in \Sigma$ together with the data of the value $\Phi$ assigns to $(i,j)$. 
 
 Associated to any labeled wedge $\mathcal{W}$, there is an associated directed graph $\Gamma_{\mathcal{W}} = (\Sigma,E_{\mathcal{W}})$. The vertex set of $\Gamma_{\mathcal{W}}$ is the unlabeled wedge $\Sigma$.  The edge set $E_{\mathcal{W}}$ is defined recursively as follows.  For each vertex $(i,j) \in \Sigma$, 
\begin{itemize}
\item if $(i,j)$ is equivalent,  there is no edge in $E_{\mathcal{W}}$ with $(i,j)$ as its source;
\item if $(i,j)$ is non-separated, we add the edge $\left((i,j) , (i+1,j+1) \right)$ to $E_{\mathcal{W}}$;
\item if $(i,j)$ is separated, we add the edges $\left((i,j) , (1,j+1) \right)$ and $\left( (i,j) , (1,i+1) \right)$ to $E_{\mathcal{W}}$.
\end{itemize}

  We say that a sequence $(\mathcal{W}_n)_{n \in \mathbb{N}}$ of labeled wedges \emph{converges} to a labeled wedge $\mathcal{W}$ if for each finite set of vertices $V \subset \Sigma$ there exists $N \in \mathbb{N}$ such that for all $n \geq N$  the labels of the elements of $V$ for $\mathcal{W}_n$ and $\mathcal{W}$ are the same. 
  
 \begin{theorem}[\cite{TiozzoContinuity}, Theorem 4.3] \label{t:SpectralDeterminantGrowthRate}
 Let $\mathcal{W}$ be a labeled wedge. Then its associated graph $\Gamma_{\mathcal{W}}$ has bounded cycles, and its spectral determinant $P(t)$ defines a holomorphic function in the unit disk. Moreover, the growth rate $r$ of the graph $\Gamma_{\mathcal{W}}$ equals the inverse of the smallest real positive root of $P(z)$, in the following sense: $P(z) \neq 0$ for $|z|<r^{-1}$ and, if $r>1$, then $P(r^{-1})=0$.
 \end{theorem}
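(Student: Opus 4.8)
The plan is to establish the three assertions of the statement — bounded cycles, holomorphy of the spectral determinant $P$ on the unit disk $\mathbb{D}$, and the identification of the growth rate — in that order; only the holomorphy needs a genuinely new idea. For \emph{bounded cycles}, I would first read off from the recursive definition of $E_{\mathcal{W}}$ that every vertex of $\Gamma_{\mathcal{W}}$ emits at most two edges, so $\Gamma_{\mathcal{W}}$ has bounded outgoing degree, and that along \emph{every} edge the first coordinate either increases by $1$ (the edge out of a non-separated vertex) or is reset to $1$ (either edge out of a separated vertex), while the second coordinate never grows by more than $1$, since $(i,j)\to(1,i+1)$ lands at second coordinate $i+1\le j+1$. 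From this I would deduce that a simple cycle $C$ of length $n$ must contain at least one separated edge — otherwise its first coordinate would increase strictly around $C$ — so, reading $C$ starting just after such an edge, every vertex of $C$ has first coordinate $\le n$; and, since the second-coordinate increments around $C$ sum to $0$ while each is $\le 1$, some edge of $C$ must be of the form $(i,j)\to(1,i+1)$, after which the second coordinate is $\le n+1$, hence $\le 2n+1$ everywhere on $C$. Thus every simple cycle of length $n$ lies in the finite vertex set $\{(i,j)\in\Sigma \mid j\le 2n+1\}$, there are finitely many of them, $\Gamma_{\mathcal{W}}$ has bounded cycles, and $P(t)=\sum_{\gamma}(-1)^{C(\gamma)}t^{\ell(\gamma)}$ is a well-defined power series with $P(0)=1$.

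For \emph{holomorphy on $\mathbb{D}$}, the crux is that, although the number of closed \emph{walks} of length $n$ grows like $r^n$ when $r>1$, the number $m_n$ of \emph{multi-cycles} of length $n$ grows sub-exponentially; once $\limsup_n m_n^{1/n}\le 1$, the majorant $\sum_n m_n|t|^n=\sum_{\gamma}|t|^{\ell(\gamma)}$ converges on $\mathbb{D}$ and $P$ is holomorphic there. To bound $m_n$ I would first bound the number $s_n$ of \emph{simple} cycles of length $n$, by decomposing a simple cycle into maximal \emph{runs} of consecutive non-separated edges, each run terminated by one separated edge. A run is determined by its \emph{diagonal level} $d=j-i$, which is constant along it because non-separated edges preserve it, together with the labelling of $\mathcal{W}$, which fixes the number $L(d)$ of edges in the run; from the end of a run at level $d$ the cycle proceeds either to level $d+L(d)$ or to level $L(d)$. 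A short bookkeeping computation then shows that the length of the cycle equals the sum of the (pairwise distinct) levels of those runs that terminate in a reset edge $(i,j)\to(1,i+1)$, and that, once this set of reset-levels is fixed, the whole cycle is forced — the level dynamics being deterministic, and a simple cycle being obliged to take its next reset at the first level that again belongs to the chosen set. Hence a simple cycle of length $n$ determines, injectively, a subset of $\mathbb{Z}_{\ge 0}$ summing to $n$, so $s_n$ is at most a constant times the number $q(n)$ of partitions of $n$ into distinct parts, which is $e^{O(\sqrt n)}$. Finally, a multi-cycle being a vertex-disjoint union of simple cycles, $m_n$ is at most the $t^n$-coefficient of $\prod_{k\ge 1}(1-t^k)^{-s_k}$; since $\sum_k s_k t^k$ converges on $\mathbb{D}$, so does $\sum_k s_k\bigl(-\log(1-t^k)\bigr)$, so $\prod_{k\ge 1}(1-t^k)^{-s_k}$ is holomorphic on $\mathbb{D}$, and $\limsup_n m_n^{1/n}\le 1$ follows.

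For \emph{the growth rate}, I would approximate $\Gamma_{\mathcal{W}}$ by its finite subgraphs $\Gamma_N$ induced on the vertices with $j\le N$, with adjacency matrices $A_N$. Then $\Gamma_N\nearrow\Gamma_{\mathcal{W}}$, so the spectral radii $\rho(A_N)$ increase to $r$; and every multi-cycle of length $n$ lies in $\Gamma_N$ once $N\ge 2n+1$, so $P_{\Gamma_N}\to P$ coefficientwise and — the coefficients being dominated by the convergent majorant $\sum_n m_n|t|^n$ — locally uniformly on $\mathbb{D}$. For each $N$ the finite-graph identity $P_{\Gamma_N}(t)=\det(I-tA_N)$ and Perron--Frobenius give that $P_{\Gamma_N}$ has no zero in $\{|z|<1/\rho(A_N)\}$ and that $1/\rho(A_N)$ is a real positive root of $P_{\Gamma_N}$ of smallest modulus. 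Since $\rho(A_N)\le r$, one has $1/\rho(A_N)\ge\min(1,r^{-1})$ for all $N$, so $P_{\Gamma_N}(z)\ne 0$ whenever $|z|<\min(1,r^{-1})$; as $P\not\equiv 0$, Hurwitz's theorem then forces $P(z)\ne 0$ for such $z$, which is the non-vanishing assertion. If in addition $r>1$, then $1/r<1$ and $1/\rho(A_N)\searrow 1/r$; feeding the local uniform convergence — together with a Cauchy estimate bounding $P_{\Gamma_N}'$ uniformly in $N$ on a disk of radius strictly between $1/r$ and $1$ — into the identities $P_{\Gamma_N}(1/\rho(A_N))=0$ yields $P(1/r)=0$, so $1/r$ is the smallest positive real root of $P$.

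The step I expect to be the real obstacle is the holomorphy one: one must isolate the right combinatorial model for cycles in $\Gamma_{\mathcal{W}}$ — the runs-and-diagonal-levels bookkeeping above — in order to prove the sub-exponential bound $m_n=e^{o(n)}$ on multi-cycles, which is precisely what lets $P$ extend holomorphically to all of $\mathbb{D}$ rather than only to the disk of radius $r^{-1}$, where the trace expansion $P(t)=\exp\bigl(-\sum_{n\ge 1}\tfrac{t^n}{n}\operatorname{tr}(A^n)\bigr)$ already ceases to converge.
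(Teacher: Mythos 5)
The paper does not actually prove this statement: Theorem~\ref{t:SpectralDeterminantGrowthRate} is imported verbatim from \cite{TiozzoContinuity} (Theorem 4.3 there), and the only related ingredients reproduced here are the quoted multicycle bound (Proposition~\ref{p:boundedcoeffs}) and the finite-model arguments of \S\ref{sec:coversofthefinitemodel}. So your argument should be judged on its own, and I believe it is correct. The width bound (every vertex on a cycle of length $n$ has second coordinate at most $2n+1$) is right and, importantly, your derivation never uses simplicity, so it applies to arbitrary closed paths; you need that stronger form at exactly one place, namely to justify ``$\rho(A_N)$ increases to $r$'' (monotonicity alone only gives a limit $\le r$; equality needs that every closed path of length $n$ already lives in $\Gamma_{2n+1}$, plus the standard trace estimate $C(\Gamma_N,n)\le |\mathcal{V}_N|\,\rho(A_N)^n$). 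That is a routine completion, not a gap. Your runs-and-levels bookkeeping is also sound: levels of distinct runs in a simple cycle are distinct because every run starts at a vertex $(1,1+d)$, the length of the cycle is the sum of the levels of the runs ending in a reset edge (a telescoping computation), and the set of reset levels forces the whole cycle since the level dynamics is deterministic off that set; this injects simple cycles of length $n$ into partitions of $n$ into distinct parts, giving $s_n=e^{O(\sqrt n)}$ and hence subexponential multicycle counts and holomorphy of $P$ on $\mathbb{D}$.

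As for comparison: your counting is a different (and arguably cleaner) encoding than the one behind the quoted bound $(2n)^{\sqrt{2n}}$ of Proposition~\ref{p:boundedcoeffs}, but it is the same underlying mechanism --- a multicycle is determined by its sparsely many ``reset'' data, whose widths are distinct and sum to at most $n$; the paper's Lemma~\ref{l:cycle-upper-bound} runs the analogous count for the finite quotients $\Gamma_m$. Likewise, your identification of the growth rate by exhausting $\Gamma_{\mathcal{W}}$ with finite truncations, invoking $P_{\Gamma_N}(t)=\det(I-tA_N)$ plus Perron--Frobenius, and passing to the limit with a uniform coefficient majorant and Hurwitz/Rouch\'e, is precisely the strategy this paper uses later for the periodic finite models (Corollary~\ref{c:truncationscoincide}, Theorems~\ref{t:approximations} and~\ref{t:approxroots}); you have in effect reproved the cited theorem by that approximation scheme rather than by citing it, which is legitimate and self-contained, with the added benefit that it works for an arbitrary (not necessarily periodic) labeled wedge, as the statement requires.
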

 
\begin{proposition}[\cite{TiozzoContinuity}, Proposition 4.2, part 5] \label{p:boundedcoeffs} \label{p:boundingnumberofcycles} 
For every labeled wedge $\mathcal{W}$ and $n \in \mathbb{N}$, the associated graph $\Gamma_{\mathcal{W}}$ has at most $(2n)^{\sqrt{2n}}$ multicycles of length $n$.  \end{proposition}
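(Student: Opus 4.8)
The plan is to exploit the very rigid local structure of the graph $\Gamma_{\mathcal W}$. First I would observe that every edge of $\Gamma_{\mathcal W}$ is of one of two types: a \emph{diagonal step} $(i,j)\mapsto(i+1,j+1)$, which is the unique outgoing edge at a non-separated vertex, or a \emph{reset}, namely an edge from a separated vertex $(i,j)$ to one of $(1,j+1)$ or $(1,i+1)$ --- in either case landing in the first column $i=1$. Consequently any closed path, and in particular any simple cycle occurring inside a multi-cycle, splits canonically at its separated vertices into \emph{runs}: starting from a reset, which lands at some $(1,c)$, one takes $d-1$ diagonal steps and arrives at the next separated vertex $(d,c+d-1)$, where $d\ge 1$ is the \emph{length} of the run. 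Thus a run is determined by the pair $(c,d)$, its vertices are $(1,c),(2,c+1),\dots,(d,c+d-1)$, and it contributes exactly $d$ edges (the $d-1$ diagonal steps and the terminal reset); hence a simple cycle of length $\ell$ decomposes into runs with $\sum_i d_i=\ell$. Two runs are vertex-disjoint if and only if they have different values of $c$, so the runs occurring in a fixed multi-cycle carry \emph{pairwise distinct} values $c$, while the reset out of a run $(c,d)$ forces the next run (cyclically within its simple cycle) to begin at $c'\in\{c+d,\ d+1\}$.

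Running this forward, a simple cycle of length $\ell$ becomes the same datum as a cyclic word $\big((d_1,T_1),\dots,(d_m,T_m)\big)$ with $d_i\ge 1$, $\sum_i d_i=\ell$, and $T_i\in\{0,1\}$ recording which reset is used at the end of run $i$: indeed $c_{i+1}=c_i+d_i$ when $T_i=1$ and $c_{i+1}=d_i+1$ when $T_i=0$, and since $c$ cannot strictly increase all the way around, at least one $T_i=0$, which pins down all the values $c_1,\dots,c_m$ in terms of the word. The cycle is \emph{admissible} precisely when the induced $c_i$ are pairwise distinct positive integers (the further requirement that the visited vertices carry the right labels in $\mathcal W$ can only decrease the count, so it may be dropped when proving a bound valid for \emph{all} $\mathcal W$), and a multi-cycle of length $n$ is a finite collection of admissible words of total length $n$ whose induced $c$-values are \emph{globally} pairwise distinct. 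A short backward walk --- from any run to the first one reached through a ``$d+1$'' reset --- shows $c_i\le 2n+1$ for every run, so all vertices of a length-$n$ multi-cycle lie in the box $\{1\le i\le n,\ 1\le j\le 3n\}$.

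The count is then organized around the multiset of run-lengths of the multi-cycle, which is a partition of $n$; the number of these is the partition number $p(n)$, which satisfies $p(n)\le e^{C\sqrt n}$, comfortably sub-exponential. Fixing such a partition, one has to bound the number of ways to distribute its parts among cyclically ordered words, insert the bits $T_i$, and still have the induced $c$-values come out globally distinct. The crucial feature is that the distinctness constraint is extremely restrictive whenever runs are short: unwinding $c'\in\{c+d,\ d+1\}$ with $d$ small forces a maximal block of short runs to be essentially rigid (its $c$-values form a near-interval with at most one ``jump''), so that even though a multi-cycle can contain as many as $\sim n$ runs, all but $O(\sqrt n)$ of the genuinely free choices are determined. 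Concretely: at most $\sqrt{2n}/2$ of the runs have length $\ge\sqrt{2n}$, each described by a pair $(c,d)$ in an $O(n^2)$ box; the remaining short runs, together with the way all runs are linked into simple cycles, contribute at most $(2n)^{O(\sqrt n)}$ further configurations because of this rigidity. Tracking the constants carefully is meant to deliver exactly $(2n)^{\sqrt{2n}}$.

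The step I expect to be the main obstacle is precisely this last bookkeeping: making the rigidity of blocks of short runs quantitative enough, and then combining it with the partition count, so that the final estimate comes out with base $2n$ and exponent $\sqrt{2n}$ rather than with a larger multiplicative constant in the exponent. I would emphasize that there appears to be no cheap shortcut: naive encodings --- recording the $n$ vertices of the multi-cycle as a subset of the $O(n^2)$-vertex box, or recording its set of separated vertices --- yield only exponential bounds, since the overwhelming majority of such configurations fail to close up into cycles. It is exactly the arithmetic of the transition rule $c'\in\{c+d,\ d+1\}$ together with the global distinctness of the heights $c$ that compresses the count into the sub-exponential range asserted by the Proposition.
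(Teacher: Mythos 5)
Your structural analysis is correct and matches the standard setup: every edge is a diagonal step or a reset to the first column, a multi-cycle decomposes into runs determined by their starting column $c$ (vertex-disjointness being equivalent to distinctness of the $c$'s), the transition rule is $c'\in\{c+d,\,d+1\}$, each simple cycle must use at least one ``$d+1$'' (backwards) reset, and all vertices of a length-$n$ multi-cycle lie in a box of size $O(n)\times O(n)$. However, the actual counting --- which is the entire content of the proposition --- is not carried out, and you say so yourself. The mechanism you propose (counting partitions of $n$ into run-lengths and invoking an unproved ``rigidity of maximal blocks of short runs'') is never made precise, and your only quantitative input, that at most $\sqrt{2n}/2$ runs have length $\geq\sqrt{2n}$, bounds the number of \emph{long} runs, which is not the relevant quantity: the short runs can number $\sim n$, and nothing in your sketch controls how they can be assembled. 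As written, the argument does not yield any subexponential bound, let alone $(2n)^{\sqrt{2n}}$.

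The missing idea is a distinctness statement about the \emph{backwards} resets themselves, not about run lengths in general: two runs of the same multi-cycle that end in a backwards reset have targets $(1,d+1)$, so by vertex-disjointness their lengths $d$ are pairwise distinct; since these distinct positive integers are bounded above in sum by $n$, the number $r$ of backwards resets satisfies $r(r+1)/2\leq n$, i.e.\ $r\leq\sqrt{2n}$. Moreover, once the (at most $\sqrt{2n}$) backwards resets are specified, the rest of the multi-cycle is forced: non-separated vertices have a unique outgoing edge, and every separated vertex of the multi-cycle that is not a designated backwards source must take its downward reset, so the multi-cycle is reconstructed deterministically from $O(\sqrt{n})$ pieces of data, each ranging over a set of size polynomial in $n$. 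This is exactly the scheme used in this paper for the finite models in Lemma~\ref{l:cycle-upper-bound} (the heights $h_1,\dots,h_r$ of the sources of type-$B$ edges are distinct, $\sum_i h_i\leq n$, hence $r\leq\sqrt{2n}$, and the positions of these sources determine the multi-cycle), and it is how the cited result \cite{TiozzoContinuity} is proved; note that the present paper does not reprove the proposition but imports it. If you replace your ``rigidity'' step by this distinctness-plus-determinism argument, your run decomposition does lead to a correct proof; without it, there is a genuine gap.
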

  
  \subsubsection{Periodic labeled wedges and finite models} \label{ss:periodiclabeledwedgesfinitemodels}
  Given integers $p \geq 1$ (the \emph{period}) and $q \geq 0$ (the \emph{preperiod}), let $\equiv_{p,q}$ be the equivalence relation on $\mathbb{N}$ defined by $i \equiv_{p,q} j$ if and only if either
  \begin{enumerate}
    \item  $i = j$, or 
 \item  $\min\{i,j\} \geq q+1$ and $i \equiv j \mod p$. 
  \end{enumerate}
  The equivalence relation $\equiv_{p,q}$ on $\mathbb{N}$  induces an equivalence relation on $\mathbb{N} \times \mathbb{N}$, which we also denote $\equiv_{p,q}$, by setting
  $(i,j) \equiv_{p,q} (k,l)$ if and only if $i \equiv_{p,q} k$ and  $j \equiv_{p,q} l$.  It also induces an equivalence relation on the set of unordered pairs of natural numbers by declaring that the unordered pair $\{i,j\}$ is equivalent to to $\{k,l\}$ if either $(i,j) \equiv_{p,q} (k,l)$ or $(i,j) \equiv_{p,q} (l,k)$. 
  
  \begin{definition} \label{D:periodiclabeledwedge}
  A labeled wedge is \emph{periodic of period $p$ and preperiod $q$} if and only if 
  \begin{enumerate} 
  \item \label{i:correspondingverticessamelabel} any two pairs $(i,j)$ and $(k,l)$ such that $\{i,j\} \equiv_{p,q} \{k,l\}$ have the same label, 
  \item a point $(i,j)$ is labeled $E$ if and only if $i \equiv_{p,q} j$, and 
\item if $i \equiv_{p,q} j$, then the pair $(i,j)$ is non-separated. 
\end{enumerate}
We say that a labeled wedge is \emph{purely periodic} if its preperiod is zero. 
\end{definition}
 
  The \emph{finite model} of a countably infinite, directed graph $\Gamma_{\mathcal{W}}$ associated to a periodic labeled wedge $\mathcal{W}$ of period $p$ is and preperiod $q$ is the quotient graph of $\Gamma_{\mathcal{W}} / \equiv_{p,q}$. (Figure \ref{f:1/5} shows the finite graph model for angle $\theta=1/5$.)

 \subsection{The Thurston entropy algorithm}

\subsubsection{The labeled wedge associated to a rational angle}
  
 For any angle $\theta \in \mathbb{R}/\mathbb{Z}$, define $P_{\theta}$ to be the partition of $\mathbb{R}/\mathbb{Z}$ into 
$$\left[ \frac{\theta}{2}, \frac{\theta+1}{2} \right) \sqcup \left[ \frac{\theta+1}{2}, \frac{\theta}{2} \right),$$
and for each $i \in \mathbb{N}$, set $x_i(\theta) :=2^{i-1}\theta \mod 1$.  Define $\mathcal{W}_{\theta}$ to be the labeled wedge defined by labeling each pair $(i,j) \in \Sigma$ as  \emph{equivalent} if $x_i(\theta) = x_j(\theta)$, as \emph{separated} if $x_i(\theta)$ and $x_j(\theta)$ are in the interiors of different elements of the partition $P_{\theta}$, and labeling the pair as \emph{non-separated} otherwise.  Denote by $\Gamma_{\theta}$ the associated infinite directed graph.  (Figure  \ref{f:1/9} shows a portion of the infinite graph $\Gamma_{1/9}$.)

\begin{figure}[h!]
\begin{tikzpicture}[node distance = 1.3cm]

\tikzstyle{non-sep} = [rectangle, rounded corners, minimum width=.5cm, minimum height=.5cm,text centered, draw=black, fill=none]
\tikzstyle{sep} = [rectangle, dashed, rounded corners, minimum width=.5cm, minimum height=.5cm, text centered, draw=black, fill=none]
\tikzstyle{process} = [rectangle, minimum width=3cm, minimum height=1cm, text centered, draw=black, fill=orange!30]
\tikzstyle{decision} = [diamond, minimum width=3cm, minimum height=1cm, text centered, draw=black, fill=green!30]
\tikzstyle{thickarrow} = [thick,->,>=stealth]
\tikzstyle{arrow} = [->,>=stealth]

\node (11) [non-sep] {1,1};
\node (12) [non-sep, right of = 11] {1,2};
\node (13) [non-sep, right of=12] {1,3};
\node (14) [sep, right of=13] {1,4};
\node (15) [sep, right of=14] {1,5};
\node (16) [non-sep, right of=15] {1,6};

\node (17) [non-sep, right of=16] {1,7};
\node(rdots) [right of = 17]{$\ldots$};

\node (22) [non-sep, above of = 12] {2,2};
\node (23) [non-sep, above of=13] {2,3};
\node (24) [sep, above of=14] {2,4};
\node (25) [sep, above of=15] {2,5};
\node (26) [non-sep, above of=16] {2,6};

\node (27) [non-sep, above of=17] {2,7};

\node (33) [non-sep, above of = 23] {3,3};
\node (34) [sep, above of=24] {3,4};
\node (35) [sep, above of=25] {3,5};
\node (36) [non-sep, above of=26] {3,6};

\node (37) [non-sep, above of=27] {3,7};

\node (44) [non-sep, above of = 34] {4,4};
\node (45) [non-sep, above of=35] {4,5};
\node (46) [non-sep, above of=36] {4,6};

\node (55) [non-sep, above of = 45] {5,5};
\node (56) [non-sep, above of=46] {5,6};

\node (66) [non-sep, above of=56] {6,6};

\node (27) [non-sep, above of=17] {2,7};
\node (37) [non-sep, above of=27] {3,7};
\node (47) [non-sep, above of=37] {4,7};
\node (57) [non-sep, above of=47] {5,7};
\node (67) [non-sep, above of=57] {6,7};
\node (77) [non-sep, above of=67] {7,7};\
\node (upperdots) [above right of=77] {$\iddots$};
\node (upperdots) [ right of=77] {$\ldots$};

\draw [arrow] (12) to (23);
\draw [arrow] (23) -- (34);
\draw [arrow] (34) -- (15);
\draw [arrow] (13) to [bend left] (24);
\draw [arrow] (24) to [bend left]  (13);
\draw [arrow] (24) to (15);
\draw [arrow] (34) to [out=-70,in=70, looseness=1.8]  (14);
\draw [arrow] (14) to [out = 210, in=-30, looseness=1] (12);
\draw [arrow] (14) to (15);
\draw [arrow] (15) to (16);
\draw [arrow] (25) to (16);
\draw [arrow] (35) to (16);
\draw [arrow] (35) to (14);
\draw [arrow] (45) to (56);
\draw [arrow] (15) to [out=210, in=-40] (12);
\draw [arrow] (25) to (13);

\draw [arrow] (16) to (27);
\draw [arrow] (26) to (37);
\draw [arrow] (36) to (47);
\draw [arrow] (46) to (57);
\draw [arrow] (56) to (67);

\draw[dotted] ([xshift=.75cm]16.south) -- ([xshift=.75cm]66.north);


\begin{scope}[yshift=6cm, scale=1.25]
\draw (0,0) circle (1);
\filldraw [black] (360/9:1) circle (1pt);
\node at (360/9:1.5)  {$x_1 {=} \tfrac{1}{9}$};
\filldraw [black] (360*2/9:1) circle (1pt);
\node at (360*2/9:1.3)  {$x_2 {=} \tfrac{2}{9}$};
\filldraw [black] (360*4/9:1) circle (1pt);
\node at (360*4/9:1.5)  {$x_3 {=} \tfrac{4}{9}$};
\filldraw [black] (360*8/9:1) circle (1pt);
\node at (360*8/9:1.3)  {$x_4 {=} \tfrac{8}{9}$};
\filldraw [black] (360*7/9:1) circle (1pt);
\node at (360*7/9:1.3)  {$x_5 {=} \tfrac{7}{9}$};
\filldraw [black] (360*5/9:1) circle (1pt);
\node at (360*5/9:1.5)  {$x_6 {=} \tfrac{5}{9}$};

\draw[dashed] (360/18:1.5) to (360/18+180:1.5);
\node at (360/18:1.5)  {$\tfrac{\theta}{2} {=} \tfrac{1}{18}$};
\end{scope}


\end{tikzpicture}
\caption{The infinite graph $\Gamma_{1/9}$. Angle $1/9$ is periodic with period $6$ for the doubling map; the vertical dotted line indicates the edge of a ``fundamental domain'' for $\equiv_{6,0}$. Vertices that are separated are indicated with a dashed boundary; non-separated and equivalent vertices have a solid boundary.  The angle diagram in the upper left is helpful for determining which vertices of $\Gamma$ are separated. }
\label{f:1/9}
\end{figure}
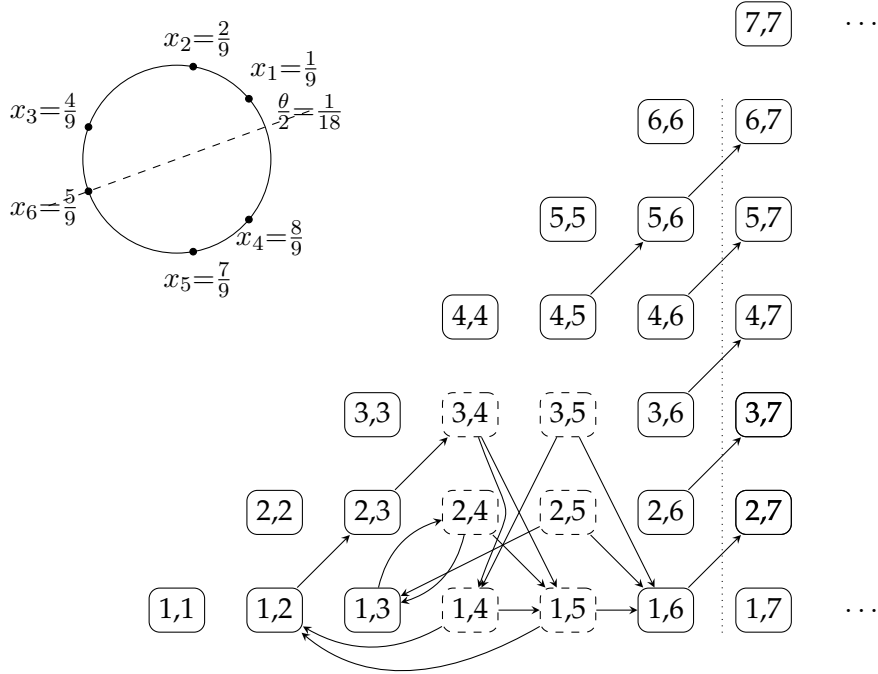

\subsubsection{Growth rate and core entropy} \label{sss:growthratecoreentropy}
 
Given a finite or infinite graph $\Gamma$ with bounded cycles, we define its \emph{growth rate} as 
$$r := \limsup_{n \to \infty} \sqrt[n]{C(\Gamma, n)}$$
where $C(\Gamma, n)$ is the number of closed paths in $\Gamma$ of length $n$.  It is straightforward to prove that the growth rate of a finite graph is the leading eigenvalue of its incidence matrix. 

\begin{proposition}[\cite{TiozzoContinuity}, Proposition 5.2] \label{p:GrowthRateFiniteModel}
Let $\mathcal{W}$ be a periodic labeled wedge, with associated (infinite) graph $\Gamma$.  Then the growth rate of $\Gamma$ equals the growth rate of its finite model. 
\end{proposition}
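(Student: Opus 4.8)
The plan is to relate $\Gamma$ and its finite model $\overline{\Gamma}=\Gamma/\!\equiv_{p,q}$ through the quotient map $\pi\colon\Gamma\to\overline{\Gamma}$ and the notion of a weak graph map. First I would check that $\pi$ is a \emph{weak graph map}: it is surjective on vertices by construction, and for every vertex $v$ the induced map $\mathrm{Out}(v)\to\mathrm{Out}(\pi(v))$ is a bijection. This is a direct consequence of the three defining properties of a periodic labeled wedge: labels are constant on $\equiv_{p,q}$-classes, the $E$-vertices carry no outgoing edge (upstairs and downstairs alike), a non-separated vertex carries exactly one outgoing edge, and the two outgoing edges of a separated vertex are distinguished by whether the new index pair is $(1,j{+}1)$ or $(1,i{+}1)$, a distinction respected by $\pi$. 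A weak graph map has the unique path-lifting property, so for every vertex $\bar v$ of $\overline{\Gamma}$, every lift $v$, and every $n$, lifting is a length-preserving bijection between paths of length $n$ issued from $\bar v$ in $\overline{\Gamma}$ and paths of length $n$ issued from $v$ in $\Gamma$; moreover $\pi$ carries closed paths to closed paths of the same length.

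For the inequality $r(\Gamma)\le r(\overline{\Gamma})$, I would show that projection is at most polynomially many-to-one on closed paths of a fixed length. A closed path of $\Gamma$ lying over a fixed closed path $\bar p$ of length $n$ is determined by its initial vertex. If $\bar p$ uses no separated edge, its lifts are the strictly increasing paths $(i,j)\to(i{+}1,j{+}1)\to\cdots$, which never close up, so $\bar p$ has no closed lift. If $\bar p$ does use a separated edge, then after its last separated edge the first coordinate of any lift equals $1$, so a closed lift can only start at a vertex whose first coordinate is at most the length of the terminal non-separated run, hence at most $n$; a symmetric argument (or the bounded-cycles bound of Proposition~\ref{p:boundingnumberofcycles}) bounds its second coordinate. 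Thus $C(\Gamma,n)\le n^{2}\,C(\overline{\Gamma},n)$, and taking $n$-th roots gives the inequality.

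The reverse inequality $r(\Gamma)\ge r(\overline{\Gamma})$ is the heart of the matter, and I would reduce first to the case $r(\overline{\Gamma})>1$, the remaining cases being immediate. Let $C$ be a strongly connected component of $\overline{\Gamma}$ with $r(C)=r(\overline{\Gamma})>1$. Then $C$ contains a vertex with two outgoing edges inside $C$; such a vertex is separated and both of its branches stay in $C$, so by strong connectivity there is a closed path in $C$ that uses a separated edge of the ``$(1,i{+}1)$'' type, and not as its first separated edge. A coordinate bookkeeping --- each separated edge resets the first coordinate to $1$, and an ``$(1,i{+}1)$''-type separated edge fired at a vertex reached after a previous reset sends the second coordinate back into a bounded window --- shows that any lift of such a closed path (from any starting vertex over its base) ends at a vertex that depends only on the closed path and lies in a fixed finite window $W\subset\Gamma$. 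Call these closed paths of $\overline{\Gamma}$ \emph{good}. By strict monotonicity of the Perron--Frobenius eigenvalue under deletion of an edge from an irreducible graph, the non-good closed paths based at a fixed vertex of $C$ have strictly smaller exponential rate, so the number of good closed paths of length $n$ based at that vertex is still at least $c\, r(\overline{\Gamma})^{n}/n^{k}$. Since their endpoints lie in the finite set $W$, pigeonhole produces a single vertex $v^{*}\in W$ that is the endpoint of at least $c'\, r(\overline{\Gamma})^{n}/n^{k}$ of them for infinitely many $n$; by unique path-lifting each such path, lifted starting at $v^{*}$, is a closed path of $\Gamma$ based at $v^{*}$. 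Hence $C(\Gamma,n)\ge c'\, r(\overline{\Gamma})^{n}/n^{k}$ along a subsequence, so $r(\Gamma)\ge r(\overline{\Gamma})$. Combining the two inequalities proves the proposition.

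An equivalent packaging uses Theorem~\ref{t:SpectralDeterminantGrowthRate}: it identifies $r(\Gamma)$ with the reciprocal of the least positive real root of the spectral determinant $P_{\Gamma}$, while for the finite graph $\overline{\Gamma}$ one has $P_{\overline{\Gamma}}(t)=\det(I-tA)$ with $r(\overline{\Gamma})$ the reciprocal of its least positive root, so the statement reduces to comparing these two power series, with Proposition~\ref{p:boundingnumberofcycles} controlling the discrepancy. In either packaging the one real obstacle is the same: showing that closed walks of $\Gamma$ which stray far from a bounded ``core'' contribute only sub-exponentially (this is the easy direction), while conversely the full exponential complexity of the finite model is already realized by closed walks of $\Gamma$ confined to a bounded window --- which is where the reset structure of the separated edges and the Perron--Frobenius strict-monotonicity input do the work.
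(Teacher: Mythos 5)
This paper does not actually prove the statement: it is imported verbatim from \cite{TiozzoContinuity} (Proposition 5.2), so there is no internal proof to compare against. Judged on its own terms, your strategy is the natural lifting argument and its skeleton is sound: the quotient onto the finite model is a weak graph map; closed paths upstairs project to closed paths downstairs with at most polynomially many closed lifts over a fixed base path; and closed paths downstairs are lifted back by exploiting the reset structure of edges out of separated vertices, with a pigeonhole on the (path-determined) endpoint of the lift. With the repairs below this yields a complete proof of the statement (at least when the finite model has growth rate $>1$).

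The points needing repair are the following. (i) In the easy direction your ``symmetric argument'' is not symmetric: an edge to $(1,j+1)$ does not decrease the second coordinate, which only drops when an edge to $(1,i+1)$ is used; the bound you actually need is the width estimate of \cite{TiozzoContinuity} (Proposition 6.2), the one invoked in the proof of Lemma \ref{l:cyclesshowupfinitetime}, whereas Proposition \ref{p:boundingnumberofcycles} counts multi-cycles and does not bound the multiplicity of the projection on closed paths. (ii) The endpoint of the lift of a ``good'' closed path of length $n$ is determined by the path, but it lies in a window of size $O(n)\times O(n)$, not in a fixed finite window; the pigeonhole still works at the cost of another polynomial factor. (iii) The appeal to strict monotonicity of the Perron--Frobenius eigenvalue under edge deletion only controls closed paths that avoid the chosen $(1,i+1)$-type edge; it says nothing about the non-good paths that do use such an edge, but only as their first separated edge. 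These are easy to dismiss, and more cheaply than by spectral arguments: up to its first separated edge a path is forced (non-separated vertices have out-degree one), and after that a non-good path may only take $(1,j+1)$-type edges at separated vertices, so the continuation is forced as well; hence there are at most two non-good closed paths of each length at each basepoint. In fact the same bookkeeping shows that \emph{every} closed path of the finite model containing a $(1,i+1)$-type edge has exactly one closed lift, so the Perron--Frobenius input can be dropped altogether. (iv) Finally, ``the remaining cases being immediate'' hides the only delicate case: when $r(\overline{\Gamma})=1$ all cycles of the finite model may be non-separated wrap-around cycles (this happens for zero-entropy angles such as the rabbit), and then $\Gamma$ has no closed path at all, so with the closed-path definition of growth rate used in this paper the two growth rates must be reconciled by the conventions of \cite{TiozzoContinuity} or by a separate argument; your proof as written does not cover it.
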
 
 
When the infinite graph is a wedge coming from a rational angle, the logarithm of the growth rate yields precisely the core entropy 
of the corresponding PCF quadratic polynomial.

\begin{theorem}[\cite{TiozzoContinuity}, Theorem 6.4]
Let $\theta$ be a rational angle.  Then the logarithm of the growth rate $r(\theta)$ of the infinite graph $\Gamma_{\theta}$ coincides with core entropy: $h(\theta) = \log r(\theta)$. 
\end{theorem}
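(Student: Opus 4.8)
The plan is to give the wedge graph $\Gamma_\theta$ a dynamical meaning — as the transition graph for the push-forward action of $f_\theta$ on the arcs of the Hubbard tree joining postcritical points — and then invoke the classical fact that the topological entropy of a postcritically finite tree map equals the logarithm of the Perron eigenvalue of its Markov transition matrix.

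\emph{Reduction to the finite model.} Since $\theta$ is rational, $\mathcal{W}_\theta$ is a periodic labeled wedge, so by Proposition~\ref{p:GrowthRateFiniteModel} it suffices to show $\log r(\theta) = h(\theta)$ where $r(\theta) = \rho(A)$ is the leading eigenvalue of the incidence matrix $A$ of the finite model $\overline{\Gamma_\theta}$ (a finite graph, whose growth rate is this eigenvalue by the remark before Proposition~\ref{p:GrowthRateFiniteModel}).

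\emph{Dynamical interpretation of the wedge.} Let $v_i := f_\theta^{\,i-1}(c)$ be the postcritical points of $f_\theta$, so $v_1 = c$ is the critical value and $v_i$ carries the external angle $x_i(\theta) = 2^{i-1}\theta$; to the pair $(i,j)$ associate the regulated arc $\gamma_{i,j} := [v_i, v_j] \subseteq T_\theta$. Then $\gamma_{i,j}$ is a single point exactly when $v_i = v_j$ (label $E$; no outgoing edge, matching ``no arc to push forward''); $\gamma_{i,j}$ contains the critical point $0$ exactly when $x_i(\theta)$ and $x_j(\theta)$ lie in the interiors of the two distinct pieces of $P_\theta$ — the partition cut out by the two rays landing at $0$ — (label $S$), in which case $f_\theta(\gamma_{i,j}) = [v_{i+1},v_1] \cup [v_1, v_{j+1}] = \gamma_{1,i+1} \cup \gamma_{1,j+1}$, matching the two edges $(i,j)\to(1,i+1)$ and $(i,j)\to(1,j+1)$; otherwise $f_\theta$ carries $\gamma_{i,j}$ homeomorphically onto $\gamma_{i+1,j+1}$ (label $N$), matching the single edge $(i,j)\to(i+1,j+1)$. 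Thus $A$ is exactly the transition matrix for the action of $f_\theta$ on the free module on arcs-between-postcritical-points, folds over $0$ being recorded with multiplicity, and closed paths of length $n$ in $\overline{\Gamma_\theta}$ count (with that multiplicity) the returns of $f_\theta^n$ of such arcs into themselves.

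\emph{Comparison with the Markov matrix and conclusion.} Let $M = M_{f_\theta}$ be the transition matrix on the edges $e_1,\dots,e_m$ of $T_\theta$, i.e.\ the closures of the connected components of $T_\theta$ minus the postcritical set and the branch points. Each $\gamma_{i,j}$ is a concatenation of some $e_k$'s, and by the rule above it is carried $f_\theta$-equivariantly onto a union of $e_k$'s; conversely every $e_k$ equals some $\gamma_{i,j}$ with $v_i, v_j$ adjacent along $T_\theta$. Hence the arc module is assembled from the edge module $f_\theta$-equivariantly via $[\gamma_{i,j}] = \sum_{e_k \subseteq \gamma_{i,j}} [e_k]$, which forces $\rho(A) = \rho(M)$, the extra structure in $A$ contributing only nilpotent or root-of-unity parts of the spectrum. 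Since $f_\theta : T_\theta \to T_\theta$ is a postcritically finite — hence piecewise-monotone Markov — tree map, the standard lap-counting theory for such maps gives that the topological entropy of $f_\theta|_{T_\theta}$ equals $\log \rho(M)$. Therefore $h(\theta) = \log \rho(M) = \log \rho(A) = \log r(\theta)$.

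The main obstacle is the comparison $\rho(A) = \rho(M)$: one must check carefully that enlarging the generating set from the edges of $T_\theta$ to all arcs between postcritical points, together with the recursive appearance of new wedge vertices of the form $(1,k)$, neither creates nor destroys exponential growth of path counts, and that the degenerate ($E$) vertices — and, in the strictly periodic case, the fact that $0$ itself occurs among the $v_i$ — as well as branch points that are not themselves postcritical are handled so that the $f_\theta$-equivariant relation between the two modules is genuine. This is essentially bookkeeping, but it is where the content lies.
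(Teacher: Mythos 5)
A preliminary remark: the paper does not prove this statement at all — it is quoted from \cite{TiozzoContinuity} (Theorem 6.4) — so the relevant comparison is with the known argument and with the closely related machinery this paper does develop in Section \ref{s:relatingThurstonAndMarkovPolys}. Your overall strategy (reduce to the finite model via Proposition \ref{p:GrowthRateFiniteModel}, read wedge vertices as arcs between postcritical points, compare the wedge transition matrix $A$ with the Hubbard-tree Markov matrix $M$, then quote the standard fact that the entropy of a Markov tree map is the log of the Perron root) is the right shape and is essentially the route taken in the literature.

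There is, however, a genuine gap, and it sits exactly where you write ``this is essentially bookkeeping'': the equality $\rho(A)=\rho(M)$ is asserted, not proved, and the two identifications you lean on are false as stated. First, a pair $(i,j)$ is labeled $E$ when the \emph{angles} $x_i(\theta),x_j(\theta)$ coincide, not when their landing points do; distinct postcritical angles can land at the same point, and worse, for strictly periodic $\theta$ the postcritical points are centers of bounded Fatou components, so no dynamical ray lands at $v_i$ at all — the dictionary must be set up via regulated arcs through Fatou components, and the resulting ``degenerate'' vertices are not $E$-labeled, still carry outgoing edges, and must be shown not to inflate the growth rate. Second, it is not true that every Markov edge $e_k$ equals some $\gamma_{i,j}$: the Markov partition also cuts at branch points, and branch points (e.g.\ the $\alpha$-fixed point) need not be postcritical, so the edge module is not a sub-collection of the arc module and the inequality $\rho(A)\leq\rho(M)$ does not follow from the assembly relation $[\gamma_{i,j}]=\sum_{e_k\subseteq\gamma_{i,j}}[e_k]$ alone (that relation only gives a semiconjugacy, hence $\rho(A)\geq\rho(M)$). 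Closing this is precisely the content of the paper's Theorem \ref{T:Th-Mar} (proved via elementary triples and elementary stars, Proposition \ref{L:Q-cycl}), which gives $P_{Th}(t)=P_{Mar}(t)Q(t)$ with the roots of $Q$ in $\{0\}\cup S^1$; combined with Proposition \ref{p:GrowthRateFiniteModel} and the entropy formula for Markov tree maps, this yields $\rho(A)=\max(\rho(M),1)=\rho(M)$ (since $\rho(M)\geq 1$) and hence the theorem. So your outline is correct in spirit, but without Theorem \ref{T:Th-Mar} or an equivalent argument for the central comparison, the proof is incomplete.
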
 

\noindent Note that here $h(\theta)$ denotes the core entropy of the critically periodic polynomial associated to $\theta$, as described in \S \ref{ss:anglestotrees}.

For a postcritically finite polynomial $f$, we define the \emph{Thurston polynomial} $P_{Th}(t)$ to be the characteristic polynomial of the incidence matrix for the finite model graph associated to $f$. 

\section{Relating the Thurston polynomial and the Markov polynomial} \label{s:relatingThurstonAndMarkovPolys}

The goal of this section is to prove the following comparison between the Thurston algorithm polynomial and the Markov polynomial, 
establishing the first part of Theorem \ref{T:equalpolys} from the introduction:

\begin{theorem} \label{T:Th-Mar} 
For a postcritically finite parameter in the Mandelbrot set, the polynomial $P_{Th}(t)$ that we get from Thurston's algorithm 
and the polynomial $P_{Mar}(t)$ that we get from the Markov partition satisfy the following relation: 
$$P_{Th}(t) = P_{Mar}(t) Q(t)$$
where $Q(t)$ is a polynomial whose roots lie in $\{ 0 \} \cup S^1$. 
\end{theorem}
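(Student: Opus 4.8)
The plan is to compare two combinatorial/linear-algebraic descriptions of the same dynamical system --- the action of $f$ on its Hubbard tree --- and show that the spectral data agree up to factors supported on $\{0\}\cup S^1$. Concretely, $P_{Mar}(t)$ is (up to the $(-t)^d$ normalization) the reciprocal characteristic polynomial of the adjacency matrix of the Markov graph $\Gamma_{Mar}$ whose vertices are the edges of the Hubbard tree (the pieces cut out by the postcritical set and branch points) and whose edges record $f(e)\supseteq e'$; while $P_{Th}(t)$ is the characteristic polynomial (reciprocal, with the same normalization) of the incidence matrix of the finite model graph $\overline{\Gamma_\theta}$ obtained from the labeled wedge $\mathcal{W}_\theta$. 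So the heart of the matter is to produce an explicit relationship between $\Gamma_{Mar}$ and $\overline{\Gamma_\theta}$.

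First I would set up both graphs carefully. For the Markov side, recall that the vertices of $T_f$ are $0=x_1$, its iterates $x_i=f^{i-1}(x_1)$, together with the branch points, and the edges of $\Gamma_{Mar}$ are the closures of the complementary arcs; the transition $e\to e'$ occurs with multiplicity equal to the number of times $f(e)$ covers $e'$. For the Thurston side, the vertices of the infinite graph $\Gamma_\theta$ are pairs $(i,j)$ with $i\le j$, which one should think of as (unordered) pairs of postcritical points $\{x_i,x_j\}$, equivalently as the arc $[x_i,x_j]$ in the tree --- a ``wedge'' parametrization of arcs rather than of edges. The key combinatorial fact, already implicit in Thurston's algorithm and in \cite{TiozzoContinuity}, is that the map sending the arc $[x_i,x_j]$ to a formal sum of the edges it traverses intertwines the wedge dynamics with the Markov dynamics: a non-separated pair maps to the single adjacent pair $(i+1,j+1)$ exactly when $f$ maps the arc $[x_i,x_j]$ homeomorphically onto $[x_{i+1},x_{j+1}]$, whereas a separated pair $(i,j)$ (one whose endpoints straddle the critical point) maps to $(1,i+1)$ and $(1,j+1)$, reflecting that $f$ folds $[x_i,x_j]$ at $0=x_1$ onto $[x_1,x_{i+1}]\cup[x_1,x_{j+1}]$. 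Passing to the finite model $\overline{\Gamma_\theta}$ via $\equiv_{p,q}$ exactly implements the periodicity $x_{i+p}=x_i$ of the postcritical orbit.

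Next, the plan is to upgrade this intertwining map to a statement about spectral determinants. I would show that the linear map $\Psi$ from the ``arc space'' (spanned by the $(i,j)$'s, i.e. by $\ell^1$ of the vertex set of $\overline{\Gamma_\theta}$) to the ``edge space'' (spanned by the edges of $T_f$, i.e. $\mathbb{R}^{|\Gamma_{Mar}|}$), sending a pair to the signed sum of edges on the arc joining them, is surjective (every edge is a piece of $[x_1,x_j]$ for a suitable $j$, or can be extracted by telescoping), and that it conjugates the adjacency operator $A_{Th}$ of $\overline{\Gamma_\theta}$ to the transpose (or the operator itself, depending on orientation conventions) of the Markov adjacency matrix $A_{Mar}$ on the image. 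Then $A_{Mar}$ is a quotient of $A_{Th}$, so $\chi_{A_{Mar}}(t)$ divides $\chi_{A_{Th}}(t)$, and in spectral-determinant form $P_{Th}(t)=P_{Mar}(t)Q(t)$ with $Q(t)=\det(I-tA_{Th}|_{\ker\Psi})$ up to a power of $t$. The remaining task is to identify the eigenvalues of $A_{Th}$ restricted to $\ker\Psi$: these are the ``extra'' cycles in the wedge that either pass through vertices $(i,j)$ with $i\equiv_{p,q}j$ (the $E$-labeled diagonal, contributing nilpotent blocks, hence roots at $0$) or correspond to arcs that collapse to a point under $\Psi$ --- and one checks these contribute only roots on the unit circle, essentially because the induced action permutes a basis of $\ker\Psi$ up to sign (the folding relations $[x_1,x_i]\cup[x_i,x_j]=[x_1,x_j]$ when $x_i$ lies between give rise to cyclic relations of this permutation type). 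This is the standard phenomenon whereby the wedge graph ``oversees'' the Hubbard tree transition matrix and the discrepancy is a product of cyclotomic polynomials and a power of $t$.

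I expect the main obstacle to be the last step: pinning down precisely what $\ker\Psi$ looks like and why the restricted operator has all eigenvalues in $\{0\}\cup S^1$. One has to handle branch points (which are vertices of $T_f$ not of the form $x_i$, so the correspondence between wedge pairs and edges is many-to-one near a branch point) and the $E$-labeled vertices (where the wedge graph has no outgoing edge, so those basis vectors may or may not lie in $\ker\Psi$ depending on whether the corresponding pair of postcritical points coincide in the tree). A clean way to organize this is to filter the arc space by ``length'' $j-i$ or by distance to the diagonal, show $\Psi$ is an isomorphism onto $\mathbb{R}^{|\Gamma_{Mar}|}$ on an explicit complement of $\ker\Psi$, and then argue that $A_{Th}$ is block-triangular with respect to this filtration, with the off-$\ker\Psi$ block being conjugate to $A_{Mar}$ and the $\ker\Psi$ block acting (up to sign) as a permutation on a natural basis indexed by degenerate arcs --- so its characteristic polynomial is a signed product of $t^{\ell}\pm 1$ factors, all of whose roots lie in $\{0\}\cup S^1$. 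A subtlety worth flagging is the normalization: Thurston's polynomial is the characteristic polynomial of the finite model's incidence matrix, not its reciprocal, so one must be careful about whether stray factors of $(-t)^d$ land in $P_{Mar}$, in $Q$, or cancel --- but since such factors have all roots at $0$, they are harmlessly absorbed into $Q(t)$.
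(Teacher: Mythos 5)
Your overall architecture is the same as the paper's: define a linear ``decomposition'' map from the space spanned by pairs of postcritical angles (the Thurston/wedge basis) to the space spanned by the Markov pieces of the Hubbard tree, check that it intertwines the two transition operators, conclude $P_{Th}(t)=P_{Mar}(t)\,Q(t)$ with $Q$ the characteristic polynomial of the action on the kernel, and then show all nonzero roots of $Q$ lie on $S^1$. However, the last step is exactly where the paper's real work lies, and your proposal does not actually carry it out. You assert that the restricted operator ``permutes a basis of $\ker\Psi$ up to sign,'' indexed by ``degenerate arcs,'' so that its characteristic polynomial is a signed product of factors $t^{\ell}\pm 1$; no such basis is constructed, and this is not how the kernel behaves. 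In the paper, the kernel is generated by \emph{elementary triples} $\{\alpha,\beta\}+\{\beta,\gamma\}-\{\alpha,\gamma\}$ with $\overline{\beta}\in[\overline{\alpha},\overline{\gamma}]$ (including degenerate pairs of co-landing angles) together with \emph{elementary stars} of zero geometric weight and bounded norm at branch points. These form a finite \emph{forward-invariant spanning set}, not a basis, and the conclusion comes from a separate linear-algebra lemma: an operator admitting a finite invariant spanning set has characteristic polynomial $t^k$ times cyclotomics. Proving invariance is itself nontrivial: a separated triple maps to a genuinely different triple through the fold, a triple whose middle landing point is the critical point maps to $0$, and a star whose interior contains the critical point maps to a star only because the zero-weight condition forces a cancellation (Lemmas analogous to the paper's \ref{L:permute} and \ref{L:S-forward-inv}); showing these objects actually span the kernel requires a weight-reduction argument plus a bounded-norm basis for the star relations at each branch point (Lemmas \ref{L:kernel-combo} and \ref{L:elem-star}). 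You correctly flag branch points and the many-to-one behaviour there as the main obstacle, but flagging it is not resolving it, so the proof as proposed has a genuine gap precisely at the statement ``one checks these contribute only roots on the unit circle.''

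Two smaller points. First, the intertwining only works with \emph{unsigned} sums of Markov pieces: a separated pair $(i,j)$ maps in the wedge to $(1,i+1)+(1,j+1)$, both with coefficient $+1$, matching the unsigned covering counts of $f([x_i,x_j])=[x_1,x_{i+1}]\cup[x_1,x_{j+1}]$; a signed (chain-level) map would instead produce $[x_1,x_{j+1}]-[x_1,x_{i+1}]$ and would fail to commute at the fold, so your hedge ``signed sum \dots depending on orientation conventions'' hides an actual constraint. Second, your claimed block-triangular structure with an invariant complement of $\ker\Psi$ is more than is needed (and more than is true without further argument); the factorization $P_{Th}=P_{Mar}\,Q$ already follows from invariance of the kernel together with surjectivity of the decomposition map onto the Markov space, after which only the kernel analysis above remains.
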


As defined in \S  \ref{sss:growthratecoreentropy}, the Thurston polynomial $P_{Th}(t)$ associated to a postcritically finite polynomial $f$ is the characteristic polynomial of the incidence matrix for the finite graph model associated to $f$.   The Markov polynomial $P_{Mar}(t)$ associated to the postcritically finite polynomial $f$ is the characteristic polynomial of the incidence matrix for the Markov partition of the Hubbard tree $T_f$ formed by cutting $T_f$ at its branch points and at its postcritical set (including the critical point).  

\bigskip
\begin{example} We compute the Thurston polynomial $P_{Th}(t)$ and Markov polynomial $P_{Mar}(t)$ for angle $\theta=1/5$.  \medskip

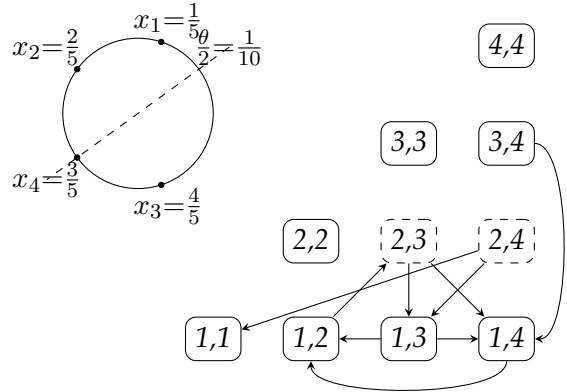
\begin{figure}[h!]
\begin{tikzpicture}[node distance = 1.3cm]

\tikzstyle{non-sep} = [rectangle, rounded corners, minimum width=.5cm, minimum height=.5cm,text centered, draw=black, fill=none]
\tikzstyle{sep} = [rectangle, dashed, rounded corners, minimum width=.5cm, minimum height=.5cm, text centered, draw=black, fill=none]
\tikzstyle{process} = [rectangle, minimum width=3cm, minimum height=1cm, text centered, draw=black, fill=orange!30]
\tikzstyle{decision} = [diamond, minimum width=3cm, minimum height=1cm, text centered, draw=black, fill=green!30]
\tikzstyle{thickarrow} = [thick,->,>=stealth]
\tikzstyle{arrow} = [->,>=stealth]

\node (11) [non-sep] {1,1};
\node (12) [non-sep, right of = 11] {1,2};
\node (13) [sep, right of=12] {1,3};
\node (14) [non-sep, right of=13] {1,4};
\node (22) [non-sep, above of = 12] {2,2};
\node (23) [sep, above of=13] {2,3};
\node (24) [non-sep, above of=14] {2,4};
\node (33) [non-sep, above of = 23] {3,3};
\node (34) [non-sep, above of=24] {3,4};
\node (44) [non-sep, above of = 34] {4,4};

\draw [arrow] (12) to (23);
\draw [arrow] (23) -- (14);
\draw [arrow] (23) -- (13);
\draw [arrow] (13) -- (12);
\draw [arrow] (13) -- (14);
\draw [arrow] (14) to [out=-90,in=-90, looseness=.5]  (12);
\draw [arrow] (34) to [out=-0,in=0, looseness=.5]  (14);
\draw [arrow] (24) to (13);

\begin{scope}[yshift=3cm, xshift=-1cm]
\draw (0,0) circle (1);
\filldraw [black] (360/5:1) circle (1pt);
\node at (360/5:1.3)  {$x_1 {=} \tfrac{1}{5}$};
\filldraw [black] (360*2/5:1) circle (1pt);
\node at (360*2/5:1.5)  {$x_2 {=} \tfrac{2}{5}$};
\filldraw [black] (360*4/5:1) circle (1pt);
\node at (360*4/5:1.3)  {$x_3 {=} \tfrac{4}{5}$};
\filldraw [black] (360*3/5:1) circle (1pt);
\node at (360*3/5:1.5)  {$x_4 {=} \tfrac{3}{5}$};
\draw[dashed] (360/10:1.5) to (360/10+180:1.5);
\node at (360/10:1.5)  {$\tfrac{\theta}{2} {=} \tfrac{1}{10}$};
\end{scope}
\end{tikzpicture}
\caption{The finite model graph for angle $\theta = 1/5$.}
\label{f:1/5}
\end{figure}

\noindent \textbf{Thurston polynomial $P_{Th}(t)$:} Figure \ref{f:1/5} shows the finite graph model for angle $\theta=1/5$.  The adjacency matrix for this finite graph (omitting rows/columns for vertices of the form $(i,i)$ for $i=1,2,3,4$, which have no incident edges) is:

\begin{center}
\begin{tabular}{ c|cccccc } 
& $(1,2)$ & $(1,3)$ & $(1,4)$ & $(2,3)$ & $(2,4)$ & $(3,4)$  \\ 
 \hline
$(1,2)$ & $0$ & $0$ & $0$ & $1$ & $0$ & $0$ \\ 
$(1,3)$ &$1$& $0$ & $1$ & $0$&$0$ & $0$ \\ 
$(1,4)$ & $1$ & $0$ & $0$ &  $0$ &$0$  & $0$ \\ 
$(2,3)$ & $0$ & $1$ & $1$ &  $0$ & $0$  & $0$ \\ 
$(2,4)$ & $0$ & $1$ & $0$ &  $0$ &$0$  & $0$ \\ 
$(3,4)$ & $0$ & $0$ & $1$ &  $0$ & $0$  & $0$ \\ 
\end{tabular}
\end{center}

The Thurston polynomial is the characteristic polynomial of the matrix above (padded with $0$s to have 4 more rows and columns, representing the vertices $(1,1),\ldots,(4,4)$):
$$P_{Th}(t) = t^6 + 2 t^7 - t^{10}.$$

\medskip 

\noindent \textbf{Markov polynomial $P_{Mar}(t)$:} Figure \ref{f:Htree1/5} shows the combinatorial model of the Hubbard tree for angle $1/5$.   The incidence matrix for the dynamics on this combinatorial Hubbard tree is:

\begin{center}
\begin{tabular}{ c|cccc } 
& $I_1 $ & $I_2$ & $I_3$ & $I_0$  \\ 
 \hline
$I_1$ & $0$ & $1$ & $0$ & $0$ \\ 
$I_2$ &$0$& $0$ & $1$ & $0$ \\ 
$I_3$ & $1$ & $0$ & $0$ &  $1$  \\ 
$I_0$ & $1$ & $1$ & $0$ &  $0$  \\ 
\end{tabular}
\end{center}
The Markov polynomial for angle $1/5$ is the characteristic polynomial of the incidence matrix above: 
$$P_{Mar}(t) = -1 - 2 t + t^4.$$
\end{example}
 
\medskip

\subsection{Setup} We will use the notation established below in the remainder of this section. 

Let $f$ be a postcritically finite polynomial with Hubbard tree $T_f$. Let $P$ be the union of the postcritical set, the critical point and the branch points of the Hubbard tree.
Define $\sigma(\theta) := 2 \theta \mod 1$. 
Let $\hat{\theta}$ be the characteristic angle of $f$, whose corresponding 
external dynamical ray $R_c(\hat{\theta})$  lands at the critical value (or whose external parameter ray $R_{\mathcal{M}}(\hat{\theta})$ lands at the root of the hyperbolic component containing the critical value). 

Let us consider $\mathbb{T} = \mathbb{R}/\mathbb{Z} \to J(f)$ the Carath\'eodory loop, 
sending an angle to the landing point of the corresponding external ray. Given an angle $\alpha \in \mathbb{T}$, 
we denote as $\overline{\alpha}$ the landing point of the ray of angle $\alpha$. 
Elements of $\mathbb{T}$ will be denoted by Greek letters $\xi, \eta, \alpha \dots$ and points in the Hubbard tree by lower-case Latin letters $x, y, z \dots$. Given two points  $x, y$, we denote by $[x, y]$ the closed arc in $T_f$ joining $x$ and $y$, 
and by $(x, y)$ the open arc. Finally, an unordered pair of angles will be denoted $\{\xi, \eta \}$, while we denote $[\xi, \eta]$
the arc in the Hubbard tree joining the landing points $\overline{\xi}$ and $\overline{\eta}$. 

\smallskip

Let $\mathcal{E}_1$ be the set of connected components of $T_f \setminus P$, and $E_1 := \mathbb{R}^{\mathcal{E}_1}$. 
Let $T_1 : E_1 \to E_1$ be the linear map induced by the action of the dynamics on the set $\mathcal{E}_1$, 
and $A_1$ be the matrix representing $T_1$ in the basis $\mathcal{E}_1$. 
By construction, this is the matrix obtained by the Markov partition. 
Hence the associated polynomial is 
$$P_{Mar}(t) := \det( I - t A_1).$$

Let $\mathcal{E}_2$ denote the set of unordered pairs of postcritical angles,
$E_2 := \mathbb{R}^{\mathcal{E}_2}$. 
An element of $E_2$ will be called an \emph{angle pair configuration} and can be written as 
$$v = \sum_i a_i \{ \xi_i, \eta_i \}$$
with $a_i \in \mathbb{R}$. 
The \emph{formal support} of $v$ is the set of pairs $\{\xi_i, \eta_i \}$ for which $a_i \neq 0$. 
The \emph{geometric support} of $v$ is the union of the arcs $[\overline{\xi_i}, \overline{\eta_i}]$ in the tree. 

Let $T_2 : E_2 \to E_2$ be the linear map 
induced by the action of the dynamics on the set $\mathcal{E}_2$, 
and $A_2$ be the matrix representing $T_2$ in the basis $\mathcal{E}_2$. 
Hence the polynomial given by Thurston's algorithm is 
$$P_{Th}(t) := \det( I - t A_2).$$

\subsection{Semiconjugacy of $T_1$ and $T_2$ via elementary decomposition} 

\begin{definition} [elementary decomposition map] \ 
\begin{enumerate}
\item We first define a map $\pi: \mathcal{E}_2 \to \mathcal{E}_1$ as follows.  
For any pair $\{\xi_1, \xi_2 \} \in \mathcal{E}_2$ of  rational   postcritical angles, write $x = \overline{\xi_1}$, $y = \overline{\xi_2}$ for the landing points.  
If $x = y$, define $\pi(\{ \xi_1, \xi_2 \}) = 0$. 
If $x \neq y$, let 
$$(x, y) \setminus P = (p_1, p_2) \cup \dots \cup (p_{k-1}, p_k)$$
be the decomposition of $(x, y) \setminus P$ in its connected components, where each $p_i \in P$, and let
$$\pi(\{ \xi_1, \xi_2\}) := \sum_{i = 1}^{k-1} [p_i, p_{i+1}].$$
\item Define the \emph{elementary decomposition} map $\pi : E_2 \to E_1$ to be the linear extension to $E_2$ of the action of $\pi$ acting on $\mathcal{E}_2$. 
\end{enumerate}
\end{definition}

Note that, by construction, the following diagram commutes:
\[\begin{tikzcd}
E_2 \arrow{r}{T_2} \arrow[swap]{d}{\pi} & E_2  \arrow{d}{\pi} \\
E_1 \arrow{r}{T_1}  & E_1
\end{tikzcd}\]
\noindent Therefore,
$$T_2(\ker \pi) \subseteq \ker \pi.$$
As a consequence, if 
$$Q(t) \coloneqq \det(I - t A_2\vert_{\ker \pi})$$ is the characteristic polynomial of the action of $T_2$ on $\ker \pi$, we have the identity
$$P_{Th}(t) = P_{Mar}(t) Q(t).$$
Thus, to prove Theorem \ref{T:Th-Mar}, it suffices to establish the following: 

\begin{proposition} \label{L:Q-cycl}
All non-zero roots of $Q(t)$ lie on the unit circle. 
\end{proposition}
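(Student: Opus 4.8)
The plan is to analyze the linear map $T_1$ restricted to $\ker \pi$ and show its nonzero eigenvalues have modulus $1$. The key idea is that $\ker \pi$ has a natural combinatorial basis, and the induced dynamics on this basis is essentially a permutation (up to sign and up to killing some basis elements), so the characteristic polynomial is a product of cyclotomic-type factors and powers of $t$.

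First I would identify a spanning set of $\ker \pi$. An element $v = \sum a_i \{\xi_i, \eta_i\}$ lies in $\ker \pi$ precisely when the signed sum of arcs $\sum a_i [\overline{\xi_i}, \overline{\eta_i}]$ vanishes as an element of $E_1 = \mathbb{R}^{\mathcal{E}_1}$, i.e. every edge of the Markov partition is covered with total coefficient zero. The natural generators of such relations come from two sources: (a) \emph{triangle/collinearity relations} — if $\overline{\xi}, \overline{\eta}, \overline{\zeta}$ are three postcritical landing points with $\overline{\eta} \in [\overline{\xi}, \overline{\zeta}]$, then $\{\xi,\zeta\} - \{\xi,\eta\} - \{\eta,\zeta\} \in \ker\pi$; and (b) \emph{degenerate relations} — if $\overline{\xi} = \overline{\eta}$ then $\{\xi,\eta\}$ itself is in $\ker\pi$, and more generally pairs landing at the same point give relations. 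I would argue that these relations span $\ker\pi$: given any $v \in \ker\pi$, one can use collinearity relations to rewrite each $\{\xi_i,\eta_i\}$ as a sum of ``edge pairs'' (pairs whose geometric support is a single Markov edge, or is degenerate), and then the condition $\pi(v) = 0$ forces the resulting combination of edge pairs to cancel — but a pair supported on a single edge maps under $\pi$ to that edge (or to $0$ if degenerate), so the only way to be in the kernel is to already be a combination of degenerate pairs and of the collinearity relations themselves.

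Next I would track how $T_2$ acts on these kernel generators. Since $f$ maps postcritical angles to postcritical angles via $\sigma$ and acts on the Hubbard tree respecting betweenness \emph{away from the critical point}, the image under $T_2$ of a collinearity relation is again (plus or minus) a collinearity relation or a sum of such, \emph{unless} the arc $[\overline{\xi},\overline{\zeta}]$ passes through the critical point $0$, in which case $f$ folds the arc and the image may pick up extra terms or collapse. The crucial point is that $0 \in P$, so $0$ is always an endpoint of Markov edges; thus when an arc through $0$ is pushed forward, the folding happens exactly at a partition point, and the ``extra'' contributions are again expressible via the generating relations. Carrying this through, $T_2$ permutes (with signs) a spanning set of $\ker\pi$, possibly sending some generators to $0$. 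A linear map that sends a spanning set to signed multiples of other spanning-set elements or to $0$ has, on the subspace it generates, all nonzero eigenvalues equal to roots of unity: restrict to the (finite) invariant subspace, pass to a basis, and observe the matrix is a signed permutation matrix padded by nilpotent blocks, whose characteristic polynomial is $t^d \prod (t^{m_j} \mp 1)$.

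I expect the main obstacle to be the bookkeeping around the critical point: showing that folding at $0$ does not introduce relations outside the span of (a) and (b), and handling pairs of angles that land at the same point (the $\alpha$-fixed point and branch points, where up to $q$ rays can land). A careful case analysis — separating pairs according to whether the connecting arc contains $0$, contains a branch point, or neither — should resolve this, and I would organize it so that the ``generic'' case is immediate and only the arcs through $P$-points require the relation-rewriting argument. One should also double-check that the span of (a) and (b) is genuinely $T_1$-invariant (which follows from the commuting diagram once we know it equals $\ker\pi$), so that the eigenvalue statement transfers from $T_2$-combinatorics to $Q(t) = \det(I - tA_1|_{\ker\pi})$ as needed.
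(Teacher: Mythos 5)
Your overall strategy is the same as the paper's: exhibit a finite, combinatorially described spanning set of $\ker \pi$ that is mapped into itself (up to sign and up to killing some elements) by the dynamics, and then conclude via the finite-orbit argument that all nonzero eigenvalues are roots of unity (this last step is exactly the paper's Lemma \ref{L:finite-cyclo}, and your version of it is fine). The genuine gap is in your spanning claim. Collinearity relations (the paper's elementary triples) together with degenerate pairs do \emph{not} span $\ker \pi$ in the presence of branch points of the Hubbard tree that are not landing points of postcritical rays. Concretely, let $x$ be such a branch point and suppose postcritical rays $\theta_1,\theta_2,\theta_3,\theta_4$ land at points lying in four distinct components of $T \setminus \{x\}$. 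Then the configuration $\{\theta_1,\theta_2\}+\{\theta_3,\theta_4\}-\{\theta_1,\theta_3\}-\{\theta_2,\theta_4\}$ lies in $\ker\pi$, but no nondegenerate elementary triple is supported near $x$ (none of the landing points lies on the arc joining two of the others, and no postcritical angle lands at $x$ itself to serve as the middle point $\beta$ of a triple), so this element is not in the span of your generators (a) and (b). This is not an exotic configuration: for parameters on the $\tfrac{p}{q}$-principal vein the $\alpha$-fixed point is a $q$-valent branch point that is not postcritical, so for $q \geq 4$ the case occurs for the very maps this paper is about. The same issue breaks your ``rewrite every pair as a sum of edge pairs'' step: endpoints of Markov edges include branch points, and an edge adjacent to a non-postcritical branch point admits no ``edge pair'' of postcritical angles at all.

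The paper closes this gap by introducing a second class of generators, the \emph{elementary stars} with zero geometric weight, supported on angles landing in distinct branches around a branch point; it then needs two further ingredients that your sketch would also require: (i) a finiteness device (Lemma \ref{L:elem-star}), since kernel elements supported at a star have unbounded integer coefficients, one must show they are generated by stars of norm at most some fixed $M$ so that the invariant spanning set $\mathcal{T} \cup \mathcal{S}$ is finite; and (ii) a separate verification (Lemma \ref{L:S-forward-inv}) that $T_2$ sends such stars to stars of the same norm, which uses the zero-geometric-weight condition in an essential way when the critical point lies inside the star (the two image terms involving $\hat\theta$ recombine only because the weights at the folded branch sum to zero). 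Your treatment of the folding at the critical point for triples matches the paper's Lemma \ref{L:permute} in spirit, but without the star generators the ``careful case analysis'' you defer to cannot succeed within the span of (a) and (b).
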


We will use the following lemma from linear algebra:

\begin{lemma} \label{L:finite-cyclo}
Let $T : E \to E$ be a linear map of a finite-dimensional vector space, and suppose that there exists a finite set 
$S \subseteq E$ such that $T(S) \subseteq S$ and so that the span of $S$ equals $E$. 
Then the characteristic polynomial of $T$ is of the form 
$$P(t) = t^k \chi(t),$$ 
where $k \geq 0$ and $\chi(t)$ is the product of cyclotomic polynomials. 
\end{lemma}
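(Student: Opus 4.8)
The plan is to prove Lemma~\ref{L:finite-cyclo} by a standard integrality-plus-boundedness argument (a Kronecker-type theorem), and then apply it to the operator $T_1$ restricted to $\ker\pi$ together with a suitable $T_1$-invariant finite spanning set to deduce Proposition~\ref{L:Q-cycl}.

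First I would address Lemma~\ref{L:finite-cyclo}. Fix the finite set $S=\{s_1,\dots,s_m\}$ with $T(S)\subseteq S$ and $\mathrm{span}(S)=E$. Since $T$ permutes-with-multiplicity the elements of $S$ (each $s_i$ is sent to \emph{some} $s_{j(i)}$), every $T^n(s_i)$ again lies in $S$, so the whole forward orbit $\{T^n(s_i): n\ge 0,\ 1\le i\le m\}$ is contained in the finite set $S$. Now pick a basis for $E$ among the elements of $S$; in that basis the matrix of $T$ has entries that are coordinates of elements of $S$ expressed in this basis — but more cleanly, I would argue as follows. The characteristic polynomial $P(t)=\det(tI-T)$ (up to sign, matching the paper's $\det(I-tA)$ normalization by reciprocation) has integer coefficients \emph{if} $T$ can be represented by an integer matrix; and indeed choosing a basis $B\subseteq S$, the map $T$ sends each basis vector to another element of $S$, which need not itself be a basis vector, so one does \emph{not} immediately get an integer matrix. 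The correct route is: let $L\subseteq E$ be the $\mathbb{Z}$-span of $S$. Then $L$ is a finitely generated subgroup of $E$ that spans $E$ over $\mathbb{R}$, hence a full-rank lattice in $E$ (it is torsion-free and finitely generated, so free of rank $\dim E$), and $T(L)\subseteq L$ because $T(S)\subseteq S\subseteq L$. Choosing a $\mathbb{Z}$-basis of $L$ represents $T$ by an integer matrix, so $P(t)\in\mathbb{Z}[t]$ and is monic.

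Next, boundedness of the orbit of $S$ forces the eigenvalues to have modulus at most $1$. Concretely: all powers $T^n$ map the finite set $S$ into itself, and $S$ spans $E$, so $\{T^n\}_{n\ge 0}$ is a bounded family of operators on $E$ (each $T^n$ has columns, in the lattice basis, drawn from the finitely many vectors representing elements of $S$). A bounded family of powers implies every eigenvalue $\mu$ of $T$ satisfies $|\mu|\le 1$ — otherwise $\|T^n\|\to\infty$ along the corresponding generalized eigenspace. Moreover, if $|\mu|=1$ then $\mu$ cannot lie in a Jordan block of size $\ge 2$, again by boundedness of powers; so the eigenvalues on the unit circle are semisimple. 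Now write $P(t)=t^k\,\chi(t)$ where $\chi(0)\ne 0$: $t^k$ absorbs the zero eigenvalues, and $\chi\in\mathbb{Z}[t]$ is monic with all roots on the closed unit disk and nonzero, hence (since $\chi(0)=\pm\prod(\text{roots})$ is a nonzero integer of modulus $\le 1$) all roots have modulus exactly $1$. By Kronecker's theorem, a monic integer polynomial all of whose roots lie on the unit circle is a product of cyclotomic polynomials. This proves Lemma~\ref{L:finite-cyclo}.

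Finally, to deduce Proposition~\ref{L:Q-cycl}, I apply the lemma with $E=\ker\pi\subseteq E_2$ and $T=T_1$ — wait, $T_1$ acts on $E_1$, not on $\ker\pi\subseteq E_2$; the correct statement is $E=\ker\pi$, $T=T_2|_{\ker\pi}$, which is legitimate since the commuting square gives $T_2(\ker\pi)\subseteq\ker\pi$, and $Q(t)=\det(I-tA_1|_{\ker\pi})$ must be reconciled — here I would follow the paper's own setup, in which $Q(t)$ is the characteristic polynomial of $T_1$ on $\ker\pi$ viewed via the induced action; in either formulation the relevant operator preserves a finite spanning set coming from the images (or formal differences) of the basis pairs $\{\xi_i,\eta_i\}\in\mathcal{E}_2$ under the natural map, because $T_2$ permutes-with-multiplicity the finite basis $\mathcal{E}_2$ and hence sends each of the finitely many elements of the form $v-w$ with $v,w\in\mathcal{E}_2$ to a $\mathbb{Z}$-combination lying in the same finite set; restricting to $\ker\pi$, these differences span. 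The main obstacle — and the place to be careful — is exhibiting the finite $T$-invariant spanning set $S$ inside $\ker\pi$: one must check that the natural finite collection (differences of pairs with the same $\pi$-image, or a combinatorial analogue) is genuinely invariant under the dynamics and genuinely spans $\ker\pi$, using the explicit description of $\pi$ as the elementary decomposition map. Once that combinatorial bookkeeping is in place, Lemma~\ref{L:finite-cyclo} finishes the proof, giving $Q(t)=t^k\chi(t)$ with $\chi$ cyclotomic, so all nonzero roots of $Q$ lie on $S^1$.
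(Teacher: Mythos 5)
Your strategy (integrality plus boundedness plus Kronecker) has a genuine gap at the integrality step, and under the lemma's stated hypotheses it cannot be repaired. You claim that the $\mathbb{Z}$-span $L$ of $S$ is a full-rank lattice in $E$, free of rank $\dim E$, so that a $\mathbb{Z}$-basis of $L$ gives a basis of $E$ in which $T$ is an integer matrix and hence $P(t)\in\mathbb{Z}[t]$. A finitely generated torsion-free abelian group is indeed free, but its rank can exceed $\dim E$ and $L$ need not be discrete (e.g.\ $\mathbb{Z}+\mathbb{Z}\sqrt{2}$ is dense in $\mathbb{R}$), so no integer-matrix representation of $T$ on $E$ is produced this way. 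Worse, integrality genuinely fails under the hypotheses of Lemma \ref{L:finite-cyclo}: take $E=\mathbb{R}^2$, let $T$ be the rotation by angle $2\pi/5$, and let $S$ be the $T$-orbit of a single nonzero vector (five vectors, with $T(S)=S$ and $\mathrm{span}(S)=E$); the characteristic polynomial is $t^2-2\cos(2\pi/5)\,t+1$, which is not even rational. So Kronecker's theorem cannot be invoked, and this example also shows that the literal conclusion ``product of cyclotomic polynomials'' should really be read as ``every eigenvalue is $0$ or a root of unity'' — which is all that is used in Proposition \ref{L:Q-cycl}, where the relevant matrices are integral anyway. Your boundedness observation (powers $T^n$ send $S$ into $S$ and $S$ spans, so $\sup_n\|T^n\|<\infty$, whence $|\lambda|\le 1$ and semisimplicity on the circle) is correct but by itself too weak: it does not exclude eigenvalues $e^{2\pi i\alpha}$ with $\alpha$ irrational without the integrality you have not established.

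The paper's proof is more elementary and bypasses integrality entirely: writing any $v\in E$ as $v=\sum_i a_i v_i$ with $v_i\in S$, the forward orbit $\{T^n v: n\ge 0\}$ lies in the finite set of vectors of the form $\sum_i a_i s_i$ with $s_i\in S$; applying this to an eigenvector $v$ with eigenvalue $\lambda$ shows that $\{\lambda^n v : n\ge 0\}$ is finite, forcing $\lambda=0$ or $\lambda$ a root of unity. That single observation is the whole proof, and it is exactly the statement needed downstream. (Your closing discussion of how to apply the lemma to $\ker\pi$ correctly flags, but does not supply, the invariant finite spanning set; in the paper that is the content of the elementary triples and elementary stars, Lemmas \ref{L:kernel-combo}, \ref{L:permute}, \ref{L:elem-star} and \ref{L:S-forward-inv}, and is separate from the lemma under review.)
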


\begin{proof}
Since $T(S) \subseteq S$, then for any $v \in S$ the set $\{ T^n(v) \ : \ n \geq 0\} \subseteq S$ is finite. 
Moreover, since $S$ spans $E$, for any $v \in E$
we can write $v = \sum_i a_i v_i$ with $a_i \in \mathbb{C}$ and $v_i \in E$. 
Hence also 
$$T^n(v) = \sum_i a_i T^n(v_i) \subseteq \sum_i a_i S,$$
which is a finite set. 
Now, suppose that $v$ is an eigenvector of $T$, with eigenvalue $\lambda$. Then note that the set 
$\{ T^n(v) = \lambda^n v \ : \ n \geq 0\}$ is finite only if either $\lambda = 0$ or if $\lambda$ is a root of unity. 
This proves the claim. 
\end{proof}

To prove Proposition \ref{L:Q-cycl}, we apply Lemma \ref{L:finite-cyclo} to the action of $T_2$ on $\ker \pi$ with of ``elementary triples'' and ``elementary stars'' serving as the set $S \subseteq \ker \pi$; we define these objects and prove that they have the requisite properties in the next subsection, \S \ref{ss:elementarytriplesandstars}.

\begin{proof}[Proof of Proposition \ref{L:Q-cycl}]
By Lemma \ref{L:triples-stars}, 
the space $\ker \pi$ is the span of the union of the set $\mathcal{T}$ of elementary triples 
and the set $\mathcal{S}$ of elementary stars of norm at most $M$.
Note that the set $\mathcal{T} \cup \mathcal{S}$ is finite, since $f$ is postcritically finite. 
Since by Lemmas \ref{L:permute} and \ref{L:S-forward-inv}, $T_2$ maps each element of $\mathcal{T} \cup \mathcal{S}$ to either $0$ or an element of $\mathcal{T} \cup \mathcal{S}$, 
then by Lemma \ref{L:finite-cyclo} the characteristic polynomial $Q(t)$ of the restriction of $T_2$ to $\ker \pi = \mathbb{R}^{\mathcal{T} \cup \mathcal{S}}$ 
is the product of $t^k$ for some $k \geq 0$ and cyclotomic polynomials. 
\end{proof}

 \subsection{Elementary triples and elementary stars} \label{ss:elementarytriplesandstars} 
 
\subsubsection{Elementary triples}

\begin{definition} 
Define an \emph{elementary triple} of angles to be a linear combination $v \in E_2$ of $3$ angle pairs of the form 
$$v = p_1+p_2-p_3$$ 
with $p_1= \{ \alpha, \beta \}, p_2= \{ \beta, \gamma \} $ and $p_3 = \{ \alpha, \gamma \}$ are elements of $\mathcal{E}_2$ 
and so that
$\overline{\beta}$ lies on the arc $[\overline{\alpha}, \overline{\gamma}]$. 
Note that, as a special case, if the rays at angle $\alpha, \alpha'$ land at the same point, setting $\alpha = \beta$ and $\gamma = \alpha'$ we obtain that the pair $\{ \alpha, \alpha' \}$ is also an elementary triple. We call such triple \emph{degenerate}. 
We denote as $\mathcal{T}$ the set of elementary triples.
\end{definition}

It is immediate to check that every elementary arc lies in the kernel of $\pi$. We now start with the following: 

\begin{lemma} \label{L:kernel-combo}
Every angle pair configuration which lies in $\ker \pi$ and whose support is contained in a segment 
is a linear combination of elementary triples. 
\end{lemma}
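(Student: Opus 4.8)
The plan is to reduce to a combinatorial statement about configurations supported on a single arc of the Hubbard tree, and then argue by induction on the number of postcritical landing points lying strictly between the two extreme endpoints of the support. Fix a segment $[x,y]$ of $T$ containing the geometric support of the given configuration $v \in \ker\pi$, and let $q_0 = x, q_1, \dots, q_m = y$ be the landing points of postcritical angles appearing in (the formal support of) $v$, ordered along the arc. The key observation is that $v$ is a formal linear combination of pairs $\{\xi_i, \eta_i\}$, and each such pair $\{\xi,\eta\}$ with $\overline\xi = q_a$, $\overline\eta = q_b$ ($a < b$) can be rewritten, using degenerate and non-degenerate elementary triples, as a ``telescoping'' sum: modulo a linear combination of elementary triples, $\{\xi,\eta\}$ is equivalent to a sum of pairs each of whose two angles land at \emph{consecutive} points $q_c, q_{c+1}$ along the arc. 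Indeed, if $q_{a+1}$ lies on $[\overline\xi,\overline\eta]$ and $\zeta$ is any postcritical angle with $\overline\zeta = q_{a+1}$, then $\{\xi,\zeta\} + \{\zeta,\eta\} - \{\xi,\eta\}$ is an elementary triple (non-degenerate), and degenerate triples let us pass between two angles landing at the same point at no cost; iterating splits $\{\xi,\eta\}$ down to consecutive-point pairs.

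Carrying this out, I would first record the two easy facts: every elementary triple lies in $\ker\pi$ (immediate from the definition of $\pi$, since the arcs telescope), and, by the rewriting above, every configuration supported on $[x,y]$ is congruent modulo the span of elementary triples to one whose formal support consists only of ``consecutive'' pairs $\{\xi,\eta\}$ with $\overline\xi, \overline\eta$ adjacent among $q_0, \dots, q_m$. Call such a configuration \emph{reduced}. So it suffices to show: a reduced configuration lying in $\ker\pi$ is itself a linear combination of elementary triples — and in fact I claim a reduced configuration in $\ker\pi$ must be the zero configuration, \emph{provided} one is careful about the fact that there may be several angles landing at the same point $q_c$. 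To handle that last subtlety, note that for a fixed consecutive pair of points $q_c, q_{c+1}$, any two pairs $\{\xi,\eta\}$, $\{\xi',\eta'\}$ with $\overline\xi = \overline{\xi'} = q_c$ and $\overline\eta = \overline{\eta'} = q_{c+1}$ differ by a sum of two degenerate elementary triples; hence modulo elementary triples a reduced configuration is determined by a single real coefficient $c_j$ for each ``gap'' $(q_{j-1}, q_j)$, $1 \le j \le m$. Applying $\pi$ to this normal form gives $\pi(v) = \sum_j c_j [q_{j-1}, q_j]$ — but wait, the $q_j$ need not be \emph{adjacent} in $P$; between $q_{j-1}$ and $q_j$ there may be further points of $P$ not in the support of $v$. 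That is fine: $\pi$ still decomposes $[q_{j-1}, q_j]$ into the edges of $\mathcal{E}_1$ it traverses, and since distinct gaps traverse disjoint sets of edges, $\pi(v) = 0$ forces each $c_j = 0$. Therefore $v$ lies in the span of the elementary triples, as claimed.

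The step I expect to be the main obstacle is the bookkeeping around multiple angles landing at a common point: one must check that degenerate elementary triples genuinely suffice to identify, modulo the span, any two pairs with the same pair of landing points, and that the ``telescoping'' splitting is consistent (order-independent) so that the reduced normal form is well defined. Once that is in place, the injectivity of $\pi$ on reduced configurations is just the observation that the arcs $[q_{j-1}, q_j]$ for the successive gaps are edge-disjoint in the Markov partition, so no cancellation in $E_1$ is possible. I would organize the write-up as: (i) elementary triples $\subseteq \ker\pi$; (ii) the degenerate-triple lemma identifying pairs with equal landing points; (iii) the telescoping reduction to a reduced normal form; (iv) injectivity of $\pi$ on reduced configurations via edge-disjointness.
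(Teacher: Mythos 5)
Your proof is correct, but it takes a genuinely different route from the paper's. The paper argues by descent: after using degenerate triples to arrange that at most one angle lands at each point of the geometric support (the same identification you perform), it clears denominators, picks a purported counterexample $v \in \ker \pi$ of minimal weight $\sum_i |a_i|$, and at an end of the support finds two pairs $\{\alpha,\beta\}$, $\{\alpha,\gamma\}$ with opposite-sign coefficients sharing that landing point; subtracting the elementary triple $\{\alpha,\beta\}+\{\beta,\gamma\}-\{\alpha,\gamma\}$ strictly lowers the weight, a contradiction. You instead build a normal form: telescoping at the intermediate landing points reduces any configuration, modulo the span of elementary triples, to one chosen ``consecutive'' pair per gap, and then $\pi$ is injective on such reduced configurations because distinct gaps traverse disjoint components of $T\setminus P$ (this disjointness is sound: all branch points lie in $P$, so the components in question are exactly the arcs between adjacent points of $P$ inside each gap). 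Your route avoids the integrality and minimal-weight step and works directly with real coefficients; the paper's descent is shorter to write once the normalization is done. Two small points to tidy in your write-up: pairs whose two angles land at the same point should simply be discarded at the outset as degenerate triples, and the identification of two pairs with equal landing points uses, besides degenerate triples, elementary triples of the form $\{\xi',\xi\}+\{\xi,\eta\}-\{\xi',\eta\}$ with $\overline{\xi}=\overline{\xi'}$ (allowed, since $\overline{\xi}$ lies on $[\overline{\xi'},\overline{\eta}]$ as an endpoint), not only degenerate ones; finally, order-independence of the reduction is not actually needed --- existence of some reduced representative together with injectivity of $\pi$ on the span of the chosen reduced pairs already gives the conclusion.
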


\begin{proof}
Since the coefficients of $T_2$ are integers, if there exists a non-zero element of $\ker \pi$ there exists an 
element of $\ker \pi$ which is a linear combination of angle pairs $\{\xi_i, \eta_i\}$ with rational coefficients. 
By multiplying all coefficients by a suitable integer, we can assume that there exists a linear combination with integer 
coefficients. 
Suppose that there exists a linear combination of the form
$$v = \sum_{i = 1}^k a_i  \{ \xi_i, \eta_i \}$$
with $a_i \in \mathbb{Z}$ and $ \{ \xi_i, \eta_i \} \in \mathcal{E}_2$, so that $v$ lies in $\ker \pi$ but not in the span $\textrm{span}(\mathcal{T})$ of the elementary triples. 

First, note that, if $\alpha, \alpha'$ land at the same point, then for any $\beta$ 
$$\{ \alpha, \beta \} - \{ \alpha', \beta \} = (\{ \alpha, \beta \} - \{ \alpha', \beta \} + \{ \alpha, \alpha' \} ) - (\{ \alpha, \alpha' \})$$ 
is a sum of elementary triples, hence, by subtracting elementary triples, we can assume that at most one angle
in the formal support of $v$ lands at each point in the geometric support of $v$. 

Now, let us choose a configuration $v$ for which the weight $\Vert v \Vert := \sum_{i = 1}^k |a_i|$ is minimal. 

Let $a$ be an end of the geometric support of $v$. Since $v$ lies in the kernel, there exists two 
elements $\{ \alpha, \beta \}$, $\{ \alpha, \gamma \}$ in the formal support of $v$ with coefficients of opposite signs 
and so that $\overline{\alpha} = \overline{\alpha} = a$. 
Suppose by symmetry that $b = \overline{\beta}$ lies in $[\overline{\alpha}, \overline{\gamma}]$. 
Then 
$$v = a_1 \{ \alpha, \beta \} + a_2 \{ \alpha, \gamma\} + \sum_{i = 3}^k a_i \{ \xi_i, \eta_i \}$$
and, up to changing $v$ with $-v$, we can assume that $a_1 > 0$, $a_2 < 0$. 
Then we can write $v$ as 
\begin{equation} \label{E:sum}
v = v_1 + v_2
\end{equation}
where 
\begin{align*}
v_1 & =\{\alpha, \beta\} + \{\beta, \gamma\} - \{\alpha, \gamma\} \\
v_2 & =  (a_1-1) \{\alpha, \beta\} + (a_2 + 1) \{\alpha, \gamma\} + \sum_{i = 3}^k a_i \{ \xi_i, \eta_i \} - \{\beta, \gamma\}.
\end{align*}
Now, by \eqref{E:sum}, $v_2$ also lies in $\ker \pi$; moreover, its weight satisfies 
$$\Vert v_2 \Vert \leq |a_1| -1  + |a_2| - 1 + \sum_{i = 3}^k |a_i| + 1 < \Vert v \Vert$$
hence it has lower weight than $v$; thus, by minimality, $v_2$ must belong to $\textrm{span}(\mathcal{T})$. However, 
since $v_1$ also belongs to $\textrm{span}(\mathcal{T})$, by \eqref{E:sum} we also have that $v$ belongs to $\textrm{span}(\mathcal{T})$, contradicting our 
hypothesis. 
\end{proof}

\begin{lemma} \label{L:permute}
The map $T_2$ sends every elementary triple to either $0$ or an elementary triple. 
\end{lemma}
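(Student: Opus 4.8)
The statement to prove is Lemma~\ref{L:permute}: the linear map $T_2$ sends every elementary triple to either $0$ or an elementary triple. Recall that an elementary triple is $v = \{\alpha,\beta\} + \{\beta,\gamma\} - \{\alpha,\gamma\}$ where $\overline{\beta}$ lies on the arc $[\overline{\alpha},\overline{\gamma}]$ in the Hubbard tree. By linearity, $T_2(v) = T_2(\{\alpha,\beta\}) + T_2(\{\beta,\gamma\}) - T_2(\{\alpha,\gamma\})$. The first step is to understand $T_2$ on a single angle pair $\{\xi,\eta\}$: since $T_2$ is induced by the dynamics on unordered pairs of postcritical angles, and the dynamics on angles is the doubling map $\sigma$, we have $T_2(\{\xi,\eta\}) = \{\sigma\xi,\sigma\eta\}$ — \emph{unless} some degeneracy occurs, in which case $T_2$ may need to be defined so that the diagram with $\pi$ commutes. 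The key point (which I expect to be spelled out in the surrounding text, or is forced by commutativity of the $\pi$--$T_2$ square together with the definition of $T_1$) is that on a pair $\{\xi,\eta\}$ whose two angles do \emph{not} straddle the critical angle $\hat\theta$ in a way that creates a branch, $T_2$ is literally $\{\sigma\xi,\sigma\eta\}$.

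First I would treat the generic case: suppose $\sigma$ applied to the arc $[\overline\alpha,\overline\gamma]$ is still injective on the tree, i.e. $f$ restricted to this arc is a homeomorphism onto its image. Then $f(\overline\beta)$ still lies on the arc $[f(\overline\alpha),f(\overline\gamma)] = [\overline{\sigma\alpha},\overline{\sigma\gamma}]$, because $f$ being a homeomorphism on the arc preserves betweenness. Hence $T_2(v) = \{\sigma\alpha,\sigma\beta\} + \{\sigma\beta,\sigma\gamma\} - \{\sigma\alpha,\sigma\gamma\}$ is again an elementary triple (possibly degenerate, if $\overline{\sigma\alpha} = \overline{\sigma\gamma}$, but a degenerate triple is still an elementary triple by definition).

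The main obstacle is the case where $f$ folds the arc $[\overline\alpha,\overline\gamma]$ at the critical point $0$ — equivalently, where $0 \in [\overline\alpha,\overline\gamma]$. Then $f$ is not injective on the arc, and the image $f([\overline\alpha,\overline\gamma])$ need not be an arc containing $f(\overline\beta)$ in the expected position; moreover the dynamics on angle pairs is genuinely $2$-to-$1$ near $\hat\theta$. Here I would split into sub-cases according to where $0$ sits relative to $\overline\beta$: (i) if $0 \in [\overline\alpha,\overline\beta]$ but $0 \notin [\overline\beta,\overline\gamma]$ (or symmetrically), then $f([\overline\beta,\overline\gamma])$ is an arc and $f([\overline\alpha,\overline\beta])$ is an arc with $f(\overline\alpha)$ and $f(\overline\beta)$ as endpoints, and one checks that $f(\overline\beta)$ still lies between $f(\overline\alpha)$ and $f(\overline\gamma)$ because both fold onto the same side at the critical value $f(0) = \overline{\hat\theta}$; a short betweenness argument using that $\overline{\hat\theta}$ is an endpoint of the image arc gives that $T_2(v)$ is again elementary. (ii) if $\overline\beta = 0$, then the two pairs $\{\alpha,\beta\}$ and $\{\beta,\gamma\}$ map to pairs involving $\overline{\hat\theta} = f(0)$, and again $f(0)$ is an endpoint of $f([\overline\alpha,\overline\gamma])$, so $T_2(v)$ is an elementary triple with $\overline{\sigma\beta}$ at the end — one must check the degenerate/endpoint conventions make this still count as elementary (it does, since the definition allows $\overline\beta$ to be an endpoint of the arc, that being the degenerate case). (iii) the truly delicate configuration is $0 \in (\overline\alpha,\overline\beta) \cap$ \dots — wait, $0$ can lie on at most one of the two subarcs unless it equals $\overline\beta$, so (i)–(ii) exhaust it; the remaining worry is when $f(\overline\alpha) = f(\overline\gamma)$ \emph{because} of the fold, making the output degenerate, which is fine, or when the output $\{\sigma\alpha,\sigma\gamma\}$ has $\overline{\sigma\alpha} = \overline{\sigma\gamma}$ while $\overline{\sigma\beta}$ differs — then $T_2(v) = \{\sigma\alpha,\sigma\beta\}+\{\sigma\beta,\sigma\gamma\}-0$, and since $\overline{\sigma\alpha}=\overline{\sigma\gamma}$ this is $2\{\sigma\alpha,\sigma\beta\}$-ish, which I would need to recognize as a sum of (a degenerate triple plus) an elementary triple, or the definition of $T_2$ on the degenerate pair $\{\alpha,\gamma\}$ already sends it to $0$ making $T_2(v)$ a degenerate-plus-nondegenerate combination; I'd confirm this reduces to a single elementary triple after using the identity $\{\alpha,\beta\}+\{\beta,\gamma\}-\{\alpha,\gamma\}$ with $\{\alpha,\gamma\}$ degenerate. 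In all cases the conclusion is that $T_2(v)$ is either $0$ (when the whole arc collapses) or an elementary triple, completing the proof. I expect the bookkeeping for case (i) — verifying betweenness is preserved under the fold — to be the one genuinely requiring care, and I would lean on the structure of the Hubbard tree as a star with the critical point in a controlled position (for principal-vein parameters, the decomposition $T\setminus\{\alpha_f,0\} = I_0\sqcup\cdots\sqcup I_q$) to make the fold analysis concrete.
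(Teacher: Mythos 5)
There is a genuine gap, and it lies exactly where you predicted the difficulty would be. Your argument treats $T_2$ on a pair $\{\xi,\eta\}$ as the naive pushforward $\{\sigma\xi,\sigma\eta\}$ even when the corresponding arc crosses the critical point, and then tries to save the day by arguing that ``betweenness is preserved under the fold.'' Neither half survives. First, by the definition of Thurston's operator, a \emph{separated} pair is not sent to $\{\sigma\xi,\sigma\eta\}$: it is split at the characteristic angle, $T_2(\{\xi,\eta\})=\{\sigma(\xi),\hat\theta\}+\{\hat\theta,\sigma(\eta)\}$, and this splitting is the whole point. Second, the betweenness claim in your case (i) is simply false: if $0\in(\overline\alpha,\overline\beta)$, then $f(\overline\beta)$ need not lie on $[f(\overline\alpha),f(\overline\gamma)]$. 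For a concrete real example take $f(x)=x^2-2$ with $\overline\alpha=-\tfrac12$, $\overline\beta=\tfrac1{10}$, $\overline\gamma=1$: then $f(\overline\beta)=-1.99$ lies outside $[f(\overline\alpha),f(\overline\gamma)]=[-1.75,-1]$. So the pushed-forward combination $\{\sigma\alpha,\sigma\beta\}+\{\sigma\beta,\sigma\gamma\}-\{\sigma\alpha,\sigma\gamma\}$ is in general \emph{not} an elementary triple in the folded case, and no betweenness argument will make it one.

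The actual proof is a short case analysis using the correct definition. If $\{\alpha,\gamma\}$ is non-separated, the naive pushforward is fine (this matches your generic case). If $\{\alpha,\gamma\}$ is separated, then either $\overline\beta=0$, or exactly one of $\{\alpha,\beta\}$, $\{\beta,\gamma\}$ is separated; say $\{\alpha,\beta\}$ is. Then $T_2(\{\alpha,\beta\})=\{\sigma(\alpha),\hat\theta\}+\{\hat\theta,\sigma(\beta)\}$ and $T_2(\{\alpha,\gamma\})=\{\sigma(\alpha),\hat\theta\}+\{\hat\theta,\sigma(\gamma)\}$, so the $\{\sigma(\alpha),\hat\theta\}$ terms cancel and one is left with $\{\hat\theta,\sigma(\beta)\}+\{\sigma(\beta),\sigma(\gamma)\}-\{\hat\theta,\sigma(\gamma)\}$; this \emph{is} elementary, because $f$ is injective on $[0,\overline\gamma]\ni\overline\beta$, so $f(\overline\beta)$ lies on $[\,\overline{\hat\theta},f(\overline\gamma)\,]$ (note the role of the critical value, not of $f(\overline\alpha)$). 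Finally, if $\overline\beta=0$, both $\{\alpha,\beta\}$ and $\{\beta,\gamma\}$ are non-separated and $\sigma(\beta)=\hat\theta$, so everything cancels and $T_2(v)=0$ — not a nonzero triple ``with $\overline{\sigma\beta}$ at the end'' as you expected. Your proposal does correctly guess that the characteristic angle $\hat\theta$ must enter, but without the splitting-and-cancellation mechanism the plan cannot be completed as written.
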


\begin{proof}
Let $v = \{ \alpha, \beta \} +  \{ \beta, \gamma \} -  \{ \alpha, \gamma\}$ be an elementary triple, so that  
$b:= \overline{\beta}$ lies in $[\overline{\alpha}, \overline{\gamma}]$. 

If $\{ \alpha, \gamma\}$ is non-separated, then 
$$T_2(v) = \{ \sigma(\alpha), \sigma(\beta) \} + \{ \sigma(\beta), \sigma(\gamma) \} -  \{ \sigma(\alpha), \sigma(\gamma) \}$$
is clearly an elementary triple. 

If $\{ \alpha, \gamma\}$ is separated, then either $b$ is the critical point, $\{ \alpha, \beta\}$ is separated, or $\{\beta, \gamma\}$ is separated. 
If $\{ \alpha, \beta\}$ is  separated, then 
\begin{align*}
T_2(\{ \alpha, \beta\}) & = \{ \sigma(\alpha), \hat{\theta} \} + \{ \hat{\theta}, \sigma(\beta) \} \\
T_2(\{\beta, \gamma\}) & = \{ \sigma(\beta), \sigma(\gamma) \} \\
T_2(\{ \alpha, \gamma\}) & = \{ \sigma(\alpha), \hat{\theta} \} + \{ \hat{\theta}, \sigma(\gamma) \}
\end{align*}
where $v$ is the critical value; hence
\begin{align*}
T_2(v) & =  \{ \sigma(\alpha), \hat{\theta} \} + \{ \hat{\theta}, \sigma(\beta) \} +\{ \sigma(\beta), \sigma(\gamma) \} - 
(\{ \sigma(\alpha), \hat{\theta} \} + \{ \hat{\theta}, \sigma(\gamma) \}) \\
& =  \{ \hat{\theta}, \sigma(\beta) \} + \{ \sigma(\beta), \sigma(\gamma) \} - \{ \hat{\theta}, \sigma(\gamma) \}
\end{align*}
which is also an elementary triple. The case of $\{\beta, \gamma\}$ is symmetric. Finally, if $b$ is the critical point, then 
$\{\alpha, \beta\}$ and $\{\beta, \gamma\}$ are not separated, hence 
$$T_2(v) = \{\sigma(\alpha), \sigma(\beta)\} + \{\sigma(\beta), \sigma(\gamma)\} - (\{\sigma(\alpha), \sigma(\beta)\} + \{\sigma(\beta), \sigma(\gamma)\}) = 0.$$
\end{proof}

\subsubsection{Elementary stars}

We now need to take care of branch points in the Hubbard tree $T_f$. 
Recall that the \emph{valence} of a point $x \in T_f$ is the number of connected components of $T_f \setminus \{ x \}$. 
A \emph{branch point} is a point of valence larger than $2$. 

\begin{definition}
Let $x$ be a branch point of $T$. A set of angles $\Theta := \{ \theta_1, \dots, \theta_k\} \subseteq \mathbb{T}$ 
form a \emph{star centered at }$x$ if for any pair of distinct elements of $\Theta$, the corresponding external rays 
lie in different connected components of $T_f \setminus \{ x \}$.
A formal sum 
$$S = \sum_{i < j} a_{\{i,j\}} \{ \theta_i, \theta_j \}, \qquad a_{\{i,j\}} \in \mathbb{Z}$$
is an \emph{elementary star} if there exists a branch point $x$ and a star $\Theta$ centered at $x$
so that each $\theta_i$ lies in $\Theta$. 
The \emph{norm} of a star $S$ is $\max_{i,j} |a_{\{i,j\}}|$. A star has \emph{zero geometric weight} if $\sum_j a_{\{i,j\}} = 0$ 
for each $i$. 
\end{definition}
 
\begin{lemma} \label{L:elem-star}
There exists $M \geq 1$ such that any elementary star with geometric weight $0$ is a linear combination 
of elementary stars of geometric weight zero with norm at most $M$. 
\end{lemma}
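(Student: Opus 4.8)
The plan is to recognize Lemma~\ref{L:elem-star} as essentially a finiteness statement: once we fix a branch point and a star through it, the geometric-weight-zero elementary stars supported on that star form a single integer lattice, and postcritical finiteness guarantees that only finitely many such pairs occur, so a uniform bound on the norm can be extracted by taking a maximum over finitely many lattices.

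First I would record the finiteness input. Since $f$ is postcritically finite, the Hubbard tree $T$ is a finite tree, so it has only finitely many branch points, and the set of postcritical angles is finite. A star centered at a branch point $x$ selects postcritical angles lying in pairwise distinct components of $T \setminus \{x\}$ (only stars consisting of postcritical angles are relevant, since an elementary star is an element of $E_2$, which is spanned by pairs of postcritical angles); each of the finitely many branch points has finitely many complementary components, and each component contains only finitely many postcritical angles, so there are only finitely many stars in total. Denote this finite collection by $\mathfrak{S}$, and note that, by definition, every elementary star is supported on pairs of angles drawn from a single member of $\mathfrak{S}$.

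Next I would fix $\Theta = \{\theta_1,\dots,\theta_k\} \in \mathfrak{S}$ and analyze the elementary stars supported on pairs from $\Theta$. Such a star is an integer vector $(a_{\{i,j\}})_{1 \le i < j \le k} \in \mathbb{Z}^{\binom{k}{2}}$, and having geometric weight $0$ amounts to the homogeneous linear conditions $\sum_{j} a_{\{i,j\}} = 0$ for each $i$ (with the convention $a_{\{i,j\}} = a_{\{j,i\}}$). Hence the set $L_\Theta$ of geometric-weight-zero elementary stars supported on $\Theta$ is a subgroup of $\mathbb{Z}^{\binom{k}{2}}$, and is therefore finitely generated, indeed free of rank at most $\binom{k}{2}$; I would choose a finite $\mathbb{Z}$-basis $b^\Theta_1,\dots,b^\Theta_{r_\Theta}$ of it. Each $b^\Theta_i$ is itself a geometric-weight-zero elementary star: it has integer coordinates, its angles lie in $\Theta \in \mathfrak{S}$, and it satisfies the geometric-weight-zero equations since it lies in $L_\Theta$.

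Finally I would set
$$M := \max_{\Theta \in \mathfrak{S}}\ \max_{1 \le i \le r_\Theta}\ \mathrm{norm}(b^\Theta_i)$$
(or $M=1$ in the vacuous case where every $L_\Theta$ is trivial, e.g. when all branch points have valence at most $3$), which is a finite positive integer because $\mathfrak{S}$ is finite. Given an arbitrary geometric-weight-zero elementary star $S$, it is supported on pairs from some $\Theta \in \mathfrak{S}$, so $S \in L_\Theta$ and hence $S = \sum_i n_i b^\Theta_i$ with $n_i \in \mathbb{Z}$; this realizes $S$ as an integer linear combination of geometric-weight-zero elementary stars each of norm at most $M$, which is the assertion of the lemma. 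I do not anticipate a genuine obstacle: the only place requiring care is the first step, where the claim that ``stars'' form a finite set must be argued honestly from postcritical finiteness; after that, the argument is just the standard fact that subgroups of $\mathbb{Z}^{N}$ are finitely generated, applied finitely many times and combined by a maximum. This is the star analogue of Lemma~\ref{L:kernel-combo}, and it is exactly what is needed so that, as in the proof of Proposition~\ref{L:Q-cycl}, the finitely many elementary stars of norm at most $M$ can serve (together with the elementary triples) as the finite, $T_2$-invariant spanning set of $\ker\pi$.
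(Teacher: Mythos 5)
Your proof is correct and follows essentially the same route as the paper: identify the geometric-weight-zero stars supported at a fixed branch point with the kernel of the linear map sending $e_{\{i,j\}} \mapsto e_i + e_j$, extract a finite integral basis, and take the maximum norm over the finitely many branch points (resp.\ stars). The only cosmetic difference is that you work directly with a $\mathbb{Z}$-basis of the lattice of integer solutions, whereas the paper takes a $\mathbb{Q}$-basis of the kernel and clears denominators; both yield the same uniform bound $M$.
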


\begin{proof}
Let $x$ be a branch point of $T_f$, and $m$ its valence. The map $A: \mathbb{Q}^{{m \choose 2}} \to \mathbb{Q}^m$ 
defined by $A(e_{\{i,j\}}) := e_i + e_j$
has a finite dimensional kernel. Let $v_1, \dots, v_h$ be a basis for the kernel $K$. By clearing denominators, we obtain a basis $v_1', \dots, v_h'$ of $K$ with integer coefficients. 
An elementary star supported on the neighborhood of $x$ yields an element of $K$. 
Let $M$ be the 
largest norm of all $v_i'$. Since this bound depends only on the valence of the branch point, and there are finitely many 
branch points in $T_f$, the claim follows. 
\end{proof}

\begin{lemma} \label{L:S-forward-inv}
Let $\mathcal{S}$ be the set of all elementary stars of norm at most $M$. Then $T_2(\mathcal{S}) \subseteq \mathcal{S}$. 
\end{lemma}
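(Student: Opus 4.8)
The plan is to show directly that $T_2$ carries an elementary star of norm $\le M$ centered at a branch point $x$ to an elementary star of norm $\le M$ centered at the branch point $f(x)$ (which is again a branch point of $T$, since $f$ maps the Hubbard tree to itself and the postcritical set forward, and a point of valence $>2$ maps to a point of valence $\ge$ its own valence minus a controlled amount; in fact for a star we only need that the \emph{images} of the rays still lie in distinct components of $T \setminus \{f(x)\}$, which we verify directly). First I would fix a star $\Theta = \{\theta_1,\dots,\theta_k\}$ centered at $x$ and an elementary star $S = \sum_{i<j} a_{\{i,j\}}\{\theta_i,\theta_j\}$ of norm $\le M$, and apply $T_2$ to each pair $\{\theta_i,\theta_j\}$. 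There are two cases for each pair, exactly as in the proof of Lemma \ref{L:permute}: if $\{\theta_i,\theta_j\}$ is non-separated then $T_2(\{\theta_i,\theta_j\}) = \{\sigma(\theta_i),\sigma(\theta_j)\}$; if it is separated then the arc $[\overline{\theta_i},\overline{\theta_j}]$ passes through the critical point $0$, and $T_2(\{\theta_i,\theta_j\}) = \{\sigma(\theta_i),\hat\theta\} + \{\hat\theta,\sigma(\theta_j)\}$, where $\overline{\hat\theta}$ is the critical value $f(0)$.

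The key observation is that for a star centered at $x$, \emph{at most one} of the $\theta_i$ can have $\overline{\theta_i}$ on the ``far side'' of $0$ — more precisely, since the rays $\theta_1,\dots,\theta_k$ lie in distinct components of $T\setminus\{x\}$ and the critical point $0$ lies in at most one of those components, at most one index $i_0$ has the property that $0$ separates $\overline{\theta_{i_0}}$ from $x$; and the arc $[\overline{\theta_i},\overline{\theta_j}]$ meets $0$ only when $0$ lies on it, which (for $i,j$ both $\ne$ that distinguished component) cannot happen unless $x = 0$. So I would split into two subcases. If $x \ne 0$: then at most one pair-type, namely pairs $\{\theta_{i_0},\theta_j\}$, can be separated, and all of those are separated through $0$; then $T_2(S) = \sum_{j \ne i_0} a_{\{i_0,j\}}\bigl(\{\sigma(\theta_{i_0}),\hat\theta\} + \{\hat\theta,\sigma(\theta_j)\}\bigr) + \sum_{i<j,\ i,j\ne i_0} a_{\{i,j\}}\{\sigma(\theta_i),\sigma(\theta_j)\}$. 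Now $\overline{\hat\theta} = f(0) = f(\overline{\theta_{i_0}})$ up to the usual identification, so the angles $\{\hat\theta\} \cup \{\sigma(\theta_i) : i \ne i_0\}$ form a star centered at $f(x)$ (the images $\sigma(\theta_i)$ land in distinct components of $T\setminus\{f(x)\}$ because $f$ is a homeomorphism on each component of $T\setminus\{0\}$, and $f(x)$ is a branch point), so $T_2(S)$ is an elementary star on this new star. Its coefficients: the pair $\{\hat\theta,\sigma(\theta_j)\}$ receives $a_{\{i_0,j\}}$ plus possibly $a_{\{i_0,j'\}}$ contributions — here I need to check the norm does not grow, which is where the zero-geometric-weight condition is used: $\sum_{j\ne i_0} a_{\{i_0,j\}} = 0$ forces the $\{\sigma(\theta_{i_0}),\hat\theta\}$ terms to cancel entirely, and each surviving coefficient is a single original $a_{\{i,j\}}$ with the relabeling $\theta_{i_0}\mapsto \hat\theta$, $\theta_i \mapsto \sigma(\theta_i)$, so the norm is unchanged. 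If $x = 0$: then the critical point is the center; pairs straddling $0$ collapse as in the final computation of Lemma \ref{L:permute}, and again one checks $T_2(S)$ is supported on a star centered at $f(0)$ with norm $\le M$.

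I expect the main obstacle to be the careful bookkeeping of coefficients in the separated case: verifying that after the substitution $\theta_{i_0} \mapsto \hat\theta$ distinct pairs of the original star do not get identified in the image (so that coefficients add up and potentially exceed $M$), and that the geometric-weight-zero hypothesis kills exactly the terms that would otherwise cause trouble. One subtlety to handle explicitly: two different angles $\theta_i,\theta_j$ (both $\ne \theta_{i_0}$) could have $\sigma(\theta_i) = \sigma(\theta_j)$, i.e. land at the same point after one iteration, which would merge branches of the image star; I would argue this cannot happen for a genuine star centered at $x\ne 0$ because $f$ is injective on $T\setminus\{0\}$ and the $\overline{\theta_i}$ lie in distinct components, none of which is the component containing $0$. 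A second subtlety: ensuring $f(x)$ is actually a branch point (valence $\ge 3$) rather than degenerating — but for the purposes of the definition of elementary star we only need the images of the chosen rays to lie in distinct components of $T\setminus\{f(x)\}$, which the injectivity argument gives, so I would phrase the conclusion in those terms and not worry about the valence of $f(x)$ separately. Once these points are settled, $T_2(S) \in \mathcal{S}$ follows, completing the proof.
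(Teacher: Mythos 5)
Your proposal is correct and follows essentially the same route as the paper's proof: split according to whether the critical point lies in the interior of the support of the star, observe that only the pairs involving the one distinguished direction can be separated, and use the zero-geometric-weight condition to cancel the $\{\sigma(\theta_{i_0}),\hat\theta\}$ terms, so that the image is an elementary star centered at $f(x)$ with the same coefficients and hence the same norm. The extra checks you flag (no merging of image angles, $f(x)$ still serving as the center of a star, the degenerate case $x=0$) are points the paper leaves implicit; they are handled correctly or are vacuous, so they do not change the argument.
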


\begin{proof}
Consider a star $S = \sum_{i < j} a_{\{i,j\}} \{\theta_i, \theta_j\}$, centered at a branch point $x$. There are two cases.

If the critical point does not lie in the interior of the support of $S$, then every arc in $S$ is non-separated, hence 
the image of $S$ is an elementary star with the same norm. 

Otherwise, if the critical point $c$ lies in the interior of the star, let us say, up to relabeling, 
that $c$ lies on the segment $[x, \overline{\theta_1}]$. Then all pairs $\{ \theta_i, \theta_j \}$ of $S$ 
which contain the index $1$ are separated. Hence 
\begin{align*}
T_2(S) & = \sum_{i, j \neq 1} a_{\{i,j\}} \{ \sigma(\theta_i), \sigma(\theta_j) \} + \sum_j a_{\{1, j\}} \{ \sigma(\theta_j), \hat{\theta} \} + \left(  \sum_j a_{\{1, j\}} \right) \{ \sigma(\theta_1), \hat{\theta} \} \\ 
& = \sum_{i, j \neq 1} a_{\{i,j\}} \{ \sigma(\theta_i), \sigma(\theta_j)\} + \sum_j a_{\{1, j\}} \{ \sigma(\theta_j), \hat{\theta} \}
\end{align*}
since the geometric weight is zero. Since $\{\hat{\theta}, \sigma(\theta_2), \dots, \sigma(\theta_k) \}$ is a star centered at $f(x)$, $T_2(S)$ 
is also an elementary star, of the same norm as $S$. 
\end{proof}

\begin{lemma} \label{L:triples-stars}
Every element in the kernel of $\pi$ is the linear combination of elementary triples and elementary stars with norm at most $M$. 
\end{lemma}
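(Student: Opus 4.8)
The strategy is to peel off the Hubbard tree one branch at a time, reducing to the segment case already settled in Lemma~\ref{L:kernel-combo}, with elementary triples absorbing the rerouting of arcs and elementary stars absorbing what is left over at a branch point. First I would normalize: given $v\in\ker\pi$, since $\pi$ is represented by an integer matrix we may assume $v$ has integer coefficients, and by subtracting degenerate elementary triples $\{\alpha,\beta\}-\{\alpha',\beta\}$ (with $\overline{\alpha}=\overline{\alpha'}$) exactly as in the proof of Lemma~\ref{L:kernel-combo}, we may assume distinct angles in the formal support of $v$ land at distinct points. Every pair in the formal support has a connected arc, hence lies in a single connected component of the geometric support $G$; since an arc covering an edge contained in a component of $G$ is itself contained in that component, $v$ is the sum of its restrictions to the components of $G$, each of which again lies in $\ker\pi$. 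So we may assume $G$ is connected, i.e.\ a finite subtree of $T$.

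I would then induct on the pair $(b(G),\#E(G))$ ordered lexicographically, where $b(G)$ is the number of branch points of $G$ and $\#E(G)$ is the number of edges of $T$ that $G$ meets. If $b(G)=0$ then $G$ is a single segment, and Lemma~\ref{L:kernel-combo} exhibits $v$ as a combination of elementary triples. For the inductive step, pick a leaf $q$ of $G$; by the normalization there is a unique angle $\xi$ with $\overline{\xi}=q$, and if $\{\xi,\eta_1\},\dots,\{\xi,\eta_s\}$ (with coefficients $b_1,\dots,b_s$) are the pairs of $v$ incident to $q$, then vanishing of $\pi(v)$ on the edge of $G$ at $q$ forces $\sum_j b_j=0$, so $s\ge2$. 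Since $q$ is a leaf of $G$, it is not the median of $\{q,\overline{\eta_j},\overline{\eta_k}\}$ for any $j,k$; if for some $j,k$ that median is $\overline{\eta_j}$, i.e.\ $\overline{\eta_j}\in[q,\overline{\eta_k}]$, then $\{\xi,\eta_j\}+\{\eta_j,\eta_k\}-\{\xi,\eta_k\}$ is an elementary triple, and subtracting a multiple of it removes $\{\xi,\eta_k\}$ from the support without enlarging $G$. Iterating, either $\#E(G)$ strictly drops (and we invoke induction) or all pairs at $q$ are exhausted, which again forces the edge at $q$ off of $G$.

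The only remaining possibility is that, for every $j\neq k$, the arcs $[q,\overline{\eta_j}]$ and $[q,\overline{\eta_k}]$ diverge at a point that is neither $q$ nor an endpoint. Then these arcs share a common initial segment $[q,x]$ and split at $x$ into at least two distinct components of $T\setminus\{x\}$, so $x$ is a branch point of $G$; I would eliminate $x$. Using elementary triples to cut each pair of $v$ whose arc passes through $x$ at the last point of $P$ before $x$ on that arc, and then (recursing into earlier cases of the induction inside each component of $T\setminus\{x\}$) moving all such cut points to one representative $\rho_l$ in each used component $C_l$, one rewrites $v$ modulo $\mathcal{T}$ as: a combination of pairs $\{\rho_l,\rho_m\}$ supported on the star $\{\rho_l\}$ centered at $x$, plus pairs whose arcs avoid $x$ (and cross strictly fewer edges). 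The coefficient of the edge of $T$ from $x$ into $C_l$ under $\pi$ equals the ``weighted degree'' at $C_l$ of the star part, so $v\in\ker\pi$ forces this star to have zero geometric weight; Lemma~\ref{L:elem-star} then expresses it as a combination of elementary stars of norm at most $M$. The surviving pairs have geometric support with no branch point at $x$, hence smaller $b(\cdot)$, and the lexicographic induction closes.

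I expect the last paragraph to be the crux. The genuine difficulty is that the kernel condition couples \emph{all} pairs of $v$ passing through a given branch point $x$, not just those seen from one leaf, so the reduction cannot be purely local; and because $x$ need not be the landing point of any postcritical angle, the cut points created by the triple splittings lie at various nearby postcritical points and must be reconciled by further triple moves, which is where one recurses. The delicate point is to organize the complexity measure so that all of these moves strictly decrease it — in particular, checking that consolidating the cut points to the representatives $\rho_l$ does not create new branch points of the geometric support; the case $b(G)=0$ and the leaf moves are routine adaptations of the proof of Lemma~\ref{L:kernel-combo}.
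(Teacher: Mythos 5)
Your proposal circles the right ingredients (normalize, reduce to connected support, use Lemma~\ref{L:kernel-combo} on segment-supported pieces, invoke Lemma~\ref{L:elem-star} for a zero-geometric-weight star at a branch point), but the step you yourself flag as the crux --- eliminating a branch point $x$ of the support --- is not actually carried out, and the mechanism you propose for it fails. After cutting a crossing pair at the last postcritical landing point $p$ before $x$ on its arc, there is by construction no postcritical point strictly between $p$ and $x$; consequently, for a proposed representative $\rho_l$ in the same component $C_l$ of $T\setminus\{x\}$, the three-term combination $\{p,r\}-\{\rho_l,r\}-\{p,\rho_l\}$ (up to sign) is an elementary triple only if one of $p,\rho_l$ lies on the arc from the other to $\overline{r}$, i.e.\ only if $\rho_l=p$ or $p\in[\rho_l,x]$. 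But two crossing arcs can enter $x$ from $C_l$ through different sub-branches at an intermediate branch point $y$ of $T$ with no postcritical landing point between $y$ and $x$; their last postcritical points $p_1\neq p_2$ then lie in the same component $C_l$, and no single $\rho_l$ satisfies $p_1,p_2\in[\rho_l,x]$. So the consolidation ``modulo elementary triples'' is impossible in general, and without it the crossing part is not an elementary star in the sense of the paper (the definition requires \emph{all} the angles involved to land pairwise in different components of $T\setminus\{x\}$), so Lemma~\ref{L:elem-star} cannot be applied; your parenthetical ``recursing into earlier cases inside each component'' does not repair this, because the rewriting whose error terms you want to recurse on is not a sum of elementary triples to begin with. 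Together with the asserted-but-unverified strict decrease of the lexicographic measure, this leaves the inductive step open; the earlier parts (normalization, connectedness reduction, the leaf moves, and the observation that the kernel condition at the edge incident to a leaf forces the incident coefficients to sum to zero) are fine.

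For comparison, the paper's proof needs no induction and no consolidation: it fixes once and for all a decomposition of $T$ into ``edges'' and $1$-neighborhoods of branch points (the postcritical points closest to each branch point), writes each pair $\{\xi,\eta\}$ in the kernel element as the sum of the consecutive sub-pairs along the subdivision points plus a telescoping sum of elementary triples, and then groups the resulting configuration by region: the parts supported in an edge lie in $\ker\pi$ and are handled by Lemma~\ref{L:kernel-combo}, while the parts supported in the $1$-neighborhood of a branch point have zero geometric weight and are handled by Lemma~\ref{L:elem-star}. In other words, the star pieces arise directly as arcs between postcritical points adjacent to the branch point, rather than being forced to a single representative per component as in your scheme; if you want to keep the leaf-peeling framework, you would need to replace the consolidation step by this kind of two-sided cutting at the postcritical points adjacent to $x$ and then argue about the resulting star configuration directly.
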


\begin{proof}
We denote as $B_0$ the branch points of the Hubbard tree $T_f$ which lie in the set $\bigcup_{n \geq 0} f^n(0)$,
and as $B_1$ the other branch points. 

For each branch point $\alpha$ in $B_1$, define its $1$-neighborhood $N_1(\alpha)$ as the set of postcritical points which are closest to $\alpha$, meaning that there is no other postcritical point between them and $\alpha$. The complement 
$$T_f \setminus \left(  \bigcup_{\alpha \in B_0} N_1(\alpha) \cup \bigcup_{\alpha \in B_1} \{\alpha \} \right)$$
is the union of segments, which we will call \emph{edges}. 

Let $S$ be an element in the kernel of $\pi$. 
If an angle pair $\{ \xi,  \eta\}$ is in the support of $S$, then we can write its associated segment as a union of segments
$$[\overline{\xi}, \overline{\eta}] = [x_0, x_1] \cup [x_1, x_2] \cup \dots \cup [x_{r-1}, x_r],$$
each of them lying 
either in a neighborhood of a branch point or in an edge. For each $1 \leq i \leq r-1$, let $\eta_i$ be a postcritical angle 
whose ray lands at $x_i$. Moreover, set $\eta_0 = \xi$, $\eta_r = \eta$.
Then we can write 
$$\{ \xi,  \eta \} = \sum_{i = 0}^{r-1} \{ \eta_i, \eta_{i+1} \} + \sum_{i = 0}^{r-1} ( \{ \eta_i, \eta_r\} - \{ \eta_i, \eta_{i+1}\} - \{ \eta_{i+1}, \eta_r \})$$
where all terms in the last sum are elementary triples. 
Thus, any configuration with zero geometric weight can be written, up to adding elementary triples, as the sum of configurations 
with zero geometric weight which are supported either in the $1$-neighborhood of a branch point or in an edge. 

By Lemma \ref{L:kernel-combo}, configurations with zero geometric weight supported in an edge can be written as linear combinations of elementary triples. 
Moreover, for each branch point, the configuration restricted to the $1$-neighborhood of each point is an elementary star 
with zero geometric weight. By Lemma \ref{L:elem-star}, this configuration is a linear combination of elementary stars with
norm at most $M$. This completes the proof of the claim.
\end{proof}


\section{Roots of the spectral determinant for periodic angles} \label{sec:coversofthefinitemodel}
  
In \cite{TiozzoContinuity}, Tiozzo shows (by combining Theorem \ref{t:SpectralDeterminantGrowthRate} and Proposition \ref{p:GrowthRateFiniteModel}) that for a 
rational angle $\theta$, the inverse of the smallest root of the spectral determinant of the graph $\Gamma_{\theta}$ associated to the labeled wedge $\mathcal{W}_{\theta}$ equals the growth rate (largest eigenvalue) of the finite model graph ($\Gamma_1$ in the notation below),  which Thurston's entropy algorithm shows is the core entropy of the quadratic polynomial of external angle $\theta$. 
In this section, we investigate \emph{all} the roots of the spectral determinant, not only the smallest root.

\bigskip
\noindent \textbf{Setup.} Throughout this section, we will use the following notation:
\begin{itemize}
\item  $\mathcal{W}$ is a periodic labeled wedge of period $p$ and preperiod $q$,
\item $\Gamma$ is the (periodic) directed graph associated to $\mathcal{W}$,
\item For each $k \in \mathbb{N}$, $$\Gamma_k := \Gamma \big/ \equiv_{kp,q}$$ is the quotient of the graph $\Gamma$ by the equivalence relation $\equiv_{kp,q}$ (defined in \S  \ref{ss:periodiclabeledwedgesfinitemodels}).  $\Gamma_k = (\mathcal{V}_k, \mathcal{E}_k)$ denotes the vertex set and edge set of $\Gamma_k$. Since the labeling on $\mathcal{W}$ is constant on $\equiv_{kp,q}$-equivalence classes, the labeling of $\mathcal{W}$ induces a labeling on vertices of $\Gamma_k$. 
\item For each $k \in \mathbb{N}$, we use the canonical basis $\{e_v \ : \ v \in \mathcal{V}_k \}$ for $\mathbb{R}^{\mathcal{V}_k}$, where we denote as $e_v$ the element of $\mathbb{R}^{\mathcal{V}_k}$ that has a $1$ in the position corresponding to $v$ and $0$ in the other positions.  Then, $M_k$ is the incidence matrix associated to $\Gamma_k$, and $A_k$ the associated linear map corresponding to the matrix $M_k$ in the canonical basis.
As vertices of $\Gamma_k$ are in bijection with the set $\{(i,j) : 1 \leq i  \leq j  \leq kp + q \}$, 
$M_k$ is a square matrix of dimension ${kp + q + 1}\choose{2}$.  

\item For each $k \in \mathbb{N}$, we consider the linear map $\pi_k:\mathbb{R}^{\mathcal{V}_k} \to \mathbb{R}^{\mathcal{V}_1}$ 
defined on its basis elements by 
$$\pi_k(e_v) := e_{[v]}$$
where $[v]$ denotes the class in $\Gamma_1$ of a vertex $v$ in $\Gamma_k$. 
\end{itemize}

\subsection{Characteristic polynomials of finite covers} \label{ss:charpolysoffinitecovers}

The goal of this subsection is to prove the following theorem: 

  \begin{theorem} \label{t:cyclotomicfactors}
  For any $k \in \mathbb{N}$, the characteristic polynomial for the action of $A_k$ on $\mathbb{R}^{\mathcal{V}_k}$ is the product of the characteristic polynomial for the action of $A_1$ on $\mathbb{R}^{\mathcal{V}_1}$, cyclotomic factors, and $x^d$ for some integer $d \geq 0$. 
  \end{theorem}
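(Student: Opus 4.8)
The plan is to exploit the fact that $\pi_k:\mathbb{R}^{\mathcal{V}_k}\to\mathbb{R}^{\mathcal{V}_1}$ intertwines the two adjacency actions, so that $A_1$ is (up to the usual spectral-determinant bookkeeping) a quotient of $A_k$, and then to identify the complementary factor via the linear-algebra lemma already proved, Lemma~\ref{L:finite-cyclo}. First I would check that the diagram
\[
\begin{tikzcd}
\mathbb{R}^{\mathcal{V}_k} \arrow{r}{A_k} \arrow[swap]{d}{\pi_k} & \mathbb{R}^{\mathcal{V}_k} \arrow{d}{\pi_k} \\
\mathbb{R}^{\mathcal{V}_1} \arrow{r}{A_1} & \mathbb{R}^{\mathcal{V}_1}
\end{tikzcd}
\]
commutes. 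This is the crux of the setup: a vertex $v=(i,j)$ of $\Gamma_k$ has outgoing edges determined solely by its label, and the label of $v$ depends only on the $\equiv_{p,q}$-class (hence the $\equiv_{kp,q}$-class) of $\{i,j\}$, which is exactly what $\pi_k$ records; the edge-construction rules ($N$: go to $(i+1,j+1)$; $S$: go to $(1,j+1)$ and $(1,i+1)$; $E$: no edge) commute with passing to the quotient index mod $kp$ and then mod $p$. So $\pi_k$ is a weak graph map and $\pi_k\circ A_k = A_1\circ\pi_k$. Since the vertices of $\Gamma_1$ with label $E$ carry no edges and correspond to the columns/rows of zeros, I should be slightly careful about the padding by $x^d$, but this is the same harmless bookkeeping as in Section~\ref{s:relatingThurstonAndMarkovPolys}: restricting to the non-$E$ vertices changes the characteristic polynomial only by a power of $x$.

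Next, because $\pi_k$ is surjective and intertwines the actions, $A_k$ descends to $A_1$ on $\mathbb{R}^{\mathcal{V}_k}/\ker\pi_k\cong\mathbb{R}^{\mathcal{V}_1}$, and $A_k(\ker\pi_k)\subseteq\ker\pi_k$. Therefore the characteristic polynomial factors as
\[
\chi_{A_k}(x) = \chi_{A_1}(x)\cdot \chi_{A_k|_{\ker\pi_k}}(x),
\]
(up to the power-of-$x$ padding coming from the $E$-vertices on each side), and it remains to show that $\chi_{A_k|_{\ker\pi_k}}(x) = x^d\cdot(\text{product of cyclotomic polynomials})$. For this I would invoke Lemma~\ref{L:finite-cyclo}: I need a finite set $\mathcal{S}\subseteq\ker\pi_k$ with $A_k(\mathcal{S})\subseteq\mathcal{S}$ and $\mathrm{span}(\mathcal{S})=\ker\pi_k$. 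The natural candidate is the set of ``difference vectors'' $e_v - e_{v'}$ where $v,v'$ are vertices of $\Gamma_k$ lying over the same vertex of $\Gamma_1$ (i.e. $v\equiv_{p,q}v'$ but $v\not\equiv_{kp,q}v'$), since these visibly span $\ker\pi_k$ (the kernel of a "sum of coordinates over each fiber" map is spanned by fiberwise differences). The key point is that $A_k$ permutes such difference vectors up to sign: if $v,v'$ are $\equiv_{p,q}$-equivalent they have the same label, hence the same number of outgoing edges with "the same shape", and the rule $(i,j)\mapsto(i{+}1,j{+}1)$ or $(i,j)\mapsto\{(1,j{+}1),(1,i{+}1)\}$ commutes with the shift; so $A_k(e_v - e_{v'})$ is again a sum/difference of $e_w - e_{w'}$ with $w\equiv_{p,q}w'$ — i.e. it lands in the $\mathbb{Z}$-span of $\{\pm(e_w-e_{w'})\}$. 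To get a genuinely \emph{finite} forward-invariant set I would take $\mathcal{S}=\{\,\pm(e_w-e_{w'}) : w\equiv_{p,q}w',\ w,w'\in\mathcal{V}_k\,\}\cup\{0\}$, which is finite, contains a spanning set of $\ker\pi_k$, and by the above is mapped into itself by $A_k$ (an $N$-labeled vertex contributes a single $\pm(e_w-e_{w'})$; an $S$-labeled vertex contributes a sum of two such terms — here one must observe that each of those two terms is again a fiberwise difference, which it is, because both $(1,j{+}1)$-type targets of $v$ and of $v'$ are $\equiv_{p,q}$-equivalent). Then Lemma~\ref{L:finite-cyclo} applies directly.

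The main obstacle I anticipate is the second step rather than the first: verifying that $\mathcal{S}$ (or a suitable finite spanning subset) is genuinely forward-invariant under $A_k$, and in particular handling the separated vertices, where a single basis vector maps to a \emph{sum} of two basis vectors and one must confirm that the corresponding difference $e_v-e_{v'}$ still maps into the span of differences — equivalently, that the two "branches" of the separated rule respect fibers over $\Gamma_1$. This is true because the rule depends only on $i,j \bmod kp$ and the targets' classes mod $p$ are then forced, but it requires care to phrase cleanly, and one should also double-check the preperiodic coordinates ($i\le q$) where $\equiv_{p,q}$ is trivial, so that differences there are automatically zero and cause no trouble. A cleaner alternative, which I would fall back on if the explicit bookkeeping gets unwieldy, is to argue abstractly: $A_k$ preserves the finitely-generated abelian group $\bigoplus_{v}\mathbb{Z}e_v \cap \ker\pi_k$ and acts on the finite set of its elements of bounded $\ell^1$-norm that are $\mathbb{Z}$-combinations of fiberwise differences with all coefficients in $\{-1,0,1\}$ — but one must check $A_k$ does not increase this norm on that set, which again comes down to the same case analysis of $N$- vs.\ $S$-labels.
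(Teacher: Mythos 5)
Your first step is fine and matches the paper's Lemma~\ref{l:commutativediagram}: the commutativity $A_1\circ\pi_k=\pi_k\circ A_k$ holds, $A_k$ preserves $\ker\pi_k$, and the characteristic polynomial factors as $\chi_{A_1}$ times the characteristic polynomial of $A_k\vert_{\ker\pi_k}$ (up to the harmless power of $x$). The gap is in the second step, and it is exactly at the point you flagged but did not resolve: the set $\mathcal{S}=\{\pm(e_v-e_{v'}) : v\equiv_{p,q}v'\}\cup\{0\}$ is \emph{not} forward-invariant under $A_k$. If $v=\{a_1,b_1\}$ and $v'=\{a_2,b_2\}$ are a separated fiberwise pair, then
$$A_k(e_v-e_{v'})=\bigl(e_{\{1,a_1+1\}}-e_{\{1,a_2+1\}}\bigr)+\bigl(e_{\{1,b_1+1\}}-e_{\{1,b_2+1\}}\bigr),$$
a sum of two fiberwise differences supported (generically) on four distinct vertices; this lies in the span of $\mathcal{S}$ but is not an element of $\mathcal{S}$, so the hypothesis $T(S)\subseteq S$ of Lemma~\ref{L:finite-cyclo} fails and the lemma cannot be invoked. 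Your fallback does not repair this: each pass through a separated vertex can double the number of difference terms, and distinct separated vertices $(i,j)$, $(i',j)$ share the target $(1,j+1)$, so neither the $\ell^1$-norm nor the condition ``coefficients in $\{-1,0,1\}$'' is preserved. More seriously, the strategy of finding \emph{any} finite $A_k$-invariant spanning set of $\ker\pi_k$ is in jeopardy a priori: if $A_k\vert_{\ker\pi_k}$ had a nontrivial Jordan block at a root of unity (the paper explicitly warns elsewhere that Jordan blocks cannot be ruled out by eigenvalue arguments), the iterates of a suitable vector would be unbounded and no such finite set could exist, even though the conclusion of the theorem is still true.

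The paper circumvents this by not working inside $\ker\pi_k$ directly. It introduces the auxiliary space $H_k=\mathbb{R}^{\mathcal{H}_k}$ indexed by \emph{ordered} pairs $(v,w)$ of $\equiv_{p,q}$-equivalent vertices, with a map $L_k$ mimicking the wedge rules, and a surjection $\phi(e_{(v,w)})=e_v-e_w$ semiconjugating $L_k$ to $A_k\vert_{\ker\pi_k}$ (Lemma~\ref{l:Lkconjonkernel}); this only yields that $\chi_{A_k\vert_{\ker\pi_k}}$ \emph{divides} $\chi_{L_k}$, which suffices. The structure of $L_k$ is then analyzed via the invariant subspace $J_k$ spanned by pairs sharing a coordinate: inside $J_k$ the separated rule kills one of the two image terms (it produces an $e_{(u,u)}=0$), so basis vectors map to single basis vectors or $0$; and in the quotient $H_k/J_k$, separated pairs map to $0$ because both image terms land in $J_k$. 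This filtration converts the problematic ``one vector $\mapsto$ sum of two'' behaviour into an eventually-periodic action on basis vectors, giving cyclotomic factors times $x^d$ (Lemma~\ref{l:charpolyLk}). If you want to salvage your outline, you would need to replace the appeal to Lemma~\ref{L:finite-cyclo} by some such pair-space or filtration argument; as written, the key invariance claim is false.
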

   
\begin{proof}
This follows from Lemma \ref{l:commutativediagram} and Proposition \ref{p:charpolyrestricted}, which we state and prove below.
\end{proof}

\begin{lemma} \label{l:commutativediagram}
For each $k$, $\pi_k$ is a linear map which satisfies $A_1 \circ \pi_k = \pi_k \circ A_k$. That is, the following diagram commutes:
\[\begin{tikzcd}
\mathbb{R}^{\mathcal{V}_k} \arrow{r}{A_k} \arrow[swap]{d}{\pi_k} & \mathbb{R}^{\mathcal{V}_k} \arrow{d}{\pi_k} \\
\mathbb{R}^{\mathcal{V}_1} \arrow{r}{A_1} & \mathbb{R}^{\mathcal{V}_1}
\end{tikzcd}
\]
\end{lemma}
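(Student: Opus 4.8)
The plan is to verify the commutation $A_1 \circ \pi_k = \pi_k \circ A_k$ directly on the canonical basis $\{e_v : v \in \mathcal{V}_k\}$, since both sides are linear. Fix a vertex $v \in \mathcal{V}_k$, which we may identify with an unordered pair $(i,j)$ with $1 \le i \le j \le kp+q$. By definition of the incidence matrix, $A_k(e_v) = \sum_{w} \#(v \to w)\, e_w$, where the sum runs over the out-neighbors $w$ of $v$ in $\Gamma_k$; by the edge rules for the wedge graph (applied to the induced labeling on $\Gamma_k$), the out-neighbors of $v=(i,j)$ are: nothing if $(i,j)$ is labeled $E$; the single vertex $(i+1,j+1)$ reduced mod $\equiv_{kp,q}$ if $(i,j)$ is non-separated; and the two vertices $(1,j+1)$ and $(1,i+1)$ reduced mod $\equiv_{kp,q}$ if $(i,j)$ is separated. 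Pushing forward by $\pi_k$ and then applying $A_1$, I get the same recipe but with everything reduced mod $\equiv_{p,q}$.

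The key point, which I would state and check carefully, is that the construction of out-edges is \emph{compatible} with the quotient maps, i.e. the following two operations give the same result: (a) compute out-neighbors in $\Gamma_k$, then project to $\Gamma_1$; (b) project $v$ to $[v] \in \Gamma_1$, then compute out-neighbors in $\Gamma_1$. This hinges on two sub-facts. First, the labeling of $v$ in $\mathcal{W}$ equals the labeling of $[v]$, which is exactly the hypothesis that the labeling of the periodic wedge $\mathcal{W}$ is constant on $\equiv_{p,q}$-classes (and a fortiori on $\equiv_{kp,q}$-classes), together with the compatibility of the two equivalence relations: $\equiv_{kp,q}$ refines $\equiv_{p,q}$, so passing to $\Gamma_k$ first and then to $\Gamma_1$ agrees with passing directly to $\Gamma_1$. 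Second, the index-shift operations $(i,j) \mapsto (i+1,j+1)$ and $(i,j) \mapsto \{(1,j+1),(1,i+1)\}$ descend to the quotients, because $\equiv_{p,q}$ (and $\equiv_{kp,q}$) is invariant under the shift $i \mapsto i+1$ on the part of $\mathbb{N}$ where it is nontrivial — that is, $i \equiv_{p,q} j$ with $i,j \ge q+1$ implies $i+1 \equiv_{p,q} j+1$ — and the new first coordinate $1$ is fixed. Combining: $\#([v] \to [w])$ in $\Gamma_1$ equals $\sum_{w': [w']=[w]} \#(v \to w')$ counted in $\Gamma_k$, which is precisely the statement that the projections intertwine the adjacency operators. (One should note here that the quotient-graph edge-count convention from \S\ref{ss:periodiclabeledwedgesfinitemodels} is exactly ``sum over the fiber,'' so this is consistent.)

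Concretely I would organize the proof as: (1) recall that both $A_k$ and $A_1$ are determined by the out-edge rules and that $\pi_k$ sends $e_v \mapsto e_{[v]}$; (2) observe $\equiv_{p,q}$ is shift-invariant on $\{q+1, q+2, \dots\}$ and fixes small indices, and that $\equiv_{kp,q}$ refines $\equiv_{p,q}$; (3) note the label of $v$ agrees with the label of $[v]$ by periodicity of $\mathcal{W}$; (4) do the three-case check ($E$, non-separated, separated) showing $\pi_k(A_k(e_v)) = A_1(\pi_k(e_v))$ in each case, using that $[\,(i+1,j+1)\,]$ and $\{[(1,j+1)],[(1,i+1)]\}$ are computed the same whether one reduces mod $kp$ first or mod $p$ directly. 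I do not expect any real obstacle here — the content is entirely bookkeeping about how the two equivalence relations interact with the wedge-graph edge rules — but the one place to be careful is multiplicities: when reducing mod $\equiv_{kp,q}$ two distinct out-neighbors of $v$ in $\Gamma_k$ could become $\equiv_{p,q}$-equivalent after projection, so $\pi_k$ must add their basis vectors, and one must confirm this matches the edge-count in $\Gamma_1$, which it does by the ``sum over the fiber'' convention. This establishes the commuting square and completes the proof of the lemma.
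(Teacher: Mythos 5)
Your proposal is correct and follows essentially the same route as the paper: both verify the identity on the canonical basis vectors $e_v$, using that the periodicity of the labeling makes the label of $v$ agree with that of $[v]$, that out-edges correspond with targets in the same $\equiv_{p,q}$-class, and that $\pi_k$ is constant on these classes. Your extra bookkeeping (shift-invariance of $\equiv_{p,q}$, refinement $\equiv_{kp,q}\Rightarrow\equiv_{p,q}$, and the fiber-sum multiplicity check) just spells out details the paper leaves implicit.
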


\begin{proof} 
By linearity, it suffices to verify commutativity on the set of basis vectors $\{e_v, v \in \mathcal{V}_k\}$ for $\mathbb{R}^{\mathcal{V}_k}$.  So consider a fixed vector $e_v$.  By condition \eqref{i:correspondingverticessamelabel} from Definition~\ref{D:periodiclabeledwedge}, the label of the vertex $v$ corresponding to $e_v$ in $\Gamma_k$ is the same as the label of the corresponding vertex (the image under the projection map from $\Gamma_k$ to $\Gamma_1$) in $\Gamma_1$, call it $w$.  Also by the periodicity of the labeling, an edge leaves $v$ if and only if a corresponding edge leaves $w$, and their targets belong to the same $\equiv_{p,q}$ equivalence class.  Since $\pi_k$ is constant on $\equiv_{p,q}$ equivalence classes, it follows that  $A_1 \circ \pi_k(e_v) = \pi_k \circ A_k(e_v)$.
\end{proof}

As a consequence of Lemma \ref{l:commutativediagram}, $A_k$ preserves $\textrm{ker}(\pi_k)$. It remains to prove: 

\begin{proposition}\label{p:charpolyrestricted}
The characteristic polynomial of the restriction of the linear map $A_k$ to the vector space $\textrm{ker}(\pi_k)$ is a product of cyclotomic polynomials and the polynomial $x^d$, $d \in \mathbb{N}$. 
\end{proposition}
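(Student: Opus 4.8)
The plan is to mimic the strategy used in Section~\ref{s:relatingThurstonAndMarkovPolys}: exhibit an explicit spanning set $S$ of $\ker(\pi_k)$ that is carried into itself (up to sending some elements to $0$) by $A_k$, and then invoke Lemma~\ref{L:finite-cyclo}. The vertices of $\Gamma_k$ are indexed by unordered pairs $\{i,j\}$ with $1 \le i \le j \le kp+q$, and the projection $\pi_k$ identifies two basis vectors $e_{\{i,j\}}$ and $e_{\{i',j'\}}$ exactly when $\{i,j\} \equiv_{p,q} \{i',j'\}$. So $\ker(\pi_k)$ is spanned by the differences $e_{\{i,j\}} - e_{\{i',j'\}}$ over pairs with $\{i,j\} \equiv_{p,q} \{i',j'\}$ but $\{i,j\} \ne \{i',j'\}$. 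The first step is to check that this \emph{finite} set of differences $S := \{\, e_v - e_w : v \equiv_{p,q} w,\ v \ne w \,\}$ does span $\ker(\pi_k)$ — this is immediate because $\pi_k$ is the linear map sending each basis vector to the basis vector of its class, so its kernel is spanned by basis-vector differences within a fiber, and every fiber difference is a telescoping sum of such ``adjacent'' differences (indeed any difference $e_v-e_w$ with $v\equiv w$ is already in $S$).

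The crux is the second step: showing $A_k(S) \subseteq \mathbb{R}\text{-span of } S$, and moreover that $A_k$ actually permutes $S \cup \{0\}$ (sends each element of $S$ to an element of $S$ or to $0$), which is what Lemma~\ref{L:finite-cyclo} needs. Here I would use the structure of the edge set $E_{\mathcal{W}}$ from the labeled-wedge construction together with the periodicity conditions \eqref{i:correspondingverticessamelabel}--(3) of \S\ref{ss:periodiclabeledwedgesfinitemodels}. The key observation is that for equivalent vertices $v \equiv_{p,q} v'$, the labels agree, so either both are $E$ (no outgoing edges, hence $A_k(e_v) = A_k(e_{v'}) = 0$ and the difference maps to $0$), both are non-separated (each has a single outgoing edge, to $v+(1,1)$ and $v'+(1,1)$ respectively, and one checks $v+(1,1) \equiv_{p,q} v'+(1,1)$ — using that $\equiv_{p,q}$ is a shift-like relation on indices $\ge q+1$ — so $A_k(e_v - e_{v'}) = e_{v+(1,1)} - e_{v'+(1,1)} \in S\cup\{0\}$), or both are separated (each has two outgoing edges, to $(1, j+1)$ and $(1, i+1)$, and again the targets of $v$ and $v'$ are correspondingly equivalent, so $A_k(e_v - e_{v'})$ is a difference of two basis-vector differences, each lying in $S \cup \{0\}$). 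So $A_k$ does not quite send $S$ into $S$ in the separated case — it sends a single element of $S$ to a sum of \emph{two} elements of $S \cup \{0\}$. To fix this I would instead take $S$ to be closed under the operations arising here: more carefully, take $S$ to be the set of all $e_v - e_w$ with $v \equiv_{p,q} w$ together with all pairwise such differences, and verify directly that this finite set is $A_k$-invariant, or — cleaner — apply Lemma~\ref{L:finite-cyclo} with $E = \ker(\pi_k)$ and $S = \{\pm(e_v - e_w) : v\equiv_{p,q} w\}$ and observe that in the separated case the image is a \emph{difference} $e_{(1,j+1)} - e_{(1,j'+1)}$ of the first target pair together with $e_{(1,i+1)} - e_{(1,i'+1)}$ of the second — i.e.\ one just has to be slightly more generous about what $S$ is. The honest version: let $S$ be the union of $\{e_v - e_w : v \equiv_{p,q} w\}$ and its image under finitely many applications of $A_k$; since $\Gamma_k$ has bounded outgoing degree ($\le 2$) and the index range is finite, this set is finite, and it is $A_k$-invariant by construction.

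With the spanning and invariance in hand, Lemma~\ref{L:finite-cyclo} applies directly: the characteristic polynomial of $A_k|_{\ker\pi_k}$ is $t^d$ times a product of cyclotomic polynomials, which is exactly the statement of Proposition~\ref{p:charpolyrestricted}. Combined with Lemma~\ref{l:commutativediagram} (which gives the block-triangular decomposition $\chi_{A_k} = \chi_{A_1}\cdot\chi_{A_k|_{\ker\pi_k}}$), this yields Theorem~\ref{t:cyclotomicfactors}.

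The main obstacle I anticipate is the bookkeeping in the \emph{separated} case: verifying that if $v \equiv_{p,q} v'$ are both separated, written $v = (i,j)$ and $v' = (i',j')$ with $i \equiv_{p,q} i'$, $j \equiv_{p,q} j'$, then the two target pairs $(1, j+1), (1, i+1)$ for $v$ are respectively equivalent to $(1, j'+1), (1, i'+1)$ for $v'$ — this requires knowing that $i \equiv_{p,q} i'$ with $i, i' \ge q+1$ implies $i+1 \equiv_{p,q} i'+1$ (true since $i \equiv i' \bmod p$ forces $i+1 \equiv i'+1 \bmod p$ and $i+1, i'+1 \ge q+1$), plus the edge case where one of $i, i'$ equals $q$ exactly and the increment crosses into the periodic regime, which needs a careful check using the precise definition of $\equiv_{p,q}$ (and also that the first coordinate $1$ is fixed, causing no issue since $1 \le q$ typically or $1$ is its own class). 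A secondary subtlety is confirming the claim, used implicitly, that equivalent vertices in a periodic labeled wedge genuinely have matching outgoing-edge structure including the targets' equivalence classes — but this is essentially the content of the proof of Lemma~\ref{l:commutativediagram} and can be cited.
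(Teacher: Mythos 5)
Your reduction to Lemma \ref{L:finite-cyclo} stalls exactly where you say it does, and none of your proposed repairs closes the gap. In the separated case $A_k(e_v-e_{v'})$ is a sum of \emph{two} elements of $S$, so $S=\{e_v-e_w : v\equiv_{p,q} w\}$ is not invariant; your ``honest version'' --- take $S$ together with ``its image under finitely many applications of $A_k$'' --- is either not invariant (if you stop after finitely many steps) or not obviously finite (if you take the full forward orbit). Bounded outgoing degree only bounds how many basis vectors are hit at each step, so the coefficients of $A_k^n(e_v-e_w)$ could a priori grow like $2^n$; finiteness (equivalently boundedness) of forward orbits inside $\ker\pi_k$ is essentially the spectral statement you are trying to prove, so asserting it ``by construction'' is circular. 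The missing idea is a specific cancellation: once the two pairs being subtracted already \emph{share a coordinate}, the separated case produces two target differences one of which collapses, e.g.
$A_k\bigl(e_{\{1,a\}}-e_{\{1,a'\}}\bigr)=\bigl(e_{\{1,a+1\}}+e_{\{1,2\}}\bigr)-\bigl(e_{\{1,a'+1\}}+e_{\{1,2\}}\bigr)=e_{\{1,a+1\}}-e_{\{1,a'+1\}}$,
a \emph{single} difference again. Hence after one step every element of $S$ becomes a sum of at most two shared-coordinate differences, and those are permuted (up to $0$) by all further iterates; only with this observation do forward orbits stay in a finite set, and only then does a Lemma~\ref{L:finite-cyclo}-type argument (or any argument) go through.

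This cancellation is precisely the engine of the paper's proof, which you did not reproduce: it lifts to the auxiliary space $H_k=\mathbb{R}^{\mathcal{H}_k}$ of ordered pairs $(v,w)$ with $v\equiv_{p,q}w$, defines $L_k$ (with the convention $e_{(u,u)}=0$), and isolates the subspace $J_k$ spanned by pairs sharing a coordinate: $L_k$ maps $\mathcal{J}_k$ into $\mathcal{J}_k\cup\{0\}$ (the shared coordinate kills one of the two separated targets), every separated pair of $\mathcal{H}_k$ is sent into $J_k$, and non-separated pairs outside $\mathcal{J}_k$ either cycle or eventually fall into $J_k$; this yields Lemma \ref{l:charpolyLk}, and Lemma \ref{l:Lkconjonkernel} transports the conclusion to $A_k|_{\ker\pi_k}$ via the surjective semiconjugacy $\phi(e_{(v,w)})=e_v-e_w$, since the characteristic polynomial of the restriction divides that of $L_k$. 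Your spanning claim for $\ker\pi_k$, the non-separated case, and the bookkeeping $i\equiv_{p,q}i'\Rightarrow i+1\equiv_{p,q}i'+1$ (when $i\neq i'$ both indices are $\geq q+1$) are all correct and consistent with Lemma \ref{l:commutativediagram}; but without the $\mathcal{J}_k$-type cancellation, or some equivalent control of the separated case, the proposal does not prove the proposition.
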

  
In order to prove Proposition \ref{p:charpolyrestricted}, we first define and investigate a related vector space $H_k$ and a linear map $L_k$ on $H_k$. 
Define the set 
$$ \mathcal{H}_k \coloneqq \left \{(v, w) \in \mathcal{V}_k \times \mathcal{V}_k: v \equiv_{p,q} w, \ v \neq w \right \}$$
and let $H_k \coloneqq \mathbb{R}^{\mathcal{H}_k}$ be the vector space over $\mathbb{R}$ for which $\mathcal{H}_k$ is a basis. 

Note that an element $(v,w)$ of $\mathcal{H}_k$ is an \emph{ordered} pair, while each of $v$ and $w$ denotes an element of the unlabeled wedge $\Sigma$, and elements of $\Sigma$ are \emph{unordered pairs} of natural numbers: for this reason, we will use the notation $v = \{ a, b \}$ with $a, b \in \mathbb{N}$ to denote elements of $\mathcal{V}_k$, and $(v, w)$ to denote elements of $\mathcal{H}_k$. 

Note also that, by periodicity,  for any element $(v,w)$ of $\mathcal{H}_k$, the vertices $v$ and $w$ have the same label.

We use the canonical basis $\{e_{(v, w)}  :  (v, w) \in \mathcal{H}_k \}$ for $H_k$, where we denote as $e_{(v, w)}$ the element of 
$H_k = \mathbb{R}^{\mathcal{H}_k}$ that has a $1$ in the position corresponding to $(v, w)$ and $0$ in the other positions.
Moreover, if $v = w \in \mathcal{V}_k$, we define $e_{(v,w)} = e_{(v,v)}$ as $0$. 

Let $(v, w) \in \mathcal{H}_k$. Since $v \equiv_{p, q} w$, we can reorder the elements so as to write $v = \{ a_1, b_1 \}$ and $w = \{ a_2, b_2 \}$ such that 
$$a_1 \equiv_{p, q} a_2, \qquad b_1 \equiv_{p, q} b_2.$$
We now define $L_k(e_{(v, w)})$ as follows. 

\begin{enumerate}
\item 
If $v$ (hence also $w$) is equivalent, set
$$L_k(e_{(v, w)}) := 0.$$
\item
If $v$ (hence also $w$) is non-separated, then set
$$L_k(e_{(v, w)}) := e_{( \{ a_1 +1, b_1+1 \}, \{ a_2 + 1, b_2 + 1\})}.$$
\item
If $v$ (hence also $w$) is separated, then set
$$L_k(e_{(v, w)}) := e_{(\{1, a_1 +1\}, \{1, a_2+1\})} + e_{(\{1, b_1+1\}, \{1, b_2+1\})}.$$
\end{enumerate}
Then let $L_k$ be the unique linear extension to $H_k$. 
Note that in the above definition, we set $e_{(v, w)} = 0$ if $v = w$.  Since $a_1 \equiv_{p,q} a_2$ and $b_1 \equiv_{p,q} b_2$ together imply $a_1 +1  \equiv_{p,q} a_2 +1$ and $b_1 + 1 \equiv_{p,q} b_2 +1 $, 
in all cases the image under $L_k$ of $e_{(v, w)}$ belongs to $H_k$.  

\medskip
Recall that for topological dynamical systems $f:X \to X$ and $g:Y \to Y$, $f$ is said to be \emph{semiconjugate} to $g$ if there exists a continuous surjection $\phi:X \to Y$ such that $g \circ \phi = \phi \circ f$. 

\begin{lemma} \label{l:Lkconjonkernel}
The action of $L_k$ on $H_k$ is linearly semiconjugate to the action of $A_k$ on $\ker(\pi_k)$.  That is, there exists a surjective linear map $\phi:H_k \to \ker(\pi_k)$ such that the following diagram commutes:
\[\begin{tikzcd}
H_k \arrow{r}{L_k} \arrow[swap]{d}{\phi} & H_k \arrow{d}{\phi} \\
\textup{ker}(\pi_k) \arrow{r}{A_k}  & \textup{ker}(\pi_k).
\end{tikzcd}\]
\end{lemma}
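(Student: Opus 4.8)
The plan is to construct the semiconjugacy $\phi: H_k \to \ker(\pi_k)$ explicitly on basis vectors and then verify the two required properties — that $\phi$ lands in $\ker(\pi_k)$ and is surjective onto it, and that the square commutes — by matching the three-case definitions of $L_k$ and $A_k$. The natural candidate is $\phi(e_{(v,w)}) := e_v - e_w$ for $(v,w) \in \mathcal{H}_k$, extended linearly; note this is well-defined since $e_{(v,w)} = 0$ when $v = w$ is sent to $e_v - e_v = 0$, and one should check (or note that it is harmless) that $\phi(e_{(v,w)})$ and $\phi(e_{(w,v)})$ differ only by a sign, consistent with however $\mathcal{H}_k$ is being treated as ordered pairs.

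First I would check $\phi(e_{(v,w)}) \in \ker(\pi_k)$: since $(v,w) \in \mathcal{H}_k$ forces $v \equiv_{p,q} w$, the vertices $v$ and $w$ project to the \emph{same} vertex $[v] = [w]$ in $\Gamma_1$, so $\pi_k(e_v - e_w) = e_{[v]} - e_{[w]} = 0$. Next, surjectivity onto $\ker(\pi_k)$: $\ker(\pi_k)$ is spanned by differences $e_v - e_w$ over pairs $v \neq w$ with $[v] = [w]$ (a standard fact about the kernel of the "summation over fibers" map associated to a partition of a basis — the kernel of $\pi_k$ has a basis indexed by, within each fiber, differences of consecutive elements, and all of these lie in the image of $\phi$ since any such $(v,w)$ with $v \equiv_{p,q} w$, $v \neq w$ lies in $\mathcal{H}_k$). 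So $\phi$ maps onto $\ker(\pi_k)$.

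Then I would verify commutativity $\phi \circ L_k = A_k \circ \phi$ on each basis vector $e_{(v,w)}$, splitting into the three cases of the label of $v$ (which equals the label of $w$ by periodicity). In the \emph{equivalent} case, $L_k(e_{(v,w)}) = 0$; on the other side, an equivalent vertex $v$ has no outgoing edge in $\Gamma_k$ (and neither does $w$), so $A_k(e_v) = A_k(e_w) = 0$, hence $A_k(\phi(e_{(v,w)})) = 0$. In the \emph{non-separated} case, writing $v = \{a_1,b_1\}$, $w = \{a_2,b_2\}$ with $a_1 \equiv a_2$, $b_1 \equiv b_2$, we get $A_k e_v = e_{\{a_1+1,b_1+1\}}$ and $A_k e_w = e_{\{a_2+1,b_2+1\}}$, so $A_k \phi(e_{(v,w)}) = e_{\{a_1+1,b_1+1\}} - e_{\{a_2+1,b_2+1\}} = \phi(e_{(\{a_1+1,b_1+1\},\{a_2+1,b_2+1\})}) = \phi(L_k e_{(v,w)})$. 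In the \emph{separated} case, $A_k e_v = e_{\{1,a_1+1\}} + e_{\{1,b_1+1\}}$ and similarly for $w$, so the difference is $\big(e_{\{1,a_1+1\}} - e_{\{1,a_2+1\}}\big) + \big(e_{\{1,b_1+1\}} - e_{\{1,b_2+1\}}\big) = \phi(e_{(\{1,a_1+1\},\{1,a_2+1\})}) + \phi(e_{(\{1,b_1+1\},\{1,b_2+1\})}) = \phi(L_k e_{(v,w)})$, matching the third case of the definition of $L_k$.

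The only genuinely delicate point — and the one I would treat most carefully — is the interaction between the \emph{unordered} nature of vertices $v = \{a,b\} \in \Sigma$ and the reordering freedom used to pick the representatives $a_1 \equiv a_2$, $b_1 \equiv b_2$: one must confirm that the image $L_k(e_{(v,w)})$ does not depend on which of the (at most two) valid reorderings is chosen, so that $L_k$, and hence the commuting square, is genuinely well-defined, and that the successor operations $\{a,b\} \mapsto \{a+1,b+1\}$ and $\{a,b\} \mapsto \{1,a+1\},\{1,b+1\}$ are exactly the ones recorded by the edge set $E_{\mathcal{W}}$ of $\Gamma_k$ (reading off the recursive definition of $E_{\mathcal{W}}$ from \S\ref{ss:spectraldeterminant} together with the fact that in a periodic wedge $i \equiv_{p,q} j$ implies $(i,j)$ is non-separated, so the "separated" case never has $a=b$). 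Everything else is a bookkeeping check. I expect the surjectivity-onto-$\ker(\pi_k)$ step and this well-definedness check to be where the real content lies; the commutation is then a line-by-line case match.
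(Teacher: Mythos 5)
Your proposal is correct and follows essentially the same route as the paper: the same map $\phi(e_{(v,w)}) := e_v - e_w$, the same observation that $v \equiv_{p,q} w$ forces the image into $\ker(\pi_k)$, and the same three-case verification of $A_k \circ \phi = \phi \circ L_k$ (the paper writes out only the separated case). Your extra checks — surjectivity via the standard description of the kernel of the fiber-collapsing map $\pi_k$, and the well-definedness of $L_k$ under reordering (harmless, since an ambiguous matching forces $v$ equivalent, where $L_k$ vanishes) — are sound and only make explicit what the paper leaves implicit.
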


\begin{proof}
Define $\phi:H_k \to \ker(\pi_k)$ to be the linear map whose action on the canonical basis vectors is given by 
$$\phi(e_{(v,w)}) := e_v - e_w.$$ 
For any elements $(v,w) \in \mathcal{H}_k$, by definition $v \equiv_{p,q} w$, which implies  $\pi_k(e_v - e_w)$ = 0.  Hence the codomain of $\phi$ is $\ker(\pi_k)$, as desired. 

Next we show that $A_k \circ \phi = \phi \circ L_k$.  By linearity, it suffices to verify that 
$$A_k \circ \phi (e_{(v,w)}) = \phi \circ L_k (e_{(v,w)})$$ for each element $(v,w) \in \mathcal{H}_k$.   

Let us consider $(v, w) = (\{a_1, b_1\}, \{a_2, b_2\})$ an element of $\mathcal{H}_k$ as above. 
If $v$ and $w$ are separated, then we compute
\begin{align} 
\phi \circ L_k(e_{(v, w)}) &  = \phi(e_{ (\{1, a_1 +1\}, \{1, a_2+1\})}) +  \phi(e_{ (\{1, b_1+1\}, \{1, b_2+1\}) })\\
& = e_{\{1, a_1 +1\}} - e_{\{1, a_2+1\}} + e_{\{1, b_1+1\}}  - e_{\{1, b_2+1\}}. \label{E:comm}
\end{align}
On the other hand, 
\begin{align}
A_k \circ \phi (e_{(v,w)})  & =  A_k \circ \phi \left( e_{(\{ a_1, b_1 \}, \{ a_2, b_2 \})}   \right) \\
 & =  A_k \left(e_{\{ a_1, b_1 \}} - e_{\{ a_2, b_2 \}}  \right) \\
 & =  e_{\{1, a_1 +1\}} + e_{\{1, b_1+1\}} - e_{\{1, a_2+1\}} - e_{\{1, b_2+1\}}
 \end{align}
which coincides with \eqref{E:comm}, verifying commutativity. 

The cases of $v, w$ equivalent or non-separated are more straightforward, so we do not write out the details. 
\end{proof}

\begin{lemma}\label{l:charpolyLk}
The characteristic polynomial for the action of $L_k$ on $H_k$ is a product of cyclotomic polynomials and the polynomial $x^d$ for some $d \in \mathbb{N}$. 
\end{lemma}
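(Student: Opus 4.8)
The plan is to exhibit a finite, $L_k$-invariant (up to zero) subset $S$ of $H_k$ whose span is all of $H_k$, and then invoke Lemma~\ref{L:finite-cyclo}. In fact the canonical basis $\{e_{(v,w)} : (v,w)\in\mathcal{H}_k\}$ itself is the natural candidate, so the real content is to check that $L_k$ maps each basis vector either to $0$ or to another \emph{single} basis vector; if that holds, then $L_k(S)\subseteq S$ for $S = \{0\}\cup\{\pm e_{(v,w)}\}$, the span of $S$ is $H_k$, and Lemma~\ref{L:finite-cyclo} immediately gives that the characteristic polynomial is $x^d$ times a product of cyclotomic polynomials. The one case where this naive claim fails is the ``separated'' case, where by definition $L_k(e_{(v,w)})$ is a sum of \emph{two} basis vectors, so the argument has to be organized around that.

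First I would record the mechanism that resolves the separated case. When $v$ (hence $w$) is separated, $L_k(e_{(v,w)}) = e_{(\{1,a_1+1\},\{1,a_2+1\})} + e_{(\{1,b_1+1\},\{1,b_2+1\})}$. Note that $\{1,a_1+1\}\equiv_{p,q}\{1,a_2+1\}$ and similarly for the $b$'s, so both summands really lie in $\mathcal H_k$; moreover each summand is again of the form $e_{(\{1,\ast\},\{1,\ast\})}$, i.e.\ its two ``wedge points'' each have first coordinate $1$. So rather than trying to keep the basis as the spanning set $S$, I would instead take for $S$ the set of all $L_k^n$-iterates of the basis vectors: since $\mathcal H_k$ is finite, $S$ is finite, it is by construction forward-invariant under $L_k$, and it spans $H_k$ (it contains the basis). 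Then Lemma~\ref{L:finite-cyclo} applies verbatim to $L_k$ acting on $H_k=\mathrm{span}(S)$, and the conclusion follows. The splitting in the separated case is harmless precisely because Lemma~\ref{L:finite-cyclo} only needs \emph{some} finite forward-invariant spanning set, not a basis, and the orbit closure of a finite set under a map with finitely many possible image-vectors is finite.

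To make ``$S$ is finite'' rigorous I would argue as follows: each application of $L_k$ either kills a basis vector or sends $e_{(\{a_1,b_1\},\{a_2,b_2\})}$ to one or two vectors whose underlying wedge points $\{a,b\}$ satisfy $a,b \le kp+q$ (wrap-around is built into $\equiv_{p,q}$), so every vector ever produced is a single basis vector $e_{(v',w')}$ with $(v',w')\in\mathcal H_k$; hence $S\subseteq\{0\}\cup\{e_{(v,w)}:(v,w)\in\mathcal H_k\}$, which is finite. Combined with $L_k(S)\subseteq S$ and $\mathrm{span}(S)=H_k$, Lemma~\ref{L:finite-cyclo} yields $\chi_{L_k}(t)=t^d\,\chi(t)$ with $\chi$ a product of cyclotomic polynomials, which is exactly the assertion.

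The main obstacle — really the only thing to be careful about — is not a hard estimate but a bookkeeping point: one must make sure that in every case the output of $L_k$ stays \emph{inside} $\mathcal H_k$ (the index-shifting $a\mapsto a+1$ must be read modulo the equivalence $\equiv_{p,q}$ so that indices do not escape the range $1,\dots,kp+q$), and that the ``$e_{(v,v)}=0$'' convention is applied consistently so that degenerate outputs are genuinely $0$ and not spurious basis vectors. Once that is checked, the reduction to Lemma~\ref{L:finite-cyclo} is immediate and the proof is short.
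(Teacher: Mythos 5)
Your overall strategy --- find a finite, $L_k$-forward-invariant spanning set and quote Lemma~\ref{L:finite-cyclo} --- is reasonable and close in spirit to what the paper does, but the step where you certify finiteness of your set $S$ of iterates is wrong as written, and this is the whole content of the lemma. You assert that ``every vector ever produced is a single basis vector $e_{(v',w')}$,'' hence $S\subseteq\{0\}\cup\{e_{(v,w)}:(v,w)\in\mathcal H_k\}$. This contradicts the definition you quoted two sentences earlier: when $(v,w)$ is separated, $L_k(e_{(v,w)})$ is a \emph{sum} of two basis vectors, so iterates of basis vectors are in general integer combinations of basis vectors, not basis vectors. The informal principle you invoke (``the orbit closure of a finite set under a map with finitely many possible image-vectors is finite'') is not valid for a linear map: if the two summands produced by a split could themselves be separated and split again, and so on cyclically, the coefficients of the iterates would grow exponentially and the orbit would be infinite --- this is exactly the mechanism that produces positive growth rate for the adjacency operator of the wedge graph itself, so some argument specific to $L_k$ is indispensable.

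What is missing is precisely the structural input the paper supplies via the set $\mathcal J_k$ of pairs $(\{a_i,b_i\},\{a_j,b_j\})$ sharing a coordinate. The two observations that tame the splitting are: (i) when a separated pair outside $\mathcal J_k$ splits, \emph{both} resulting basis vectors have the form $e_{(\{1,\ast\},\{1,\ast\})}$, hence lie in $\mathcal J_k$; and (ii) inside $\mathcal J_k$ a separated pair produces one target of the form $e_{(v,v)}=0$, so its image is a \emph{single} basis vector of $\mathcal J_k$ --- no further splitting ever occurs. With these two facts one can either argue as the paper does (the restriction of $L_k$ to $J_k$ is eventually periodic on a finite set, and every basis vector outside $\mathcal J_k$ falls into $J_k$ or into a finite cycle after finitely many steps), or repair your argument by showing that every iterate of a basis vector is a sum of at most two basis vectors all eventually lying in $\mathcal J_k$, which does make your orbit set $S$ finite and lets Lemma~\ref{L:finite-cyclo} apply. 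Without (i) and (ii), your proof does not go through.
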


\begin{proof}
First, we will define a subspace $J_k$ of $H_k$ and investigate the action of $L_k$ restricted to this subspace. 

Define the set 
$$\mathcal{J}_k := \{(\{a_i,b_i\}, \{a_j, b_j\}) \in \mathcal{H}_k  \mid \textrm{at least one of } a_i = a_j \textrm{ and } b_i = b_j \textrm{ holds} \}.$$
Define $J_k$ to be the $\mathbb{R}$-vector space for which $\mathcal{J}_k$ is a basis. 

\smallskip
We claim that $L_k$ sends each element of $\mathcal{J}_k$ to either $0$ or an element of $\mathcal{J}_k$. 
Consider an element $(v, w) = (\{a_i, b_i \}, \{ a_j, b_j\} ) \in \mathcal{J}_k$. By construction, $\Phi(v) = \Phi(w)$. 

\begin{enumerate}

\item If $v$ is equivalent, then 
$$L_k(e_{(v,w)}) = 0.$$ 

\item If $v$ is non-separated, then the fact that $\{ a_i,b_i \}$ and $\{ a_j,b_j \}$ have a common coordinate implies that the two pairs 
$\{ a_1 +1, b_1+1 \}$, $\{ a_2 + 1, b_2 + 1\}$ that comprise $L_k(e_{(v,w)}) $ do too, hence it is an element of $\mathcal{J}_k$. 

\item If $v$ is separated, then, by definition of $\mathcal{J}_k$, one of the two targets 
$$e_{(\{1, a_1 + 1 \}, \{1, a_2 + 1 \})}, \qquad e_{(\{1, b_1 + 1 \}, \{1, b_2 + 1 \})}$$
is equal to zero, while the other belongs to $\mathcal{J}_k$, since two of its entries are equal to $1$. 

\end{enumerate}

Since $\mathcal{J}_k$ is a finite set, by Lemma~\ref{L:finite-cyclo} it follows that 
the characteristic polynomial for the restriction of $L_k \vert_{J_k}$ is a product of a cyclotomic polynomial and a factor of $x^d$ for some integer $d \geq 0$. 

\smallskip
We now claim that $L_k$ sends in finitely many iterations every basis element $h \in \mathcal{H}_k \setminus \mathcal{J}_k$ to either $J_k$ 
or to a cycle of elements of $\mathcal{H}_k \setminus \mathcal{J}_k$. 

\begin{enumerate}

\item
If a pair $(v, w) \in \mathcal{H}_k \setminus \mathcal{J}_k$ is equivalent, then $L_k$ sends it to $0$, which is in $J_k$.  

\item
If $(v, w) \in \mathcal{H}_k$ is separated, then its two pairs of targets under $L_k$ both have a coordinate equal to $1$, so both target pairs are in $\mathcal{J}_k$.  
 
\item
If $(v, w) \in \mathcal{H}_k \setminus \mathcal{J}_k$  is non-separated, $L_k$ sends it to another pair $(v, w) \in \mathcal{H}_k \setminus \mathcal{J}_k$.   Since $\mathcal{H}_k$ is a finite set, either the orbit of this non-separated pair $(v, w)$ enters a cycle of non-separated pairs in $\mathcal{H}_k \setminus \mathcal{J}_k$, or the orbit eventually enters $J_k$.  
\end{enumerate}
 
 Therefore, there exists an integer $p$ such that $L_k^p(\mathcal{H}_k)$ is contained in the union of $J_k$ and finitely many (possibly zero) cyclic orbits in $\mathcal{H}_k \setminus \mathcal{J}_k$.  

Therefore the characteristic polynomial of $L_k$ acting on $H_k$ is the product of the characteristic polynomial of $L_k$ acting on $J_k$ and finitely many (possibly zero) cyclotomic polynomials and $x^d$.  Thus, the characteristic polynomial of $L_k$ acting on $H_k$ has the desired form. 
\end{proof}

Equipped with Lemmas  \ref{l:Lkconjonkernel} and \ref{l:charpolyLk}, we are now ready to prove Proposition \ref{p:charpolyrestricted}. 
  
\begin{proof}[Proof of Proposition \ref{p:charpolyrestricted}.]
By Lemma \ref{l:Lkconjonkernel}, there is a linear map  $\phi:H_k \to \ker(\pi_k)$ that semiconjugates the action of $L_k$ on $H_k$  to the action of $A_k$ on $\ker(\pi_k)$. 
Hence, the characteristic polynomial of $A_k$ divides the characteristic polynomial of $L_k$. 
By Lemma \ref{l:charpolyLk}, the characteristic polynomial for the action of $L_k$ on $H_k$ is a product of cyclotomic polynomials and the polynomial $x^d$ for some $d \in \mathbb{N}$.  
Thus, the same is true for all its divisors; in particular, for the characteristic polynomial of $A_k$. 
\end{proof}

\subsection{Relating characteristic polynomials of finite covers to the spectral determinant}
  
The main goal of this subsection is to prove the following theorem, which builds on Theorem \ref{t:cyclotomicfactors}.
  
\begin{theorem} \label{t:approxroots}
The set of roots  in $\mathbb{D}$ of the spectral determinant for the infinite graph, $P_{\Gamma}$, equals the set of roots in $\mathbb{D}$ of  the spectral determinant of the finite graph model, $P_{\Gamma_1}$.  
\end{theorem}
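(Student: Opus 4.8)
The plan is to compare the spectral determinant $P_\Gamma$ of the infinite graph with the spectral determinants $P_{\Gamma_k}$ of the finite covers, and to exploit Theorem~\ref{t:cyclotomicfactors} to show that the zeros of $P_{\Gamma_k}$ inside $\mathbb{D}$ are \emph{the same finite set for every $k$}. Since $P_\Gamma$ is holomorphic and not identically zero on $\mathbb{D}$ by Theorem~\ref{t:SpectralDeterminantGrowthRate}, once one knows that $P_{\Gamma_k}\to P_\Gamma$ suitably, Hurwitz's theorem forces the zeros of $P_\Gamma$ in $\mathbb{D}$ to be exactly that common set, which for $k=1$ is the zero set of $P_{\Gamma_1}$ in $\mathbb{D}$.

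First I would carry out the algebra. Using the identity $P_{\Gamma_k}(t)=\det(I-tA_k)=(-t)^{d}\chi_{\Gamma_k}(t^{-1})$ together with Theorem~\ref{t:cyclotomicfactors}, which gives $\chi_{\Gamma_k}=\chi_{\Gamma_1}\cdot(\text{cyclotomic})\cdot x^{e}$, one obtains a factorization $P_{\Gamma_k}(t)=P_{\Gamma_1}(t)\,C_k(t)$ where $C_k$ is a polynomial with $C_k(0)=1$ all of whose roots lie on the unit circle (reciprocals of roots of unity). Consequently the set $R:=\{\,z\in\mathbb{D}:P_{\Gamma_k}(z)=0\,\}$ equals a fixed finite set independent of $k$, and $0\notin R$ since every spectral determinant has constant term $1$.

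The analytic heart is to show $P_{\Gamma_k}\to P_\Gamma$ uniformly on compact subsets of $\mathbb{D}$, and this rests on two combinatorial inputs. The first is \emph{coefficientwise stabilization}: for every $n$ there is $K(n)$ such that $[t^m]P_{\Gamma_k}=[t^m]P_\Gamma$ for all $m\le n$ whenever $k\ge K(n)$. This is because $\Gamma$ and $\Gamma_k$ have bounded cycles, so every multi-cycle of length $\le n$ is a disjoint union of simple cycles, each of which must pass through a vertex of $\Sigma$ at bounded level (second coordinate) and hence is supported on the finite sub-wedge $\{(i,j):j\le\beta(n)\}$ for some $\beta(n)$ depending only on $n$; and once $kp+q$ exceeds $\beta(n)+n$ the quotient $\Gamma\to\Gamma_k$ restricts to an isomorphism of this sub-wedge, while no edge created by the identification $\equiv_{kp,q}$ at the top of the truncated wedge can be reached by a path of length $\le n$, so the multi-cycles of length $\le n$ in the two graphs — and their numbers of components — are in bijection. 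The second input is the uniform bound $|[t^m]P_{\Gamma_k}|\le(2m)^{\sqrt{2m}}$, valid for all $k$ and $m$ by Proposition~\ref{p:boundedcoeffs} (or, rather, the analogous bound for the finite models $\Gamma_k$, which follows from the same argument, the estimate on the number of simple cycles of a given length using only the local combinatorics of the wedge that the quotients inherit). Combining the two through Tannery's theorem — dominated convergence for power series — upgrades coefficientwise convergence to uniform convergence on each disk $\{|t|\le\rho\}$ with $\rho<1$.

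With these in hand I would conclude by Hurwitz's theorem. Any zero $z_0\in\mathbb{D}$ of $P_\Gamma$ is a limit, as $k\to\infty$, of zeros of $P_{\Gamma_k}$; all those zeros lie in the finite set $R$, so $z_0\in\overline{R}=R$. Conversely, if $z_0\in R$ then $P_{\Gamma_1}(z_0)=0$, hence $P_{\Gamma_k}(z_0)=0$ for every $k$, hence $P_\Gamma(z_0)=\lim_k P_{\Gamma_k}(z_0)=0$. Thus the zero set of $P_\Gamma$ in $\mathbb{D}$ equals $R$, i.e.\ equals the zero set of $P_{\Gamma_1}$ in $\mathbb{D}$. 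The main obstacle is the approximation step, and specifically its two combinatorial facts: that short multi-cycles of $\Gamma_k$ cannot exploit the wrap-around identifications of $\equiv_{kp,q}$, and that the number of multi-cycles of length $m$ in $\Gamma_k$ is subexponential in $m$ uniformly in $k$; everything after that is routine complex analysis.
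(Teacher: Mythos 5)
Your proposal follows essentially the same route as the paper: factor $P_{\Gamma_k}=P_{\Gamma_1}\cdot(\text{cyclotomic-type factor})$ via Theorem~\ref{t:cyclotomicfactors}, establish coefficientwise stabilization of the spectral determinants (the paper's Lemmas~\ref{l:cyclesshowupfinitetime} and~\ref{l:shortfinitecyclesarereal}, Corollary~\ref{c:truncationscoincide}), dominate the tails by a subexponential cycle count to get uniform convergence on compact subsets of $\mathbb{D}$ (the paper's Theorem~\ref{t:approximations}), and finish with Hurwitz/Rouch\'e. The one place where you are too quick is the claim that the bound $(2m)^{\sqrt{2m}}$ for the finite models ``follows from the same argument'' as Proposition~\ref{p:boundedcoeffs}: the quotient graphs $\Gamma_k$ have extra wrap-around edge types emanating from vertices $(i,kp+q)$, so the infinite-wedge argument does not transfer verbatim; the paper instead proves a separate estimate (Lemma~\ref{l:cycle-upper-bound}) of the form $2^q(pk+q)^{3\sqrt{2n}+1}$, which depends on $k$, and recovers uniformity in $k$ by a case split — for $k$ large relative to $n$ the cycles coincide with those of $\Gamma$ by Lemma~\ref{l:shortfinitecyclesarereal}, while for $k\lesssim n$ the $k$-dependent bound is still subexponential in $n$. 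The fact you need is true and you correctly identify it as the crux, but as stated your justification for it would not stand; with that lemma supplied, the rest of your argument is sound and matches the paper's.
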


\noindent The remainder of this subsection builds up to the statement and proof of Theorem  \ref{t:approximations}, which will be then used to prove Theorem \ref{t:approxroots} above.
 
 We begin by proving two lemmas (\ref{l:cyclesshowupfinitetime} and \ref{l:shortfinitecyclesarereal}) which relate the multicycles of finite and infinite graph models.  
First, multicycles in the infinite graph also show up in big finite graphs: 
\begin{lemma} \label{l:cyclesshowupfinitetime}
 For each $n \in \mathbb{N}$ there exists $M \in \mathbb{N}$ such that whenever a multicycle $\gamma$ of length at most $n$ is in $\Gamma$, then $\gamma$ is also a multicycle in the finite graph $\Gamma_m$ for all $m \geq M$.  
\end{lemma}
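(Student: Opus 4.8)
The plan is to exploit the fact that the infinite graph $\Gamma$ is built from a \emph{periodic} labeled wedge $\mathcal{W}$, so that the labels — hence the edge structure — only depend on $\equiv_{p,q}$-classes, and that a multicycle of bounded length can only visit boundedly many vertices. First I would recall that a multicycle $\gamma$ of length $\ell(\gamma) \le n$ has vertex-support of size at most $n$; write $V(\gamma) = \{(i_1,j_1), \dots, (i_r,j_r)\}$ for this finite set of vertices of $\Sigma$, with $r \le n$. Let $N := \max_s \max\{i_s, j_s\}$ be the largest coordinate appearing among the vertices of $\gamma$, and choose $M \in \mathbb{N}$ large enough that $Mp + q \ge N$ (e.g. $M := \lceil (N - q)/p \rceil + 1$, or simply $M := N$). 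I claim this $M$ works.

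The key step is to check that, for every $m \ge M$, the quotient map $\Gamma \to \Gamma_m = \Gamma/\!\equiv_{mp,q}$ restricts to a bijection on $V(\gamma)$ and carries the edges of $\gamma$ to edges of $\Gamma_m$, so that the image of $\gamma$ is again a multicycle of the same length and same combinatorial type. For the injectivity on $V(\gamma)$: two distinct vertices $(i,j), (k,l)$ with all coordinates $\le N \le mp+q$ cannot be $\equiv_{mp,q}$-equivalent, because the relation $\equiv_{mp,q}$ on $\mathbb{N}$ identifies two distinct integers only when both are $\ge mp + q + 1 > N$. For the edges: the edge set $E_{\mathcal{W}}$ of $\Gamma_{\mathcal{W}}$ is defined by the local rule at each vertex (equivalent: no outgoing edge; non-separated: one edge to $(i+1,j+1)$; separated: two edges to $(1,j+1)$ and $(1,i+1)$), and by periodicity of $\mathcal{W}$ this rule depends only on the $\equiv_{p,q}$-class of the vertex, which is respected by the quotient to $\Gamma_m$; moreover the targets of these edges have coordinates bounded by $N+1$. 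Here one should be slightly careful that $\gamma$ being a \emph{cycle} (a closed path, resp. a disjoint union of simple cycles) in $\Gamma$ forces all the edge-targets traversed to lie back in $V(\gamma)$, so in fact no coordinate exceeding $N$ is ever produced along $\gamma$; hence the image genuinely stays inside the ``window'' $\{(i,j): i\le j \le mp+q\}$ and the image of each edge of $\gamma$ is literally the ``same'' edge in $\Gamma_m$. Therefore $\gamma$, viewed in $\Gamma_m$, is a multicycle of length $\ell(\gamma)$, as required.

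The one subtlety — and the part I expect to require the most care — is making precise the identification ``$\gamma$ is also a multicycle in $\Gamma_m$'': since vertices of $\Gamma_m$ are $\equiv_{mp,q}$-classes, one must argue that the natural lift/projection identifies the simple cycles of $\gamma$ with genuine simple cycles in $\Gamma_m$ of the same length (in particular that distinct simple cycles in $\gamma$, having pairwise disjoint vertex-supports in $\Sigma$, still have disjoint supports after projecting, which again follows from injectivity of the quotient on $V(\gamma)$, and that the multi-cycle condition of pairwise-disjoint supports is preserved). Once this bookkeeping is set up, the statement follows immediately, uniformly in $m \ge M$, and $M$ depends only on $n$ (through the bound $r \le n$ and hence $N$, which is at most something like $n$ plus a constant, since a path of length $\le n$ starting near the diagonal cannot reach coordinates larger than roughly $q + n$). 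I would record this last observation explicitly so that $M = M(n)$ is manifestly a function of $n$ alone.
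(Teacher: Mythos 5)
Your overall strategy is the same as the paper's: bound the coordinates of the vertices visited by a closed path of bounded length, choose $M$ so that those vertices sit inside the fundamental window $\{(i,j) : j \leq Mp+q\}$, and then observe that the quotient $\Gamma \to \Gamma_m$ is injective on that window and respects the edge rules (by periodicity of the labeling), so the multicycle descends intact. The paper compresses all of this into one line by quoting \cite[Proposition 6.2]{TiozzoContinuity} (every vertex on a closed path of length $n$ has width at most $2n$) and taking $Mp+q \geq 2n$; your extra bookkeeping about injectivity and preservation of edges is correct and welcome. (One small slip there: $\equiv_{mp,q}$ \emph{does} identify pairs of distinct integers one of which is $\leq mp+q$, e.g.\ $q+1$ and $q+1+mp$; the fact you actually need is that two distinct integers both $\leq mp+q$ are never identified, since two such integers that are both $\geq q+1$ lie in a window of exactly $mp$ consecutive integers and hence cannot be congruent mod $mp$.)

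The genuine gap is the uniformity step, which is the only nontrivial content of the lemma and which you defer to a final parenthetical. Your $M$ is defined from $N$, the largest coordinate occurring in the \emph{given} $\gamma$, and you then assert that $N$ is ``$n$ plus a constant, since a path of length $\leq n$ starting near the diagonal cannot reach coordinates larger than roughly $q+n$.'' This is not a proof, and the bound is wrong: a closed path need not start near the diagonal, and its vertices can have second coordinate close to $2n$ (climb from $(1,j_0)$ along non-separated edges to $(r, j_0+r-1)$, drop via a separated edge to $(1,r+1)$, climb again and drop once more to close up; the length is about $j_0$ while the top coordinate is about $j_0+r$, with $r$ as large as about $j_0$). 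The correct uniform bound is exactly what the paper cites: coordinates (width) at most $2n$. If you want it self-contained: any closed path in $\Gamma$ must use at least one separated edge (along non-separated edges both coordinates strictly increase), so every first coordinate on the path is at most $n$, since the first coordinate resets to $1$ at separated edges and grows by at most $1$ per step; the second coordinate can only decrease by jumping to $i+1 \leq n+1$, after which it grows by at most $1$ per step, so it never exceeds $2n+1$ on a closed path of length at most $n$. With that bound recorded, any $M$ with $Mp+q \geq 2n+1$ works uniformly in $\gamma$, and the rest of your argument goes through unchanged.
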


\begin{proof} 
By \cite[Proposition 6.2]{TiozzoContinuity}, every vertex in a closed path of length $n$ has width at most $2n$. 
Hence, it is enough to choose $M$ with $M p + q\geq 2 n$, where $p$ is the period of the critical orbit.
\end{proof}

\noindent Second, short multicycles in big finite graphs also show up in the infinite graph:

\begin{lemma}  \label{l:shortfinitecyclesarereal}
For each $n \in \mathbb{N}$, and $m > \frac{2n + 2}{p}$, 
if $\gamma$ is a multicycle of length $n$ in $\Gamma_m$, then $\gamma$ is also a multicycle in $\Gamma$.
 \end{lemma}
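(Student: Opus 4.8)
The goal is a converse to Lemma \ref{l:cyclesshowupfinitetime}: short multicycles in a large enough finite cover $\Gamma_m$ actually come from the infinite graph $\Gamma$. The strategy is to compare the combinatorial structure of $\Gamma_m$ near a short multicycle with that of $\Gamma$, exploiting that the edge structure of $\Gamma_m = \Gamma/\equiv_{mp,q}$ is read off locally from the labeled wedge via the recursive rule in \S\ref{ss:spectraldeterminant}, and that a multicycle of bounded length only sees vertices of bounded width. So the first step is to invoke \cite[Proposition 6.2]{TiozzoContinuity} (as in the proof of Lemma \ref{l:cyclesshowupfinitetime}) to note that every vertex appearing in a closed path of length $n$ — hence in a multicycle $\gamma$ of length $n$ — has width at most $2n$, i.e. lies in the set $\{(i,j)\ :\ 1\le i\le j,\ j-i\le 2n\}$, and moreover every vertex traversed has its second coordinate bounded by some function of $n$ once we track the image under the edge rule.

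The key point is a bound on the \emph{absolute size} of the indices a short multicycle can reach. The subtlety is that the edge rule sends $(i,j)$ either to $(i+1,j+1)$ (so indices grow) or to $(1,j+1)$ and $(1,i+1)$ (so one index resets to $1$ but the other still grows by one). Starting from a vertex of width $\le 2n$, after $n$ steps of a closed path the indices can have grown by at most $n$ beyond where they started; but in $\Gamma_m$, the vertex labels live modulo $\equiv_{mp,q}$, so the ``true'' index in $\Gamma$ could differ from the representative in $\Gamma_m$ by a multiple of $mp$. I would argue: choose the representative of each vertex of $\gamma$ in $\Gamma$ so that following the multicycle around, indices never need to be reduced mod $mp$ — this is possible provided $mp + q$ exceeds the largest index reached, which by the width bound and the length bound is at most something like $q + 2n + n$. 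So taking $m$ with $mp + q \ge q + 3n$ (or any safely larger linear bound in $n$) suffices. Then the lift of $\gamma$ to $\Gamma$ uses only vertices whose labels agree with their $\Gamma_m$-labels by condition \eqref{i:correspondingverticessamelabel} of periodicity, and the edge rule in $\Gamma$ produces exactly the same successors as in $\Gamma_m$ (no identification is triggered because we stayed below the reduction threshold), so the lifted sequence of edges is genuinely a multicycle in $\Gamma$ — in particular the simple cycles remain closed and vertex-disjoint because their images in $\Gamma_m$ were.

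The main obstacle I anticipate is bookkeeping the lift carefully: a multicycle is a union of simple cycles, and I must lift the \emph{whole} configuration consistently, making sure that (a) each simple cycle, which closes up in $\Gamma_m$, also closes up in $\Gamma$ — this requires that going around the cycle returns to the \emph{same} lifted vertex, not merely an $\equiv_{mp,q}$-equivalent one, which is where the ``no wraparound'' size estimate is essential — and (b) distinct simple cycles of $\gamma$, whose vertex supports are disjoint in $\Gamma_m$, lift to cycles with disjoint vertex supports in $\Gamma$; this is automatic since the lift is injective on the bounded window of indices we use. A clean way to organize (a) is to observe that within the bounded-index window, the projection $\Gamma|_{\text{window}} \to \Gamma_m|_{\text{window}}$ is a graph isomorphism onto its image (same vertices, same edges by the edge rule plus periodicity of labels), so any multicycle of $\Gamma_m$ supported in that window pulls back uniquely. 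I would therefore structure the proof as: (1) width/index bound giving the window; (2) identify $m$ large enough that the window injects; (3) conclude the window-restricted projection is an isomorphism and pull $\gamma$ back. With this in hand, Theorem \ref{t:approxroots} follows from comparing the spectral determinants term-by-term in each bounded degree using Lemmas \ref{l:cyclesshowupfinitetime} and \ref{l:shortfinitecyclesarereal} together with the coefficient bound in Proposition \ref{p:boundedcoeffs}.
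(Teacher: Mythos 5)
Your overall strategy is sound and is essentially the contrapositive of the paper's argument: the paper shows directly that any cycle of $\Gamma_m$ passing through a vertex of the boundary column $(h,pm+q)$ must have length at least $\tfrac{pm-2}{2}$ (by counting the vertical and horizontal displacement needed to return after the wraparound), and concludes with the same threshold $m>\tfrac{2n+2}{p}$ that you aim for; you instead want to confine a length-$n$ multicycle of $\Gamma_m$ to a window of absolute indices bounded independently of $m$, and then lift through the window, on which the quotient map is a graph isomorphism. The lifting step at the end of your plan is fine: once no vertex of the multicycle lies in the column $j=pm+q$, all edges used are the ordinary wedge edges, labels agree by periodicity, the projection is injective on the fundamental domain, and closedness and disjointness of the simple cycles pull back.

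The gap is in the justification of the window bound, which is the entire content of the lemma. First, you invoke \cite[Proposition 6.2]{TiozzoContinuity} for closed paths in $\Gamma_m$, but that width bound is proved for the infinite wedge graph $\Gamma$; the quotient $\Gamma_m$ has additional edges out of the column $j=pm+q$ (to $(q+1,h+1)$, $(h+1,q+1)$, or $(1,q+1)$, depending on the label and on whether $h\gtrless q$), which are not of the form ``$(i,j)\to(i+1,j+1)$ or $(1,i+1),(1,j+1)$'' that you quote, and these are precisely the edges that could create spurious short cycles. Second, and more importantly, a width bound does not bound absolute indices: a vertex $(i,i+1)$ has width $1$ but can sit arbitrarily high in the wedge, so ``starting from a vertex of width $\le 2n$, indices grow by at most $n$'' gives no bound like $q+3n$ unless you first show the cycle must visit a vertex with small coordinates. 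The missing step is the observation that a closed path cannot have its coordinates strictly increasing, hence must contain an edge whose target has first coordinate in $\{1,q+1\}$ (a separated backward edge or a wraparound edge), that the second coordinate of such a target is at most roughly $q+n$ once the first-coordinate bound along the cycle is established, and that each step raises a coordinate by at most one; this displacement counting is exactly the case analysis the paper performs at a vertex $(h,pm+q)$. Supply that argument (or argue, as the paper does, that any cycle through the boundary column has length at least $\tfrac{pm-2}{2}$) and your plan goes through with $m>\tfrac{2n+2}{p}$.
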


\begin{proof}
Suppose that there exists a cycle of length $n$ in $\Gamma_m$ but not in $\Gamma$. This implies that it must pass through 
a vertex of the form $(h, pm +q)$. 
If you are at a vertex $(h, pm+q)$ and the vertex is separated, then you go to $(1,h+1)$ and $(1, q+1)$. 

In the first case, note that along our path starting at $(1,h+1)$ one needs to travel vertically at least $h - 1$ and horizontally at least $pm +q - h - 1$.  (Here, directions like ``vertical,'' ``horizontal,'' etc. refer to the layout used in, for example, Figure \ref{f:1/9}.)
Since each step goes up or to the right by at most one, the length $n$ satisfies 
$$n \geq \max \{ h -1, pm +q - h - 1\} \geq \frac{pm +q - 2}{2}.$$ 

In the second case, one notes that one needs to travel at least $pm - 1$ horizontally, implying $n \geq pm-1$. 

On the other hand, if the vertex $(h, pm + q)$ is non-separated, its outgoing edge goes to $(q+1, h+1)$ if $q \leq h$, or to $(h+1, q+1)$ if $h < q$.
In the first case, the vertical displacement is at least $h - q - 1$ and the horizontal displacement is at least $pm + q - h -1$, yielding
$$n \geq \max \{ h - q - 1, pm +q - h - 1\} \geq \frac{pm - 2}{2}.$$ 
In the second case, the horizontal displacement is at least $pm-1$, so
$$n \geq pm -1.$$

Thus, in every case we have $n \geq \frac{pm - 2}{2}$, and hence taking 
$$m > \frac{2n + 2}{p}$$ 
is sufficient to exclude the existence of such additional cycles of length $n$. 
\end{proof}

Combining Lemmas \ref{l:cyclesshowupfinitetime} and \ref{l:shortfinitecyclesarereal} immediately implies that the coefficients of the spectral determinants $P_{\Gamma_m}$ are asymptotically stable in the following sense: 
 
\begin{corollary} \label{c:truncationscoincide} 
For any degree $n$, there exists $M \in \mathbb{N}$ such that the terms of degree at most $n$ in the spectral determinant $P_{\Gamma}$ coincide with the terms of degree at most $n$ in the spectral determinant $P_{\Gamma_m}$ for all $m \geq M$. 
\end{corollary}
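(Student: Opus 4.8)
The plan is to read the coefficients of both spectral determinants straight off the combinatorial formula \eqref{eq:spectraldeterminant}: the coefficient of $t^k$ in $P_{\Gamma}$ is the signed count $\sum_{\ell(\gamma)=k}(-1)^{C(\gamma)}$ over the multi-cycles $\gamma$ of length $k$ in $\Gamma$, and likewise with $\Gamma_m$ in place of $\Gamma$; by the bounded-cycles property (Proposition \ref{p:boundedcoeffs}) each such sum is finite. Hence it suffices to construct, for all sufficiently large $m$, a bijection between the multi-cycles of length at most $n$ in $\Gamma$ and those in $\Gamma_m$ that preserves both the length $\ell(\gamma)$ and the number of components $C(\gamma)$.

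First I would fix $n$ and apply Lemma \ref{l:cyclesshowupfinitetime} to obtain a threshold $M_1$ such that, for all $m \ge M_1$, every multi-cycle of length $\le n$ in $\Gamma$ is — via the quotient map $\Gamma \to \Gamma_m$ — a multi-cycle of the same length and the same number of components in $\Gamma_m$. I would also observe that this assignment is injective: a multi-cycle of length $\le n$ has vertex-support of width at most $2n$ (Proposition 6.2 of \cite{TiozzoContinuity}), so once $mp+q \ge 2n$ the quotient map is injective on that support and distinct multi-cycles receive distinct images. Then I would run Lemma \ref{l:shortfinitecyclesarereal} in the opposite direction — its proof produces the explicit threshold $m > (2n+2)/p$, which, being increasing in $n$, simultaneously handles all lengths $\le n$ — to get $M_2$ such that, for $m \ge M_2$, every multi-cycle of length $\le n$ in $\Gamma_m$ lifts to one in $\Gamma$ with the same length and component count. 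With $M = \max\{M_1, M_2\}$, the two assignments are mutually inverse for every $m \ge M$, which gives the bijection; since $\ell$ and $C$ are preserved, the coefficient of $t^k$ coincides in $P_{\Gamma}$ and $P_{\Gamma_m}$ for all $k \le n$ and all $m \ge M$, as claimed.

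The one point I would be careful about — the nearest thing to an obstacle, although it is already packaged inside the two lemmas — is confirming that the property of ``being a multi-cycle'' is genuinely transported by the quotient map in both directions: that for large $m$ the quotient is injective on the relevant bounded vertex-supports, so that a union of pairwise vertex-disjoint simple cycles maps to (and lifts from) a union of pairwise vertex-disjoint simple cycles of the same total length and the same number of pieces. Everything else is bookkeeping with \eqref{eq:spectraldeterminant}.
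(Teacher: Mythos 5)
Your proposal is correct and follows essentially the same route as the paper: the paper deduces the corollary directly by combining Lemmas \ref{l:cyclesshowupfinitetime} and \ref{l:shortfinitecyclesarereal}, exactly the two ingredients you use. The only difference is that you spell out explicitly what the paper treats as immediate, namely the length- and component-preserving bijection between multi-cycles of length at most $n$ (including the injectivity coming from the width bound $2n \le mp+q$ and the uniform threshold $m > (2n+2)/p$ in the second lemma), which is a faithful elaboration rather than a new argument.
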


Now, in order to bound the coefficients of the spectral determinant, we need a uniform bound 
on the number of simple multicycles of given length. For the infinite graph $\Gamma$, this is given by Proposition \ref{p:boundedcoeffs}; 
we now obtain a similar bound for the finite graph $\Gamma_m$. 

\begin{lemma} \label{l:cycle-upper-bound}
For each $n$, the number $S_n^{(m)}$ of simple multicycles of length $n$ in $\Gamma_m$ is at most 
$$S_n^{(m)} \leq 2^q (pm + q)^{3 \sqrt{2 n} + 1}.$$
\end{lemma}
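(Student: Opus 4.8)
The plan is to bound the number of simple multicycles of length $n$ in $\Gamma_m$ by decomposing the count according to which vertices of $\Gamma_m$ appear, using the fact (borrowed from the analysis of $\Gamma$, e.g. \cite[Proposition 6.2]{TiozzoContinuity} as invoked in the proof of Lemma \ref{l:cyclesshowupfinitetime}) that every vertex lying on a closed path of length $n$ has bounded ``width.'' Concretely, a multicycle of length $n$ in $\Gamma_m$ lifts (vertex by vertex, using the quotient map $\Gamma \to \Gamma_m = \Gamma/\equiv_{pm,q}$) to a collection of paths in $\Gamma$ whose vertex supports have total size at most $n$; each such vertex is a pair $(i,j)$ with $1 \le i \le j \le pm+q$, so there are at most $\binom{pm+q+1}{2} \le (pm+q)^2$ possible vertices, but more importantly each vertex on a closed path of length $n$ is constrained so that $j - i$ (its ``width'') is at most $2n$.

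First I would recall from Proposition \ref{p:boundingnumberofcycles} that $\Gamma$ itself has at most $(2n)^{\sqrt{2n}}$ multi-cycles of length $n$; the extra factors $2^q$ and the polynomial-in-$(pm+q)$ term in the desired bound reflect the two ways a multicycle of $\Gamma_m$ can fail to be a genuine multicycle of $\Gamma$: it may wrap around the periodic direction (contributing the $2^q$ from the choices of ``branch'' each time a separated vertex near the boundary column $j = pm+q$ is traversed, cf. the two outgoing edges $(1,h{+}1)$ and $(1,q{+}1)$ analyzed in Lemma \ref{l:shortfinitecyclesarereal}), and its starting vertex may sit anywhere in the $(pm+q)$-long column rather than in a fixed fundamental domain. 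So the strategy is: (1) fix the ``shape'' of the multicycle — the sequence of labels (separated / non-separated / equivalent) of the vertices visited, together with, at each separated vertex, a binary choice of which of the two outgoing edges is taken; by Proposition \ref{p:boundingnumberofcycles}-type reasoning the number of such shapes of total length $n$ is at most $(2n)^{3\sqrt{2n}}$ or so (the constant $3$ absorbing the branching); (2) observe that once the shape is fixed, the multicycle is determined by the choice of its ``anchor'' vertices — one per connected component — and each anchor lies in the finite vertex set of size $\le (pm+q)^2$, but because width is bounded by $2n$ the number of components is $O(\sqrt n)$, giving at most $(pm+q)^{O(\sqrt n)}$ choices of anchors; (3) finally, each time the path crosses the ``boundary'' column $j = pm+q$ it makes an independent $\le 2$-way choice of how to re-enter, and the number of such crossings is at most the preperiod-related quantity, contributing the clean factor $2^q$.

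Putting these together yields a bound of the form $S_n^{(m)} \le 2^q \cdot (pm+q)^{c\sqrt{n}} \cdot (2n)^{c'\sqrt{n}}$; I would then check that the exponents can be consolidated — absorbing the $(2n)^{c'\sqrt n}$ factor into $(pm+q)^{O(\sqrt n)}$ since $pm+q \ge 2n$ may be assumed (otherwise Lemma \ref{l:shortfinitecyclesarereal} already identifies $\Gamma_m$'s multicycles with $\Gamma$'s and the cruder bound from Proposition \ref{p:boundingnumberofcycles} applies) — to reach the stated $S_n^{(m)} \le 2^q (pm+q)^{3\sqrt{2n}+1}$. The main obstacle I anticipate is bookkeeping the exponent carefully: one must verify that the combined count of shape-choices, anchor-choices, and boundary-crossing choices really does fit under the exponent $3\sqrt{2n}+1$ rather than some larger multiple of $\sqrt n$, which requires being economical — e.g., noting that a simple multicycle visits each of its vertices exactly once, so its total vertex support has size exactly $n$, and that the number of connected components plus the number of boundary crossings is itself $O(\sqrt n)$ by the width bound, so that the ``$+1$'' and the coefficient $3$ are genuinely enough. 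The $2^q$ factor is the one piece that does \emph{not} grow with $n$, and isolating exactly where the preperiod $q$ (as opposed to the period $p$) enters — it is the number of ``pre-periodic'' columns through which a boundary-crossing path must thread before re-synchronizing with the periodic structure — is the subtle combinatorial point.
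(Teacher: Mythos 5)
Your sketch has the right flavor (isolate a small set of ``special'' features of the cycle, get a $\sqrt{2n}$-type exponent, treat the boundary column $j=pm+q$ separately), but as written it has genuine gaps. The most serious is the regime split at the end: you propose to assume $pm+q\geq 2n$ and, ``otherwise,'' fall back on Lemma \ref{l:shortfinitecyclesarereal} plus Proposition \ref{p:boundingnumberofcycles}. That lemma runs in the opposite direction: it identifies multicycles of $\Gamma_m$ with multicycles of $\Gamma$ only when $m$ is \emph{large} relative to $n$ (roughly $pm>2n+2$), whereas the present lemma is needed precisely in the complementary regime $pm+q\lesssim 2n$ (this is exactly how it is used in the proof of Theorem \ref{t:approximations}, for $k\leq\frac{2n+2}{p}$). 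So your fallback fails exactly where the statement matters, and in that small-graph regime a multicycle of $\Gamma_m$ need not lift to a closed path of $\Gamma$ at all, so the width bound of \cite[Proposition 6.2]{TiozzoContinuity} cannot simply be transported as you suggest. The second gap is the counting itself: the claims that the number of ``shapes'' is $(2n)^{3\sqrt{2n}}$ ``or so'' and that the number of connected components is $O(\sqrt n)$ ``by the width bound'' are asserted, not proved, and the width bound alone does not control the number of components. The paper gets the $\sqrt{2n}$ exponents by a different, sharper mechanism: it classifies the edges of $\Gamma_m$ into types ($U$, $B$, $D$, $D'$, $U'$, $Q$), notes that in a \emph{simple} multicycle the targets $(1,h_i+1)$ of the backward-type edges are pairwise distinct, so the distinct heights $h_1,\dots,h_r$ satisfy $\frac{r(r+1)}{2}\leq\sum h_i\leq n$, hence $r\leq\sqrt{2n}$ (and similarly $\leq\sqrt{2n}$ edges of type $D'$, at most one of type $Q$, at most $q$ of type $U'$); it then shows the multicycle is \emph{determined} by the sources of these special edges, because every simple cycle must contain one and the continuation from them is forced. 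Your ``anchor per component'' scheme does not yield this determination, since at a separated vertex with $j<pm+q$ there are two outgoing edges and your shape/anchor data does not obviously pin down which is taken without re-counting.

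Finally, your accounting of the factor $2^q$ is off. In the paper it arises because the possible sources of type-$U'$ edges (non-separated boundary vertices $(i,pm+q)$ with $i<q$) number at most $q$, so the multicycle contains some \emph{subset} of them, giving at most $2^q$ choices; it is not a product of binary ``re-entry'' choices at boundary crossings, and the number of boundary crossings is not bounded by $q$ (there can be on the order of $\sqrt{2n}$ crossings of type $D'$, which involve no branching at all). To repair your argument you would need to (i) work directly in $\Gamma_m$ with an explicit edge classification rather than lifting to $\Gamma$, (ii) prove the $\leq\sqrt{2n}$ bounds via the distinct-heights argument, and (iii) show that the special-edge sources determine the multicycle --- at which point you would essentially have reproduced the paper's proof.
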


\begin{proof}
Recall that in $\Gamma_m$ there are the following types of edges: 
\begin{enumerate}
\item If $(i,j)$ is non-separated with $j < pm + q$, one has the edge $(i, j) \to (i+1, j+1)$, which we call of type $U$ (\emph{upwards}); 

\item  if $(i, j)$ is non-separated with $j = p m+ q$, then there is one edge coming out of it, and it may be of one of the two types: 
\begin{itemize}
\item
if $i > q$, $(i, p m + q) \to (q+1, i+1)$, which we call of type $D'$; 
\item
if $i < q$, $(i, p m + q) \to (i+1, q+1)$, which we call of type $U'$.
\end{itemize}
Note that if $i = q$, the edge goes to $(q+1, q+1)$, from which there 
is no further edge, so no multicycle is supported there.
\item If $(i,j)$ is separated with $j < pm + q$, one has two edges: 
\begin{itemize}
\item[(a)]
$(i, j) \to (1, i+1)$, which we call of type $B$ (\emph{backwards}), 
\item[(b)]
$(i, j) \to (1, j+1)$, which we call of type $D$ (\emph{downwards});
\end{itemize}

\item If $(i,j)$ is separated with $j  = pm + q$, one has two edges: 

\begin{itemize}
\item[(a)]
$(i, j) \to (1, i+1)$, which we call of type $B$ (\emph{backwards}), 
\item[(b)]
if $j = pm + q$, we have $(i, p m + q) \to (1, q+1)$, which we call of type $Q$. 
\end{itemize}
\end{enumerate}

Now, fix a simple multicycle $\gamma$ of length $n$. 

Consider the set of edges of type $B$ along $\gamma$, and let $h_1, \dots, h_r$ be the heights of the sources of 
all such edges. 
First, we claim that all $h_i$ are distinct: this is because their targets are $(1, h_i +1)$, and these vertices have to be disjoint 
by the definition of simple multicycle. 
Moreover, we claim that 
$$\sum_{i = 1}^r h_i \leq n;$$ 
this is because to get from $(1, h_i)$ to the vertex at height $h_{i+1}$ 
you need to increase the height by $h_{i+1} -1$, and each move raises the height by at most one. 
Note that the $h_i$'s are distinct positive integers, hence  their sum is larger or equal than the sum of first $r$ positive integers. 
As a consequence, 
$$\frac{r(r+1)}{2} = \sum_{i =1}^r i \leq \sum_{i = 1}^r h_i \leq n, $$
and hence $r \leq \sqrt{2 n}$. Hence, since $\Gamma_m$ has $\leq (pm+q)^2$ vertices, there are at most $(pm+q)^{2 \sqrt{2n}}$ 
choices for the set of sources of edges of type $B$. 

Similarly, consider the heights $h_1', \dots, h_s'$ of the sources of the edges of type $D'$ along $\gamma$. Since each of them has target
$(q+1, h_i'+1)$, all these heights are different. 
Moreover, we claim that 
$$\sum_{i= 1}^s (h_i' -q) \leq n; $$ 
this is because to get from $(q +1, h_i +1) $ to the next vertex of type $D'$ on the cycle, at height $h_{i+1}'$, 
you need to increase the height by at least $h_{i+1}' - q -1$, and each move raises the height by at most one. 
Thus, similarly as before, since all $(h_i - q)$ are distinct, we obtain $s \leq \sqrt{2 n}$. 
Since the possible sources for an edge of type $D'$ are $(i, pm+q)$ with $q+1 \leq i \leq pm + q$, 
there are at most $(pm)^{\sqrt{2n}}$ possible choices. 
 
We also claim that there is at most one edge labeled $Q$: this is because their target is $(1, q+1)$, hence by disjointness
this can only happen once along the multicycle. Since the number of possible sources for edges of type $Q$ is at most $pm + q$, there are at most $pm  + q$ such choices. 

Finally, since the edges $U'$ have $q$ possible targets, there are at most $q$ of them in any multicycle, and their source 
has height between $1$ and $q$, so there are at most $2^q$ possible choices. 

Now, we claim that the positions of the sources of the $B, Q, D',$ and $U'$ edges 
determines the multicycle. This is because any simple cycle in $\gamma$ needs to contain at least one of them (otherwise the 
path keeps going to the right), and, once these vertices are specified, the other vertices of the path are determined uniquely since 
there is only one edge coming out of non-separated vertices. 

Hence there are at most $((pm + q)^2)^{\sqrt{2 n}}$ choices for the sources of the $B$ edges, $(pm + q)^{\sqrt{2 n}}$ choices for the sources
 of the $D'$ edges, $pm+q$ choices for the sources of the $Q$ edge, and $2^q$ for the sources of the $U'$ edges.
Altogether, this leads to at most 
$$ (pm + q)^{2 \sqrt{2 n}}  \cdot (pm + q)^{\sqrt{2 n}} \cdot (pm+q) \cdot 2^q \leq 2^q (pm + q)^{3 \sqrt{2 n} + 1}$$
multicycles of length $n$, as required.
\end{proof}
  
We now obtain asymptotic stability of roots in $\mathbb{D}$ of the spectral determinants $P_{\Gamma_m}$:

\begin{theorem} \label{t:approximations}
The sequence of functions $(P_{\Gamma_k}(t))_{k \geq 0}$ converges uniformly to $P_\Gamma(t)$
on compact subsets of $\mathbb{D}$. 
As a consequence, roots of $P_{\Gamma}$ in $\mathbb{D}$ are approximated arbitrarily well by roots of $P_{\Gamma_k}$.
\end{theorem}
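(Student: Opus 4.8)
The plan is to compare the power-series coefficients of $P_\Gamma$ and of the polynomials $P_{\Gamma_k}$ degree by degree. Writing $P_\Gamma(t) = \sum_{n\ge 0} a_n t^n$ and $P_{\Gamma_k}(t) = \sum_{n\ge 0} a_n^{(k)} t^n$, formula \eqref{eq:spectraldeterminant} shows that $a_n$ (resp. $a_n^{(k)}$) is the signed count $\sum_{\ell(\gamma)=n}(-1)^{C(\gamma)}$ over multi-cycles of length $n$ in $\Gamma$ (resp. $\Gamma_k$), so $|a_n|$ and $|a_n^{(k)}|$ are bounded by the number of such multi-cycles. Fixing $r\in(0,1)$, it suffices to show that $\sup_{|t|\le r}|P_{\Gamma_k}(t)-P_\Gamma(t)| \le \sum_{n\ge 0}|a_n^{(k)}-a_n|\,r^n \to 0$ as $k\to\infty$ (the bound is by the triangle inequality, using that the power series for $P_\Gamma$ converges on $\mathbb{D}$, which follows from Proposition \ref{p:boundedcoeffs}).

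First I would isolate a threshold below which the two coefficient sequences agree. Going through the proofs of Lemmas \ref{l:cyclesshowupfinitetime} and \ref{l:shortfinitecyclesarereal} — which quantify Corollary \ref{c:truncationscoincide} — one obtains a threshold $L_k$ that grows linearly in $k$, say $L_k \ge \tfrac{p}{2}k - C$ for a constant $C=C(p,q)$, with the property that for $n\le L_k$ the multi-cycles of length $n$ in $\Gamma_k$ are in length- and component-count-preserving bijection with those of length $n$ in $\Gamma$ (the vertices involved have bounded width, so they lie in a fundamental domain and project injectively under $\Gamma \to \Gamma_k$). Hence $a_n^{(k)}=a_n$ for $n\le L_k$, so that $\sup_{|t|\le r}|P_{\Gamma_k}(t)-P_\Gamma(t)| \le \sum_{n>L_k}\bigl(|a_n^{(k)}|+|a_n|\bigr)r^n$; note also that $n>L_k$ forces $pk+q\le 2n+C'$ for a constant $C'=C'(p,q)$.

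Next I would estimate this tail uniformly in $k$. Proposition \ref{p:boundedcoeffs} gives $|a_n|\le (2n)^{\sqrt{2n}}$, and Lemma \ref{l:cycle-upper-bound} gives $|a_n^{(k)}|\le 2^q(pk+q)^{3\sqrt{2n}+1}$; the latter grows with $k$, but it is only applied in the range $n>L_k$, where $pk+q\le 2n+C'$, so there $|a_n^{(k)}|\le 2^q(2n+C')^{3\sqrt{2n}+1}$, a bound free of $k$. Therefore $\sup_{|t|\le r}|P_{\Gamma_k}(t)-P_\Gamma(t)|$ is at most the $L_k$-tail of the single fixed series $\sum_n\bigl((2n)^{\sqrt{2n}}+2^q(2n+C')^{3\sqrt{2n}+1}\bigr)r^n$, which converges since both summands have radius of convergence $1$ (the $n$-th term is of the form $\exp(o(n)+n\log r)$ with $\log r<0$). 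As $L_k\to\infty$, this tail tends to $0$, giving uniform convergence on $\{|t|\le r\}$ and hence on every compact subset of $\mathbb{D}$. For the statement about roots, $P_\Gamma$ is holomorphic on $\mathbb{D}$ by Theorem \ref{t:SpectralDeterminantGrowthRate} with $P_\Gamma(0)=1\ne 0$, so $P_\Gamma\not\equiv 0$, while each $P_{\Gamma_k}$ is a polynomial; applying Hurwitz's theorem to the uniform convergence just established shows that near any zero $z_0\in\mathbb{D}$ of $P_\Gamma$ the polynomials $P_{\Gamma_k}$ have zeros for all large $k$, which is the claimed approximation.

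I expect the main obstacle to be the tail estimate in the second step: the per-degree bound on the coefficients of $P_{\Gamma_k}$ coming from Lemma \ref{l:cycle-upper-bound} is \emph{not} uniform in $k$ (it grows polynomially in $k$), so one cannot bound the tail of $P_{\Gamma_k}$ directly. The key observation that resolves this — and the crux of the argument — is that the range where this $k$-dependent bound is actually needed is exactly $n>L_k$, where $L_k\asymp pk/2$; there one has $pk+q = O(n)$, which converts the $k$-dependent bound into a $k$-independent, summable one. Everything else (the quantitative versions of Lemmas \ref{l:cyclesshowupfinitetime}–\ref{l:shortfinitecyclesarereal}, the convergence of the comparison series, and the Hurwitz argument) is routine once this point is in place.
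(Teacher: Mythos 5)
Your proposal is correct and follows essentially the same route as the paper: both compare coefficients via Corollary \ref{c:truncationscoincide}, and both exploit the key point that the $k$-dependent bound of Lemma \ref{l:cycle-upper-bound} is only needed in the range where $pk+q = O(n)$, which makes it $k$-independent and summable against $r^n$. The only cosmetic difference is the order of steps (you match coefficients below a linear threshold $L_k$ and bound only the tail, while the paper first establishes a uniform-in-$k$ coefficient bound via the dichotomy $k \gtrless \tfrac{2n+2}{p}$ and then truncates), and your Hurwitz argument for the roots is equivalent to the paper's use of Rouch\'e's theorem.
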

  
\begin{proof}
We claim that for each $r< 1$, the sequence $(P_{\Gamma_k}(t))_{k \geq 0}$ converges uniformly to $P_\Gamma(t)$ 
on the disk $\mathbb{D}_r = \{ |t| < r \}$.
Let
$$P_\Gamma(t) = \sum_{n = 0}^\infty a_n t^n, \qquad P_{\Gamma_k}(t) = \sum_{n = 0}^\infty a_n^{(k)} t^n$$
be the power series expansion of $P_\Gamma$ and  $P_{\Gamma_k}$.

First of all, by Proposition~\ref{p:boundingnumberofcycles}, the number $S_n$ of simple multicycles of length $n$
in the graph $\Gamma$ is bounded above by $(2n)^{\sqrt{2n}}$, hence 
$$
a_n \leq (2n)^{\sqrt{2n}}.
$$
As for $\Gamma_k$, note that by Lemma \ref{l:shortfinitecyclesarereal}, if $k > \frac{2n + 2}{p}$ then 
$$a_n^{(k)} \leq S_n \leq (2n)^{\sqrt{2 n}}.$$
On the other hand, if $k \leq \frac{2n + 2}{p}$, by Lemma \ref{l:cycle-upper-bound} we obtain
$$a_n^{(k)} \leq S_n^{(k)} \leq  2^q (pk + q)^{3 \sqrt{2 n} + 1} \leq  2^q (2n + q + 2)^{3 \sqrt{2 n} + 1} .$$
 
Now fix $0 < r < 1$ and $\epsilon > 0$, and pick $c$ with $1 < c < \frac{1}{r}$. There exists $n_0$ such that 
$$\frac{(3 \sqrt{2n} + 1) \log(2n + q + 2) + q \log 2}{n} \leq \log c \qquad \textup{for all } n \geq n_0$$ 
so that 
\begin{equation} \label{E:multi3}
a_n \leq c^n \qquad 
\textup{ and } a_n^{(k)} \leq c^n  \qquad  \qquad \textup{for any }  n \geq n_0, k \geq 0.
\end{equation}

Hence, if we write 
$$P_\Gamma(t) = \sum_{n = 0}^\infty a_n t^n, \qquad P_{\Gamma_k}(t) = \sum_{n = 0}^\infty a_n^{(k)} t^n$$
the power series expansion of $P_\Gamma$ and  $P_{\Gamma_k}$, we have by equation \eqref{E:multi3} that
$$|a_n t^n| \leq (2n)^{\sqrt{2n}} r^n \leq (rc)^n \qquad \textup{for }|t| \leq r, n \geq n_0.$$
Thus, if $n_1 \geq n_0$ is sufficiently large, we have  
$$ \left|\sum_{n = n_1}^\infty a_n t^n \right| \leq \sum_{n = n_1}^\infty (rc)^n = \frac{(rc)^{n_1}}{1 - rc} < \epsilon$$
and exactly the same estimate holds for $P_{\Gamma_k}$ for any $k \geq 1$. 

Now, by Corollary \ref{c:truncationscoincide} there exists $k_0$ such that for $k \geq k_0$ the first $n_1$ coefficients of $P_\Gamma$ and $P_{\Gamma_k}$ are the same. 
Hence, for $k \geq k_0$, 
$$ \left| P_\Gamma(t) - P_{\Gamma_k}(t) \right| \leq \left|  \sum_{n = 0}^{n_1} a_n t^n  -  \sum_{n = 0}^{n_1} a_n^{(k)} t^n \right| + \left| \sum_{n = n_1 + 1}^{\infty} a_n t^n \right| 
+  \left| \sum_{n = n_1 + 1}^{\infty} a_n^{(k)} t^n \right|  \leq 0 + \epsilon + \epsilon = 2 \epsilon,$$
which proves the uniform convergence on the disk of radius $r$. 
\end{proof}

\begin{proof}[Proof of Theorem \ref{t:approxroots}] 
By Theorem \ref{t:approximations} and Rouch\'e's theorem, the set of roots of $P_{\Gamma_k}$ in $\mathbb{D}$ converges in the Hausdorff topology to the set of roots 
of $P_\Gamma$. By Theorem \ref{t:cyclotomicfactors}, for any $k$ the roots of $P_{\Gamma_k}$ in $\mathbb{D}$ coincide with the roots of $P_{\Gamma_1}$. Hence, the roots of $P_{\Gamma}$ in $\mathbb{D}$ coincide with the roots of $P_{\Gamma_1}$. 
\end{proof}

\section{Continuous extension of $Z^+$ to $\mathbb{R}/\mathbb{Z}$} \label{S:continuous-ext}

In this section, we use Theorem \ref{t:approxroots} to prove Theorem \ref{t:continuousdiskextension}, reproduced here:

\begin{theoremcontinuousdiskextension}
The map $Z^+ : \mathbb{Q} / \mathbb{Z} \to Com^+(\mathbb{C})$ admits a continuous extension from $\mathbb{R}/\mathbb{Z} \to Com^+(\mathbb{C})$.  
\end{theoremcontinuousdiskextension}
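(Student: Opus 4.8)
The plan is to \emph{define} the extension directly, via labeled wedges, and only then establish continuity on all of $\mathbb{R}/\mathbb{Z}$. For an arbitrary angle $\theta\in\mathbb{R}/\mathbb{Z}$, rational or not, the labeled wedge $\mathcal{W}_\theta$ still makes sense: one labels a pair $(i,j)$ \emph{equivalent} if $x_i(\theta)=x_j(\theta)$, \emph{separated} if $x_i(\theta)$ and $x_j(\theta)$ lie in the interiors of distinct elements of the partition $P_\theta$, and \emph{non-separated} otherwise. By Theorem~\ref{t:SpectralDeterminantGrowthRate} the spectral determinant $P_{\Gamma_\theta}(t)$ of the associated graph is holomorphic on $\mathbb{D}$ with $P_{\Gamma_\theta}(0)=1$, and I would set
$$\widehat{Z}^+(\theta)\ :=\ S^1\ \cup\ \bigl\{\, 1/z \ :\ z\in\mathbb{D},\ P_{\Gamma_\theta}(z)=0 \,\bigr\}.$$

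The first step is a \emph{uniform confinement of the zeros}. Writing $P_{\Gamma_\theta}(t)=1+\sum_{n\ge1}a_n^{(\theta)}t^n$, Proposition~\ref{p:boundedcoeffs} gives $|a_n^{(\theta)}|\le(2n)^{\sqrt{2n}}$ \emph{independently of $\theta$}, so there is a constant $\delta\in(0,1)$, the same for all $\theta$, with $|P_{\Gamma_\theta}(t)-1|<1$ whenever $|t|\le\delta$; hence no $P_{\Gamma_\theta}$ vanishes in $\{|t|\le\delta\}$, every zero in $\mathbb{D}$ lies in the annulus $\{\delta\le|t|<1\}$, and consequently $\widehat{Z}^+(\theta)\subseteq\{1\le|w|\le 1/\delta\}$ for all $\theta$, so that $\widehat{Z}^+(\theta)\in Com^+(\mathbb{C})$. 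Next I would check that $\widehat{Z}^+$ \emph{agrees with $Z^+$ on $\mathbb{Q}/\mathbb{Z}$}: for rational $\theta$ the wedge $\mathcal{W}_\theta$ is periodic, so Theorem~\ref{t:approxroots} identifies the zeros of $P_{\Gamma_\theta}$ in $\mathbb{D}$ with those of the finite-model spectral determinant $P_{\Gamma_1}(t)=\det(I-tA_1)$; via $z\mapsto 1/z$ these correspond bijectively (with multiplicity) to the eigenvalues of modulus $>1$ of the finite-model incidence matrix, which by Theorem~\ref{T:Th-Mar} are precisely the eigenvalues of modulus $>1$ of the Markov matrix $M_\theta$, i.e. $Z(\theta)\cap(\mathbb{C}\setminus\overline{\mathbb{D}})$. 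Thus $\widehat{Z}^+$ is a genuine extension, and everything reduces to proving that $\widehat{Z}^+:\mathbb{R}/\mathbb{Z}\to Com^+(\mathbb{C})$ is continuous.

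For continuity, fix $\theta_n\to\theta$. Since $w\mapsto 1/w$ is bi-Lipschitz from $\{\delta\le|w|\le 1\}$ onto $\{1\le|w|\le 1/\delta\}$ and the circle $S^1$ belongs to every $\widehat{Z}^+(\cdot)$, it suffices to show the zero sets $\mathcal{Z}_n:=\{z\in\mathbb{D}:P_{\Gamma_{\theta_n}}(z)=0\}$ converge to $\mathcal{Z}:=\{z\in\mathbb{D}:P_{\Gamma_\theta}(z)=0\}$ in the following two one-sided senses: (i) every $z_0\in\mathcal{Z}$ is a limit of points of $\mathcal{Z}_n$; and (ii) if $z_n\in\mathcal{Z}_n$ satisfy $|z_n|\le 1-\eta$ for some fixed $\eta>0$, then a subsequence converges to a point of $\mathcal{Z}$ (those with $|z_n|\to 1$ land on the $S^1$ that is already present). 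By the uniform coefficient bound, $(P_{\Gamma_{\theta_n}})_n$ is a normal family on $\mathbb{D}$ with uniformly small tails, so (i) and (ii) follow from Hurwitz's theorem \emph{provided} every locally uniform subsequential limit of $(P_{\Gamma_{\theta_n}})_n$ has the same zero set in $\mathbb{D}$ as $P_{\Gamma_\theta}$.

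This last point is where the real difficulty lies, and it is the reason the machinery of \S\ref{s:relatingThurstonAndMarkovPolys} and \S\ref{sec:coversofthefinitemodel} (Theorem~\ref{t:cyclotomicfactors}, Theorem~\ref{t:approxroots}, and the kernel analysis) was built. A locally uniform limit of $P_{\Gamma_{\theta_n}}$ equals $P_{\Gamma_{\mathcal{W}_\infty}}$ for a labeled wedge $\mathcal{W}_\infty$ obtained (by a diagonal argument over the countably many pairs) as a subsequential limit of the $\mathcal{W}_{\theta_n}$. Because $x_i(\theta_n)\to x_i(\theta)$ and the partition endpoints $\tfrac{\theta_n}{2},\tfrac{\theta_n+1}{2}$ converge to those of $P_\theta$, the wedge $\mathcal{W}_\infty$ must agree with $\mathcal{W}_\theta$ on every pair $(i,j)$ with $x_i(\theta)\ne x_j(\theta)$ and with both $x_i(\theta),x_j(\theta)$ interior to elements of $P_\theta$; the only pairs on which $\mathcal{W}_\infty$ may differ from $\mathcal{W}_\theta$ are the \emph{exceptional} ones, where $x_i(\theta)=x_j(\theta)$ or some $x_i(\theta)$ is a partition endpoint, and there the label can genuinely jump (e.g. from \emph{equivalent} to \emph{non-separated}) depending on the side of approach. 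The key claim — and the main obstacle of the whole argument — is that altering the labels of $\mathcal{W}_\theta$ at exceptional pairs changes the spectral determinant only by a factor whose roots lie in $\{0\}\cup S^1$, hence leaves the zero set inside $\mathbb{D}$ unchanged; I would prove this by adapting the semiconjugacy-and-kernel arguments of \S\ref{s:relatingThurstonAndMarkovPolys} and \S\ref{sec:coversofthefinitemodel}, exhibiting the exceptional pairs as spanning an invariant subspace on which the transition operator is eventually periodic, so that it contributes only cyclotomic factors. Granting this, all subsequential limits share the zero set $\mathcal{Z}$ in $\mathbb{D}$, Hurwitz's theorem applies, (i) and (ii) hold, and the continuity of $\widehat{Z}^+$ — hence Theorem~\ref{t:continuousdiskextension} — follows.
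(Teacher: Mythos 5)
Your overall architecture is sound and in fact parallels the paper's: define the extension through labeled wedges and spectral determinants, use the uniform coefficient bound of Proposition~\ref{p:boundedcoeffs} to confine the zeros and get compactness, identify the extension with $Z^+$ at rational angles via Theorem~\ref{t:approxroots} together with Theorem~\ref{T:Th-Mar}, and deduce Hausdorff convergence of zero sets from locally uniform convergence plus Rouch\'e/Hurwitz (this is the paper's Lemma~\ref{l:Hausdorffconvergence}). But the step you yourself flag as ``the main obstacle'' is exactly the one you do not prove, and it is the mathematical heart of the theorem: you assert that relabeling $\mathcal{W}_\theta$ at the exceptional pairs changes the spectral determinant only by a factor with roots in $\{0\}\cup S^1$, and propose to show this by exhibiting the exceptional pairs as ``spanning an invariant subspace on which the transition operator is eventually periodic.'' As stated this claim is both unproven and too strong: the exceptional pairs for a periodic $\theta$ of period $p$ are infinitely many (all $(i,j)$ with $i\equiv j$ or with a coordinate $\equiv 0 \bmod p$), they do not span an invariant subspace of the infinite transition operator in any evident way, and an arbitrary relabeling there (one not preserving the periodic structure) could in principle change the zero set in $\mathbb{D}$. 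What makes the argument work is the specific structure of the actual one-sided limits: by \cite[Proof of Proposition 8.5]{TiozzoContinuity} the limits $\mathcal{W}_{\theta^{\pm}}$ exist, by Lemma~\ref{l:limitonlydiffers} they are still purely periodic of the same period $p$ and differ from $\mathcal{W}_\theta$ only at pairs with a coordinate $\equiv 0 \bmod p$, and by Lemma~\ref{l:isomorphicfinitemodels} such wedges have isomorphic \emph{finite models}, hence identical finite-model characteristic polynomials; Theorem~\ref{t:approxroots} then transfers equality of the root sets in $\mathbb{D}$ from the finite models to the infinite spectral determinants. None of this finite-model reduction appears in your sketch, and ``adapting the semiconjugacy-and-kernel arguments'' of \S\ref{s:relatingThurstonAndMarkovPolys}--\S\ref{sec:coversofthefinitemodel} directly to the infinite graph is not a routine adaptation: those arguments compare two \emph{finite} quotients related by an explicit projection, whereas here you must compare two genuinely different infinite labeled wedges.

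Concretely, to close the gap you should replace your ``granting this'' paragraph by: (i) a proof (or citation) that the one-sided limits $\mathcal{W}_{\theta^{\pm}}$ exist and are periodic of the same period, differing from $\mathcal{W}_\theta$ only at pairs with a coordinate divisible by the period (Lemma~\ref{l:limitonlydiffers}); (ii) the isomorphism of the corresponding finite models (Lemma~\ref{l:isomorphicfinitemodels}); and (iii) the application of Theorem~\ref{t:approxroots} to each of $\mathcal{W}_\theta$, $\mathcal{W}_{\theta^+}$, $\mathcal{W}_{\theta^-}$ to conclude that the three infinite spectral determinants have the same zeros in $\mathbb{D}$, with the non-periodic case handled by $\mathcal{W}_{\theta^+}=\mathcal{W}_\theta=\mathcal{W}_{\theta^-}$. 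With that substitution your normal-family/Hurwitz framework (or the paper's Lemma~\ref{l:Hausdorffconvergence}) does finish the proof; without it, the argument is incomplete at its decisive point.
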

   
Recall that we say that a sequence $(\mathcal{W}_n)$ of labeled wedges converges to a labeled wedge $\mathcal{W}$ if for any finite subgraph of $\mathcal{W}$, 
there exist $n_0$ such that, for $n \geq n_0$, the labels of all corresponding vertices of $\mathcal{W}_n$ agree with the labels of $\mathcal{W}$. 

By \cite[Proof of Proposition 8.5]{TiozzoContinuity}, for any angle $\theta$ the limits 
$$\mathcal{W}_{\theta^+} := \lim_{\theta' \to \theta^+}\mathcal{W}_{\theta}, \quad \mathcal{W}_{\theta^-} := \lim_{\theta' \to \theta^-} \mathcal{W}_{\theta'}$$
exist. The properties of these limit graphs are described in the following lemmas.
    
\begin{lemma} \cite[Lemma 8.6]{TiozzoContinuity} \label{l:limitonlydiffers}
 Let $\theta$ be periodic of period $p$ under the doubling map.  Then $\mathcal{W}_{\theta}$, $\mathcal{W}_{\theta^+}$ and $\mathcal{W}_{\theta^-}$ are periodic of period $p$, and differ only in the labelings of pairs $(i,j)$ with either $i \equiv 0 \mod{p}$ or $j \equiv 0 \mod{p}$. 
\end{lemma}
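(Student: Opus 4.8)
The statement to prove, Lemma~\ref{l:limitonlydiffers}, is quoted directly from \cite[Lemma 8.6]{TiozzoContinuity}, so the natural plan is to reconstruct its proof from the combinatorics of the labeled wedge $\mathcal{W}_\theta$ attached to a rational angle. The plan is to unwind the definition of $\mathcal{W}_\theta$: the label of a pair $(i,j)$ records whether the two points $x_i(\theta) = 2^{i-1}\theta$ and $x_j(\theta) = 2^{j-1}\theta$ on the circle are equal, lie in the interiors of different elements of the partition $P_\theta = [\theta/2,(\theta+1)/2)\sqcup[(\theta+1)/2,\theta/2)$, or neither. So the only way the label can be unstable as $\theta'\to\theta^\pm$ is if one of $x_i(\theta'),x_j(\theta')$ sits on the boundary of the partition $P_{\theta'}$, i.e.\ coincides with $\theta'/2$ or $(\theta'+1)/2$, or if $x_i(\theta)=x_j(\theta)$ becomes an inequality.

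First I would establish the periodicity claim: if $\theta$ is purely periodic of period $p$ under doubling, then $x_{i+p}(\theta) = x_i(\theta)$, so the ``equivalent'' labels are exactly the pairs with $i\equiv j \bmod p$, which is condition (2) in the definition of a periodic labeled wedge, and the remaining labels satisfy the periodicity condition \eqref{i:correspondingverticessamelabel} automatically because the orbit $\{x_i\}$ is $p$-periodic and the partition $P_\theta$ is fixed. For the one-sided limits $\mathcal{W}_{\theta^\pm}$, the key observation is that each $x_i(\theta')$ depends continuously (indeed real-analytically, piecewise) on $\theta'$, and the partition endpoints $\theta'/2, (\theta'+1)/2$ also move continuously; so for all but finitely many pairs $(i,j)$ in any fixed finite window the label is locally constant near $\theta$, which gives the existence of the limits (already asserted via \cite[Proof of Proposition 8.5]{TiozzoContinuity}) and shows $\mathcal{W}_{\theta^\pm}$ agrees with $\mathcal{W}_\theta$ except possibly on the ``bad'' pairs.

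Then the heart of the argument is identifying the bad pairs. A pair $(i,j)$ can change label between $\mathcal{W}_\theta$ and $\mathcal{W}_{\theta^+}$ (or $\mathcal{W}_{\theta^-}$) only if at the parameter value $\theta$ one of the following degeneracies occurs: $x_i(\theta)$ or $x_j(\theta)$ equals a partition endpoint $\theta/2$ or $(\theta+1)/2$, or $x_i(\theta)=x_j(\theta)$. Since $\theta$ is purely periodic of period $p$, we have $x_1(\theta)=\theta$ and the partition endpoint $\theta/2$ is a preimage of $\theta$; concretely $x_i(\theta) \in \{\theta/2,(\theta+1)/2\}$ forces $x_{i+1}(\theta) = 2x_i(\theta) = \theta = x_1(\theta)$, hence $i+1 \equiv 1 \bmod p$, i.e.\ $i\equiv 0 \bmod p$. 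Likewise $x_i(\theta)=x_j(\theta)$ with $i<j$ forces, by periodicity and injectivity of the doubling map on the periodic orbit away from the critical value, that $i\equiv j\bmod p$, and chasing this back one sees the only newly-created coincidences sit at indices $\equiv 0\bmod p$ as well. Either way, the unstable pairs $(i,j)$ have $i\equiv 0$ or $j\equiv 0\bmod p$, which is exactly the claim. Finally, the three wedges $\mathcal{W}_\theta,\mathcal{W}_{\theta^+},\mathcal{W}_{\theta^-}$ are all purely periodic of period $p$: for $\mathcal{W}_\theta$ this was the first step, and the two limits inherit periodicity because the limiting relations $x_{i+p}=x_i$ and the partition are unchanged in the limit, so conditions (1)--(3) in the definition of periodic labeled wedge persist (with the convention that the limiting label on an $i\equiv 0\bmod p$ pair is read off from the one-sided limit of the circle picture).

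I expect the main obstacle to be the bookkeeping in the last step: carefully verifying that \emph{every} possible label change is pinned to an index divisible by $p$, and in particular ruling out that a pair $(i,j)$ with neither coordinate $\equiv 0 \bmod p$ could change label because some \emph{other} point of the orbit crossed a partition boundary --- this cannot happen since the label of $(i,j)$ depends only on $x_i,x_j$ and the endpoints, but one must also handle the edge case where $x_i(\theta)$ or $x_j(\theta)$ equals $\theta$ itself (the index $i\equiv 1$) without equalling a partition endpoint, and confirm that such a pair is in fact locally stable. Making the dependence of $x_i(\theta')$ and of the partition endpoints on $\theta'$ precise enough to conclude one-sided local constancy of the label off the bad set is the routine-but-delicate part; the rest follows the combinatorial structure already set up in \S\ref{ss:periodiclabeledwedgesfinitemodels}.
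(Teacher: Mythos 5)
The paper gives no argument for this lemma at all: it is quoted verbatim from \cite[Lemma 8.6]{TiozzoContinuity}, so your reconstruction can only be judged on its own terms. Your main mechanism is the right one, and is surely the heart of the cited proof: labels are locally constant in $\theta'$ whenever both orbit points lie in the interiors of the partition halves, and an orbit point can sit on the boundary $\{\theta/2,(\theta+1)/2\}$ only when its image is $x_1=\theta$, i.e.\ only at indices $i\equiv 0\bmod p$ (and conversely every $x_{kp}$ does sit there).

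There is, however, a genuine gap in your treatment of the coincident pairs. You list the degeneracy ``$x_i(\theta)=x_j(\theta)$ becomes an inequality'' as a source of instability and then dismiss it with ``chasing this back one sees the only newly-created coincidences sit at indices $\equiv 0\bmod p$''. That is not the relevant phenomenon: coincidences are not created but destroyed under perturbation, and they occur at \emph{every} pair with $i\equiv j\bmod p$, not only at indices divisible by $p$. Concretely, take $\theta=1/7$ (period $3$) and the pair $(1,4)$: $x_1=x_4=1/7$, so $(1,4)$ is labeled $E$ in $\mathcal{W}_\theta$, while for every nearby $\theta'\neq\theta$ the points $x_1(\theta')$ and $x_4(\theta')$ are distinct and lie in the interior of the same element of $P_{\theta'}$, so $(1,4)$ is labeled $N$ in both one-sided limits; neither index is $\equiv 0\bmod 3$. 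Thus, with the three-label convention of this paper, such pairs do change label, and your conclusion that all unstable pairs have an index $\equiv 0\bmod p$ is not established as stated. To close the gap you must either work with a convention in which a coincident pair simply counts as non-separated (so nothing changes at these pairs and the statement is literally true), or explicitly record that the only further changes are $E\to N$ along the classes $i\equiv j\bmod p$ and explain why they are harmless for the intended use (the passage to finite models via Lemma \ref{l:isomorphicfinitemodels}). Relatedly, your assertion that $\mathcal{W}_{\theta^{\pm}}$ are again periodic is claimed rather than proved: condition \eqref{i:correspondingverticessamelabel} for the limit wedge requires that for every $k$ the point $x_{kp}(\theta')$ leaves the partition endpoint on the same side, which follows from the sign of $\left(2^{kp-1}-\tfrac{1}{2}\right)(\theta'-\theta)$ being independent of $k$; this one-line argument should be made explicit.
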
 
           
\begin{lemma} \cite[Lemma 7.3]{TiozzoContinuity} \label{l:isomorphicfinitemodels}
Let $\mathcal{W}^a$ and $\mathcal{W}^b$ be two labeled wedges which are purely periodic of period $p$.  Suppose moreover that for every pair $(i,j)$ with $i,j \not \equiv 0 \mod{p}$, the label of $(i,j)$ in $\mathcal{W}^a$ equals the label in $\mathcal{W}^b$.  Then the finite models $\Gamma_1^a$ and $\Gamma_1^b$ are isomorphic graphs.  
\end{lemma}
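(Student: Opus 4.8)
The plan is to prove Lemma~\ref{l:isomorphicfinitemodels} by exhibiting an explicit isomorphism between $\Gamma_1^a$ and $\Gamma_1^b$, using the fact that the vertex set of either finite model is canonically identified with the set of $\equiv_{p,0}$-equivalence classes of pairs in $\Sigma$, and that the combinatorics of the edges emanating from a vertex depend only on its label. First I would recall from \S\ref{ss:periodiclabeledwedgesfinitemodels} that, for a purely periodic labeled wedge of period $p$ (preperiod $q=0$), the finite model $\Gamma_1$ is the quotient $\Gamma_{\mathcal{W}}/\!\equiv_{p,0}$, whose vertices are the classes $[(i,j)]$ with $1 \le i \le j \le p$, and that the image under the quotient of the edge-construction rules gives: from a non-separated vertex $[(i,j)]$ there is one edge to $[(i+1,j+1)]$ (indices read mod $p$ in the appropriate range), and from a separated vertex $[(i,j)]$ there are edges to $[(1,i+1)]$ and $[(1,j+1)]$; equivalent vertices have no outgoing edges. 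Crucially, which class is the target is determined purely by the pair $(i,j)$ and the arithmetic of $\equiv_{p,0}$, not by the labeling.

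Next I would define the candidate isomorphism to be the identity on vertex sets: since $\mathcal{W}^a$ and $\mathcal{W}^b$ are both purely periodic of period $p$, their finite models have literally the same vertex set $V = \{[(i,j)] : 1 \le i \le j \le p\}$. The content of the lemma is then that this identity map is a graph isomorphism, i.e. that the edge sets coincide. Take any vertex $v = [(i,j)] \in V$. If neither $i$ nor $j$ is $\equiv 0 \bmod p$, i.e. $1 \le i, j \le p-1$ after choosing representatives, then by hypothesis $v$ has the same label in $\mathcal{W}^a$ and $\mathcal{W}^b$, so the edge rule produces the same outgoing edges in both finite models. The remaining case is when $i \equiv 0$ or $j \equiv 0 \bmod p$; here the labels may differ, so I need a separate argument. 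The key observation is condition (3) in the definition of a periodic labeled wedge: if $i \equiv_{p,0} j$ then $(i,j)$ is non-separated, and condition (2): $(i,j)$ is labeled $E$ if and only if $i \equiv_{p,0} j$. For a vertex with $j \equiv 0 \bmod p$, the class $[(i,j)] = [(i,p)]$; if also $i \equiv 0 \bmod p$ this is $[(p,p)]$, which is an equivalent vertex in \emph{both} models (by condition (2), same in $a$ and $b$) and has no outgoing edge in either. If $i \not\equiv 0$, then $i \not\equiv_{p,0} j$, so $(i,p)$ is not labeled $E$ in either model; whether it is labeled $N$ or $S$ may genuinely differ between $\mathcal{W}^a$ and $\mathcal{W}^b$ --- and this is the crux.

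The main obstacle is precisely this last subcase: showing that even when the label of such a boundary vertex $(i,p)$ differs between the two wedges, the outgoing edges in the finite models still coincide. The resolution I would pursue is that in the finite model the quotient collapses $j+1 \equiv p+1 \equiv 1 \bmod p$, so a non-separated vertex $[(i,p)]$ emits the single edge $[(i,p)] \to [(i+1, 1)] = [(1, i+1)]$ (reordering so the first coordinate is smaller), while a separated vertex $[(i,p)]$ emits edges to $[(1,i+1)]$ and $[(1,1)]$. These are \emph{not} the same in general, so the identity map is \emph{not} literally the isomorphism --- instead I would need to permute the vertices $[(i,p)]$ for $1 \le i \le p-1$ among themselves, or more carefully: the point of \cite[Lemma 7.3]{TiozzoContinuity} must be that the isomorphism is built by tracking how these ``period-boundary'' vertices correspond, using the fact (from Lemma~\ref{l:limitonlydiffers} and the dynamics) that the total combinatorial data is forced. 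Concretely, I would show that for each such boundary vertex, in \emph{both} models the unique (or pair of) outgoing edge(s) and the incoming edges can be matched up by a bijection $\psi$ of $V$ fixing all non-boundary vertices, by comparing the in/out degrees and observing that the subgraph induced on boundary vertices plus their neighbors has a rigid structure determined by $p$ alone. I would close by noting that since eigenvalue data (hence core entropy, hence everything downstream) is invariant under graph isomorphism, this suffices for the applications. The genuinely delicate bookkeeping --- verifying that $\psi$ respects all edges through the period-boundary --- is where I would spend the most care, likely by invoking the explicit description of $\Gamma_{\theta}$ near the doubling-periodic angle and the fact established in \cite{TiozzoContinuity} that the landing dynamics pin down these labels up to the allowed ambiguity.
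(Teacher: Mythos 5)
This lemma is not proved in the paper at all --- it is imported verbatim from \cite[Lemma 7.3]{TiozzoContinuity} --- so there is no in-paper argument to compare against; your proposal has to stand on its own, and as written it has a genuine gap exactly at the point you yourself flag as the crux. You correctly compute that for a boundary class $[(i,p)]$, $1\le i\le p-1$, the quotient construction gives one outgoing edge, to $[(1,i+1)]$, if the label is $N$, and two outgoing edges, to $[(1,i+1)]$ and to $[(1,p+1)]=[(1,1)]$, if the label is $S$. But your proposed repair --- a bijection $\psi$ of the vertex set fixing the non-boundary vertices, justified by matching in/out degrees and an appeal to ``rigid structure'' or to the landing dynamics --- cannot work: when some boundary class is labeled $N$ in $\mathcal{W}^a$ and $S$ in $\mathcal{W}^b$ (which is precisely the situation in the intended application $\mathcal{W}_\theta$ versus $\mathcal{W}_{\theta^\pm}$), the two quotient graphs have different total numbers of edges, and no permutation of vertices can change an edge count. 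Invoking Lemma~\ref{l:limitonlydiffers} or the dynamics of ray landing is also not admissible here, since the statement is purely combinatorial, about arbitrary periodic labeled wedges.

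The missing observation is that the discrepancy you found is entirely confined to edges \emph{into equivalent classes}: in both cases the outgoing edges of $[(i,p)]$ that land on non-diagonal classes are identical (the single edge to $[(1,i+1)]$), and the only possible difference is an extra edge into $[(1,1)]$, which is labeled $E$ and hence a sink with no outgoing edges; classes with both coordinates $\equiv 0$, or more generally with $i\equiv j \bmod p$, are labeled $E$ in both wedges by the definition of a periodic labeled wedge, and all other classes have equal labels by hypothesis. Consequently the two finite models agree identically once the equivalent (diagonal) vertices are removed --- which is the form of the finite model used in \cite{TiozzoContinuity} --- and in any case their characteristic polynomials coincide, since an edge into a vertex whose row of the adjacency matrix is zero does not affect $\det(A-tI)$ (expand along that zero row); this equality of characteristic polynomials is exactly what the paper uses downstream in the proof of Theorem~\ref{t:continuousdiskextension}. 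So the correct map is the identity on (non-diagonal) vertices, not a permutation of the boundary vertices, and the work consists in the label bookkeeping above rather than in any degree-matching argument; your sketch does not supply this and, as outlined, would not close.
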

   
\begin{remark} The subscript $1$ in the notation $\Gamma_1^a$ in Lemma \ref{l:isomorphicfinitemodels} is consistent with the notation defined at the beginning of \S  \ref{sec:coversofthefinitemodel} to denote the finite ``1-cover'' model associated to the labeled wedge $\mathcal{W}^a$. It differs slightly from the notation in \cite{TiozzoContinuity}. 
\end{remark}
   
\begin{lemma} \label{l:Hausdorffconvergence}
Let $(\mathcal{W}_n)_{n \in \mathbb{N}}$ be a sequence of labeled wedges (associated to angles $(\theta_n)_{n \in \mathbb{N}}$) that converges to a labeled wedge $\mathcal{W}$.   Denote by $(P_n)_{n \in \mathbb{N}}$ and $P$ the associated spectral determinants.  Then 
$$ \lim_{n \to \infty} S^1 \cup \{z \in \mathbb{D} \mid P_n(z) = 0\} = S^1 \cup \{z \in \mathbb{D} \mid P(z) = 0\} $$  
in the Hausdorff topology.  
\end{lemma}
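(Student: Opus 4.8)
The plan is to use the convergence of spectral determinants as holomorphic functions together with an argument-principle (Rouché) argument, handling the unit circle as a fixed common factor. The key input is Theorem~\ref{t:approximations}, which gives uniform convergence of $P_{\Gamma_k} \to P_\Gamma$ on compact subsets of $\mathbb{D}$ for a \emph{fixed} periodic wedge; here, however, we have a sequence of wedges $\mathcal{W}_n$ converging to $\mathcal{W}$, so first I would establish the analogous uniform-convergence statement for $P_n \to P$ on compact subsets of $\mathbb{D}$. This follows by revisiting the proof of Theorem~\ref{t:approximations}: by Corollary~\ref{c:truncationscoincide} (applied to the definition of convergence of wedges, so that for each $N$ the first $N$ coefficients of $P_n$ eventually agree with those of $P$), together with the uniform coefficient bounds from Proposition~\ref{p:boundedcoeffs} and Lemma~\ref{l:cycle-upper-bound}, the tails $\sum_{n \geq n_1} a_n^{(k)} t^n$ are uniformly small on $\mathbb{D}_r$, exactly as in the proof of Theorem~\ref{t:approximations}. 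The only subtlety is that the bound in Lemma~\ref{l:cycle-upper-bound} depends on the period $p$ and preperiod $q$; but along a convergent sequence of wedges one may restrict attention to the relevant finite covers, and the bound of Proposition~\ref{p:boundedcoeffs} for the infinite graph is uniform, so the tail estimate goes through.

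Next I would deduce the Hausdorff convergence of the zero sets inside $\mathbb{D}$. Let $K \subset \mathbb{D}$ be compact. Since $P$ is holomorphic and not identically zero on $\mathbb{D}$ (its constant term is $1$), its zeros are isolated; around each zero $z_0$ of $P$ in $\mathbb{D}$, choose a small circle on which $P$ is nonvanishing, and by uniform convergence $P_n$ is close to $P$ there, so by Rouché's theorem $P_n$ has a zero near $z_0$ for $n$ large. Conversely, on any compact $K' \subset \mathbb{D}$ avoiding the zeros of $P$, we have $|P| \geq \delta > 0$, so uniform convergence forces $|P_n| > \delta/2$ eventually, hence $P_n$ has no zeros on $K'$ for $n$ large. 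Together these two facts give that $\{z \in \mathbb{D} : P_n(z) = 0\}$ converges to $\{z \in \mathbb{D} : P(z) = 0\}$ in the Hausdorff topology \emph{on compact subsets of} $\mathbb{D}$; one must be slightly careful because $\mathbb{D}$ is not compact, so I would phrase the conclusion after adjoining $S^1$: the set $S^1 \cup \{z \in \mathbb{D} : P_n(z) = 0\}$ is compact (it lies in $\overline{\mathbb{D}}$), and any accumulation point of such zeros that is \emph{not} in $S^1$ lies in $\mathbb{D}$ and must be a zero of $P$ by Hurwitz's theorem, while no zero of $P$ in $\mathbb{D}$ is missed, as just argued. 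Adjoining $S^1$ absorbs any escaping mass toward the boundary, yielding
$$\lim_{n\to\infty} S^1 \cup \{z \in \mathbb{D} : P_n(z) = 0\} = S^1 \cup \{z \in \mathbb{D} : P(z) = 0\}$$
in the Hausdorff topology on $Com(\overline{\mathbb{D}})$.

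The main obstacle I anticipate is the escape of zeros toward the unit circle: a priori, zeros of $P_n$ in $\mathbb{D}$ could accumulate on a point of $S^1$ that is not a limit of zeros of $P$, or a zero of $P$ in $\mathbb{D}$ could be the limit of zeros of $P_n$ that themselves lie outside $\mathbb{D}$ (these latter are not seen by our sets). The first issue is precisely why the statement adjoins $S^1$ on both sides --- it makes the potential boundary accumulation harmless. The second issue is handled because zeros of $P_n$ \emph{in} $\mathbb{D}$ that converge to an interior point $z_0$ force, by Hurwitz applied on a disk around $z_0$ compactly contained in $\mathbb{D}$, that $P$ vanishes at $z_0$; and conversely a zero $z_0 \in \mathbb{D}$ of $P$ is, by Rouché on a small circle around it, a limit of zeros of $P_n$ which for $n$ large lie inside that small circle, hence inside $\mathbb{D}$. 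So the only genuine care needed is bookkeeping near $S^1$, and the choice to include $S^1$ in the sets makes that bookkeeping trivial. I would also note explicitly that this lemma does not say anything about zeros on $S^1$ itself --- it only controls the interior zeros together with the full circle --- which is exactly what is needed for the application to $Z^+$, where the complementary statement about zeros \emph{outside} $\overline{\mathbb{D}}$ is treated separately.
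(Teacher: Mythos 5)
Your argument is correct and is essentially the paper's: the published proof simply cites \cite[Lemma 6.4]{TiozzoContinuity} for the uniform convergence of $P_n \to P$ on compact subsets of $\mathbb{D}$ and then invokes Rouch\'e's theorem, which is exactly your two-step scheme (your careful Rouch\'e/Hurwitz bookkeeping near $S^1$ is the part the paper leaves implicit, and it is handled correctly, including noting that $P \not\equiv 0$ since its constant term is $1$). One small correction to your first step: Corollary \ref{c:truncationscoincide} and Lemma \ref{l:cycle-upper-bound} concern the finite covers $\Gamma_m$ of a single fixed periodic wedge, not a sequence of distinct wedges, so they are not the right citations here --- the coefficient agreement up to any fixed degree follows directly from the definition of wedge convergence together with the fact that a multicycle of length $n$ is supported on vertices of width at most $2n$, and the uniform tail estimate comes from Proposition \ref{p:boundedcoeffs} (which applies to every labeled wedge), which you do correctly identify as the decisive uniform bound.
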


\begin{proof}
The proof of \cite[Lemma 6.4]{TiozzoContinuity} shows $P_n$ converges to $P$ uniformly on compact subsets contained in the open disk $\mathbb{D}$.  The result then immediately follows by applying Rouch\'e's Theorem. 
\end{proof}

\begin{proof}[Proof of Theorem \ref{t:continuousdiskextension}]
For a periodic angle $\theta$, denote the finite graphs associated to the labeled wedges $\mathcal{W}_{\theta}$, $\mathcal{W}_{\theta^+}$ and $\mathcal{W}_{\theta^-}$  by $\Gamma_1^{\theta}$, $\Gamma_1^{\theta^+}$, and $\Gamma_1^{\theta^-}$, respectively.  
Combining Lemmas \ref{l:limitonlydiffers} and \ref{l:isomorphicfinitemodels} immediately gives that for any periodic angle $\theta$, the finite models $\Gamma_1^{\theta}$, $\Gamma_1^{\theta^+}$ and $\Gamma_1^{\theta^-}$ are isomorphic graphs.   Hence the characteristic polynomials  associated to the finite models, $P_{\Gamma_1^{\theta}}$, $P_{\Gamma_1^{\theta^+}}$ and  $P_{\Gamma_1^{\theta^- }}$, coincide.  Consequently,  Theorem \ref{t:approxroots} implies that the sets of roots in $\mathbb{D}$ of each of the spectral determinants (of the infinite models)
 $P_{\Gamma^{\theta}}$, $P_{\Gamma^{\theta^+}}$ and  $P_{\Gamma^{\theta^- }}$ coincide.
If $\theta$ is not periodic, $\mathcal{W}_{\theta^+} = \mathcal{W}_{\theta} = \mathcal{W}_{\theta^-}$ by \cite[Proof of Proposition 8.5]{TiozzoContinuity}.
In both cases, Lemma \ref{l:Hausdorffconvergence} then gives the result. 
\end{proof}

 \section{Kneading theory for principal veins} \label{S:kneading-veins}

The purpose of this section is to define a new ``kneading polynomial'' for quadratic polynomials in principal veins. 
Although approaches to kneading theory for tree maps already exist (e.g. \cite{kneadingtheoryfortreemaps}), they do not 
satisfy the continuity properties we need later (specifically, in \S  \ref{s:persistence}), hence we cannot apply them directly. 
To this end, we formulate a new kneading determinant which is uniform along each principal vein, using the first 
return map to a certain subinterval.

\subsection{Itineraries} \label{ss:itineraries}

Fix integers $0 < p < q$, with $p$ and $q$ coprime, and let $\mathcal{V}_{p/q}$ denote the $\tfrac{p}{q}$-principal vein.  As described in \S \ref{ss:veins}, there is a  fixed topological/combinatorial model that describes the dynamics of  the restriction of any  polynomial $f_c$ in $\mathcal{V}_{p/q}$ to its Hubbard tree $T_{f_c}$.  

Namely, $T_{f_c}$ is, topologically, a star-shaped tree with $q$ branches, and the central vertex of the star is the $\alpha$-fixed point of $f_c$.  One of the branches contains the critical point, $0$ in its interior (unless the map $f$ is conjugate to a rotation, which happens, e.g. for the Douady rabbit map).  
We cut this branch at $0$ to form two topological intervals; we label the interval that contains the central vertex of the star $I_1^c$, and we label the other interval $I_0^c$.  We label the other branches $I_2^c,\ldots,I_q^c$, so that $f(I_k^c) = I_{k+1}^c$ 
for any $2 \leq k \leq q-1$.  (See Figure \ref{f:Htree1/5}.)
 We take these intervals to be closed, so that both the $\alpha$-fixed point and $0$ belong to more than one interval.  Then the dynamics of $f_c$ restricted to $T_{f_c}$ is as follows:
\begin{itemize}
\item $I_k^c$ is sent to $I_{k+1}^c$ homeomorphically, for $1\leq k\leq q-1$.
\item $I_q^c$ is sent to $I_0^c\cup I_1^c$.
\item $I_0^c$ is sent to a subset of $I_0^c\cup I_1^c\cup I_2^c$.
\end{itemize}
To lighten notation, we shall sometimes drop the superscript $c$ in $I_k^c$ when it is clear which 
parameter we are referring to.

\begin{definition}[Itinerary]
Let $f_c$ be a map in the principal vein $\mathcal{V}_{p/q}$. For any point $x \in T_{f_c}$  such that $x \not \in \bigcup_{k=0}^{\infty} f_c^{-k}(\{0,\alpha_c\})$,  where $\alpha_c$ is the $\alpha$-fixed point of $f_c$, define the \emph{itinerary} of $x$ under $f_c$ to be
the sequence 
\begin{equation} \label{E:itin1}
\textup{It}_{c}(x) \coloneqq  w_0 w_1 \ldots \in \{ 0,1,\ldots,q \}^{\mathbb{N}}
\end{equation}
such that, for all $j \geq 0$, $I_{w_j}^c$ is the interval in the Hubbard tree $T_{f_c}$ that contains $f_c^j(x)$. 
Additionally, if $x \neq 0$ but there exists $k \geq 1$ such that $f_c^k(x) = 0$, we define 
\begin{equation} \label{E:itin2}
\textup{It}_{c}(x) := \lim_{y \to 0} \textup{It}_{c}(y).
\end{equation}
\end{definition}

\begin{figure} 
\includegraphics[width=0.8 \textwidth]{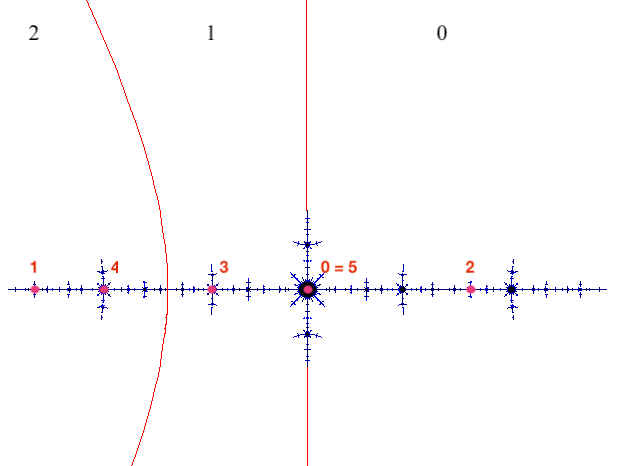}
\caption{The itinerary for the critical orbit of the real map of characteristic angle $\theta = \frac{13}{31} = .\overline{01101}$, 
corresponding to $c \approx -1.6254$.
With respect to the highlighted partition, the itinerary $\textup{It}(c)$ of the critical value is $\overline{\texttt{20121}}$.}
\label{F:real5}
\end{figure}

Note that the latter definition is well posed, as points on either side of $0$ map to a one-sided neighborhood of $c = f_c(0)$. 
In particular, Eq. \eqref{E:itin2} applies to define the itinerary $\textup{It}_c(c)$ of the critical value when the map $f_c$ is critically periodic of period $p > 1$. This will be the most important case in our paper. Thus, we can give the following:

\begin{definition} \label{def:kneadingseq}
We define the \emph{itinerary} associated to the map $f_c$, where $c$ belongs to any principal vein, as 
$$\textup{It}(c) := \textup{It}_c(c)$$
the itinerary of the critical value. 
\end{definition}

It will turn out to be simpler to consider a version of the itinerary that uses only the symbols $\{0,1, 2\}$, independently of $q$. 
In order to do so, we consider the first return map to the interval $I_0 \cup I_1 \cup I_2$, which is in fact unimodal:

\begin{definition}[First return map itinerary \emph{or }simplified itinerary]  \label{def:firstreturnitinerary} 
Let $f_c$ be a  map in the principal vein $\mathcal{V}_{p/q}$.  Let $\widehat{f}_c$ denote the first return map of $f_c$ to $I_0^c \cup I_1^c \cup I_2^c$.  Define the \emph{first return map itinerary}, or \emph{simplified itinerary}, of $f_c$, denoted $\widehat{\textup{It}}(x)$, to be the itinerary of $x$ under the map $\widehat{f}_c$. 
That is, $$\widehat{\textup{It}}_c(x) = w_0w_1w_2\ldots$$ 
is the sequence in  $\{0,1,2\}^{\mathbb{N}}$ where $(\hat{f}_c)^j(x) \in I_{w_j}^c$ for all $j$.  
Moreover, we define the simplified itinerary for the map $f_c$ as 
$$\widehat{\textup{It}}(c) := \widehat{\textup{It}}_c(c).$$
\end{definition}

It is easy to see that to go from $\widehat{\textup{It}}_c(x)$ to $\textup{It}_c(x)$, one simply deletes all letters greater than $2$ from $\widehat{\textup{It}}_c(x)$. To go from $\textup{It}_c(x)$ to $\widehat{\textup{It}}_c(x)$, one replaces every $2$ in the sequence $\textup{It}_c(x)$ with $234\dots q$. 

\subsection{Kneading polynomial and kneading determinant}
We are now ready to define a new analogue to the kneading determinant for polynomials along a principal vein. 
Let us fix $p, q$ coprime, with $0 < p < q$. 

We now define maps $F_{j, q, \lambda}$ which are ``candidate" piecewise linear models for 
the first return map $\widehat{f_c}$ for any parameter $c$ on the $p/q$-vein (see Remark \ref{l:PLsemiconjugacy}). 

\begin{definition} \label{d:Fmaps}
Let us define
\begin{align*}
F_{0,q,\lambda}(x) & := \lambda x + \lambda + 1\\
F_{1,q,\lambda}(x) & :=  -\lambda x + \lambda + 1\\
F_{2,q,\lambda}(x) & := -\lambda^{q-1} x + \lambda^{q-1} + 1
\end{align*}
Let polynomials $a_j$, $b_j$ be such that 
$$F_{j, q, \lambda}(x) := a_j(\lambda) x + b_j(\lambda)$$
Then for each $j \in \{0,1,2\}$, there exist unique choices of $\epsilon_j \in \{ +1, -1 \}$ and $q_j \in \mathbb{N}^+$, and polynomial $B_j$ such that 
$$a_j(\tfrac{1}{t}) = \frac{\epsilon_j}{t^{q_j}},  \qquad b_j(\tfrac{1}{t}) = \frac{B_j(t)}{t^{q_j}}$$ for all $t \in \hat{\mathbb{C}}$.
Let $w= \widehat{\textup{It}}(c) \in \{0, 1, 2 \}^\mathbb{N}$.  
For each integer $k \geq 1$, define
\begin{align*} 
\eta_k & := \epsilon_{w_1} \dots \epsilon_{w_k} \\
d_k & := q_{w_0} + \dots + q_{w_{k-1}}
\end{align*}
while $\eta_0 = 1$, $d_0 = 0$.
We define the \emph{$q$-principal vein kneading determinant} of $f_c$  as 
$$D(t) := \sum_{k =0}^\infty \eta_k B_{w_k} t^{d_k}.$$
This is a power series in the formal variable $t$.
\end{definition}

We will sometimes suppress the $q$ in the subscript $F_{j,q,z}$ and write just $F_{j,z}$ when the $q$ is clear from the context.  

The coefficients of the principal vein kneading determinant are all bounded, hence we have:

\begin{lemma}
For any natural number $q$, the $q$-principal vein kneading determinant $D(t)$ converges in the unit disk to a holomorphic function. The roots of $D(t)$ 
inside the unit circle, in particular the smallest root, change continuously with $w  \in \{0, 1, 2 \}^\mathbb{N}$.
\end{lemma}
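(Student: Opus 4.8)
The plan is to prove two claims: (i) that $D(t)$ defines a holomorphic function on $\mathbb{D}$, and (ii) that its roots inside $\mathbb{D}$ (in particular the smallest one) depend continuously on $w = \widehat{\textup{It}}(c)$. For (i), I would first observe that the key quantities $\epsilon_j$, $q_j$ and $B_j(t)$ depend only on $j \in \{0,1,2\}$ and on $q$, not on $c$ or $w$. Inspecting Definition~\ref{d:Fmaps} directly: $a_0(\lambda) = \lambda$, $a_1(\lambda) = -\lambda$, $a_2(\lambda) = -\lambda^{q-1}$, so substituting $\lambda = 1/t$ gives $q_0 = q_1 = 1$, $q_2 = q-1$, with $\epsilon_0 = +1$, $\epsilon_1 = \epsilon_2 = -1$; likewise $b_0(\lambda) = \lambda+1$, $b_1(\lambda) = \lambda+1$, $b_2(\lambda) = \lambda^{q-1}+1$, so $B_0(t) = B_1(t) = 1 + t$ and $B_2(t) = 1 + t^{q-1}$. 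In particular each $B_j(t)$ has degree at most $q-1$ and all coefficients in $\{0,1\}$, so there is a uniform bound $|B_{w_k}(t)| \leq 2$ for $|t| \leq 1$. Also $|\eta_k| = 1$ for all $k$. Hence the $n$-th coefficient of the power series $D(t) = \sum_k \eta_k B_{w_k}(t) t^{d_k}$ (after expanding $B_{w_k}$) is a sum of terms, each $\pm 1$, indexed by pairs $(k,i)$ with $d_k + i = n$, $0 \leq i \leq q-1$. Since $d_k \geq k$ (each $q_j \geq 1$), for a fixed $n$ only $k \leq n$ contribute, so the $n$-th coefficient of $D$ is bounded in absolute value by a fixed polynomial in $n$ — say $2(n+1)$. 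Therefore $\sum |a_n t^n| \leq \sum 2(n+1)|t|^n < \infty$ on $|t| < 1$, giving absolute and locally uniform convergence, hence holomorphy on $\mathbb{D}$. This is the routine part.

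For (ii), I would use the fact that the bound on the $n$-th coefficient just established is \emph{independent of $w$}. Suppose $w^{(m)} \to w$ in the sense that the itineraries agree on longer and longer prefixes (this is the topology implicit in the statement, and is exactly what controls $\eta_k, d_k, B_{w_k}$). If $w^{(m)}$ and $w$ agree on the first $N$ symbols, then $\epsilon_{w_k}, q_{w_k}, B_{w_k}$ agree for $k \leq N$, hence $\eta_k, d_k$ agree for $k \leq N$, and so the power-series coefficients of $D^{(m)}(t)$ and $D(t)$ agree up to degree $N$ (using again $d_k \geq k$, so the first $N$ coefficients of $D$ only involve symbols $w_0,\dots,w_{N-1}$). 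Combining this with the uniform tail estimate $|\sum_{n > N} a_n t^n| \leq \sum_{n>N} 2(n+1) r^n$, which is small for $N$ large and is uniform in $w$, I conclude that $D^{(m)}(t) \to D(t)$ uniformly on each compact subset of $\mathbb{D}$ — exactly the situation already handled for spectral determinants in Theorem~\ref{t:approximations} and Lemma~\ref{l:Hausdorffconvergence}. By Hurwitz's theorem (or Rouché's theorem, as used elsewhere in the paper), the zero set of $D^{(m)}$ in any fixed disk $\mathbb{D}_r$ converges in the Hausdorff topology to that of $D$, provided $D$ is not identically zero; in particular the smallest positive real root moves continuously, since it is the reciprocal of the growth rate $r(c) > 1$ and hence bounded away from $0$.

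The main obstacle — really the only subtlety — is making precise which topology on itineraries $w$ is intended and checking that the coefficient-stability argument genuinely respects it: one must verify that "$w^{(m)} \to w$" (prefix convergence) is the correct hypothesis, and that $\widehat{\textup{It}}(c)$ varies prefix-continuously with the relevant parameter data (which is presumably established, or will be established, in the surrounding sections on recoding and itineraries). One should also confirm $D(t) \not\equiv 0$ so that Hurwitz applies; this follows because the constant term is $\eta_0 B_{w_0}(0) = B_{w_0}(0) = 1 \neq 0$. With those points in hand the argument is a direct transcription of the spectral-determinant continuity machinery (Corollary~\ref{c:truncationscoincide}, Theorem~\ref{t:approximations}) to this new power series.
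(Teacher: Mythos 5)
Your proposal is correct and follows exactly the route the paper intends: the paper offers no written proof beyond the one-line remark that the coefficients of $D(t)$ are bounded, and your argument (explicit computation of $\epsilon_j, q_j, B_j$, coefficient bounds independent of $w$, prefix-stability of the coefficients since $d_k \geq k$, locally uniform convergence, then Hurwitz/Rouch\'e with $D(0)=1$ ruling out $D \equiv 0$) is precisely the standard filling-in of that assertion. No discrepancy with the paper's approach; your writeup is in fact more detailed than the original.
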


In the case when the first return map itinerary is periodic, i.e. $f$ is critically periodic, the principal vein kneading determinant is rational. In this case, we can define more directly the following polynomial. 

\begin{definition}[principal vein kneading polynomial] \label{D:princ-kneading}
Let $f$ be a critically periodic quadratic polynomial of period $p$, with simplified itinerary $(w_1, w_2, \dots, w_{p-1}, w_p)^{\infty}$. 
Then the \emph{$q$-principal vein  kneading polynomial} $P_f$ is defined as 
\begin{equation} \label{E:pvkp}
P_f(z) := F_{w_{p-1},q, z} \circ \ldots \circ F_{w_1, q, z}(1+z).
\end{equation}
\end{definition}

The principal vein kneading polynomial and the kneading determinant are closely related:

\begin{lemma} \label{L:periodicP}
Suppose that the sequence $w = (w_j)_{j \geq 1}$ is periodic of period $p$; then we have 
$$D(t) =  P_f(\tfrac{1}{t}) \cdot \frac{  \eta_{p-1} t^{d_{p}}}{1 - \eta_p t^{d_p}}.$$ As a corollary, $D(t)$ and $P_f(\tfrac{1}{t})$ have the same 
roots inside the unit disk.
\end{lemma}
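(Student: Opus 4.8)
The plan is to compute $D(t)$ in closed form directly from its definition using the periodicity of $(w_j)$, and then to recognize the resulting numerator as a monomial multiple of $P_f(1/t)$ by expanding the affine composition defining $P_f$; the corollary will follow from elementary properties of the correction factor. First I would observe that $w_{k+p}=w_k$ forces $\epsilon_{w_{k+p}}=\epsilon_{w_k}$, $q_{w_{k+p}}=q_{w_k}$, $B_{w_{k+p}}=B_{w_k}$, and hence, from $\eta_k=\epsilon_{w_1}\cdots\epsilon_{w_k}$ and $d_k=q_{w_0}+\cdots+q_{w_{k-1}}$, the cocycle relations
\[
\eta_{mp+j}=\eta_p^{\,m}\,\eta_j,\qquad d_{mp+j}=m\,d_p+d_j\qquad(m\ge0,\ 0\le j\le p-1).
\]
Substituting $k=mp+j$ into $D(t)=\sum_{k\ge0}\eta_kB_{w_k}t^{d_k}$ and factoring the double sum (legitimate on $\mathbb D$ since the inner sum is a fixed polynomial and $|\eta_pt^{d_p}|=|t|^{d_p}<1$, using $d_p\ge p\ge1$) gives
\[
D(t)=\Bigl(\sum_{m\ge0}(\eta_pt^{d_p})^{m}\Bigr)\Bigl(\sum_{j=0}^{p-1}\eta_jB_{w_j}t^{d_j}\Bigr)=\frac{N(t)}{1-\eta_pt^{d_p}},\qquad N(t):=\sum_{j=0}^{p-1}\eta_jB_{w_j}t^{d_j},
\]
so everything reduces to proving $N(t)=\eta_{p-1}t^{d_p}P_f(1/t)$.

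For that I would write $F_{j,q,z}(x)=a_j(z)x+b_j(z)$; the standard formula for composing affine maps gives $P_f(z)=A(z)(1+z)+B(z)$ with $A(z)=\prod_{i=1}^{p-1}a_{w_i}(z)$ and $B(z)=\sum_{k=1}^{p-1}b_{w_k}(z)\prod_{i=k+1}^{p-1}a_{w_i}(z)$. Now substitute $z=1/t$ and use $a_j(1/t)=\epsilon_jt^{-q_j}$, $b_j(1/t)=B_j(t)t^{-q_j}$, together with the key identity $1+1/t=B_{w_0}(t)\,t^{-q_{w_0}}$ — which holds because $q_{w_0}=1$ and $B_{w_0}(t)=1+t$, since by definition~\eqref{E:itin2} the leading letter $w_0$ of $\widehat{\textup{It}}(c)$ records the side of the critical point $0$ and hence lies in $\{0,1\}$, with $q_0=q_1=1$. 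Then every truncated product of the $\epsilon$'s collapses (via $\eta_k^2=1$) into $\eta_{p-1}\eta_k$, every truncated sum $q_{w_k}+\cdots+q_{w_{p-1}}$ collapses into $d_p-d_k$, and carrying out the bookkeeping yields
\[
P_f(1/t)=\frac{\eta_{p-1}}{t^{d_p}}\Bigl(B_{w_0}(t)+\sum_{k=1}^{p-1}\eta_kB_{w_k}(t)\,t^{d_k}\Bigr)=\frac{\eta_{p-1}}{t^{d_p}}\,N(t),
\]
equivalently $N(t)=\eta_{p-1}t^{d_p}P_f(1/t)$, which combined with the first display is precisely the claimed formula $D(t)=P_f(1/t)\cdot\frac{\eta_{p-1}t^{d_p}}{1-\eta_pt^{d_p}}$.

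For the corollary I would observe that $g(t):=\frac{\eta_{p-1}t^{d_p}}{1-\eta_pt^{d_p}}$ is holomorphic on $\mathbb D$ (its poles satisfy $t^{d_p}=\eta_p=\pm1$, hence lie on $S^1$), that its only zero in $\mathbb D$ is $t=0$, that $D(0)=\eta_0B_{w_0}(0)=1\ne0$, and that $P_f(1/t)$ has a pole — not a zero — at $t=0$; the factorization $D(t)=P_f(1/t)\,g(t)$ then forces the zeros of $D$ and of $t\mapsto P_f(1/t)$ inside $\mathbb D$ to coincide. I expect the main obstacle to be purely the Step-two bookkeeping — matching each truncated sign product to the correct $\eta_k$ and each truncated exponent sum to the correct $d_k$ — together with the one genuinely structural input, the degree-zero identity $1+1/t=B_{w_0}(t)t^{-q_{w_0}}$, which is exactly where the normalization of the first return map (turning point lying in $I_0\cup I_1$) is used.
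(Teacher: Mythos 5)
Your proof is correct and takes essentially the same route as the paper's: the same geometric-series summation of $D(t)$ from the cocycle relations $\eta_{mp+j}=\eta_p^m\eta_j$, $d_{mp+j}=md_p+d_j$, followed by the same expansion of the affine composition evaluated at $1+\tfrac1t$, with the identical structural input that the zeroth letter $w_0$ lies in $\{0,1\}$ so that $1+\tfrac1t=B_{w_0}(t)t^{-q_{w_0}}$ (the paper encodes this via its remark on $w_0$ and an inductive formula for $F_{w_n,1/t}\circ\cdots\circ F_{w_1,1/t}(x)$, which is the closed form you wrote down). The only differences are cosmetic: you use the explicit affine-composition formula instead of induction, and you spell out the corollary about roots in $\mathbb{D}$ that the paper leaves implicit.
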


\begin{proof}
If $w = (w_j)_{j \geq 1}$ is periodic with period $p$, then 
$$\eta_{pn + k} = (\eta_p)^n \eta_k, \qquad B_{pn+k} = B_k, \qquad d_{pn+k} = n d_p + d_k$$
hence
$$D(t) = \sum_{k = 0}^\infty \eta_k B_k t^{d_k} = \frac{ \sum_{k = 0}^{p-1} \eta_k B_k t^{d_k}}{1 - \eta_p t^{d_p}}.$$
Moreover, we can compute by induction for any $n$
$$F_{w_n, 1/t} \circ \dots \circ F_{w_1,1/t}(x) = \frac{\epsilon_{w_1} \epsilon_{w_2} \dots \epsilon_{w_n} x + \sum_{k = 1}^n B_{w_k} \epsilon_{w_{k+1}} \dots \epsilon_{w_n} t^{q_{w_1} + \dots + q_{w_{k-1}}}}{t^{q_{w_1} + \dots + q_{w_n}}}$$
hence also
$$F_{w_n,  1/t} \circ \ldots \circ F_{w_1, 1/t}(1 + \tfrac{1}{t}) \eta_n t^{d_{n+1}} =  \sum_{k = 0}^n \eta_k B_k t^{d_k}.$$
Setting $n = p -1$ yields
$$P_f(\tfrac{1}{t})  \eta_{p-1} t^{d_{p}} = \sum_{k = 0}^{p-1} \eta_k B_k t^{d_k}$$
which implies 
$$ D(t) =  P_f(\tfrac{1}{t}) \cdot \frac{ \eta_{p-1} t^{d_{p}}}{1 - \eta_p t^{d_p}},$$
as required.
\end{proof}

\begin{remark}
Note that there is some ambiguity in the definition of the first letter $w_0$ of the itinerary of the critical point, since $0$ lies 
on the boundary of both $I_0$ and $I_1$; however, one has $1 + z = F_{0,z}(0) = F_{1, z}(0)$, hence we can interpret the formula \eqref{E:pvkp} for $P_f(z)$ as 
$$P_f(z) = F_{w_{p-1}, z} \circ \ldots \circ F_{w_1, z} \circ F_{w_0,z}(0)$$
where $w_0$ may be indifferently $0$ or $1$. 
\end{remark}

\subsection{Relationship with Markov matrix}

Now we will prove the main result of this section --  relating the roots of the kneading polynomial and the roots of the Markov polynomial.

Note that this completes the proof of Theorem \ref{T:equalpolys} from the introduction.

\begin{theorem} 
When $f$ is critically periodic and on the $\tfrac{p}{q}$-principal vein, the roots of the $q$-principal vein kneading polynomial $P_f(z)$ are the eigenvalues of the incidence matrix $A$ of the Markov decomposition defined using postcritical points as well as the $\alpha$-fixed point.
\end{theorem}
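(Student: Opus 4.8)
The plan is to show that the $q$-principal vein kneading polynomial $P_f(z)$ is, up to the known cyclotomic-type factors, the reciprocal of the characteristic polynomial of the Markov incidence matrix $A$; since roots of $D(t)$ inside the disk agree with roots of $P_f(1/t)$ (Lemma~\ref{L:periodicP}) and the theorems of \S\ref{sec:coversofthefinitemodel} already tie the Markov polynomial to the spectral determinant off the unit circle, it suffices to directly relate $P_f(z)$ to $\det(A - zI)$. First I would set up the piecewise-linear model: fix the parameter $c$ on the $\tfrac{p}{q}$-vein with $f$ critically periodic of period $p$, and write the Hubbard tree decomposition $T_c \setminus \{\alpha_c, 0\} = I_0 \sqcup I_1 \sqcup \cdots \sqcup I_q$ as in \S\ref{ss:veins}. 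The Markov partition here is refined by the postcritical points; the branches $I_2, \dots, I_q$ are not subdivided (each maps homeomorphically to the next), while $I_0 \cup I_1$ carries the first-return dynamics $\widehat f_c$, which is unimodal and whose combinatorics are recorded by the simplified itinerary $w = \widehat{\It}(c)$.

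The key computational step is to interpret the maps $F_{j,q,\lambda}$ from Definition~\ref{d:Fmaps} as the transition rules for the piecewise-linear model of slope $\lambda$: $F_{0}$ and $F_1$ encode the two branches over $I_0 \cup I_1$ crossing the critical point, while $F_2$ (with its $\lambda^{q-1}$ slope) encodes the excursion out along $I_2 \to I_3 \to \cdots \to I_q \to I_0 \cup I_1$ before returning — this is exactly why the exponents $q_j$ appear ($q_0 = q_1 = 1$, $q_2 = q-1$), matching the length of the detour through the long branches. I would then show that the Markov incidence matrix $A$ (on the $I_k$'s plus the postcritical subdivision of $I_0 \cup I_1$) has characteristic polynomial whose reciprocal, after cancelling the contribution of the ``transit'' pieces $I_3, \dots, I_q$ (which contribute only powers of $t$ and cyclotomic factors, via an argument like Lemma~\ref{L:finite-cyclo} or direct block-triangular elimination), equals the denominator-cleared form of $F_{w_{p-1},q,z} \circ \cdots \circ F_{w_1,q,z}(1+z)$. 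Concretely: the composition $F_{w_{p-1},z}\circ\cdots\circ F_{w_1,z}(1+z)$ is a linear function of the form $\epsilon_{w_1}\cdots\epsilon_{w_{p-1}} z^{d_{p-1}} \cdot (\text{stuff}) + \cdots$; tracking how the postcritical points' positions in the PL model satisfy a linear recursion governed by these $F_j$'s, one sees the vanishing of $P_f(z)$ is equivalent to the PL model of slope $z$ being consistent, i.e. $z$ being an eigenvalue of $A$.

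The main obstacle I expect is bookkeeping the precise relationship between the refined Markov partition (which includes \emph{all} postcritical points subdividing $I_0\cup I_1$, so the matrix $A$ can be large) and the compressed first-return description via only the three maps $F_0, F_1, F_2$ and the length-$p$ itinerary. In effect one must prove that the ``extra'' eigenvalues of $A$ coming from the internal structure of the subdivided unimodal interval are either accounted for by the kneading recursion or are roots of unity / zero — i.e., that passing from the full Hubbard-tree Markov matrix to the first-return unimodal model does not lose or gain roots off the unit circle. I would handle this by exhibiting an explicit semiconjugacy (as in \S\ref{s:relatingThurstonAndMarkovPolys}, via a $\pi$-type map sending Markov pieces to their first-return images) whose kernel carries only cyclotomic/nilpotent action, reducing the claim to the already-established fact that $\det(I - tA)$ and the Thurston/spectral-determinant polynomials agree off $S^1$, combined with the identification of $P_f(1/t)$ with the spectral determinant of the first-return model. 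The remaining verification — that the first-return PL model's consistency polynomial is literally $P_f(z)$ — is then a finite linear-algebra computation using the induction formula already proved in Lemma~\ref{L:periodicP}.
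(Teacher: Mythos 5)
There is a genuine gap: the load-bearing step of your argument is assumed rather than proved. You reduce the theorem to ``the identification of $P_f(1/t)$ with the spectral determinant of the first-return model,'' but no such identification exists anywhere in the paper prior to this point — it is essentially the statement being proved. The results of \S\ref{s:relatingThurstonAndMarkovPolys} and \S\ref{sec:coversofthefinitemodel} relate $P_{Th}$ to $P_{Mar}$ and to the wedge spectral determinant; they say nothing about the kneading polynomial $P_f$, and Lemma~\ref{L:periodicP} only relates $D(t)$ to $P_f(1/t)$, not to any transition matrix. (The closest analogue, the Appendix identity $\det(I-tA)=P_{MT}(t)$, is proved only for real maps via zeta functions and is not an available ingredient here.) Moreover, your block-elimination of the transit branches $I_3,\dots,I_q$ does not produce merely cyclotomic factors: inducing changes cycle lengths, so eliminating those pieces turns $\det(I-tA)$ into a \emph{return-time-weighted} determinant in which passages through symbol $2$ carry weight $t^{q-1}$ — you correctly see this is why $q_2=q-1$, but the resulting weighted determinant still has to be matched with $P_f$, and that matching is exactly the content of the theorem. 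Finally, your sketch only gestures at the direction ``$P_f(z)=0 \Rightarrow z$ eigenvalue'' (``consistency of the PL model''), and the converse direction is addressed only through the circular reduction.

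For comparison, the paper closes precisely this gap by an explicit two-way construction. Given a root $z$ of $P_f$, it assigns to each postcritical point $c_k$ the ``position'' $\pi(c_k)=F_{w_{k-1},q,z}\circ\cdots\circ F_{w_1,q,z}(1+z)$ (with $\pi(c)=0$, $\pi(\alpha)=1$), sets $v_\alpha:=\pi(b)-\pi(a)$ on pieces $J_\alpha=[a,b]\subseteq I_0\cup I_1$ and $v_\alpha:=z^{k-q-1}v_\beta$ on pieces in the branches, and checks by a telescoping sum that $(v_\alpha)$ is an eigenvector of $A$ with eigenvalue $z$. Conversely, given an eigenvector $(v_\alpha)$ for the eigenvalue $z$, it defines $m(x):=\epsilon(x)\sum_{I_\alpha\subseteq[0,x]}v_\alpha$, verifies $m(\widehat{f}(c_j))=F_{w_j,q,z}(m(c_j))$, and concludes from $\widehat{f}^{\,p}(c_0)=c_0$ that $P_f(z)=m(c_0)=0$. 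Your proposal contains neither construction nor a substitute for them; to repair it you would either have to supply such an eigenvector/position correspondence, or genuinely prove the weighted-determinant identity $\det(I-tA)\doteq$ (kneading polynomial) that you currently cite as known.
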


\begin{proof}
Let $$P := \bigcup_{n \geq 0} f^n(c) \cup \{ \alpha_f \}$$ be the ``postcritical" set of $f$, where $\alpha_f$ is the $\alpha$-fixed point, and let us suppose that $f$ is critically periodic, with critical point $c_0$. 
Denote by $\widehat{f}$ the first return map, and let $p$ such that $\widehat{f}^p(c_0) = c_0$. 
Let us denote $c_j := \widehat{f}^j(c_0)$, let $\widehat{\textup{It}}_c(c_1) = w_1 w_2 \dots w_{p-1} \dots$ the itinerary of the critical value, and let us define the map 
$\pi : P \cap I \to \mathbb{C}$ as 
$$\begin{array}{ll}
\pi(c_0) & = 0 \\
\pi(\alpha_f) & = 1 \\
\pi(c_k) & = F_{w_{k-1},q, z}\dots F_{w_1,q, z}(1+z) \qquad \textup{for }1 \leq k \leq p-1.
\end{array}$$
If $z$ is a root of the $q$-principal vein kneading polynomial, then
$$F_{w_{p-1}, q,z} \circ \ldots \circ F_{w_1, q,z}(1+z) = 0,$$
which implies that $\pi$ satisfies 
$$\pi(c_{j+1}) = F_{w_j,q, z}(\pi(c_j)) \qquad \textup{for } 0 \leq j \leq p-1.$$

Moreover, let us denote $$T_f \setminus P := \bigcup_{\alpha \in \mathcal{A}} J_\alpha.$$ Now, fix an orientation on each branch of $T$
(for instance, we can set the orientation on $I_0$ as increasing towards $I_1$, on $I_1$ as increasing towards $\alpha_f$, and on any $I_k$ with $k \geq 2$ as increasing away from $\alpha_f$); for each interval $J_\alpha = [a, b] \subseteq I_0 \cup I_1$, with $a < b$, define 
$$v_\alpha := \pi(b) - \pi(a).$$
If $J_\alpha \subseteq I_k$ with $k \geq 2$, then $J_\beta = f^{q+1 - k}(J_\alpha) \subseteq I_0 \cup I_1$, and we define
$$v_\alpha := z^{k-q-1} v_{\beta}.$$

We claim that $(v_\alpha)$ is an eigenvector for $A$, of eigenvalue $z$. 
In order to prove this, let $J_\alpha = [c_j, c_k] \subseteq I_0 \cup I_1$.
Then, since $J_\alpha$ is contained in one monotonic piece, $f(J_\alpha) = [c_{j+1}, c_{k+1}]$, and also 
there exists $w \in \{0, 1, 2 \}$ for which
$$\pi(c_{j+1}) = F_{w, q,z}(\pi(c_j)) \qquad \pi(c_{k+1}) = F_{w, q,z}(\pi(c_k)).$$
Now, write the decomposition of $f(J_\alpha)$ as $f(J_\alpha) = \bigsqcup_{i = 1}^{r} J_{\beta_i}$, 
and also denote $J_{\beta_i} = [p_i, p_{i+1}]$ with $p_i < p_{i+1}$. Note that $p_1 = c_{j+1}$, $p_{r+1} = c_{k+1}$ if $w = 0$, and 
 $p_1 = c_{k+1}$, $p_{r+1} = c_{j+1}$ if $w = 1$.
Then we compute
\begin{align*}
(Av)_\alpha & = \sum_{i = 1}^r v_{\beta_i}  = \sum_{i = 1}^r (\pi(p_{i+1}) - \pi(p_{i})) \\
 & =  \pi(p_{r+1}) - \pi(p_{1}) \\
 & = \epsilon_w \left( \pi(c_{k+1}) - \pi(c_{j+1}) \right)\\
 & = \epsilon_w \left( F_{w,q, z}(\pi(c_k)) - F_{w, q,z}(\pi(c_j)) \right) \\
 & = z ( \pi(c_k) - \pi(c_j)) = z v_\alpha,
 \end{align*}
 showing that $z$ is an eigenvalue for $A$. 

Conversely, suppose that $z$ is an eigenvalue of $A$, with eigenvector $(v_\alpha)$, and let us normalize it so that 
$\sum_{J_\beta \subseteq [c_0, \alpha]} v_\beta = 1$. Then define 
$m : I_0 \cup I_1 \cup I_2 \to \mathbb{C}$ as 
$$m(x) := - \epsilon(x) \sum_{J_\beta \subseteq [c_0, x]} v_\beta$$
where $\epsilon(x) = +1$ if $x \in I_0$ and $\epsilon(x) = -1$ if $x \in I_1 \cup I_2$  (note that, whatever choice we make about $\epsilon(c_0)$, we always have $m(c_0) = 0$). 
Then, since $(v_\alpha)$ is an eigenvector,  
\begin{align*}
(Av)_\alpha & = \sum_{J_\beta \subseteq f(J_\alpha)} v_{\beta}  = z v_\alpha.
\end{align*}
Now, suppose that $c_j \in I_0 \cup I_1$. By summing over $J_\alpha \subseteq [c_0, c_j]$, we have
\begin{align*}
\sum_{J_\beta \subseteq f([c_0, c_j])} v_{\beta}  & = z  \sum_{J_\alpha \subseteq [c_0, c_j]} v_\alpha = - \epsilon(c_j) z \cdot m(c_j) \\
\intertext{and, noting that $f([c_0, c_j]) = [c_{j+1}, c_0] \cup [c_0, c_1]$ or $f([c_0, c_j]) = [c_0, c_1] \setminus [c_0, c_{j+1})$ depending on $\epsilon(c_{j+1})$, we have}
\sum_{J_\beta \subseteq f([c_0, c_j])} v_{\beta}  & =
\sum_{J_\beta \subseteq [c_0, c_1]} v_{\beta} + \epsilon(c_{j+1})  \sum_{J_\beta \subseteq [c_0, c_{j+1}]} v_{\beta}    = (1 + z) - m(c_{j+1}) 
\end{align*}
hence, by comparing the previous two equations we obtain
$$m(\widehat{f}(c_j)) = F_{w_j,q, z}(m(c_j)).$$
An analogous computation holds if $c_j \in I_2$, using the first return time of the map $\widehat{f}$, yielding
$$m(\widehat{f}(c_j)) = F_{w_j,q, z}(m(c_j))\qquad \textup{for any }0 \leq j \leq p-1.$$
Thus, 
$$m(\widehat{f}^p(c_0)) = F_{w_{p-1, q, z}} \circ \ldots \circ F_{w_1,q, z}(1+z)$$
and, since $\widehat{f}^p(c_0) = c_0$, we obtain 
$$F_{w_{p-1,q, z}} \circ \ldots \circ F_{w_1,q, z}(1+z) = m(c_0) = 0.$$
\end{proof}

\begin{remark} 
This proof shows that the eigenvalues are the same. However, that does not quite prove that the polynomials are the same, 
as there could be Jordan blocks of size $> 1$.
\end{remark}

With the previous method we can also get the following semiconjugacy to the linear model. 
Since a similar result is obtained e.g. by \cite{BaillifCarvalho}, we do not provide the proof. 

\begin{remark}[Semiconjugacy to the piecewise linear model] \label{l:PLsemiconjugacy}
Let $f_c$ be in a principal vein $\mathcal{V}_{p/q}$, and let $\lambda = e^{h(f_c)}$. 

Set $I_{0,q,\lambda} \coloneqq [1-\lambda^q,0 ), I_{1,q,\lambda}\coloneqq [ 0, 1), I_{2,q,\lambda} \coloneqq [1,1+\lambda]$ and 
write $I_{q,\lambda} \coloneqq [1 -\lambda^q,1+\lambda]$.   Now, the maps $F_{j, q, \lambda}$ can be ``glued" to define the continuous, piecewise linear 
map $F_{q, \lambda} : I_{q, \lambda} \to I_{q, \lambda}$ as 
$$F_{q, \lambda}(x) := \sum_{j = 0}^2 F_{j, q, \lambda}(x) \boldsymbol{1}_{I_{j, q, \lambda}}(x).$$ 
Then the first return map $\widehat{f}_c$ of $f_c$ acting on $I_0^c \cup I_1^c \cup I_2^c$ is semi-conjugate to 
$F_{q,\lambda}$ acting on $I_{q,\lambda}$:
\[ \begin{tikzcd}
I_0^c \cup I_1^c \cup I_2^c \arrow{r}{\widehat{f}_c} \arrow[swap]{d}{h} & I_0^c \cup I_1^c \cup I_2^c \arrow{d}{h} \\%
I_{q,\lambda} \arrow{r}{F_{q,\lambda}}& I_{q,\lambda}
\end{tikzcd}
\]
The semi-conjugacy $h$ sends the critical point of $f_c$ to $0 \in I_{q,\lambda}$ and the $\alpha$-fixed point of $f_c$ to $1 \in I_{q,\lambda}$; $h(I^c_j) = I_{j,q,\lambda}$ for $j =0,1,2$.  
If $f_c$ is critically periodic, then $F_{q,\lambda}$ is too, meaning $F_{q,\lambda}^n(0) = 0$ for some $n \geq 1$.
\end{remark}

 \section{Surgery and recoding} \label{S:surgery} 
 
\subsection{The Branner-Douady surgery} 

Recall that postcritically finite parameters in the Mandelbrot set are partially ordered:

\begin{definition}
Given two postcritically finite parameters $c_1, c_2$, we 
denote $c_1 <_{\mathcal{M}} c_2$ if $c_1$ lies on the vein $[0, c_2]$. That is, $c_1$ lies closer to the main cardioid than $c_2$. 
\end{definition}

Following Branner-Douady \cite{BrannerDouady}, there is a  \emph{$\tfrac{p}{q}$-surgery map} $\Psi_{p/q}:\mathcal{V}_{1/2} \to \mathcal{V}_{p/q}$ between the real vein in the Mandelbrot set and the $\tfrac{p}{q}$-principal vein. 
The construction was extended by Riedl \cite{Riedl} to arbitrary veins. 

\begin{theorem}[Branner-Douady \cite{BrannerDouady}, Riedl \cite{Riedl}] \label{T:BD}
The surgery map $\Psi_{p/q}$ is a homeomorphism between the real vein $\mathcal{V}_{1/2}$ and the $\tfrac{p}{q}$-principal vein $\mathcal{V}_{p/q}$. 
Moreover:
\begin{enumerate}
\item
 $\Psi_{p/q}$ preserves the order $<_\mathcal{M}$; 
 \item 
 the parameter $c' = \Psi_{p/q}(c)$ is critically periodic if and only $c$ is; 
 \item
 For any real critically periodic parameter $c$, the parameter $c' = \Psi_{p/q}(c)$ is the only critically periodic parameter 
in the $\tfrac{p}{q}$-principal vein satisfying
$$\widehat{\textup{It}}(c') = \widehat{\textup{It}}(c).$$
 \end{enumerate}
 
\end{theorem}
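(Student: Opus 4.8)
The plan is to take the surgery construction and the purely topological statements essentially from Branner–Douady \cite{BrannerDouady} and Riedl \cite{Riedl}, and to concentrate the real work on the itinerary bookkeeping in part (3). First I would recall the surgery: the Branner–Douady surgery inserts a combinatorial $\tfrac{p}{q}$-rotation at the $\alpha$-fixed point of a real quadratic $f_c$, $c\in[-2,1/4]$, by a quasiconformal cut-and-paste supported in a neighborhood of the external rays landing at $\alpha$ and their iterated preimages; straightening the resulting quadratic-like map produces a uniquely determined $f_{c'}$ with $c'\in\mathcal{V}_{p/q}$. The map $\Psi_{p/q}\colon c\mapsto c'$ is continuous because the surgery data vary continuously with $c$, it is injective because the inserted rotation can be collapsed again, and its image is all of $\mathcal{V}_{p/q}$ — this is precisely the content of \cite{BrannerDouady} (for $p=1$) and \cite{Riedl} (in general), and gives the homeomorphism assertion.

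For parts (1) and (2) I would isolate the structural feature that also drives (3): the surgery is, combinatorially, a modification \emph{localized at $\alpha$} and along the newly inserted branches, so that the surgery homeomorphism restricts to a conjugacy between the first-return map of $f_{c'}$ to $I_0^{c'}\cup I_1^{c'}\cup I_2^{c'}$ and the first-return map of $f_c$ to $I_0^{c}\cup I_1^{c}\cup I_2^{c}$ — the latter being simply $f_c$ itself, since $q=2$ on the real vein. Because the vein order $<_{\mathcal{M}}$ along a principal vein is detected by the order on simplified itineraries, and because critical periodicity of $f_c$ is equivalent to periodicity of its first-return itinerary, parts (1) and (2) both follow from this conjugacy; alternatively, both can be quoted directly from \cite{BrannerDouady},\cite{Riedl}.

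The substance is part (3). For the existence half, $\widehat{\textup{It}}(\Psi_{p/q}(c))=\widehat{\textup{It}}(c)$, I would make the ``recoding'' explicit: a letter $2$ in $\textup{It}(c)$, which is a single step $I_1\to I_2$ for the real map, is replaced after insertion of the $q$-fold rotation by the block $234\cdots q$, while the letters $0$ and $1$ are untouched, so deleting all letters $>2$ from $\widehat{\textup{It}}(c')$ returns exactly $\widehat{\textup{It}}(c)$. For the uniqueness half, I would transport the classical fact that distinct real critically periodic parameters have distinct kneading sequences — hence distinct simplified itineraries — across the bijection $\Psi_{p/q}$: if $c_1',c_2'\in\mathcal{V}_{p/q}$ are critically periodic with $\widehat{\textup{It}}(c_1')=\widehat{\textup{It}}(c_2')$, then by surjectivity and part (2) we may write $c_i'=\Psi_{p/q}(c_i)$ with $c_i$ real and critically periodic, the existence half gives $\widehat{\textup{It}}(c_1)=\widehat{\textup{It}}(c_2)$, and hence $c_1=c_2$, so $c_1'=c_2'$. (Thurston rigidity gives an alternative route to uniqueness, but the transport argument is more elementary.)

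I expect the main obstacle to be making ``localized at $\alpha$'' genuinely rigorous: one needs an honest description of the effect of the surgery on the external-angle portrait and on the pieces $I_0,\dots,I_q$ of the Hubbard tree, in particular a verification that the critical orbit never meets the surgery locus, so that the conjugacy of first-return maps — and hence the recoding rule — really holds; one must also separately dispose of the degenerate cases in which the critical branch $I_0$ collapses (e.g. at rabbit-type centers). Once this is in place, the remaining claims are either immediate from the conjugacy or direct quotations.
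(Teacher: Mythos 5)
The paper does not actually prove Theorem \ref{T:BD}: it is quoted from Branner--Douady and Riedl, with only the construction of $\Psi_{p/q}$ sketched afterwards (branch insertion at the $\alpha$-fixed point for critically periodic parameters, inverse via the first-return map to $I_0\cup I_1\cup I_2$), and the itinerary statement (3) is precisely the first-return/recoding bookkeeping the paper records around Lemma \ref{L:recode} and Theorem \ref{T:recode-summary}. Your proposal --- citing \cite{BrannerDouady}, \cite{Riedl} for the homeomorphism and parts (1)--(2), and deriving (3) from the conjugacy of first-return maps together with rigidity of real kneading sequences --- is correct and essentially the same approach; the only caution is to take (1) directly from the cited references rather than from the itinerary-order characterization on the $\tfrac{p}{q}$-vein, since in this paper that characterization (Theorem \ref{T:recode-summary}(d)) is itself deduced from Theorem \ref{T:BD}.
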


The map $\Psi_{p/q}$ is constructed as follows, at least for critically periodic parameters (for details, see \cite{BrannerDouady}):
\begin{itemize}
\item $\Psi_{p/q}:\mathcal{V}^{per}_{1/2} \to \mathcal{V}^{per}_{p/q}$: given a critically periodic map $f: I \to I$ on the interval, partition its domain $I$ in three 
subintervals $I_0, I_1, I_2$, where the critical point separates $I_0$ and $I_1$, and the $\alpha$-fixed point, denoted $\alpha$, separates $I_1$ and $I_2$. 
Now, make $q-2$ additional branches starting at $\alpha$, denoted as $I_3$, $I_4$, $\dots$, $I_q$. Now we modify the dynamics as follows: instead of sending the interval $I_2$ from $\alpha$ to the critical value to the original interval, we send it to another branch $I_3$, send $I_3$ to $I_4$, etc, and then send $I_q$ back to the image of $I_2$. The resulting map is conjugate to a map in  $\mathcal{V}_{p/q}^{per}$. 
\item $\Psi^{-1}_{p/q}:\mathcal{V}_{p/q}^{per} \to \mathcal{V}_{1/2}^{per}$:  For any map corresponding to a parameter  in $\mathcal{V}_{p/q}^{per}$, 
take the first return map on $I_0\cup I_1\cup I_2$. This first return map is an interval map, 
which is topologically conjugate to a map  in $\mathcal{V}_{1/2}^{per}$. 
\end{itemize}
There is some subtlety in defining $\Psi_{p/q}$ as above if the critical point eventually maps to the $\alpha$-fixed point, but we will from now on 
only focus on the critically periodic case, so we do not need that case.

Tiozzo \cite{TiozzoTopologicalEntropy} provided a description of the Branner-Douady surgery in terms of external angles called {\bf combinatorial surgery}. Because we are doing kneading theory we are more interested in the description of Branner-Douady surgery in terms of itineraries, which we call {\bf recoding} and will be described below.

\subsection{Binary itineraries}

Let us first recall the classical setup of Milnor-Thurston \cite{MilnorThurston} for unimodal interval maps. This gives rise to a symbolic coding with two symbols, $0$ and $1$. 

Let $f_c : I \to I$ be a real quadratic map. Decompose the interval $I$ into two subintervals $J_0$ and $J_1$, separated by the critical point $0$, where $J_1$ contains the $\alpha$ fixed point.

For any point $x \in I$  such that $x \not \in \bigcup_{k=0}^{\infty} f_c^{-k}(\{0\})$,  define the \emph{binary itinerary} of $x$ under $f_c$ to be
the sequence 
\begin{equation} \label{E:itin1-bin}
\textup{it}_{c}(x) \coloneqq  w_0 w_1 \ldots \in \{ 0,1 \}^{\mathbb{N}}
\end{equation}
such that, for all $j \geq 0$, $J_{w_j}$ is the interval that contains $f_c^j(x)$. 
Additionally, if $x \neq 0$ but there exists $k \geq 1$ such that $f_c^k(x) = 0$, we define 
\begin{equation} \label{E:itin2-bin}
\textup{it}_{c}(x) := \lim_{y \to 0} \textup{it}_{c}(y).
\end{equation}
Finally, the \emph{kneading sequence}, or \emph{binary itinerary}, of $f_c$ is defined as 
$$\textup{it}(c) := \textup{it}_{c}(c).$$
Moreover, the \emph{twisted lexicographic order} on $\{0, 1\}^\mathbb{N}$ is defined as $w<_{lex} w'$ iff there is some $i\in\mathbb{N}$, such that $w_j=w'_j$ for all $j<i$, and $(-1)^{\sum_{j<i}w_j}(w_i-w'_i)<0$.

A key property of the twisted lexicographical order is  the following:

\begin{lemma}[Milnor-Thurston \cite{MilnorThurston}] \label{L:twisted}
If $c, c'$ are real parameters in the Mandelbrot set, then $c <_{\mathcal{M}} c'$ if and only if 
$$\textup{it}(c) <_{lex} \textup{it}(c').$$
\end{lemma}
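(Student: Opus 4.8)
The plan is to reduce the statement to the classical monotonicity of the kneading sequence in the real quadratic family and then transport it to the parameter plane via the well-known correspondence between real parameters in the Mandelbrot set and their kneading data. First I would recall that for $c \in [-2,1/4]$ the map $f_c$ restricted to its (real) Hubbard tree is a unimodal interval map, and that the partition $I = J_0 \sqcup J_1$ used in the definition of $\textup{it}_c(x)$ is exactly the standard kneading partition at the critical point. The key analytic input is the (classical) fact that as the real parameter $c$ increases along $[-2, 1/4]$, the critical value $f_c(0) = c$ moves monotonically, and more precisely that the itinerary map $c \mapsto \textup{it}(c)$ is monotone with respect to the twisted lexicographic order $<_{lex}$; this is exactly the content of Milnor--Thurston's original analysis. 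I would phrase this as: for two real parameters, $c <_\mathbb{R} c'$ (ordinary order on $\mathbb{R}$) implies $\textup{it}(c) \le_{lex} \textup{it}(c')$, with strict inequality off the (countable) set of parameters sharing the same kneading sequence.

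The second step is to identify the order $<_\mathbb{R}$ on the real slice $\mathcal{M} \cap \mathbb{R} = [-2, 1/4]$ with the vein order $<_\mathcal{M}$. Indeed, the $\tfrac12$-principal vein $\mathcal{V}_{1/2}$ is precisely the real segment from the main cardioid (the parameter $1/4$, or rather the center $0$) down to the tip $c = -2$, and $c_1 <_\mathcal{M} c_2$ means $c_1$ lies on the arc $[0, c_2]$, i.e. $c_1$ is closer to $0$; on the real line this is $c_2 <_\mathbb{R} c_1$ (more negative = further from the cardioid = larger in $<_\mathcal{M}$). So $c <_\mathcal{M} c'$ corresponds to $c' <_\mathbb{R} c$, and one must check that the direction of the kneading order is then consistent with the statement as written — this is a matter of fixing the orientation convention in the definition of $<_{lex}$ (the sign $(-1)^{\sum_{j<i} w_j}$) so that decreasing real parameter corresponds to increasing twisted-lex itinerary. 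I would verify this on the anchor cases: the tip $c = -2$ has itinerary $\overline{1}$ (or $1\overline{0}$ in the appropriate normalization), which is the $<_{lex}$-maximum, while parameters near the cardioid have itinerary close to $\overline{10}$, the $<_{lex}$-minimum, matching $c \to 0^- \Leftrightarrow c$ smallest in $<_\mathcal{M}$.

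For the converse direction — that $\textup{it}(c) <_{lex} \textup{it}(c')$ forces $c <_\mathcal{M} c'$ — I would argue by contradiction using the forward implication plus trichotomy: if not $c <_\mathcal{M} c'$ then either $c = c'$ (so the itineraries are equal, contradicting strictness) or $c' <_\mathcal{M} c$, which by the forward direction gives $\textup{it}(c') \le_{lex} \textup{it}(c)$, again a contradiction. The one genuine subtlety, and what I expect to be the main obstacle, is the behavior at the countably many ``ambiguous'' parameters: the itinerary $\textup{it}_c(c)$ at a critically periodic or preperiodic parameter is defined by a one-sided limit (Eq.~\eqref{E:itin2-bin}), and at such parameters distinct parameters can a priori share the same kneading sequence only if one is an endpoint of a hyperbolic window — one must check that the one-sided limit convention is chosen consistently on both sides so that $\textup{it}$ is (at worst) locally constant exactly on the closures of hyperbolic intervals and strictly monotone in between, and that these plateaus do not violate the strict inequality claimed. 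Since the lemma is only invoked for real critically periodic parameters lying at centers of hyperbolic components (each such component contributing a single kneading sequence, distinct from its neighbors'), this is resolved by the standard fact that the kneading sequence separates centers of distinct real hyperbolic components; I would cite \cite{MilnorThurston} (and, if needed, \cite{Lyubich}) for this rather than reprove it.
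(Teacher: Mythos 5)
The paper offers no proof of this lemma at all — it is quoted directly from Milnor--Thurston — and your plan does essentially the same thing: defer the substantive content (monotonicity of the kneading sequence with respect to the twisted lexicographic order) to \cite{MilnorThurston}, add the routine bookkeeping identifying $<_{\mathcal{M}}$ on the real slice with the reversed real order, and note, correctly, that the strict ``if and only if'' genuinely requires parameters with distinct kneading data (the itinerary has plateaus), which is harmless here since the lemma is only invoked for critically periodic parameters. One small slip in your anchor check: vein parameters just beyond the cardioid have itinerary $\overline{1}$, not $\overline{10}$ (and in fact $\overline{1} <_{lex} \overline{10}$, so $\overline{10}$ is not the $<_{lex}$-minimum), but the tip anchor $1\overline{0}$ and the orientation conclusion you draw from it are correct, so this does not affect the argument.
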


\subsection{Recoding}

The principal vein $\mathcal{V}_{1/2}$ consists of the real polynomials in $\mathcal{M}$.  For parameters $c \in \mathcal{V}_{1/2}$, the Hubbard tree $T_{f_c}$ is a real interval whose endpoints are $c$ and $f_c(c)$.  
This real interval may be thought of as a 2-pronged star emanating from the $\alpha$-fixed point of $f_c$; one prong contains the critical point $0$ in its interior, and so $0$ divides the prong into two topological intervals, which we label $I_0$ and $I_1$.  The other prong we label $I_2$. 

On the other hand, the classical setting of kneading theory uses two intervals, say $J_0$ and $J_1$, separated by the critical point. 
The relationship is $J_0 = I_0$ and $J_1 = I_1 \cup I_2$.  

In order to compare the $\{0, 1\}$-coding to the $\{0, 1, 2\}$-coding, we define the following \emph{recoding map}. 
Let $w\in\{0, 1\}^{\mathbb{N}}$ be the periodic kneading sequence of a critically periodic real quadratic map $f_c$.
Whenever there are $m$ consecutive $1$s in $w$, replace them with $\dots 21212$. More precisely, 
we replace any maximal block $1^{2m}$ of consecutive $1$'s of even length by $(12)^m$ and any maximal block $1^{2m+1}$ of odd length by $2 (12)^m$. In formulas:

\begin{definition} \label{def:recodingmap}
Let $\Sigma_1 \subseteq \{0, 1\}^\mathbb{N}$ be the set of binary sequences $(b_n)$ for which there exists $N$ such that $b_n = 1$ for all $n \geq N$. 
For any $k \geq 1$, define 
$$\begin{array}{l} 
\sigma(0^k) := 0^k \\
\sigma(1^{2k}) := (12)^k\\
\sigma(1^{2k-1})  := 2(12)^{k-1}.
\end{array}$$
We define the \emph{recoding map} 
$R : \{ 0, 1 \}^\mathbb{N} \setminus \Sigma_1 \to \{0, 1, 2\}^\mathbb{N}$
as follows:  
if $$\mathbf{a} = 1^{a_0} 0^{a_1} 1^{a_2} \dots 1^{a_{2n}} 0^{a_{2n + 1}}\dots$$
with $a_0 \geq 0$, $a_i \geq 1$ for $i \geq 1$, then 
$$R(\mathbf{a}) := \sigma(1^{a_0}) \sigma(0^{a_1}) \sigma(1^{a_2}) \dots \sigma(1^{a_{2n}}) \sigma(0^{a_{2n + 1}}) \dots$$
\end{definition}

The key property of the recoding map is the following:

\begin{lemma} \label{L:recode}
The recoding map establishes a bijective correspondence between the set of binary itineraries and the
set of simplified itineraries of critically periodic real quadratic polynomials. 
In particular, for any critically periodic real parameter $c$, we have 
$$R(\textup{it}(c)) = \widehat{\textup{It}}(c).$$ 
\end{lemma}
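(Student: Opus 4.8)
The plan is to verify the identity $R(\textup{it}(c)) = \widehat{\textup{It}}(c)$ directly, by comparing how each side tracks the orbit of the critical value through the two different partitions, and then to deduce the bijectivity statement from Lemma~\ref{L:twisted} (and its analogue for simplified itineraries via the monotone dependence of $\widehat{\textup{It}}$ on the parameter). First I would fix a critically periodic real parameter $c$ and relate the two partitions of the Hubbard tree. As explained just before the statement, the $\{0,1\}$-partition uses $J_0 = I_0$ and $J_1 = I_1 \cup I_2$, so a point has symbol $1$ in the binary coding exactly when it lies in $I_1 \cup I_2$ in the $\{0,1,2\}$-coding. The only subtlety is to resolve, within a maximal block of $1$'s, which visits land in $I_1$ and which land in $I_2$: the key dynamical fact is that $f_c$ maps $I_1$ homeomorphically to $I_2$ and maps $I_2$ into $I_0 \cup I_1$, so starting from any point of $I_1 \cup I_2$, the orbit alternates $\ldots \to I_1 \to I_2 \to (I_0\cup I_1) \to \ldots$ until it first leaves $I_1\cup I_2$. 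Hence a maximal run of binary symbol $1$ of length $\ell$ corresponds to a string in $\{1,2\}$ that is the length-$\ell$ suffix of the periodic pattern $\ldots 1212$, i.e. $(12)^k$ if $\ell = 2k$ and $2(12)^{k-1}$ if $\ell = 2k-1$. This is precisely the definition of $\sigma(1^{\ell})$, while runs of $0$'s are unchanged because $I_0 = J_0$. Reading off the orbit of $c$ block by block then gives $R(\textup{it}(c)) = \widehat{\textup{It}}(c)$, which also shows the image of $R$ on such sequences indeed consists of simplified itineraries.

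For the remaining direction — that $R$ is a bijection onto the set of simplified itineraries of critically periodic real quadratics — I would argue as follows. Injectivity of $R$ is immediate from its definition, since the decomposition $\mathbf{a} = 1^{a_0}0^{a_1}1^{a_2}\cdots$ into maximal blocks is unique and $\sigma$ is invertible on blocks (collapse each maximal $\{1,2\}$-block back to a run of $1$'s of the same length, leave $0$'s alone). Surjectivity: given a simplified itinerary $w = \widehat{\textup{It}}(c')$ of a critically periodic real parameter $c'$, apply the collapsing inverse of $R$ to get a binary sequence $\mathbf{a}$; by the first part, $R(\textup{it}(c')) = \widehat{\textup{It}}(c') = w$, so $\mathbf{a} = \textup{it}(c')$ and $w$ is in the image. (Alternatively, one can note that the set of admissible binary kneading sequences of critically periodic real maps is characterized combinatorially by the twisted lexicographic order, Lemma~\ref{L:twisted}, and check that $R$ carries this order-characterization to the corresponding one for simplified itineraries; but the direct argument via the first part is cleaner and avoids re-deriving admissibility.)

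I expect the main obstacle to be purely bookkeeping rather than conceptual: one must be careful about the boundary case $x \in \{0, \alpha\}$ and the resulting ambiguity in the first symbol (the map $\widehat{\textup{It}}$, like $\textup{it}$, is defined via the limit from one side in~\eqref{E:itin2}, so the convention $1+z = F_{0,z}(0) = F_{1,z}(0)$ from the earlier remark is the relevant one, and the leading block $1^{a_0}$ with possibly $a_0 = 0$ must be handled). One must also check that a maximal block of $1$'s in a periodic binary kneading sequence never "wraps around" the period in a way that is incompatible with the alternation $I_1 \leftrightarrow I_2$ — this follows because $f_c(0) = c \in I_2$ (the critical value of a map on $\mathcal{V}_{1/2}$ lies in the $I_2$-prong), which pins down the phase of the alternation at the start of each block that begins right after a $0$. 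Once these conventions are nailed down, the identity $R(\textup{it}(c)) = \widehat{\textup{It}}(c)$ and hence the bijection both follow.
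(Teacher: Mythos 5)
Your proposal is correct and follows essentially the same route as the paper: the identity $R(\textup{it}(c)) = \widehat{\textup{It}}(c)$ comes from the forced alternation $I_1 \to I_2$, $I_2 \to I_0 \cup I_1$ (so each maximal block of binary $1$'s must end in $I_2$ and alternate, matching $\sigma$), and bijectivity follows because replacing every $2$ by $1$ inverts $R$. The paper's proof is just a terser statement of these same two observations, so your more detailed bookkeeping (block suffixes, the first symbol landing in $I_2$) fills in rather than changes the argument.
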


\begin{proof}
 This is because the dynamics on any $1/2$-vein Hubbard tree described above implies that $I_1$ must be sent to $I_2$, and $I_2$ may be sent to either $I_0$ or $I_1$. This also implies that the first $1$ will be replaced with $2$. 
 To prove that $R$ is bijective, note that $R^{-1} : \{0, 1, 2\}^\mathbb{N} \to \{0, 1 \}^\mathbb{N}$ is simply defined by replacing each character $2$ with $1$. 
 \end{proof}
 
\begin{remark}
Recoding is not well defined when the real quadratic itinerary ends with $1^\infty$, or, equivalently, the $\tfrac{p}{q}$-vein itinerary of the critical value hits the $\alpha$-fixed point. However, since we are only focusing on the critically periodic case, this do not happen in the situations we consider.
\end{remark}

The recoding map can also be defined on finite words $w$ where $w^\infty$ is some critical itinerary. In the classical  real quadratic map context, such words must start with $10$; hence if $w$ ends with $1$ the $1$ should not be changed into $2$ in the first step. We denote the resulting simplified itinerary as $R(w)$.
In symbols, if $w = a_1 \dots a_p \in \{0, 1\}^p$ is a finite word (with $w \neq 1^p$), then we compute
$$R(w^\infty) = b_1 \dots b_n \dots$$
and set 
$$R(w) := b_1 \dots b_p.$$
 More concretely, if $w$ is of the form $w = 1 0^{a_1} 1^{b_1} \dots 0^{a_r} 1^{b_r},$
with $r \geq 1, a_1 \geq 1, b_r \geq 0$, then one obtains
$$R(w) = \sigma(1) \sigma(0^{a_1}) \dots \sigma(0^{a_r}) \widehat{\sigma}(1^{b_r}),$$
where we set 
$$\widehat{\sigma}(1^{2k}) \coloneqq (21)^k,  \qquad \widehat{\sigma}(1^{2k+1}) \coloneqq  1 (21)^k,$$
so that 
$\sigma(1^{b + 1}) = \widehat{\sigma}(1^{b}) \sigma(1)$ for any $b \geq 0.$
 
It will be useful to note that $R$ respects concatenation, as follows: 

\begin{lemma} \label{L:R-concat}
If $w_0, w_1$ are finite words in the alphabet $\{0, 1\}$ and both start with $10$, then 
$$R(w_0 \cdot w_1) = R(w_0) \cdot R(w_1).$$
\end{lemma}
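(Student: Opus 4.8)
The plan is to exploit the very explicit block structure of the recoding map $R$ from Definition \ref{def:recodingmap}. The key observation is that $R$ acts independently on each maximal monochromatic block of the input sequence, once we know how the blocks concatenate. Since both $w_0$ and $w_1$ start with $10$, they each begin with a $1$-block of length exactly $1$ (an ``odd'' $1$-block) followed by a $0$-block of length $\geq 1$; and, crucially, they each \emph{end} with a $0$-block, because any admissible finite word of a critically periodic real quadratic itinerary whose infinite repetition is a critical itinerary must end in $0$ (equivalently, $w_i \neq 1^{p_i}$ and the last letter is $0$ by the admissibility condition $w < _{lex} \sigma w$; alternatively, one argues from Lemma \ref{L:twisted}). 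The point is that when we concatenate $w_0 \cdot w_1$, no $1$-block of $w_0$ merges with a $1$-block of $w_1$, because $w_0$ ends in $0$ and $w_1$ starts in $1$: the boundary between $w_0$ and $w_1$ in the concatenation is a $0/1$ boundary, hence a genuine block boundary. Likewise no $0$-block merges across the seam, since $w_1$ starts with a $1$.

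First I would record the elementary fact that if a finite word $v$ ends with the letter $0$ and a finite word $v'$ starts with the letter $1$, then the maximal-block decomposition of $v \cdot v'$ is the concatenation of the maximal-block decomposition of $v$ with that of $v'$ (no merging occurs at the seam). Apply this with $v = w_0$, $v' = w_1$: writing $w_0 = 1^{a_0}0^{a_1}\cdots 0^{a_{2r+1}}$ and $w_1 = 1^{c_0}0^{c_1}\cdots 0^{c_{2s+1}}$ (with $a_0 = c_0 = 1$, all exponents $\geq 1$, both words ending in a $0$-block), the block decomposition of $w_0 \cdot w_1$ is exactly $1^{a_0}0^{a_1}\cdots 0^{a_{2r+1}}1^{c_0}0^{c_1}\cdots 0^{c_{2s+1}}$. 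Second, I would invoke the definition of $R$ on finite words: $R$ replaces each block $0^{k}$ by $\sigma(0^k) = 0^k$ and each block $1^k$ by $\sigma(1^k)$ (which depends only on $k$), \emph{except} that the first $1$-block is not turned into a $2$ at its right end — but since $w_0$ begins with the single letter $1$, this exceptional treatment of the leading block is identical whether we compute $R(w_0)$ or the leading portion of $R(w_0 \cdot w_1)$, and the $1$-block $1^{c_0}$ of $w_1$ (sitting in the interior of $w_0 \cdot w_1$) gets the standard treatment $\sigma(1^{c_0})$, which matches the definition of $R(w_1)$ because $w_1$ also begins with $1$ and the finite-word recoding $R(w_1)$ truncates $R(w_1^\infty)$ to the first $p_1$ symbols — and one checks the truncation commutes with concatenation here since $|R(w_0)| = |w_0|$ and $|R(w_1)| = |w_1|$, so $R(w_0)\cdot R(w_1)$ has length $|w_0| + |w_1|$, which is the truncation length for $w_0 \cdot w_1$.

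Assembling these: $R(w_0 \cdot w_1) = \sigma(1^{a_0})\sigma(0^{a_1})\cdots\sigma(0^{a_{2r+1}})\,\sigma(1^{c_0})\sigma(0^{c_1})\cdots\sigma(0^{c_{2s+1}}) = R(w_0) \cdot R(w_1)$, where the first equality is the block-wise definition of $R$ applied to the (unmerged) block decomposition of the concatenation, and the second groups the first $2r+2$ blocks as $R(w_0)$ and the last $2s+2$ as $R(w_1)$. The only slightly delicate point — and the step I expect to be the main obstacle in writing it cleanly — is checking that the ``first-$1$-block'' exception in the finite-word version of $R$ does not cause a mismatch at the seam: one must be sure that the $1$-block of $w_1$ appears in the \emph{interior} of $w_0 \cdot w_1$ (so it receives the generic $\sigma(1^{c_0})$), yet still equals the leading block treatment used to define $R(w_1)$ as a stand-alone word; this works precisely because $\sigma(1^{c_0}) = \sigma(1) = 2$ when $c_0 = 1$, and the finite-word convention ``do not change the trailing $1$ into $2$'' is vacuous for a leading single $1$. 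Once this bookkeeping is pinned down, the lemma follows.
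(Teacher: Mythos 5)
There is a genuine gap. Your entire argument rests on the assumption that $w_0$ and $w_1$ each end with a $0$-block, so that no maximal $1$-block straddles the seam of $w_0\cdot w_1$. That is not a hypothesis of the lemma (it is stated for arbitrary finite words starting with $10$), and the justification you offer for it is false: binary itineraries of critically periodic real parameters need not end in $0$. For instance, the paper's own example in Figure \ref{F:real5} (angle $13/31$) has binary itinerary $(10111)^\infty$, so the relevant finite word $10111$ ends in three $1$s; and in the intended application (Theorem \ref{main_comb}) the lemma is applied to concatenations of the form $u_1^N u\, u_0^N$ whose factors can likewise end in $1$s. The case you exclude is precisely the nontrivial one: if $w_0$ ends with a block $1^{b}$, $b\geq 1$, this block merges with the leading $1$ of $w_1$ into a maximal block $1^{b+1}$ of $w_0\cdot w_1$, and one must check that its recoding splits correctly across the seam. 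The paper does this via the identity $\sigma(1^{b+1})=\widehat{\sigma}(1^{b})\,\sigma(1)$, where $\widehat{\sigma}(1^{b})$ is exactly what the finite-word recoding $R(w_0)$ produces for its trailing $1$-block and $\sigma(1)=2$ is the first letter of $R(w_1)$; without this step the blockwise computation does not go through, so your proof establishes only a strictly weaker statement that would not suffice where the lemma is used.

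A secondary slip, related to the same point: the special convention in the finite-word definition of $R$ (Definition \ref{def:recodingmap} and the discussion following it) concerns the \emph{trailing} $1$-block of $w$, which is recoded by $\widehat{\sigma}$ rather than $\sigma$ because the final $2$ that $\sigma$ would produce belongs to the leading $1$ of the next copy of $w$ in $w^\infty$; it is not an exception for the leading $1$-block, as your write-up states at one point. In the restricted regime you consider (both words ending in $0$) your blockwise computation is correct and indeed easy, since no blocks merge and $R$ acts block by block with $|R(w)|=|w|$; but to prove the lemma as stated you must add the seam-merging case and the $\widehat{\sigma}$ bookkeeping described above.
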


\begin{proof}
  Consider a maximal block of consecutive $1$s in $w_0\cdot w_1$; such a block must be can only arise via three cases:
  \begin{itemize}
  \item Consecutive $1$s in the end of $w_1$: they are turned into $\dots 2121$ by $R$.
  \item Consecutive $1$s entirely located in $w_0$ or $w_1$, but not at the end: they are turned into $\dots 21212$ by $R$.
  \item Consecutive $1$s at the end of $w_0$, followed by the first $1$ of $w_1$: they are turned into $\dots 21212$ by $R$. In particular, the first $1$ in $w_1$ is turned into a $2$ and the consecutive $1$s at the end of $w_0$ are turned into $\dots 2121$.
  \end{itemize}
In symbols, if we write 
$$w_0 = 1 0^{a_1} 1^{b_1} \dots 0^{a_r} 1^{b_r},\qquad w_1 = 1 0^{c_1} 1^{d_1} \dots 0^{c_s} 1^{d_s}$$ 
with $r, s \geq 1, a_1, c_1 \geq 1, b_r, d_s \geq 0$, then we have
$$w_0  \cdot w_1 = 1 0^{a_1} \dots 0^{a_r} 1^{b_r} 1 0^{c_1} \dots 0^{c_s} 1^{d_s}$$
hence by definition of $\widehat{\sigma}$ above 
\begin{align*}
R(w_0 \cdot w_1) & = \sigma(1) \sigma(0^{a_1}) \dots \sigma(0^{a_r}) \sigma(1^{b_r+1}) \sigma(0^{c_1}) \dots \sigma(0^{c_s}) \widehat{\sigma}(1^{d_s}) \\
 & = \sigma(1) \sigma(0^{a_1}) \dots \sigma(0^{a_r}) \widehat{\sigma}(1^{b_r}) \sigma(1) \sigma(0^{c_1}) \dots \sigma(0^{c_s}) \widehat{\sigma}(1^{d_s}) = R(w_0) \cdot R(w_1)
\end{align*}
showing that $R(w_0\cdot w_1)$ is the concatenation of $R(w_0)$ and $R(w_1)$.
\end{proof}

\subsubsection{The $q$-recoding map} 
We also consider the \emph{$q$-recoding map} $R_q : \{ 0, 1, 2 \}^\mathbb{N} \to \{ 0, 1, \dots, q \}^\mathbb{N}$ 
given by substituting each occurrence of $2$ with the word $23\dots q$. 
The $q$-recoding map turns simplified itineraries into ``full itineraries,''' i.e. has the property 
$$R_q(\widehat{\textup{It}}(c')) = \textup{It}(c')$$
for any critically periodic parameter $c'$ on the $\frac{p}{q}$-principal vein.
The map $R_q$ is a bijection between the simplified itineraries and the itineraries of critically periodic parameters on the $p/q$-vein.
The inverse map $R_q^{-1}: \{ 0, 1, \dots, q \}^\mathbb{N} \to \{ 0, 1, 2 \}^\mathbb{N}$ acts by deleting all characters larger than $2$. 

Finally, we call the map $$R^{-1} \circ {R_q}^{-1} : \{0, 1, 2, \dots, q \}^\mathbb{N} \to \{0, 1\}^\mathbb{N}$$ the \emph{binary recoding map}.

\medskip

Using Lemma \ref{L:recode}, Theorem \ref{T:BD}, and Lemma \ref{L:twisted}, we can now summarize our discussion in the following theorem:

\begin{theorem} \label{T:recode-summary}
Recoding provides a $1-1$ correspondence between all the following sets:
 \begin{enumerate}
    \item The itineraries of critically periodic parameters on the $\tfrac{p}{q}$-principal vein.
    \item The simplified itineraries of critically periodic parameters on the $\tfrac{p}{q}$-principal vein.
    \item The binary itineraries of critically periodic real quadratic maps on an interval.
 \end{enumerate}
In greater detail, we have: 
\begin{itemize}
\item[(a)]
Let $c$ be a critically periodic, real parameter. Then the recoding map yields
$$R(\textup{it}(c)) = \widehat{\textup{It}}(c).$$

\item[(b)]
If $c' = \Psi_{p/q}(c)$ is the surgery map, then 
$$\widehat{\textup{It}}(c) = \widehat{\textup{It}}(c').$$

\item[(c)]
The $q$-recoding map satisfies, for any critically periodic parameter $c'$ on the $p/q$-vein,
$$R_q(\widehat{\textup{It}}(c')) = \textup{It}(c').$$

\item[(d)]
Let $c$, $c'$ be two critically periodic parameters on the $p/q$-vein. Then $c <_{\mathcal{M}} c'$, i.e. $c$ is closer to the main cardioid than $c'$, iff the 
binary recoding of the critical itinerary of $c$ is smaller than the binary recoding of the critical itinerary of $c'$ under the twisted lexicographic order. 
\end{itemize}
\end{theorem}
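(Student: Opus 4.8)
The plan is to assemble the stated correspondence directly from the lemmas and theorems already established in this section; each of the four displayed statements (a)--(d) is a restatement of one such result, and the bijections among (1), (2), (3) follow by composing the maps $R$, $R_q$ and $\Psi_{p/q}$.

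\textbf{Step 1: the bijections.} For the correspondence between (2) and (3), I would invoke Lemma \ref{L:recode}, which asserts that $R$ restricts to a bijection from the set of binary itineraries of critically periodic real quadratic maps onto the set of simplified itineraries of critically periodic real quadratic polynomials, together with Theorem \ref{T:BD}(2)--(3), which says that $\Psi_{p/q}$ carries critically periodic real parameters bijectively onto critically periodic parameters on the $\tfrac{p}{q}$-vein while preserving simplified itineraries; composing these gives the bijection between (3) and (2). For the correspondence between (1) and (2), I would use the $q$-recoding map $R_q$: by construction it inserts the block $23\cdots q$ in place of each $2$, and its inverse $R_q^{-1}$ deletes all letters exceeding $2$, so $R_q$ is a bijection between simplified itineraries and full itineraries of critically periodic parameters on the $\tfrac{p}{q}$-vein.

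\textbf{Step 2: statements (a)--(c).} These are immediate: (a) is the conclusion of Lemma \ref{L:recode} applied to the real parameter $c$; (b) is exactly Theorem \ref{T:BD}(3); and (c) is the defining property of $R_q$ recorded just before the theorem.

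\textbf{Step 3: statement (d).} Here I would chain the order-preserving properties. Fix critically periodic parameters $c', c''$ on the $\tfrac{p}{q}$-vein and write $c = \Psi_{p/q}^{-1}(c')$, $\tilde c = \Psi_{p/q}^{-1}(c'')$, which are real. By Theorem \ref{T:BD}(1), $c' <_{\mathcal{M}} c''$ if and only if $c <_{\mathcal{M}} \tilde c$; by Lemma \ref{L:twisted}, the latter holds if and only if $\textup{it}(c) <_{lex} \textup{it}(\tilde c)$ in the twisted lexicographic order. It remains to identify $\textup{it}(c)$ with the binary recoding $R^{-1}\circ R_q^{-1}$ of $\textup{It}(c')$: by (c) we have $R_q^{-1}(\textup{It}(c')) = \widehat{\textup{It}}(c')$, by (b) this equals $\widehat{\textup{It}}(c)$, and by (a) we have $R^{-1}(\widehat{\textup{It}}(c)) = \textup{it}(c)$; hence $R^{-1}(R_q^{-1}(\textup{It}(c'))) = \textup{it}(c)$, and likewise for $c''$. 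Substituting yields statement (d). The main obstacle is essentially bookkeeping rather than a genuine difficulty: one must check that the letter-substitution maps $R^{-1}$ and $R_q^{-1}$ genuinely invert $R$ and $R_q$ on critical itineraries, so that the chain of identities in Step 3 is legitimate; this is where one should be careful about the degenerate endpoints flagged in the Remarks (sequences ending in $1^\infty$, i.e.\ the critical orbit hitting $\alpha$), but since we restrict throughout to the critically periodic case these do not arise.
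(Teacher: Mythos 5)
Your proposal is correct and follows exactly the route the paper intends: the theorem is stated there as a summary assembled from Lemma~\ref{L:recode}, Theorem~\ref{T:BD}, the defining property of $R_q$, and Lemma~\ref{L:twisted}, with no separate proof given, and your Steps 1--3 reproduce precisely this assembly (your Step 3 even spells out the chain of identifications for (d) slightly more explicitly than the paper does).
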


The discussion is summarized in the diagram:
$$\begin{array}{lllll}
\{0,1\}^\mathbb{N} & \overset{R}{\longrightarrow}  & \{0,1, 2 \}^{\mathbb{N}}  & \overset{R_q}{\longrightarrow} &  \{0, 1, \dots, q\}^{\mathbb{N}} \\
\textup{it}(c) & & \widehat{\textup{It}}(c) =  \widehat{\textup{It}}(c') & &  \textup{It}(c')
\end{array}$$
where $c' = \Psi_{p/q}(c)$. 

\section{Renormalization} \label{S:renorm}

One notes that the teapot behaves nicely under taking roots. 
This is closely related to renormalization (and its inverse, \emph{tuning}) in the quadratic family. 

Recall that a \emph{polynomial-like} map is a proper holomorphic map $f : U \to V$ , where $U$ and $V$ are open, simply connected subsets of $\mathbb{C}$ and $\overline{U}$ is a compact subset of $V$.  A quadratic polynomial $f$ whose Julia set is connected is  \emph{$n$-renormalizable}, for $n \geq 2$, if there exists a neighborhood $U$ of the critical value such that $f^n:U \to f^n(U)$ is polynomial-like of degree $2$ with connected Julia set. A \emph{tuning map} of period $n \geq 2$ is a continuous injection $\tau:\mathcal{M} \to \mathcal{M}$ such that for every $c \in \mathcal{M}$ 
(except for possibly the root $c = 1/4$), the map $f_{\tau(c)}$ is $n$-renormalizable and the corresponding polynomial-like map is \emph{hybrid equivalent} to $f_c$ (i.e. is conjugate via a quasiconformal map to $f_c$ restricted to a suitable domain) \cite{douady1984etude}.

Let $f_{c_2}$ be a critically periodic quadratic polynomial and let $C$ be the hyperbolic component to which $c_2$ belongs. 
Then, there exists a tuning map $\tau_{c_2} : \mathcal{M} \to \mathcal{M}$ that sends the main cardioid to $C$.  
For any parameter $c_1 \in \mathcal{M}$, the parameter $\tau_{c_2}(c_1)$ is called the \emph{tuning} of $c_1$ by $c_2$, and 
the Julia set of $f_{\tau_{c_2}(c_1)}$ is obtained by inserting copies of the Julia set of $f_{c_1}$ at the locations of the critical orbit of $f_{c_2}$. 
For details, see e.g. \cite{McMullenRenormalization}. 

We now show that, as in the classical kneading theory for real maps, the principal vein kneading polynomial 
behaves well under tuning. 

\begin{lemma} \label{L:tuned}
Let $c_1$ be a critically periodic, real quadratic parameter, and let $c_2$ be a 
critically periodic parameter in the $\tfrac{p}{q}$-principal vein. 
Then the parameter $c = \tau_{c_2}(c_1)$ that is the tuning of $c_1$ by $c_2$ belongs 
to the $\tfrac{p}{q}$-principal vein and has $q$-principal vein kneading polynomial
$$P_{f_c}(t) = P_{f_{c_2}}(t)\cdot \frac{P_{f_{c_1}}(t^\ell)}{1+t^\ell},$$
where $\ell$ is the period of $f_{c_2}$. 
\end{lemma}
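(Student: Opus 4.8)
The plan is to unwind the definition of the tuning operation at the level of simplified itineraries, and then feed the resulting description of $\widehat{\textup{It}}(c)$ into the formula for the kneading polynomial. First I would recall that, by the description of tuning (see \cite{McMullenRenormalization}), the Hubbard tree of $f_c = f_{\tau_{c_2}(c_1)}$ is obtained by inserting a scaled copy of the Hubbard tree of $f_{c_1}$ at each point of the critical orbit of $f_{c_2}$; in particular $c$ lies on the same vein as $c_2$, namely $\mathcal{V}_{p/q}$, which proves the first assertion. Concretely, if $f_{c_2}$ has period $\ell$ and binary-recoded critical itinerary corresponding to the length-$\ell$ word $w^{(2)} \in \{0,1\}^{\ell}$ (via Theorem \ref{T:recode-summary}), and $f_{c_1}$ has binary critical itinerary with length-$m$ period word $w^{(1)}$, then the binary critical itinerary of $c$ is the $*$-product $w^{(2)} * w^{(1)}$ in the sense of Milnor--Thurston: one substitutes, into the $\ell$-block $w^{(2)}$, the $m$-block $w^{(1)}$ "twisted" by the orientation accumulated along $w^{(2)}$. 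By Lemma \ref{L:recode}, the simplified itinerary $\widehat{\textup{It}}(c) = R(\textup{it}(c))$ is obtained by applying the recoding map $R$, and by Lemma \ref{L:R-concat} (recoding respects concatenation of words starting with $10$) this recoding of the $*$-product is built out of $R(w^{(2)})$-blocks with $R(w^{(1)})$-blocks inserted in the appropriate (orientation-reversed or not) positions.

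Next I would translate this combinatorial picture into the composition of affine maps $F_{j,q,t}$ defining $P_c$ via Eq. \eqref{E:pvkp}. The key point is that running the kneading polynomial recursion $x \mapsto F_{w_k, q, t}(x)$ through one full period of $\widehat{\textup{It}}(c_2)$ — i.e. through the word $R(w^{(2)})$ of total $q$-length $d_\ell$ — is, up to the affine rescaling $x \mapsto$ (the linear part) $\cdot\, x + $ (the affine part), precisely the map $P_{c_2}(t)$ evaluated at the appropriate argument; and the inner $F$-maps coming from the inserted copies of $R(w^{(1)})$, since within a tuning block the first-return dynamics of $f_c$ is conjugate to that of $f_{c_1}$ but all orbit segments are $\ell$ times longer (each step of $f_{c_1}$ corresponds to $\ell$ steps, hence $t \mapsto t^\ell$ in the $q$-length bookkeeping $d_k$), contribute a factor $P_{c_1}(t^\ell)$. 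The denominator $1 + t^\ell$ arises exactly as in Lemma \ref{L:periodicP}: it is the normalization $\tfrac{1}{1 - \eta_\ell t^{d_\ell}}$-type factor attached to the periodic word $w^{(1)}$ under the substitution $t \rightsquigarrow t^\ell$, with $\eta_\ell = -1$ (the sign accumulated over one period of a critically periodic real map is $-1$, since the critical orbit passes through $J_0$ an odd number of times per period — equivalently this is why the classical kneading determinant of a real critically periodic map has the form $\tfrac{P(t)}{1+t^{\ell}}$). So the cleanest route is: (i) express $D_{c}(t)$, the kneading determinant of $c$, as a product using the periodic structure, via a tuning-adapted version of the computation in Lemma \ref{L:periodicP}; (ii) read off $P_c(t)$ from $D_c(t)$ using Lemma \ref{L:periodicP} again; the $\tfrac{1}{1+t^\ell}$ factors then match up to give the stated formula.

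Alternatively — and this may be the more economical writeup — I would argue directly on the piecewise-linear models from Remark \ref{l:PLsemiconjugacy}. The first-return map $F_{q,\lambda}$ of the tuned parameter, with $\lambda = e^{h(f_c)}$, decomposes: on the "outer" scale it realizes (a rescaled copy of) the piecewise-linear model $F_{q,\mu}$ of $f_{c_2}$ with $\mu = \lambda^\ell$-related slope data, and on each of the $\ell$ inserted sub-scales it realizes the piecewise-linear model of the real map $f_{c_1}$ run with slope $\lambda$, i.e. with the variable $t$ replaced by $t^\ell$. Tracking the critical orbit through one period and setting the endpoint equal to $0$ gives the factorization of characteristic-polynomial-type expressions directly. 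The main obstacle, and the step I expect to require the most care, is getting the orientation signs exactly right in the $*$-product and hence the precise placement of the $\widehat{\sigma}$-versus-$\sigma$ recoding inside the tuning blocks — in particular pinning down that the accumulated sign over one period of the inner word is such that the normalizing denominator is $1 + t^\ell$ rather than $1 - t^\ell$, and checking that no spurious cyclotomic factor is introduced or lost when passing between the kneading determinant $D$ and the kneading polynomial $P$. This is exactly the kind of sign bookkeeping that is routine in the classical Milnor--Thurston setting but must be redone carefully here because of the extra symbol $2$ and the $q$-dependent length weights $q_{w_k}$.
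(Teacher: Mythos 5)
Your overall strategy (describe the simplified itinerary of the tuned parameter, plug it into the kneading polynomial, and factor the result into an ``outer'' $c_2$-contribution and an ``inner'' $c_1$-contribution with $t\mapsto t^\ell$) is the same as the paper's, but the proposal leaves the central factorization as an assertion, and the one concrete mechanism you offer for the denominator is wrong. You claim the $1+t^\ell$ arises as the normalization $1/(1-\eta_\ell t^{d_\ell})$ of Lemma~\ref{L:periodicP} applied to the inner word, ``with $\eta_\ell=-1$'' because a real critically periodic map allegedly accumulates sign $-1$ over one period. That sign claim is false in general: for the paper's own example in Figure~\ref{F:real5}, $\widehat{\textup{It}}(c)=\overline{\texttt{20121}}$, the signs of the symbols are $-,+,-,-,-$, whose product over a period is $+1$, so the denominator produced by Lemma~\ref{L:periodicP} there is $1-t^{5}$, not $1+t^{5}$. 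In the actual proof the factor $1+t^\ell$ has a different origin: for the real ($q=2$) local models one has $B_0(t)=B_1(t)=1+t$ (Definition~\ref{d:Fmaps}), so $P_{c_1}(t)=(1+t)\sum_{k}\eta_{k+1}\cdots\eta_{p-1}t^{p-1-k}$ is divisible by $1+t$, and after the substitution $t\mapsto t^\ell$ the quotient $P_{c_1}(t^\ell)/(1+t^\ell)$ is exactly the signed power sum contributed by the inner symbols. The sign identity that is actually needed concerns the \emph{outer} word: the product of the slopes' signs over one period of $c_2$'s simplified itinerary is $+1$, which is what makes each outer block contribute a clean factor $\eta_h t^\ell$ with no residual sign, together with the observation that the modified last symbols $\widehat{v_h}\in\{0,1\}$ all have the same affine part $B_{\widehat{v_h}}=B_{w_0}=1+t$. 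Neither of these points appears in your write-up, and they are precisely the ``sign bookkeeping'' you defer.

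There is also a gap in how you obtain the itinerary of $c=\tau_{c_2}(c_1)$. You propose to form the binary $*$-product and then recode via Lemmas~\ref{L:recode} and~\ref{L:R-concat}; but those lemmas relate binary itineraries of \emph{real} parameters to simplified itineraries, while $c$ lies on the $p/q$-vein, so this route implicitly needs that the Branner--Douady surgery commutes with tuning (so that $\widehat{\textup{It}}(\tau_{c_2}(c_1))$ is the recoding of a real star product), a fact you neither state nor justify; moreover the star product interleaves single sign-twisted symbols of the inner word after each outer block, not whole $R(w^{(1)})$-blocks as you describe, and Lemma~\ref{L:R-concat} only applies to words beginning with $10$. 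The paper avoids all of this by reading the simplified itinerary of $c$ directly off the combinatorics of tuning, namely $w_1\cdots w_{q-1}\widehat{v_1}\,w_1\cdots w_{q-1}\widehat{v_2}\cdots w_1\cdots w_{q-1}\widehat{v_p}$, and then factoring the explicit double sum $\sum_{j}\sum_{i}B_{w_i}A_{w_{i+1}}\cdots A_{w_{q-1}}\bigl(\prod_{h>j}A_{\widehat{v_h}}A_{w_1}\cdots A_{w_{q-1}}\bigr)$ into $P_{c_2}(t)\cdot P_{c_1}(t^\ell)/(1+t^\ell)$. To repair your proposal you would need to (i) justify the itinerary description of the tuned parameter (either directly or via a surgery-commutes-with-tuning statement), and (ii) replace the incorrect $\eta_\ell=-1$ argument by the correct computation based on $B_0=B_1=1+t$ and the outer-period sign identity.
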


\begin{proof}
Recall that, by Definition~\ref{D:princ-kneading}, if we set $w = (w_1, \dots, w_{n})$ the simplified itinerary of a critically periodic $f_c$, and the piecewise linear model of $f_c$ is 
$$F_{w_j}(x) = A_j x + B_j = \epsilon_j t^{d_j} \cdot x  + t^{d_j} + 1, \qquad \epsilon_j \in \{\pm 1\}, d_j \geq 1$$
then we have the formula
$$P_{f_c}(t) = \sum_{k = 0}^{n-1} B_k \prod_{j = k+1}^{n-1} A_j = \sum_{k = 0}^{n-1} (1 + t^{d_k}) \prod_{j = k+1}^{n-1} \epsilon_{j} t^{d_j} .$$
where we set $w_0  = w_n$. 

Now, let $(w_1, \dots, w_{q})$ be the simplified itinerary of $c_2$, and let $(v_1, \dots, v_{p})$ be the simplified itinerary of $c_1$. 
Then, by looking at the combinatorics of tuning (see e.g. Figure \ref{F:tuned}), the simplified itinerary of $c$ is 
$$w_1 w_2 \dots w_{q-1} \widehat{v_1} w_1 w_2 \dots w_{q-1} \dots \widehat{v_{p-1}} w_1 w_2 \dots w_{q-1}\widehat{v_p} $$
where $\widehat{0} = 0$, $\widehat{1} = \widehat{2} = 1$ if $\epsilon_0 = +1$, and $\widehat{0} = 1$, $\widehat{1} = \widehat{2} = 0$ if $\epsilon_0 = - 1$.
Moreover, denote the local models of $f_{c_1}$ as 
$$F_{v_j, 2, t}(x) = \eta_j t \cdot x  + t+ 1 \qquad \eta_j \in \{\pm 1\}.$$
Now, note that, since $w_0$ is either $0$ or $1$, we have 
$$B_{\widehat{v_i}} = B_{w_0}$$
for any $i$ (recall that $B_0 = B_1 = t+1$ from Definition~\ref{d:Fmaps} ). 
Hence, we compute
\begin{align*}
P_{f_c}(t) & = \sum_{j = 0}^{p-1} \sum_{i = 0}^{q-1}  B_{w_i} A_{w_{i+1}} \dots A_{w_{q-1}} \left( \prod_{h = j +1}^{p-1} A_{\widehat{v_h}} A_{w_1} \dots A_{w_{q-1}} \right) \\
& = \left( \sum_{i = 0}^{q-1}  B_{w_i} A_{w_{i+1}} \dots A_{w_{q-1}}\right) \cdot \sum_{j = 0}^{p-1}  \left( \prod_{h = j+1 }^{p-1} A_{\widehat{v_h}} A_{w_1} \dots A_{w_{q-1}} \right) \\
& = P_{f_{c_1}}(t) \cdot  \sum_{j = 0}^{p-1} \left( \prod_{h = j +1}^{p-1} A_{\widehat{v_h}} A_{w_1} \dots A_{w_{q-1}} \right).
\end{align*}
Note now that 
$$A_{\widehat{v_h}} = \epsilon_q \eta_h t, \qquad A_{w_i} = \epsilon_i t^{d_i}$$
hence, since $\epsilon_1 \dots \epsilon_{q-1} \epsilon_q = 1$, we have for any $h$
$$A_{\widehat{v_h}} A_{w_1} \dots A_{w_{q-1}} =  \eta_h t^\ell$$ 
where $\ell$ is the period of the large scale dynamics, i.e. $f_{c_2}$. 
Hence
\begin{align*}
 \sum_{j = 0}^{p-1}  \left( \prod_{h = j +1}^{p-1} A_{\widehat{v_h}} A_{w_1} \dots A_{w_{q-1}} \right) & = \sum_{j = 0}^{p-1} \eta_{j+1} \dots \eta_{p-1} t^{\ell (p-1-j)} \\
 & = \frac{P_{f_{c_1}}(t^\ell)}{1 + t^\ell}
\end{align*}
which proves the formula. 
\end{proof}

\begin{corollary} \label{C:tuned}
Given $c_1, c_2$ as above and $c = \tau_{c_2}(c_1)$, we have 
$$h(f_c) = \max \left \{ h(f_{c_2}), \frac{1}{\ell} h(f_{c_1}) \right \}.$$
Moreover, by the same proof as in \cite[Theorem 6]{Douady-entropy}, if $h(f_{c_2}) > 0$ we have $h(f_c) = h(f_{c_2})$. 
\end{corollary}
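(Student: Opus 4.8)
The goal is to extract the entropy statement from the polynomial identity in Lemma~\ref{L:tuned},
$$P_{c}(t) = P_{c_2}(t)\cdot \frac{P_{c_1}(t^\ell)}{1+t^\ell}.$$
The plan is to use the fact (from \S\ref{ss:spectraldeterminant} and \S\ref{sss:growthratecoreentropy}) that the entropy $h(c)$ is recovered from the reciprocal of the smallest positive real root of the kneading polynomial $P_c(t)$: writing $\lambda(c) = e^{h(c)}$, the number $1/\lambda(c)$ is the smallest positive root of $P_c$ (and $P_c$ has no roots in the disk of radius $1/\lambda(c)$). So I would first identify the smallest positive real zero of the right-hand side. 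The factor $P_{c_2}(t)$ has smallest positive root $1/\lambda(c_2)$; the factor $P_{c_1}(t^\ell)$ has smallest positive root $\lambda(c_1)^{-1/\ell}$, since $t \mapsto t^\ell$ is monotone increasing on $(0,\infty)$ and pulls back the root $1/\lambda(c_1)$ of $P_{c_1}$ to its positive $\ell$-th root; and the denominator $1 + t^\ell$ has no positive real zeros, so it does not affect anything on the positive real axis (one only needs to check it does not cancel the relevant root of the numerator, which it cannot since $1+t^\ell > 0$ for $t>0$). Therefore the smallest positive real root of $P_c$ is $\min\{1/\lambda(c_2),\ \lambda(c_1)^{-1/\ell}\}$, and taking $-\log$ and exponentiating gives exactly
$$h(c) = \max\left\{ h(c_2),\ \tfrac{1}{\ell} h(c_1)\right\}.$$
One should also note that $c = \tau_{c_2}(c_1)$ lies in the $\tfrac{p}{q}$-principal vein (already asserted in Lemma~\ref{L:tuned}), so that $h(c)$ genuinely is computed by the $q$-principal vein kneading polynomial and the growth-rate formula applies.

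For the second assertion, that $h(c) = h(c_2)$ whenever $h(c_2) > 0$: here one invokes the classical fact, proved in \cite[Theorem~6]{Douady-entropy} for real maps, that tuning by a positive-entropy map $c_2$ does not increase the entropy, i.e. $h(\tau_{c_2}(c_1)) = h(c_2)$ as long as $h(c_2)>0$, regardless of $h(c_1)$. The point is that the ``large-scale'' dynamics already has positive entropy, and the contribution $\tfrac{1}{\ell}h(c_1)$ of the inserted small Julia sets is dominated. Concretely, from the formula just obtained it suffices to check $\tfrac{1}{\ell} h(c_1) \le h(c_2)$. Since $c_1$ is a real quadratic parameter, $h(c_1) \le \log 2$, so $\tfrac{1}{\ell}h(c_1) \le \tfrac{\log 2}{\ell}$; on the other hand $c_2$ is critically periodic of period $\ell$ in a principal vein, so its entropy is bounded below by the entropy $\log\lambda_q$ of the tip of its vein only in the worst case — more to the point, one shows directly that a period-$\ell$ critically periodic map with positive entropy has $h(c_2) \ge \tfrac{\log 2}{\ell}$, because a monotone piece of $\widehat{f}_{c_2}$ gets stretched by a factor that is at least $2$ over one full return period of length $\ell$ (the interval $I_0 \cup I_1$ is covered with multiplicity, giving $\lambda(c_2)^\ell \ge 2$). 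Combining, $\tfrac{1}{\ell}h(c_1) \le \tfrac{\log 2}{\ell} \le h(c_2)$, so the max is $h(c_2)$. Alternatively, and perhaps more cleanly, one can simply cite \cite[Theorem~6]{Douady-entropy} together with the recoding correspondence of Theorem~\ref{T:recode-summary}, which reduces the statement to the already-known real case: the binary recoding of the critical itinerary of $\tau_{c_2}(c_1)$ is the tuning (in the real sense) of the binary itineraries, and the entropy of a real tuned map equals the entropy of the tuning map when the latter is positive.

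The main obstacle I anticipate is the bookkeeping needed to be sure that the denominator $1+t^\ell$ and the substitution $t \mapsto t^\ell$ genuinely behave as claimed on the positive real axis near the smallest root — in particular that $1+t^\ell$ does not share its (complex) roots with the relevant factor of the numerator in a way that would shift the smallest \emph{positive real} root. Since $1 + t^\ell$ is strictly positive for all real $t > 0$, this is harmless: the identity of Lemma~\ref{L:tuned} shows $P_c(t)(1+t^\ell) = P_{c_2}(t) P_{c_1}(t^\ell)$ as polynomials, and evaluating on $t \in (0,\infty)$ the factor $1+t^\ell$ is a positive real number, so the positive real zero sets of $P_c$ and of $P_{c_2}\cdot (P_{c_1}\circ t^\ell)$ coincide, and minimality transfers. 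The only genuinely non-formal input is the inequality $\tfrac{1}{\ell}h(c_1) \le h(c_2)$ in the positive-entropy case, and as noted this is exactly the content imported from \cite[Theorem~6]{Douady-entropy} (via recoding), which the statement already grants us.
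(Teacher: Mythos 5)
Your strategy for the displayed formula is the intended one---read off growth rates from the factorization in Lemma~\ref{L:tuned}---but you have the root convention for $P_c$ backwards. By the theorem in Section~\ref{S:kneading-veins} relating the $q$-principal vein kneading polynomial to the Markov matrix, the roots of $P_c(t)$ are the \emph{eigenvalues} of the Markov transition matrix; in particular $\lambda(c)=e^{h(c)}$ is the \emph{largest} positive real root of $P_c$, not the reciprocal of its smallest one, and $P_c$ may well have roots of modulus less than $1/\lambda(c)$ (Galois conjugates inside the disk). So the claims ``$1/\lambda(c)$ is the smallest positive root of $P_c$'' and ``$P_c$ has no roots in the disk of radius $1/\lambda(c)$'' are false as written; the statement you have in mind is the one for the kneading determinant $D(t)$, which lives in the reciprocal variable (Lemma~\ref{L:periodicP}), whereas the identity of Lemma~\ref{L:tuned} is in the eigenvalue variable. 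Fortunately your bookkeeping survives the flip verbatim: since $1+t^\ell>0$ on $(0,\infty)$, the identity $P_c(t)(1+t^\ell)=P_{c_2}(t)\,P_{c_1}(t^\ell)$ shows that the positive real roots of $P_c$ are exactly the union of those of $P_{c_2}$ and of $P_{c_1}(t^\ell)$, whose largest elements are $\lambda(c_2)$ and $\lambda(c_1)^{1/\ell}$ (Perron--Frobenius guarantees the spectral radius is itself a root of each factor), and since $c$ lies on the vein the largest positive root of $P_c$ is $e^{h(c)}$; taking logarithms gives $h(c)=\max\{h(c_2),\tfrac1\ell h(c_1)\}$. With that correction the first part matches the paper's (implicit) argument.

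The second assertion is where the genuine gap lies. The paper does not reduce it to the real case; it asserts it holds ``by the same proof as'' \cite[Theorem 6]{Douady-entropy}, i.e.\ Douady's argument is to be rerun in the vein setting. Your route via recoding does not work: the surgery/recoding correspondence of Theorem~\ref{T:recode-summary} preserves (simplified) itineraries but \emph{not} core entropy---$c_2$ and its real counterpart $\Psi_{p/q}^{-1}(c_2)$ have growth rates that are roots of different kneading polynomials, and the period changes as well---so the known real plateau statement cannot be transported formally to compare $h(c_2)$ with $\tfrac1\ell h(c_1)$. Your direct route isolates the right inequality, namely $h(c_2)\geq\tfrac{\log 2}{\ell}$ whenever $h(c_2)>0$ (which, together with $h(c_1)\leq\log 2$, forces the maximum to be $h(c_2)$), but the justification offered---a monotone piece is ``stretched by a factor at least $2$'' over one period, i.e.\ $\lambda(c_2)^\ell\geq 2$---is only an assertion, and it is precisely the nontrivial content of Douady's proof. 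In particular, when $h(c_2)$ is small the parameter $c_2$ is itself renormalizable, and naive stretching or coefficient estimates on the kneading series at $t=1/\lambda(c_2)$ do not yield $\lambda(c_2)^\ell\geq 2$ without a renormalization induction of the kind Douady carries out. This step must either be proved in the vein setting or explicitly attributed to an adaptation of \cite[Theorem 6]{Douady-entropy}, which is exactly what the statement of the corollary does; as written, your sketch neither proves it nor validly imports it.
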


\begin{figure}
\includegraphics[width=0.7 \textwidth]{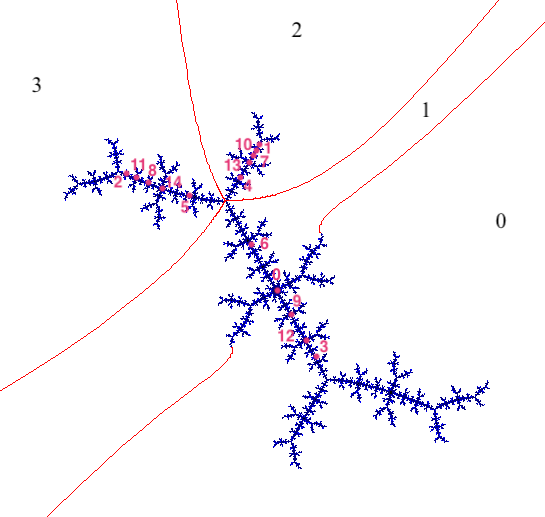}
\caption{The itinerary for the critical orbit of the complex map $f_{c'}$ in the $\frac{1}{3}$-principal vein given by tuning the map of Figure \ref{F:real5} 
by the rabbit. With respect to the highlighted partition, the itinerary of the critical value is $\textup{It}(c') = \overline{\texttt{230231230230230}}$.
The first return itinerary is $\widehat{\textup{It}}(c') = \overline{\texttt{2021202020}}$. The first return itinerary of the critical value for the map $f_c$ of Figure \ref{F:real5} is $\widehat{\textup{It}}(c) = \overline{\texttt{20121}}$; note that $\widehat{\textup{It}}(c')$ can be obtained by applying to $\widehat{\textup{It}}(c)$ the substitution $\texttt{0} \to \texttt{21}, \texttt{1} \to \texttt{20}, \texttt{2} \to \texttt{20}$ .} \label{F:tuned}
\end{figure}

\subsection{Characterization of minimal parameters} 

To distinguish between parameters on a given vein with the same entropy, we give the following

\begin{definition}[minimal parameter] \label{d:minimalparam}
We define a parameter $c$ on the $\tfrac{p}{q}$-principal vein to be \emph{minimal} if there is no parameter $c'$ with the same core entropy as 
$c$ and with $c' <_\mathcal{M} c$. 
\end{definition}


Given a continuous map $f : T \to T$ of a finite topological tree $T$, a point $x \in T$ is a \emph{critical point} for $f$ if 
$f$ is not a local homeomorphism in a neighbourhood of $x$. A map $f : T \to T$ is \emph{unimodal} if it has a unique critical point, which we denote $c$. 
Then $T \setminus \{c\} = T_0 \sqcup T_1$ is the union of two connected subtrees.
The \emph{kneading sequence} of a unimodal tree map $f : T \to T$ is the sequence $\epsilon = (\epsilon_i) \in \{0, 1, * \}^{\mathbb{N}}$ where $\epsilon_i = k$ if $f^i(c) \in T_k$ and $\epsilon_i = *$ if $f^i(c) = c$. 
We call a quadratic polynomial \emph{topologically finite} if its Julia set is connected and locally connected and its Hubbard tree is a finite topological tree. 

A \emph{small Mandelbrot set} is the image of $\mathcal{M}$ under a tuning map $\tau$; the \emph{root} of such a small Mandelbrot set is the root of the hyperbolic component onto which $\tau$ maps the main cardioid. 
We call a parameter \emph{tuned} if it lies in some small Mandelbrot set, and \emph{untuned} otherwise. 
Note that a topologically finite parameter $c$ belongs to a small Mandelbrot set of period $n$ if and only if $f_c$ has a so-called \emph{restrictive subtree}, i.e. a subtree $J$ in the Hubbard tree of $f_c$ such that $J$ contains the critical point of $f_c$ in its interior, $f_c^n(J) \subseteq J$, and the interiors of $f_c^k(J)$ for $k = 0, \dots, n-1$ are disjoint. 

\begin{remark}
Note that being untuned is almost the same as being non-renormalizable, but with the following caveat: If $c$ is the root of a satellite component in the Mandelbrot set, then $f_c$ is not renormalizable but clearly $c$ belongs to a small Mandelbrot set. In fact, the Hubbard tree of $f_c$ contains a restrictive subtree, but the first return map to this subtree does not extend to a polynomial-like map on a neighborhood of the subtree in the complex plane. 
\end{remark}

We now can characterize minimal parameters using the following lemmas. 

\begin{lemma} \label{L:semic-nr}
Let $f$ be a topologically finite, untuned quadratic polynomial, with Hubbard tree $T_f$ and $h(f) > 0$. 
Then $f : T_f \to T_f$ is semiconjugate to a piecewise linear tree map $g : T_g \to T_g$ 
with constant slope (i.e. expansion factor) $s = e^{h(f)}$, where $T_g$ is homeomorphic to $T_f$ and $g$ has the same kneading sequence as $f$.
\end{lemma}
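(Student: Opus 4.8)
The plan is to follow the classical Parry / Milnor--Thurston recipe: read the expansion factor and the edge lengths off the Perron--Frobenius data of the Markov transition matrix, build the constant-slope model combinatorially, and then realize the semiconjugacy through the itinerary map (in the spirit of Remark~\ref{l:PLsemiconjugacy}, but now for an arbitrary non-renormalizable $f$ rather than for a principal-vein first-return map). Concretely, I would first form the Markov partition of $T$ at the set $P$ (the postcritical set together with $0$ and the branch points), exactly as in Section~\ref{s:relatingThurstonAndMarkovPolys}, obtaining closed pieces $I_1,\dots,I_N$ on which $f$ is a homeomorphism onto a union of pieces, together with the incidence matrix $M$ (so $M_{ij}=1$ iff $f(I_i)\supseteq I_j$). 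The one place where non-renormalizability is used is to guarantee that $M$ is primitive with Perron eigenvalue $\lambda=e^{h_{top}(f)}>1$ --- equivalently, that $f|_T$ is topologically transitive and non-invertible, so that no proper forward-invariant subtree with interior absorbs the critical orbit; I would cite (or quote) this standard fact about Hubbard trees of non-renormalizable quadratics. Perron--Frobenius then supplies a strictly positive right eigenvector $(\ell_i)_{i=1}^N$ with $M\ell=\lambda\ell$.

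Next I would let $T'$ be the metric tree with the same incidence data as the partition graph of $T$: one vertex for each point of $P$, one edge for each piece $I_i$, and the edge corresponding to $I_i$ of length $\ell_i$. Thus $T'$ is canonically homeomorphic to $T$. On each edge $I_i'$ define $g$ to be the affine map onto the arc $\bigcup_{M_{ij}=1} I_j'$ that matches the orientation of $f\colon I_i\to f(I_i)$; its slope is $\bigl(\sum_j M_{ij}\ell_j\bigr)/\ell_i=\lambda$. The technical content here is that these edge maps glue to a well-defined continuous map $g\colon T'\to T'$: at a non-critical vertex $v\in P$ the map $f$ is a local homeomorphism, so it carries the cyclic order of edges at $v$ to the cyclic order at $f(v)$ and the images fit together, while at the vertex $0$ the two incident edges fold onto a common arc, just as for $f$. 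By construction $g$ is piecewise linear of constant slope $s=\lambda=e^{h_{top}(f)}$, as required.

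Finally I would define the semiconjugacy $h\colon T\to T'$ by sending a point $x$ to the unique point of $T'$ whose itinerary with respect to $\{I_j'\}$ equals the $f$-itinerary of $x$ (using the one-sided-limit convention of \eqref{E:itin2} for orbits that meet $P$). This is well defined because $g$ has slope $>1$, hence is expanding, so itineraries separate points of $T'$, and because $T$ and $T'$ share the transition matrix $M$, so every $f$-admissible itinerary is realized by a point of $T'$. Expansiveness of $g$ gives continuity of $h$, surjectivity is immediate, and $h\circ f=g\circ h$ because both sides act as the one-sided shift on itineraries. Since $h$ carries the critical point $0$ to the turning vertex of $g$ and respects the labeling of the pieces, the critical value of $f$ and the turning-value of $g$ have the same itinerary; that is, $g$ has the same kneading sequence as $f$.

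The step I expect to be the main obstacle is the first one: extracting from ``non-renormalizable'' exactly the primitivity of $M$ (and $\lambda>1$), since with that in hand the remainder is the standard constant-slope construction, whose only fiddly point is the bookkeeping of orientations of edges at the branch points of $T$. (The degenerate case $T=\{0\}$, i.e. $c=0$, is trivial and can be excluded at the outset.)
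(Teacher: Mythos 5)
Your route is genuinely different from the paper's: you build the constant-slope model from scratch, Parry-style, by taking the Markov partition at $P$, extracting a positive Perron--Frobenius eigenvector of the transition matrix to serve as edge lengths, and then producing the semiconjugacy through itineraries. The paper instead black-boxes the existence step by citing \cite[Theorem 4.3]{BaillifCarvalho}, which supplies a weakly monotone semiconjugacy $p\colon T\to T'$ onto a constant-slope tree map with no transitivity or irreducibility hypothesis, and then uses non-renormalizability only at the very end, to show the kneading sequence survives: if some iterate $f^n(c)$ of the critical point landed in the plateau $J=p^{-1}(\{c'\})$ at a point other than $c$, then the intervals $J_i=p^{-1}(g^i(c'))$ would form a cycle of disjoint intervals with $f(J_i)\subseteq J_{i+1}$ and $f(J_{n-1})\subseteq J\ni c$, i.e.\ a renormalization. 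So in the paper non-renormalizability is spent on ``the semiconjugacy does not collapse the critical orbit,'' whereas in your argument it is spent on ``the Markov matrix is primitive with $\lambda>1$.''

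That is exactly where your proposal has a genuine gap, and you flag it yourself: the assertion that non-renormalizability yields primitivity (or even irreducibility) of the transition matrix together with $\lambda=e^{h_{top}(f)}>1$ is not an off-the-shelf fact you can simply cite, and it is the load-bearing step of your whole construction. Everything downstream depends on it: if the Perron eigenvector is only nonnegative (reducible matrix), some edges of $T'$ get length zero, $T'$ is no longer homeomorphic to $T$, and the claim that kneading is preserved -- which looks automatic in your setup precisely because no collapsing occurs -- evaporates; this is the very difficulty the paper's plateau argument is designed to handle. Note also that the implication you would need is delicate at the boundary: $\lambda>1$ requires knowing that non-renormalizable PCF parameters (other than $c=0$) have positive core entropy, and ``transitive'' only gives irreducibility, not the primitivity you assert, so even the statement of your ``standard fact'' needs repair. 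Unless you can supply a proof or a precise reference that non-renormalizability of a PCF quadratic implies transitivity of $f$ on its Hubbard tree with positive entropy, your argument is incomplete; establishing that claim is of comparable depth to the lemma itself, which is presumably why the paper routes around it via \cite{BaillifCarvalho} plus the renormalization-from-a-plateau argument. The remaining parts of your construction (gluing the affine edge maps, the itinerary-defined semiconjugacy, continuity via expansion) are standard and fine in outline.
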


\begin{proof}
By \cite[Theorem 4.3]{BaillifCarvalho} there exists a 
semiconjugacy $p : T_f \to T_g$ to a piecewise linear tree map $g : T_g \to T_g$ with constant slope $s = e^{h(f)}$. 
The semiconjugacy is the composition of a map that collapses at most countably many intervals to points and a homeomorphism; 
hence, the preimage of a point is a subtree of $T_f$. Moreover, the number of critical points of $g$ is at most the number of critical points of $f$, 
hence $g$ has a unique critical point. 

Let $c$ be the critical point of $f$, and $c' = p(c)$ the critical point of $g$. Set also $J = p^{-1}(\{ c' \})$, which is a subtree of $T_f$. 
If there exists $n$ such that $f^n(c)$ belongs to $J \setminus \{c \}$, then $f$ is tuned. 
Indeed, let $n$ be the smallest such number. 
We have $$g^n(c') = g^n(p(c)) = p(f^n(c)) = p(J) = c'.$$ 
Moreover, for any $x \in J$ we have 
$$p(f^n(x))=g^n(p(x)) = g^n(c') = c',$$ 
hence $f^n(J) \subseteq J$.
Hence, if we set $J_i := p^{-1}(g^i(c'))$ for $i = 0, 1, \dots, n-1$, we have that the $J_i$ are disjoint, $f(J_i) \subseteq J_{i+1}$, 
and $f(J_{n-1}) \subseteq J$. Moreover, $J$ contains the critical point of $f$, hence $J$ is a restrictive subtree for $f$, so $f$ lies in a small Mandelbrot set, 
contradicting our assumption.
Hence, $f^n(c)$ does not belong to $J \setminus \{c \}$ for any $n$, which implies that for any $n \geq 1$ the points $f^n(c)$ and $g^n(c')$ 
belong to the corresponding connected components of the sets $T_f \setminus \{c\}$ and $T_g \setminus \{c'\}$, respectively, 
hence the kneading sequence of $f$ and $g$ are the same.
\end{proof}

\begin{lemma} \label{L:strict-mono}
Let $c_1, c_2$ be untuned,  
PCF quadratic polynomials on the $\tfrac{p}{q}$-principal vein, with $c_1 <_\mathcal{M} c_2$. 
Then $h(f_{c_1}) < h(f_{c_2})$. 
\end{lemma}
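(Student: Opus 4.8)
```latex
The plan is to deduce strict monotonicity of the entropy from the strict monotonicity of the kneading data under the partial order $<_{\mathcal M}$, combined with the piecewise-linear models and the variational characterization of $\lambda = e^{h}$ as a leading root. First I would reduce to the purely critically periodic case: both $c_1$ and $c_2$ are PCF; by density of critically periodic parameters on the vein and continuity of $h$ (Theorem in \S\ref{sss:growthratecoreentropy}, together with the results of \cite{TiozzoContinuity}), it suffices to prove the strict inequality when $c_1, c_2$ are both critically periodic, since preperiodic parameters are limits of critically periodic ones lying strictly between them in $<_{\mathcal M}$, and a non-strict inequality at a preperiodic parameter would force a non-strict inequality at a nearby critically periodic pair. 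So from now on assume $c_1, c_2 \in \mathcal V^{per}_{p/q}$ with $c_1 <_{\mathcal M} c_2$.

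Next I would pass through the recoding dictionary of Theorem~\ref{T:recode-summary}. By part (d) of that theorem, $c_1 <_{\mathcal M} c_2$ is equivalent to the binary recodings $\textup{it}(\Psi_{p/q}^{-1}(c_i))$ satisfying $\textup{it}(\Psi^{-1}_{p/q}(c_1)) <_{lex} \textup{it}(\Psi^{-1}_{p/q}(c_2))$ in the twisted lexicographic order, and in particular these two binary kneading sequences are \emph{distinct}. Since the surgery map $\Psi_{p/q}$ is entropy-non-decreasing in the sense that $h(c_i)$ is determined by $\widehat{\textup{It}}(c_i)$, which in turn is $R$ of the binary itinerary, it is enough to show: two critically periodic real quadratic maps with distinct binary kneading sequences and with $c_1 <_{\mathcal M} c_2$ have $h(f_{c_1}) < h(f_{c_2})$, and that this strict inequality is preserved under the $q$-recoding / surgery (which only rescales the Hubbard tree combinatorics in a way that multiplies lengths of cycles uniformly — here is where Lemma~\ref{L:semic-nr} and the piecewise-linear model $F_{q,\lambda}$ of Remark~\ref{l:PLsemiconjugacy} enter: the first-return map $\widehat f_{c_i}$ is semiconjugate to $F_{q,\lambda_i}$ with $\lambda_i = e^{h(f_{c_i})}$, so comparing $h(f_{c_1})$ and $h(f_{c_2})$ is the same as comparing the expansion constants of the two first-return PL models).

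The core of the argument is then the classical fact that for the real unimodal family the entropy is strictly monotone \emph{off the renormalization locus}: if $f_{c_1}, f_{c_2}$ are non-renormalizable (which we are assuming) with $c_1 <_{\mathcal M} c_2$, then since the kneading sequences are distinct, there is a point $x$ in the Hubbard tree $T_{c_2}$ whose orbit realizes combinatorial complexity not present for $f_{c_1}$; concretely, using Lemma~\ref{L:semic-nr}, $f_{c_i}$ is semiconjugate to a constant-slope PL map $g_i$ of slope $s_i = e^{h(f_{c_i})}$ with the same kneading sequence, and non-renormalizability guarantees the semiconjugacy collapses no interval containing a piece of the critical orbit, so $g_1$ is a genuine (not merely semiconjugate) combinatorial ``submodel'' of $g_2$ in the sense that the admissible itineraries of $g_1$ form a proper subshift of those of $g_2$. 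A proper inclusion of subshifts of finite type (or sofic shifts) with the larger one transitive forces a strict inequality of topological entropies, hence $s_1 < s_2$, i.e. $h(f_{c_1}) < h(f_{c_2})$.

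The main obstacle I anticipate is making precise the step ``distinct kneading sequences $+$ the order relation $\Rightarrow$ proper inclusion of subshifts with strictly larger entropy,'' because a priori distinct kneading sequences could a priori have equal entropy (this is exactly what fails on renormalization windows, where entropy is locally constant). The non-renormalizability hypothesis is what rules this out, and the clean way to exploit it is: (i) by Lemma~\ref{L:semic-nr} both maps are semiconjugate to transitive constant-slope PL tree maps with no collapsed critical-orbit intervals; (ii) the set of itineraries of $g_1$ is contained in that of $g_2$ because $c_1 <_{\mathcal M} c_2$ (admissibility is monotone in the kneading sequence under $<_{lex}$, a standard Milnor–Thurston fact that transfers to the vein via the recoding of Theorem~\ref{T:recode-summary}); (iii) the inclusion is \emph{proper} precisely because the kneading sequences differ, so some finite word admissible for $g_2$ is forbidden for $g_1$; and (iv) for transitive constant-slope maps, a proper restriction of the itinerary set strictly decreases the growth rate of admissible words, hence the slope, hence the entropy. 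Assembling (i)–(iv) carefully, and checking that the recoding/surgery correspondence does not identify the two parameters' simplified itineraries (which it does not, again by Theorem~\ref{T:recode-summary}, since $\widehat{\textup{It}}(c_1) \neq \widehat{\textup{It}}(c_2)$ as they come from distinct binary itineraries under the bijection $R$), completes the proof.
```
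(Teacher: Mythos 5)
The central gap is your opening reduction. You replace the given non-renormalizable PCF pair by critically periodic parameters lying between them, but every critically periodic parameter of period at least $2$ is renormalizable: it is the center of a hyperbolic component and lies in the small Mandelbrot set obtained by tuning at that component --- precisely the sets on which core entropy is locally constant (Corollary \ref{C:tuned}, Lemma \ref{L:plateau}). So after the reduction the hypothesis you later invoke (``$f_{c_1},f_{c_2}$ are non-renormalizable, which we are assuming'') is no longer available, and the intermediate statement you reduce to --- strict inequality for \emph{any} two critically periodic real maps with distinct kneading sequences and $c_1<_{\mathcal M}c_2$ --- is false: take $c_1$ a real critically periodic parameter of positive entropy (e.g.\ the airplane) and $c_2$ any critically periodic tuning of it; they are $<_{\mathcal M}$-comparable with different kneading sequences but equal entropy. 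The genuine content of the lemma concerns non-renormalizable PCF parameters, which (apart from $c=0$) are strictly preperiodic, and the argument must be run on those directly, as the paper does.

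If you drop the reduction and run your steps (i)--(iv) on the original pair, you get a route genuinely different from the paper's, but two steps still need work. First, ``proper inclusion of itinerary subshifts with the larger one transitive forces strict entropy inequality'' requires the larger coding to be entropy-minimal (e.g.\ an irreducible sofic shift, or a transitive system with specification); Lemma \ref{L:semic-nr} gives neither transitivity nor such a structure, so this must be supplied. Second, transferring the comparison from the real/first-return model back to the vein map is not a ``uniform rescaling of cycle lengths'': returns through $I_2$ take $q-1$ steps while returns through $I_0,I_1$ take one, so the monotone relation between the real slope and the vein growth rate $\lambda$ needs an argument (e.g.\ via the kneading polynomial or the PL model $F_{q,\lambda}$). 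By contrast, the paper's proof is short: weak monotonicity $h(f_{c_1})\le h(f_{c_2})$ is quoted from the literature (Li, Zeng); if equality held, Lemma \ref{L:semic-nr} would give constant-slope PL models on homeomorphic trees with the same slope, hence the same model, hence the same kneading sequence for $f_{c_1}$ and $f_{c_2}$, contradicting $c_1\neq c_2$ on the vein. Your subshift-inclusion idea could in principle replace the appeal to weak monotonicity, but only after the two points above are justified.
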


\begin{proof}
By monotonicity of core entropy (\cite{TaoLi}, \cite{Zeng-landing}), $h(f_{c_1}) \leq h(f_{c_2})$. 
Note that the unique untuned PCF quadratic polynomial with zero core entropy is $f_0(z) = z^2$, hence we can assume $h(f_{c_1}) > 0$. 
Then, let $g_1, g_2$ be the piecewise linear tree maps given by Lemma \ref{L:semic-nr}. 
If $h(f_{c_1}) = h(f_{c_2})$, then $h(g_1) = h(g_2)$.
Therefore, since the growth rate of a piecewise linear map equals its slope and the underlying trees are homeomorphic,
we have also $g_1 = g_2$, which implies the kneading sequence of $g_1$ is the same as the kneading sequence of $g_2$, hence by the Lemma \ref{L:semic-nr} also $f_{c_1}$ and $f_{c_2}$ have the same kneading sequence, hence $c_1 = c_2$ since they are both PCF.
\end{proof}

\begin{lemma} \label{L:plateau}
A parameter $c \in \mathcal{V}_{p/q}$ with $h(f_c) > 0$ is minimal if and only if it does not lie in the interior of a small Mandelbrot set whose root has positive core entropy.
\end{lemma}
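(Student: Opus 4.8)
The plan is to prove both implications via the characterization of minimality in terms of strict monotonicity of core entropy, together with the behavior of entropy under tuning from Corollary~\ref{C:tuned}. First I would establish the ``only if'' direction by contrapositive: suppose $c$ lies in a small Mandelbrot set $\tau_{c_2}(\mathcal{M})$ whose root $\tau_{c_2}(0)$ has positive core entropy $h(c_2) > 0$. Then $c = \tau_{c_2}(c_1)$ for some $c_1 \in \mathcal{M}$, and by Corollary~\ref{C:tuned} we have $h(c) = h(c_2)$ whenever $h(c_2) > 0$; but also $h(\tau_{c_2}(0)) = h(c_2)$, so the root of the small Mandelbrot set has the same core entropy as $c$. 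Since the root $\tau_{c_2}(0)$ lies strictly closer to the main cardioid than $c$ along the vein (as $\tau_{c_2}$ preserves $<_\mathcal{M}$ and $0 <_\mathcal{M} c_1$ unless $c_1 = 0$), this exhibits a parameter $c' = \tau_{c_2}(0) <_\mathcal{M} c$ with the same core entropy, so $c$ is not minimal. (If $c_1 = 0$ then $c$ is itself the root; I would need to check that the root of a small Mandelbrot set of positive entropy is not minimal, which follows because the root is a parabolic parameter sharing its hyperbolic component's entropy with interior points of the component closer to the cardioid, or alternatively because below the root of the $p/q$-vein's small Mandelbrot sets there are always lower parameters of equal entropy along the component — this edge case needs care.)

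For the ``if'' direction, again by contrapositive: suppose $c$ is not minimal, so there exists $c' <_\mathcal{M} c$ with $h(c') = h(c)$. I would use Lemma~\ref{L:strict-mono}: if both $c$ and $c'$ were non-renormalizable then $c' <_\mathcal{M} c$ would force $h(c') < h(c)$, a contradiction. Hence at least one of them is renormalizable. The goal is to upgrade this to: $c$ lies in a small Mandelbrot set of positive entropy. Using the structure of renormalization along the vein — every renormalizable PCF parameter on $\mathcal{V}_{p/q}$ lies in a unique maximal small Mandelbrot set — I would argue that if $c$ lies in a maximal small Mandelbrot set $\tau_{c_2}(\mathcal{M})$, then by Corollary~\ref{C:tuned} and Lemma~\ref{L:strict-mono} applied to the ``straightened'' (non-renormalizable factor) dynamics, the only way to have a strictly lower parameter of equal entropy is for the outer map $f_{c_2}$ to already carry all the entropy, i.e. $h(c_2) > 0$, giving exactly the small Mandelbrot set in the statement. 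Conversely, if $c$ is non-renormalizable (lies in no small Mandelbrot set at all), then any $c' <_\mathcal{M} c$ that is itself renormalizable would have $h(c')$ determined by its straightening, and one shows $h(c') < h(c)$ by combining monotonicity with the fact that a non-renormalizable parameter strictly dominates; if $c'$ is also non-renormalizable, Lemma~\ref{L:strict-mono} directly gives the contradiction.

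I expect the main obstacle to be the careful bookkeeping of the renormalization structure: precisely, matching ``$c$ is renormalizable'' with ``$c$ lies in a small Mandelbrot set'' (these are equivalent by standard renormalization theory, but I need the version for PCF parameters on a vein), and handling the case where $c$ is renormalizable but the outer map $f_{c_2}$ has zero entropy — then $c_2$ is (a tuning by) the basilica or an even lower parameter on $\mathcal{V}_{1/2}$-type combinatorics, $h(c) = \frac{1}{\ell} h(c_1)$, and I must show $c$ \emph{is} minimal in that case, which amounts to showing no parameter strictly below it shares its entropy. The cleanest route is probably to reduce everything to the real ($1/2$-vein) case via the recoding map of Theorem~\ref{T:recode-summary}, where the analogous ``plateau'' statement for real quadratic maps is classical (the entropy function is locally constant exactly on the small Mandelbrot sets of positive entropy), and then transport the statement along the surgery map $\Psi_{p/q}$ using that it preserves $<_\mathcal{M}$, preserves critical periodicity, preserves simplified itineraries, and (by Corollary~\ref{C:tuned} combined with the renormalization-compatibility of $\Psi_{p/q}$) intertwines tuning structures. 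The remaining work is then just verifying that ``small Mandelbrot set of positive entropy'' corresponds under $\Psi_{p/q}$ to the same notion on the $p/q$-vein, which should be immediate from Theorem~\ref{T:BD}.
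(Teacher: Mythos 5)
Your ``only if'' direction is essentially the paper's: by Corollary~\ref{C:tuned}, core entropy is constant on small Mandelbrot sets whose roots have positive entropy, so a parameter lying in such a set (other than at its root, the edge case you correctly flag) has a strictly smaller parameter of equal entropy and is not minimal. The problem is in the ``if'' direction, and specifically in the case you yourself identify as decisive: $c$ renormalizable but with all outer maps of zero entropy. Your ``cleanest route'' there is to transport the classical real plateau statement through the surgery map $\Psi_{p/q}$, but $\Psi_{p/q}$ is only known (Theorem~\ref{T:BD}) to be an order-preserving homeomorphism that preserves critical periodicity and simplified itineraries; it is not known to preserve entropy level sets. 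Minimality of $c=\Psi_{p/q}(b)$ is a statement about the entropy function on $\mathcal{V}_{p/q}$, while minimality of $b$ is about the entropy function on the real vein, and to pass between them you need exactly that entropy plateaus on $\mathcal{V}_{p/q}$ correspond under $\Psi_{p/q}$ to entropy plateaus on $\mathcal{V}_{1/2}$ --- which is essentially the content of the lemma being proved, so the proposed reduction is circular as stated. The paper avoids this by using \emph{tuning} rather than surgery in this case: it writes $c=\tau_{p/q}\circ\tau_{1/2}^{\,n}(c')$ with $c'$ non-renormalizable, uses Corollary~\ref{C:tuned} to get the exact scaling $h(f_{c'})=2^{n}q\,h(f_{c})$ (possible precisely because the outer maps have zero entropy), runs the non-renormalizable argument on $c'$, and pushes the conclusion back through the tuning maps, which preserve the order $<_{\mathcal{M}}$ and transform entropy by a known factor --- so, unlike surgery, they manifestly carry non-constancy of entropy below $c'$ to non-constancy below $c$.

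A second, smaller gap: in the case where $c$ is non-renormalizable, your dismissal of a renormalizable competitor $c'<_{\mathcal{M}}c$ with $h(c')=h(c)$ (``monotonicity plus the fact that a non-renormalizable parameter strictly dominates'') is not an argument, since Lemma~\ref{L:strict-mono} only applies when \emph{both} parameters are non-renormalizable. The paper's mechanism is: non-renormalizable parameters are dense in the vein just below $c$, and core entropy is weakly monotone along veins, so $h(c')=h(c)$ would force $h(c'')=h(c)$ for some non-renormalizable $c''$ with $c'<_{\mathcal{M}}c''<_{\mathcal{M}}c$ arbitrarily close to $c$, contradicting Lemma~\ref{L:strict-mono}. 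You need both the density statement and the weak-monotonicity pinching made explicit. With these two repairs --- replacing the surgery transport by the iterated-tuning argument, and spelling out the density-plus-monotonicity step --- your outline coincides with the paper's proof.
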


\begin{proof}
By Corollary \ref{C:tuned}, core entropy is constant on small Mandelbrot sets whose roots have positive entropy. 
On the other hand, suppose that $c$ does not lie in the interior of any small Mandelbrot set whose root has positive entropy; there are two cases.
First, suppose that $c$ does not lie in the interior of any small Mandelbrot set. 
Then, there exists two untuned, 
PCF parameters $c', c''$ arbitrarily close to $c$ and with $c' <_\mathcal{M} c'' <_\mathcal{M} c$.
 By Lemma \ref{L:strict-mono}, $h(f_{c'}) < h(f_{c''}) \leq h(f_c)$, 
so entropy is not constant in any neighborhood of $c$. 
If instead $c$ lies inside a small Mandelbrot set whose root has zero entropy, since $c$ belongs to the vein $\mathcal{V}_{p/q}$, 
there exists $n \geq 0$ such that $c =  \tau_{p/q} \circ \tau_{1/2}^n(c')$ where $\tau_{p/q}$ is the tuning operator by the rabbit in the $p/q$-limb and $c'$ is 
untuned, 
and $h(f_{c'}) = 2^n q \cdot h(f_c)$, and we can apply the previous argument to $c'$. 
\end{proof}

The strict monotonicity of core entropy except on small Mandelbrot sets was also shown in \cite{JungBiaccessibility}. 

 \begin{lemma} \label{l:closestrepresentative}
 Fix integers $0<p<q$ coprime.  Let $\lambda > 1$ be the growth rate of a parameter in $\mathcal{V}_{p/q}$.  Of all the critically periodic parameters in $\mathcal{V}_{p/q}^{per}$ with growth rate $\lambda$, let $c(\lambda)$ be the one that is closest to the main cardioid along the vein. 
 Then $\mathcal{Z}(\lambda)$ coincides with the set of all eigenvalues of $M_{c(\lambda)}$.  
 \end{lemma}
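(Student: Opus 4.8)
The plan is to establish the two inclusions $\mathcal Z(\lambda)\subseteq Z(M_{c(\lambda)})$ and $Z(M_{c(\lambda)})\subseteq \mathcal Z(\lambda)$ separately; I will assume $\lambda>1$ (the value $\lambda=1$ is the only degenerate case, and the same argument runs there with Corollary~\ref{C:tuned} replaced by the analogous statement along the period-doubling cascade). Write $c_0:=c(\lambda)$. Since $c_0$ is critically periodic on the vein it is the center of a hyperbolic component $H$ whose root is the landing point of some periodic external ray $R_{\mathcal M}(\theta_0)$; by construction $\theta_0\in\Theta^{per}_{p/q}$, $f_{\theta_0}=f_{c_0}$ and $e^{h(\theta_0)}=e^{h(c_0)}=\lambda$, so $\theta_0$ is one of the angles occurring in the definition of $\mathcal Z(\lambda)$. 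Hence $\mathcal Z(\lambda)\subseteq Z(M_{\theta_0})=Z(M_{c_0})$, which is the first inclusion. For the second, observe that every $\theta$ appearing in the definition of $\mathcal Z(\lambda)$ has $f_\theta$ critically periodic on $\mathcal V_{p/q}$ with $h(f_\theta)=\log\lambda$, and conversely every critically periodic $c'\in\mathcal V_{p/q}^{per}$ with $h(c')=\log\lambda$ arises this way (its component root carries such a $\theta$); since $Z(M_\theta)$ depends only on $f_\theta$, we have $\mathcal Z(\lambda)=\bigcap_{c'}Z(M_{c'})$, the intersection over all such $c'$. So it suffices to prove $Z(M_{c_0})\subseteq Z(M_{c'})$ for each such $c'$.

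The crux is to recognize every such $c'$ as a tuning of $c_0$. Because $c_0=c(\lambda)$ is by definition the critically periodic parameter of core entropy $\log\lambda$ closest to the main cardioid, we have $c_0\le_{\mathcal M}c'$, and by monotonicity of core entropy (\cite{TaoLi},\cite{Zeng-landing}) together with Lemma~\ref{L:strict-mono}, core entropy is constant, equal to $\log\lambda>0$, on the subarc $[c_0,c']\subseteq\mathcal V_{p/q}$. By Corollary~\ref{C:tuned}, Lemma~\ref{L:plateau} and the theory of renormalization, this forces $[c_0,c']$ to lie inside a single small Mandelbrot set $\tau_{c_2}(\mathcal M)$ whose root has core entropy $\log\lambda$, where $c_2=\tau_{c_2}(0)$ is the unique primitive critically periodic parameter it contains; moreover $\tau_{c_2}(\mathcal M)\cap\mathcal V_{p/q}=\tau_{c_2}(\mathcal V_{1/2})$ (cf.\ the hypotheses of Lemma~\ref{L:tuned} and Theorem~\ref{T:BD}), so by Theorem~\ref{T:BD}(2) every critically periodic parameter of this small copy has the form $\tau_{c_2}(c_1)$ with $c_1\in\mathcal V_{1/2}^{per}$. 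Since $c_0$ itself lies in $[c_0,c']\subseteq\tau_{c_2}(\mathcal M)$ we get $c_2\le_{\mathcal M}c_0$, while $c_2$ is a critically periodic parameter of entropy $\log\lambda$ and $c_0$ is minimal among these, so $c_0\le_{\mathcal M}c_2$; hence $c_0=c_2$, and in particular $c'=\tau_{c_0}(c_1)$ for a critically periodic real parameter $c_1$.

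Now Lemma~\ref{L:tuned} applies and gives $P_{c'}(t)=P_{c_0}(t)\cdot\dfrac{P_{c_1}(t^{\ell})}{1+t^{\ell}}$, where $\ell$ is the period of $f_{c_0}$ and the factor $\dfrac{P_{c_1}(t^{\ell})}{1+t^{\ell}}$ is a genuine polynomial in $t$ (it is exhibited as $\sum_{j=0}^{p-1}\eta_{j+1}\cdots\eta_{p-1}\,t^{\ell(p-1-j)}$ in the proof of that lemma). Therefore $P_{c_0}(t)$ divides $P_{c'}(t)$, so every root of $P_{c_0}$ is a root of $P_{c'}$. By the theorem of \S\ref{S:kneading-veins} identifying the roots of the $q$-principal vein kneading polynomial of a critically periodic parameter with the eigenvalues (as a set) of the Markov matrix $A$ built from the postcritical set and the $\alpha$-fixed point — which is exactly $M_{c}$, and is consistent with Theorem~\ref{T:equalpolys} — we conclude $Z(M_{c_0})=\{\text{roots of }P_{c_0}\}\subseteq\{\text{roots of }P_{c'}\}=Z(M_{c'})$. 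Intersecting over all admissible $c'$, and using that $c_0$ itself is one of them, yields $\mathcal Z(\lambda)=\bigcap_{c'}Z(M_{c'})=Z(M_{c_0})=Z(M_{c(\lambda)})$, as required.

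I expect the main obstacle to be the middle step: identifying precisely the critically periodic parameters on $\mathcal V_{p/q}$ of a given entropy $\log\lambda$ and showing they are all tunings of the single primitive parameter $c_0=c(\lambda)$. This requires extracting the "plateau" structure of the core entropy function along the vein from Corollary~\ref{C:tuned}, Lemma~\ref{L:strict-mono}, Lemma~\ref{L:plateau}, and the standard fact that the part of $\mathcal V_{p/q}$ inside a small Mandelbrot set $\tau_{c_2}(\mathcal M)$ equals $\tau_{c_2}(\mathcal V_{1/2})$. A secondary bookkeeping point is that the kneading–Markov comparison only delivers equality of eigenvalue \emph{sets} (there may be spurious Jordan blocks), but that is exactly the level of precision at which $\mathcal Z(\lambda)$ and the statement are phrased.
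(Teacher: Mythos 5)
Your proof is correct and follows essentially the same route as the paper's: identify the entropy-$\log\lambda$ plateau on the vein as (the vein's intersection with) a small Mandelbrot set whose base critically periodic parameter is $c(\lambda)$, use Lemma~\ref{L:tuned} to see that $P_{c(\lambda)}$ divides $P_{c'}$ for every other critically periodic $c'$ with the same entropy, and translate roots of the vein kneading polynomial into eigenvalues of the Markov matrix, so that the intersection defining $\mathcal Z(\lambda)$ collapses to the eigenvalue set of $M_{c(\lambda)}$. The paper's proof is just a terser version of this argument (and shares the same implicit appeals, e.g.\ that the vein meets the small copy in the tuned image of the real vein), so there is nothing to fix.
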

 
 \begin{proof} 
 By \cite{TaoLi}, \cite{Zeng-landing} core entropy is weakly increasing along veins, and by Lemma \ref{L:plateau} is constant precisely on small Mandelbrot sets with roots of positive entropy. 
 Thus, the set of parameters along a vein with given growth rate $\lambda > 1$ is the closure of a small Mandelbrot set. Let $c(\lambda)$ be the root of such small Mandelbrot set. 
 By Lemma \ref{L:tuned}, for all parameters $c'$ in the small Mandelbrot set with root $c$, the polynomial $P_{f_{c'}}$ is a multiple of $P_{f_c}$, 
hence the eigenvalues of $M_{c'}$ contain the eigenvalues of $M_c$. 
Thus, the intersection $\mathcal{Z}(\lambda)$ of all eigenvalues equals the eigenvalues of $M_{c}$. 
\end{proof}

\subsection{The teapot is closed under taking roots}
We can now prove Theorem \ref{T:q-root}, reproduced here:

\begin{renormalizationtheorem} 
If $(z,\lambda) \in \Upsilon_{1/2}$ with $|z| \neq 1$, then for any $q$, if $w^q = z$ then the point 
$(w,+\sqrt[q]{\lambda})$ belongs to $\Upsilon_{p/q}.$
\end{renormalizationtheorem}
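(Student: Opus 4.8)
The plan is to reduce the statement about the teapot $\Upsilon_{p/q}$ to a statement about kneading polynomials, via the tuning formula of Lemma~\ref{L:tuned}, and then to relate the roots of the tuned kneading polynomial to the roots of the original one. First I would observe that, by Lemma~\ref{l:closestrepresentative}, a point $(z,\lambda)\in\Upsilon_{1/2}$ with $|z|\neq 1$ means that $\lambda = e^{h(c_1)}$ for some real critically periodic parameter $c_1$, and $z$ is an eigenvalue of $M_{c_1}$, equivalently (by Theorem~\ref{T:equalpolys} and Lemma~\ref{L:periodicP}) a root of the principal vein kneading polynomial $P_{c_1}(t)$; here we may take $c_1 = c(\lambda)$, the minimal such parameter. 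Note $P_{c_1}$ in the $1/2$-vein is the polynomial built from the maps $F_{j,2,t}$, so $P_{c_1}(t^q)$ is a polynomial in $t^q$; we want to produce from it a parameter on the $p/q$-vein whose teapot contains $(w,\sqrt[q]{\lambda})$ whenever $w^q = z$.

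The key step is to invoke Lemma~\ref{L:tuned} with $c_2 = c_{p/q}$, the tip of the $p/q$-vein, whose simplified itinerary is $\overline{23\dots q}$ of period $q$ (so $\ell = q$ in the notation of that lemma), and $c_1$ the real parameter above. Then $c := \tau_{c_{p/q}}(c_1)$ lies on the $\tfrac{p}{q}$-principal vein and has $q$-principal vein kneading polynomial
\[
P_c(t) = P_{c_{p/q}}(t)\cdot \frac{P_{c_1}(t^{q})}{1+t^{q}}.
\]
Therefore every root $w$ of $P_{c_1}(t^q)$ with $1+w^q\neq 0$, i.e. every $w$ with $w^q = z$ a root of $P_{c_1}$ (and $z\neq -1$, which is automatic since $|z|\neq 1$), is a root of $P_c(t)$, hence an eigenvalue of $M_c$. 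By Corollary~\ref{C:tuned}, since $h(c_1)>0$ (as $|z|\neq 1$ forces $z$ to be a nontrivial eigenvalue, so $\lambda>1$), we get $h(c) = h(c_{p/q})$... wait — that would force the entropy to be that of the tip, which is wrong. So here the correct choice is to tune in the other order, or rather to use the lemma with the roles reversed: I would instead take $c_2 = c_{p/q}$ only to land on the vein and use that $h(\tau_{c_{p/q}}(c_1)) = \tfrac{1}{q}h(c_1)$ when $h(c_{p/q})$ — no. The resolution: the tip $c_{p/q}$ has positive entropy $\log\lambda_q$, so Corollary~\ref{C:tuned} gives $h(c) = \log\lambda_q$, not $\tfrac1q\log\lambda$. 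The actual construction must instead realize $\sqrt[q]\lambda$ directly.

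The right approach, then, is to use the \emph{combinatorial} description: given the real parameter $c_1$ with simplified itinerary $\widehat{\textup{It}}(c_1) = \overline{w_1\cdots w_p}$ in $\{0,1,2\}$, form the parameter $c'$ on the $p/q$-vein whose $q$-recoded itinerary is $R_q(\widehat{\textup{It}}(c_1)) = \textup{It}(c')$ but viewed as a \emph{simplified} itinerary on the $p/q$-vein — concretely, $\widehat{\textup{It}}(c') = \overline{w_1\cdots w_p}$ by Theorem~\ref{T:recode-summary}(b), so $c' = \Psi_{p/q}(c_1)$ has the \emph{same} simplified itinerary. But its full itinerary uses the maps $F_{j,q,\mu}$ instead of $F_{j,2,t}$. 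Comparing Definition~\ref{d:Fmaps}, $F_{2,q,\mu}(x) = -\mu^{q-1}x + \mu^{q-1}+1$ while $F_{2,2,t}(x) = -t\,x + t+1$; and $F_{0,q,\mu}, F_{1,q,\mu}$ have $\mu$-coefficients where $F_{0,2,t},F_{1,2,t}$ have $t$-coefficients. Thus if we substitute $t = \mu^{q}$ for the "length-$q$" return and — more precisely, track that each symbol $2$ contributes $q-1$ to $d_k$ instead of $1$ — we get that $P_{c'}(\mu)$, up to a power of $\mu$ and a unit, equals $P_{c_1}(\mu^{?})$ evaluated appropriately. I would carry out this bookkeeping: show that with $w = \mu$ and $z = $ (the corresponding rescaling), $P_{c'}$ as a function of $\mu$ has $P_{c_1}$ as a factor under the substitution $z \leftrightarrow \mu^q$, so that $w^q = z$ and $P_{c_1}(z) = 0$ imply $P_{c'}(w) = 0$; then invoke Theorem~\ref{T:equalpolys} to conclude $w$ is an eigenvalue of $M_{c'}$, and Lemma~\ref{l:closestrepresentative} (with $c' = c(\sqrt[q]\lambda)$, which holds since the simplified itinerary is minimal exactly when $c_1$'s is) to conclude $(w,\sqrt[q]\lambda)\in\Upsilon_{p/q}$.

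The main obstacle I expect is precisely this last substitution computation: verifying that the exponents $q_j$ and signs $\epsilon_j$ attached to the maps $F_{j,q,\mu}$ conspire so that $P_{c'}(\mu)$ factors through $\mu \mapsto \mu^q$ in exactly the way needed, i.e. that the kneading polynomial on the $p/q$-vein is literally $P_{c_1}(\mu^q)$ times a cyclotomic-type unit. One must be careful that the symbol $2$ in the simplified itinerary of $c_1$ corresponds, after recoding, to a block $23\cdots q$ whose composition of homeomorphic maps $F_{k}: I_k \to I_{k+1}$ contributes a factor $\mu^{q-1}$ (not $\mu$) together with a sign, exactly matching $F_{2,q,\mu}$; and that the cycle closes up with the right total exponent. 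A clean way to handle this, avoiding case analysis, is to use Lemma~\ref{L:periodicP} to pass to the kneading \emph{determinant} $D(t)$, whose coefficients are manifestly additive/multiplicative along the itinerary, and then to observe that replacing the $1/2$-vein local data by the $p/q$-vein local data is exactly the reparametrization $t \mapsto t^{q}$ on the relevant terms. Once this identity of (rational) kneading functions is established, the roots-off-the-unit-circle statement is immediate and the theorem follows.
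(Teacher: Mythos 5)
Your proposal identifies the right circle of ideas (Lemma~\ref{L:tuned}, Corollary~\ref{C:tuned}) but then abandons the correct construction and replaces it with one that does not work. You correctly notice that tuning by the \emph{tip} $c_{p/q}$ fails (it would force $h(c)=\log\lambda_q$), but the fix is not to drop tuning: it is to tune by the \emph{root of the vein}, i.e.\ the center $c_2$ of the period-$q$ hyperbolic component attached to the main cardioid in the $p/q$-limb (the ``$p/q$-rabbit''), which has core entropy zero and $q$-principal vein kneading polynomial $P_{c_2}(t)=1-t^q$, with $\ell=q$. This is exactly what the paper does: Lemma~\ref{L:tuned} then gives $P_{c'}(t)=(1-t^q)\,P_{c_1}(t^q)/(1+t^q)$ for $c'=\tau_{c_2}(c_1)$, Corollary~\ref{C:tuned} gives $h(c')=\max\{0,\tfrac1q h(c_1)\}=\tfrac1q\log\lambda$ precisely because $h(c_2)=0$, and $w^q=z$ with $P_{c_1}(z)=0$, $|z|\neq 1$, yields $P_{c'}(w)=0$. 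One then still has to argue that $c'$ is minimal (so that its eigenvalues really are recorded by $\mathcal Z(\sqrt[q]{\lambda})$); the paper does this by noting $\sqrt[q]{\lambda}\le\sqrt[q]{2}$, so every critically periodic parameter on the vein with growth rate at most $\sqrt[q]{\lambda}$ lies in the image of $\tau_{c_2}$, and $\tau_{c_2}$ preserves the order $<_{\mathcal M}$ and scales entropy by $1/q$, hence sends minimal parameters to minimal parameters. Your proposal only gestures at this minimality point, and attaches it to the wrong parameter.

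The alternative route you actually propose --- take $c'=\Psi_{p/q}(c_1)$, the Branner--Douady surgery image, which has the \emph{same} simplified itinerary as $c_1$, and claim that $P_{c'}(\mu)$ is $P_{c_1}(\mu^q)$ up to a cyclotomic unit --- is false. In Definition~\ref{d:Fmaps} the letters $0$ and $1$ contribute degree $q_j=1$ on \emph{every} vein; only the letter $2$ changes degree, from $1$ (when $q=2$) to $q-1$. So if the simplified itinerary is unchanged, the degree of $P_{c'}$ per period is $\#\{0,1\text{'s}\}+(q-1)\#\{2\text{'s}\}$, not $q$ times the degree of $P_{c_1}$, and no substitution $t\mapsto t^q$ relates the two polynomials. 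Correspondingly, $\Psi_{p/q}$ does \emph{not} scale growth rates by the $q$-th root (e.g.\ the real tip has growth rate $2$ while the vein tip has growth rate $\lambda_q\neq 2^{1/q}$), so $\Psi_{p/q}(c_1)$ does not even sit at the right height $\sqrt[q]{\lambda}$ in the teapot. The reason tuning by the rabbit does produce the substitution $t\mapsto t^q$ is that it changes the itinerary: each letter of $\widehat{\textup{It}}(c_1)$ is replaced by a block such as $0\to 21$, $1\to 20$, $2\to 20$ (cf.\ Figure~\ref{F:tuned}), contributing total degree $(q-1)+1=q$. So the gap is not mere bookkeeping: the parameter you construct is the wrong one, and the polynomial identity you hope to verify does not hold for it.
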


\begin{proof}
Let $c$ be a critically periodic, real parameter, with $h(f_c) = \log \lambda$, and consider a fixed $z \in \mathbb{C}$ such that 
the $2$-principal vein kneading polynomial satisfies $P_{f_c}(z) = 0$. 
Now, let $c_2$ be the root of the $p/q$-principal vein, which has $q$-principal vein kneading polynomial $P_{f_{c_2}}(t) = 1 - t^q$. 
Then, by Lemma \ref{L:tuned}, the parameter $c' = \tau_{c_2}(c)$ lies on the $p/q$-vein and its kneading polynomial satisfies 
\begin{equation} \label{E:q-root}
P_{f_{c'}}(t) =  (1 - t^q) \frac{P_{f_c}(t^q)}{1+ t^q}.
\end{equation}
Hence, the core entropy satisfies $h(f_{c'}) = \frac{1}{q} \log \lambda$, and moreover, if $w^q = z$, by plugging $w = t$ into Eq. 
\eqref{E:q-root} we obtain 
$P_{f_{c'}}(w) = 0$.
It remains to show that $c'$ is minimal. Since $\sqrt[q]{\lambda} \leq \sqrt[q]{2}$, every critically periodic map in $\mathcal{V}^{per}_{p/q}$ with growth rate $\leq \sqrt[q]{\lambda}$ is of the form $\tau_{c_2}(c)$, where $c$ is a real critically periodic map. 
Since $\tau_{c_2}$ respects the ordering and scales the entropy by $\frac{1}{q}$, it maps minimal parameters to 
minimal parameters. 
 This shows that $(w, \sqrt[q]{\lambda}) \in \Upsilon_{p/q}$. 
The claim follows by taking the closure. 
\end{proof}

\section{Persistence for Thurston teapots for principal veins} \label{s:persistence}

The goal of this section is to prove the Persistence Theorem (Theorem  \ref{t:persistence}) for teapots associated to principal veins.  

\subsection{Itineraries and roots of the kneading polynomial}

The $q$-principal vein kneading polynomial can be generalized to arbitrary words in the alphabet $\{0,1,2\}$; we call this generalization the \emph{finite word $q$-kneading polynomial}. 

\begin{definition}[finite word $q$-kneading polynomial]
Let $w$ be a finite word in $\{0, 1, 2\}^n$ and let $q$ be a natural number. Then the \emph{finite word $q$-kneading polynomial} $P^q_w$ is defined as 
\[
P^q_w(z) \coloneqq F_{w_{n-1},q ,z} \circ \ldots \circ F_{w_1,q, z}(1+z).
\]
\end{definition} 

\noindent When $q$ is clear from the context or the result does not depend on $q$, we will sometimes write $P_w$ instead of $P^q_w$. 
\medskip

The following facts are immediate from the definition (see also Lemma \ref{L:periodicP} and its proof). 

\begin{lemma}\label{lem:bounded}
The coefficients of $P^q_w$ are uniformly bounded, where the bound depends only on $q$. Moreover, for any $n$, the polynomial $P^q_{w^n}$ equals $P^q_w$ multiplied by a cyclotomic polynomial, hence has the same roots other than those on the unit circle.
\end{lemma}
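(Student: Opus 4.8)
The two assertions are essentially bookkeeping with the definition of $P^q_w$, so the plan is to unwind that definition carefully. For the first assertion, I would argue by induction on the word length $n$. Write $F_{j,q,z}(x) = a_j(z) x + b_j(z)$ as in Definition \ref{d:Fmaps}; the key observation is that each $a_j$ is a \emph{monomial} in $z$ (namely $z$, $-z$, or $-z^{q-1}$) with coefficient $\pm 1$, while each $b_j$ is a polynomial of degree at most $q-1$ with coefficients in $\{0,1\}$ (namely $z+1$, $z+1$, or $z^{q-1}+1$). Composing the affine maps $F_{w_{n-1},q,z} \circ \cdots \circ F_{w_1,q,z}$ and evaluating at $1+z$, one gets
$$P^q_w(z) = \prod_{j=1}^{n-1} a_{w_j}(z)\cdot(1+z) \;+\; \sum_{k=1}^{n-1} b_{w_k}(z)\prod_{j=k+1}^{n-1} a_{w_j}(z).$$
Since each $\prod a_{w_j}$ is $\pm$ a single monomial in $z$, each summand here is, up to sign, a shift (multiplication by a power of $z$) of one of the fixed polynomials $1+z$ or $z^{q-1}+1$; hence every term contributes at most $2$ to the sum of absolute values of coefficients in any fixed degree, but more to the point the \emph{individual} coefficients of $P^q_w$ are bounded by a constant depending only on $q$ (a crude bound: no degree receives contributions from more than, say, $q$ of the $n$ terms, because the exponents $d_k$ increase by at least $1$ at each step — this is exactly the $d_k := q_{w_0}+\cdots+q_{w_{k-1}}$ of Definition \ref{d:Fmaps} — so one gets a bound like $2q$ on each coefficient, independent of $n$). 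I would phrase this cleanly using the $\eta_k, d_k$ notation already introduced: $P^q_w(z)$ is, up to the normalization in Lemma \ref{L:periodicP}, the truncation $\sum_{k=0}^{n-1}\eta_k B_{w_k}(1/z)^{\,\cdot}$ rewritten as a genuine polynomial, and the $B_j$ are finitely many fixed polynomials.

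For the second assertion, I would apply Lemma \ref{L:periodicP} directly to the periodic sequence $(w^n)^\infty = w^\infty$: both have the same underlying periodic itinerary, just with period $n\cdot|w|$ instead of $|w|$. By that lemma, the kneading determinant of a periodic sequence of period $p$ equals $P_f(1/t)\cdot \eta_{p-1}t^{d_p}/(1-\eta_p t^{d_p})$, and since this rational function is the same whether we regard the sequence as having period $|w|$ or period $n|w|$, comparing the two expressions gives
$$P^q_{w^n}(z) \cdot \frac{\eta_{n|w|-1}^{(n)} \, z^{-d}}{1-\eta^{(n)}\, z^{-d\cdot?}} \;=\; P^q_{w}(z)\cdot\frac{\eta_{|w|-1}\, z^{-d'}}{1-\eta\, z^{-d''}},$$
so $P^q_{w^n}/P^q_w$ equals a ratio of the shape $(1 - \zeta t^{m})/(1 - \zeta' t^{m'})$ with $m' \mid m$ and $\zeta^{m/m'}=\zeta'$, $\zeta,\zeta'\in\{\pm1\}$, which is exactly a product of cyclotomic polynomials (the factorization of $x^m-1$ over $x^{m'}-1$, or $x^m+1$ over $x^{m'}+1$). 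Concretely: if $\eta_p = +1$ over period $p=|w|$ then $\eta_{np}=+1$ and $P_{w^n}/P_w = (1-t^{nd_p})/(1-t^{d_p}) = \prod_{d \mid nd_p,\, d\nmid d_p}\Phi_d(t)$; if $\eta_p=-1$ the analogous identity with $1+t^{d_p}$ holds, again a product of cyclotomics. Hence $P^q_{w^n}$ and $P^q_w$ differ by a cyclotomic factor and have the same roots off the unit circle.

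The main obstacle — really the only place one must be careful — is the second part: one has to be sure that ``repeating the period'' interacts correctly with the sign $\eta_p$ and the exponent $d_p$, i.e. that $\eta_{np} = \eta_p^n$ and $d_{np} = n\,d_p$ (both of which follow immediately from the periodicity relations $\eta_{pn+k} = \eta_p^n\eta_k$ and $d_{pn+k} = n d_p + d_k$ stated in the proof of Lemma \ref{L:periodicP}), and then that the resulting quotient of geometric-type factors is genuinely a product of cyclotomic polynomials rather than merely a polynomial with roots on the unit circle — this uses that $(x^{nm}-1)/(x^m-1)$ and $(x^{2nm}-1)/(x^m-1)\cdot(x^m-1)/(x^{2m}-1)$-type expressions are integer polynomials with only roots of unity as zeros, i.e. products of $\Phi_d$. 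For the first part the only subtlety is making the coefficient bound uniform in $n$, which the strictly increasing exponents $d_0 < d_1 < \cdots$ handle automatically. I would keep both arguments short, citing Definition \ref{d:Fmaps} and Lemma \ref{L:periodicP} for the structural input.
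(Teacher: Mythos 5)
Your proposal is correct and takes essentially the route the paper intends: the paper gives no detailed argument (it calls the lemma immediate from the definition, pointing to Lemma \ref{L:periodicP} and its proof), and your expansion of the composition with distinct exponents $d_k$ for the coefficient bound, together with comparing the period-$|w|$ and period-$n|w|$ expressions of the same kneading determinant via $\eta_{np}=\eta_p^n$ and $d_{np}=n\,d_p$ to extract the factor $\pm\bigl(z^{nd_p}-\eta_p^n\bigr)/\bigl(z^{d_p}-\eta_p\bigr)$, is exactly that argument made explicit. Apart from the placeholder notation in your middle display (which should just be the two instances of Lemma \ref{L:periodicP} written out), there is nothing to fix.
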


Given any monic polynomial $P(z)=\sum_{i=0}^da_iz^i$, we let the \emph{reciprocal} of $P$ be
\[r(P)(y) :={y^d\over a_d}P(1/y).\]
Then, from the construction of $P_w$, we have

\begin{lemma}
Let $w, w'$ be two finite words in the alphabet $\{0, 1, 2\}$. 
  \begin{enumerate}
  \item If $w$ and $w'$ share a large common suffix, then the lower degree terms of $P_w$ and $P_{w'}$ are identical;
  \item If $w$ and $w'$ share a large common prefix, then the lower degree terms of $r(P_w)$ and $r(P_{w'})$ are identical.
  \end{enumerate}
\end{lemma}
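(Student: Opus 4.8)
The plan is to reduce both claims to a single explicit formula for $P_w$ obtained by unrolling the composition of affine maps, and then to read off which monomials of $P_w$, resp.\ of its reciprocal $r(P_w)$, are controlled by a suffix, resp.\ a prefix, of $w$.

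First I would record that each building block has the form $F_{j,q,z}(x) = \epsilon_j\, z^{q_j} x + (1+z^{q_j})$, with $(\epsilon_0,q_0)=(1,1)$, $(\epsilon_1,q_1)=(-1,1)$, $(\epsilon_2,q_2)=(-1,q-1)$; so the linear coefficient is a monomial and the affine term has constant term $1$. A one-step induction --- the same computation as in the proof of Lemma~\ref{L:periodicP} --- then yields, for $w=w_0w_1\cdots w_{n-1}$,
\[
P_w(z) = \Bigl(\prod_{j=1}^{n-1}\epsilon_{w_j}\Bigr) z^{\,Q_1^{n-1}}(1+z) + \sum_{k=1}^{n-1}\bigl(1+z^{q_{w_k}}\bigr)\Bigl(\prod_{j=k+1}^{n-1}\epsilon_{w_j}\Bigr) z^{\,Q_{k+1}^{n-1}},
\]
where $Q_a^{b} := q_{w_a}+q_{w_{a+1}}+\cdots+q_{w_b}$ (empty sum $=0$). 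Because $Q_{k+1}^{n-1}+q_{w_k}=Q_k^{n-1}$, each summand is a signed binomial $\pm(z^{Q_{k+1}^{n-1}}+z^{Q_k^{n-1}})$ with sign $\prod_{j>k}\epsilon_{w_j}$; the top monomial is the unique term $\pm z^{\,Q_1^{n-1}+1}$ and the constant term of $P_w$ is $1$, so $\deg P_w = Q_1^{n-1}+1$ and its leading coefficient is $\prod_{j=1}^{n-1}\epsilon_{w_j}=\pm1$.

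For claim (1) I would argue as follows. Suppose $w$ and $w'$ have a common suffix of length $m$ (with, say, $m\le n-2$; the remaining cases are degenerate and handled below), and set $D:=Q_{n-m}^{n-1}$, the total $q$-length of that suffix, which is the same number computed from $w'$. In the displayed formula the summands with $k\ge n-m$, together with the degree-$D$ monomial contributed by the $k=n-m-1$ summand --- whose coefficient is $\prod_{j=n-m}^{n-1}\epsilon_{w_j}$, a suffix quantity --- involve only the letters $w_{n-m},\dots,w_{n-1}$; every remaining monomial (the part $z^{Q_1^{n-1}}(1+z)$ and the summands with $k\le n-m-2$) has exponent $\ge Q_{n-m-1}^{n-1}>D$. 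Hence for each $d<D$ the coefficient of $z^d$ in $P_w$ depends only on the common suffix, and therefore equals the coefficient of $z^d$ in $P_{w'}$; since $D\ge m\to\infty$, this proves (1). When the common suffix is (nearly) all of $w$, I would instead write $P_{w'}=G(v)$ and $P_w=G(1+z)$ with $G$ the affine composition along the suffix: $G$ has linear part $\pm z^{D'}$ for the appropriate $D'$, and $v-(1+z)$ vanishes at $z=0$ (every $F_{j,q,0}$ is the constant map $1$), so $P_{w'}-P_w$ has order $>D'$ at $0$.

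For claim (2) I would use that $r(P)(y)=y^{\deg P}P(1/y)/a_{\deg P}$, i.e.\ $r$ reverses coefficients up to the global sign $\prod_j\epsilon_{w_j}=\pm1$. Substituting $z=1/y$ in the displayed formula, multiplying through by $y^{\,Q_1^{n-1}+1}$, and using $Q_1^{n-1}-Q_{k+1}^{n-1}=Q_1^{k}$ together with $\epsilon_{w_j}^2=1$, I get
\[
r(P_w)(y) = (1+y) + \sum_{k=1}^{n-1}\Bigl(\prod_{j=1}^{k}\epsilon_{w_j}\Bigr)\bigl(y^{\,Q_1^{k-1}+1}+y^{\,Q_1^{k}+1}\bigr).
\]
Now the $k$-th summand depends only on $w_1,\dots,w_k$ and its lowest exponent is $Q_1^{k-1}+1$; so if $w$ and $w'$ share a common prefix of length $m$, the summands with $k\le m-1$ coincide, while those with $k\ge m$ affect only degrees $\ge Q_1^{m-1}+1$. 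Therefore $r(P_w)$ and $r(P_{w'})$ agree in all degrees $<Q_1^{m-1}+1$, and $Q_1^{m-1}+1\ge m\to\infty$, which is (2). I expect the only real obstacle to be the bookkeeping at the boundary between the prefix/suffix-determined monomials and the remaining ones (and the attendant mild edge cases); once the closed form above is in hand, everything else is the routine unrolling of an affine recursion.
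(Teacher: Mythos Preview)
Your proof is correct, and it is essentially the explicit unrolling of what the paper leaves as a one-line assertion (``from the construction of $P_w$''). The paper gives no argument beyond that phrase; your explicit formula for $P_w$ obtained from the affine recursion is exactly the computation the paper has already carried out in the proof of Lemma~\ref{L:periodicP}, and your reading-off of which summands are suffix- (resp.\ prefix-) determined is the natural way to make the claim precise. So the approaches coincide: you have simply supplied the details the paper omits.
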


Combining this with Rouch\'e's theorem we have:

\begin{lemma}\label{lem:same_prefix_suffix}
Let $w, w'$ be two finite words in the alphabet $\{0, 1, 2\}$. 
  \begin{enumerate}
  \item If $w$ and $w'$ share a large common suffix, then the roots of $P_w$ and $P_{w'}$ within the unit circle are close to one another;
  \item If $w$ and $w'$ share a large common prefix, then the roots of $P_w$ and $P_{w'}$ outside the unit circle are close to one another.
  \end{enumerate}
\end{lemma}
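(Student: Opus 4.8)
The plan is to deduce Lemma~\ref{lem:same_prefix_suffix} from the immediately preceding (unnamed) lemma together with Rouch\'e's theorem and the uniform coefficient bound of Lemma~\ref{lem:bounded}. The two assertions are dual to each other under the reciprocal operation $r(\cdot)$, so it suffices to argue one of them carefully and remark that the other follows by the same reasoning applied to $r(P_w)$ and $r(P_{w'})$, using the fact that $y \mapsto 1/y$ interchanges the inside and the outside of the unit circle.

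For part (1): suppose $w$ and $w'$ share a common suffix of length $m$. By part (1) of the preceding lemma, the coefficients of $P_w$ and $P_{w'}$ agree in all degrees up to some $N = N(m)$ with $N(m) \to \infty$ as $m \to \infty$; write $P_w(z) = \sum_i a_i z^i$ and $P_{w'}(z) = \sum_i a_i' z^i$ with $a_i = a_i'$ for $i \leq N$. By Lemma~\ref{lem:bounded} all coefficients are bounded in absolute value by a constant $K = K(q)$ depending only on $q$. Fix any radius $0 < r < 1$. Then on the circle $|z| = r$ we have
\[
|P_w(z) - P_{w'}(z)| \;\leq\; \sum_{i > N} |a_i - a_i'|\, r^i \;\leq\; 2K \sum_{i > N} r^i \;=\; \frac{2K\, r^{N+1}}{1-r},
\]
which tends to $0$ as $N \to \infty$, hence as $m \to \infty$, uniformly on $|z|=r$. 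A standard compactness/degree-bound argument (the number of roots of $P_w$ inside a fixed disk is bounded in terms of the length of $w$ and $q$, and one may take $r$ close enough to $1$ to capture all roots strictly inside the unit circle while keeping $|P_w|$ bounded below away from those roots) lets us invoke Rouch\'e's theorem: for $m$ large, $P_w$ and $P_{w'}$ have the same number of zeros inside each small disk centered at a root of $P_w$ lying in $\mathbb{D}$, so their root sets in $\mathbb{D}$ are within any prescribed Hausdorff distance. This is the precise meaning of ``close to one another.''

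The main obstacle — and the only point requiring care beyond bookkeeping — is making the Rouch\'e argument uniform near the unit circle: a priori a root of $P_w$ could sit very close to $|z|=1$, where the chosen radius $r$ fails to enclose it, or where $|P_w(z)|$ is not bounded below independently of $w$. The clean fix is to phrase the conclusion as: for every $\varepsilon > 0$ there exists $m_0$ such that whenever $w, w'$ share a common suffix of length $\geq m_0$, every root of $P_w$ in $\{|z| \leq 1-\varepsilon\}$ is within $\varepsilon$ of a root of $P_{w'}$ and vice versa; this is all that is used in the applications in \S\ref{s:persistence}, and it follows from the displayed estimate with $r = 1-\varepsilon/2$ together with the elementary fact that the zeros of a family of polynomials with uniformly bounded coefficients and uniformly bounded degree-on-compacta vary continuously. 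Part (2) is then immediate by applying part (1) to the reciprocals $r(P_w), r(P_{w'})$, noting that a shared prefix of $w, w'$ produces a shared suffix of the coefficient sequences of the reciprocals (equivalently, shared low-degree terms of $r(P_w)$ by part (2) of the preceding lemma), and that inversion carries $\{|y| \leq 1-\varepsilon\}$-closeness of roots of the reciprocals to closeness of roots of $P_w, P_{w'}$ in $\{|z| \geq (1-\varepsilon)^{-1}\}$.
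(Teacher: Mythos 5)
Your proof is correct and follows essentially the same route as the paper, which simply combines the preceding coefficient-agreement lemma with the uniform coefficient bound of Lemma~\ref{lem:bounded} and Rouch\'e's theorem (the paper gives no more detail than that). Your explicit uniform estimate on $|z|=r<1$, the careful restatement of ``close'' on compact subsets $\{|z|\le 1-\varepsilon\}$, and the reduction of part (2) to part (1) via the reciprocal $r(P)$ are exactly the intended argument, just spelled out.
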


  Combining Lemma~\ref{lem:same_prefix_suffix} and Lemma~\ref{lem:bounded} we have
  \begin{proposition}\label{prop:lim}
    Let $w_1$, $w_2$ and $w'$ be finite words in $\{0, 1, 2\}$, and let $\lambda_i$ denote the leading positive real root of $P_{w_i}$. 
     Suppose that $w = w_1^Nw'w_2^N$ for some $N \gg 1$, and denote by $\lambda$ the leading real root of $P_{w}$.  
     Then:
    \begin{itemize}
    \item As $N\rightarrow\infty$, $\lambda$ converges to $\lambda_1$.
    \item As $N\rightarrow\infty$, the roots of $P_{w}$ within the unit circle converge to roots of $P_{w_2}$.
    \end{itemize}
 \end{proposition}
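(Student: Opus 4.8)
The plan is to combine the two halves of Lemma~\ref{lem:same_prefix_suffix} with the uniform bound from Lemma~\ref{lem:bounded}, treating the ``inside the unit disk'' and ``outside the unit disk'' behaviors separately, since they are governed by opposite ends of the word $w = w_1^N w' w_2^N$.

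\textbf{Leading real root.} First I would note that $w$ and $w_1^{\,2N}$ share the common prefix $w_1^N$, whose length grows with $N$. By part (2) of Lemma~\ref{lem:same_prefix_suffix}, the roots of $P_w$ and $P_{w_1^{2N}}$ outside the unit circle are close, and this closeness improves as $N \to \infty$. By Lemma~\ref{lem:bounded}, $P_{w_1^{2N}}$ has the same non-cyclotomic roots as $P_{w_1}$; in particular its leading positive real root is $\lambda_1$. The leading real root $\lambda$ of $P_w$ is $\geq 1$ (indeed, one should observe it equals the growth rate of an associated graph/piecewise-linear model, or argue directly that the leading positive root is at least $1$ because the $F_{j,q,z}$ are designed so that $P_w(1) \le 0 < P_w(0)=1$; in the non-renormalizable regime it is $> 1$), hence it is one of the roots ``on or outside'' the unit circle, so it must be close to $\lambda_1$. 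As $N\to\infty$ this forces $\lambda \to \lambda_1$. A small point to address: one must make sure the root being tracked by Rouch\'e is genuinely the leading one and does not get swapped with another root of comparable modulus; this follows because $\lambda_1$ is a simple, isolated, strictly-dominant positive real root of $P_{w_1}$ (Perron–Frobenius for the associated graph), so a small neighborhood of $\lambda_1$ contains exactly one root of $P_w$ for $N$ large, and reality of coefficients pins it to the real axis.

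\textbf{Roots inside the unit circle.} Symmetrically, $w$ and $w_2^{\,2N}$ share the common suffix $w_2^N$, so by part (1) of Lemma~\ref{lem:same_prefix_suffix} the roots of $P_w$ inside the unit disk are close to those of $P_{w_2^{2N}}$, which by Lemma~\ref{lem:bounded} are exactly the roots of $P_{w_2}$ inside the disk. To phrase this as genuine convergence I would fix any compact $K \subset \mathbb{D}$ and any $\varepsilon>0$: for $N$ large, every root of $P_w$ in $K$ lies within $\varepsilon$ of a root of $P_{w_2}$ (and, running Rouch\'e the other way on small disks around each root of $P_{w_2}$, the correct multiplicity of roots of $P_w$ appears near each root of $P_{w_2}$). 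Hence in the Hausdorff metric on compact subsets of $\mathbb{D}$, the root set of $P_w$ inside the unit circle converges to that of $P_{w_2}$, which is the asserted conclusion. The uniform coefficient bound of Lemma~\ref{lem:bounded} is what lets me pass from ``low-degree terms agree'' to uniform-on-compacts control of $P_w$ as a holomorphic function, which is the hypothesis Rouch\'e needs.

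\textbf{Main obstacle.} The routine parts are the two Rouch\'e arguments; the real subtlety is the bookkeeping that makes ``$w_1$ and $w$ share a long prefix'' and ``$w_2$ and $w$ share a long suffix'' literally true for $w = w_1^N w' w_2^N$ uniformly in the fixed middle word $w'$ — i.e.\ checking that the insertion of $w'$ does not interfere with the prefix/suffix estimates (it does not, because Lemma~\ref{lem:same_prefix_suffix} only needs the shared block to be long, not that the words be equal beyond it), and the fact that the relevant leading root is isolated and dominant so that the Rouch\'e neighborhoods can be chosen uniformly. I expect the dominance/simplicity of $\lambda_1$ — needed so that ``the leading root of $P_w$'' is unambiguously the thing converging to $\lambda_1$ — to be the one point requiring a genuine (if short) argument rather than a direct appeal to the stated lemmas.
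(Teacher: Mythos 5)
Your proposal is correct and follows exactly the route the paper intends: the paper offers no further argument beyond ``combining Lemma~\ref{lem:same_prefix_suffix} and Lemma~\ref{lem:bounded}'', which is precisely your use of the prefix estimate (via $w_1^{2N}$) for roots outside the unit circle and the suffix estimate (via $w_2^{2N}$) for roots inside, together with Rouch\'e. Your additional remarks on the simplicity and dominance of $\lambda_1$ and on the uniform coefficient bound are just the details the paper leaves implicit, so there is nothing to correct.
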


\subsection{Persistence}

Firstly, we review some results on unimodal maps from \cite{BrayDavisLindseyWu}.

\begin{definition} \label{d:unimodaldefs}
Let $w$ be a finite word in the alphabet $\{0, 1\}$. Then: 
  \begin{itemize}
  \item $w$ is said to be 
  \emph{irreducible} if it cannot be written as concatenation of more than $1$ copies of another word; 
  \item 
  $w$ is said to have \emph{positive cumulative sign}, if there are an even number of $1$s;
  \item
It is said to be \emph{admissible}, if given any decomposition $w=ab$, we have 
$$ba\leq_{lex} ab$$
  \item
  the \emph{Parry polynomial} of $w$ is 
  $$P^{Parry}_w(z) :=f_{w_n} \circ \dots\circ f_{w_1}\circ f_{w_0}(1)-1$$
  with $f_0(x)=zx$, $f_1(x)=2-zx$.
  \item we define $D(w)$ as the word obtained from $w$ by replacing $1$ with $10$ and $0$ with $11$.
  \end{itemize}
\end{definition}

The Milnor-Thurston kneading theory \cite[Theorem 12.1]{MilnorThurston} implies that if $w$ is admissible, then $w^\infty$ is the critical itinerary of some real critically periodic quadratic map. 
 
\label{d:minimalparam}
\begin{definition}[minimal binary word]
Let $w$ be a finite word in the alphabet $\{0,1\}$. If $w$ is irreducible and $w^{\infty}$ is the binary itinerary of a minimal real parameter (Definition  \ref{d:minimalparam}), we call $w$ \emph{minimal}. 
\end{definition}

The following characterizations of minimality are well-known (see e.g. \cite{BruinBrucks}):

\begin{lemma} \label{L:minimal}
  The following are equivalent for an irreducible admissible word $w\in \{0, 1\}^n$:
  \begin{enumerate}
  \item $w$ is minimal;
  \item If there is another irreducible admissible word $w'$ such that $(w')^\infty$ is the itinerary of some unimodal map with the same entropy, then $w^\infty\leq_{lex} (w')^\infty$;
   \item $w$ is the binary itinerary of some piecewise linear map with a single critical point and slope of constant absolute value.
   \item If $w^\infty$ is the binary itinerary of some quadratic map with entropy less than $\log(\sqrt{2})$, then $w$ is minimal iff $w=D(w')$ where $w'$ is minimal. (Here $D$ is the map defined in Definition \ref{d:unimodaldefs})
   \end{enumerate}
\end{lemma}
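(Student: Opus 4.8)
The plan is to treat this as a compilation of equivalences from the literature on unimodal maps and $\beta$-transformations, verifying each implication rather than proving them all from scratch. The chain I would establish is $(1) \Leftrightarrow (2)$, then $(1) \Leftrightarrow (3)$, then $(1) \Leftrightarrow (4)$, keeping (1) as the hub.

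First I would unwind the definitions. By Definition \ref{d:minimalparam} (the "minimal binary word" one), $w$ minimal means $w$ is irreducible and $w^\infty$ is the binary itinerary of a \emph{minimal real parameter} $c$, i.e. one for which no $c' <_\mathcal{M} c$ has the same core entropy (Definition \ref{d:minimalparam}, the parameter version). By Lemma \ref{L:twisted}, $c' <_\mathcal{M} c$ iff $\textup{it}(c') <_{lex} \textup{it}(c)$, and by the Milnor--Thurston admissibility correspondence cited just above the lemma, the binary itineraries of critically periodic real maps are exactly the (periodic) admissible words. So minimality of $w$ is equivalent to: $w$ is irreducible, admissible, and $w^\infty$ is $\leq_{lex}$-minimal among all $(w')^\infty$ with $w'$ irreducible admissible and $h(f_{c'}) = h(f_c)$. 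Since core entropy is monotone along the real vein and constant exactly on small Mandelbrot sets with positive-entropy root (Lemma \ref{L:plateau}), the set of parameters with a given entropy value is such a (closed) small Mandelbrot set, and its $<_\mathcal{M}$-minimal element has the lexicographically smallest itinerary. This gives $(1) \Leftrightarrow (2)$ essentially by translating Lemma \ref{L:twisted} and Lemma \ref{L:plateau} into the symbolic language.

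For $(1) \Leftrightarrow (3)$: the piecewise linear model of constant slope $s = e^{h(f_c)}$ is provided by Lemma \ref{L:semic-nr} (in the unimodal/interval case, which is the $q=2$ specialization of Remark \ref{l:PLsemiconjugacy}). A non-renormalizable $f_c$ is semiconjugate to the constant-slope $g$ with the \emph{same} kneading sequence; conversely, $c$ is minimal iff it is non-renormalizable (Lemma \ref{L:plateau}: minimal $\Leftrightarrow$ not in a small Mandelbrot set with positive-entropy root $\Leftrightarrow$ non-renormalizable when entropy is positive, with the entropy-zero case handled directly since then the itinerary is $(10)^\infty$-tuned and the slope-$1$ model is forced). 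So the constant-slope piecewise linear map exists with kneading sequence $w^\infty$ precisely when $w$ is minimal; this is the content of $(1)\Leftrightarrow(3)$, and it is the same statement as saying the growth rate of the $\beta$-shift generated by $w$ equals $\lambda(f_c)$ with no "collapsing."

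For $(1) \Leftrightarrow (4)$: in the low-entropy regime $h < \log\sqrt2$, every real critically periodic map is the tuning of another real critically periodic map by the basilica $c = -1$ (the period-$2$ center), and on itineraries this tuning is realized by the substitution $D$ (replace $1 \mapsto 10$, $0 \mapsto 11$) — this is the classical period-doubling substitution, and it is exactly the $\ell = 2$, $q=2$ case of the recoding/tuning combinatorics in Lemma \ref{L:tuned} and Theorem \ref{T:recode-summary}. Since the tuning map $\tau_{-1}$ respects the order $<_\mathcal{M}$ and scales entropy by $\tfrac12$, it sends minimal parameters to minimal parameters and non-minimal to non-minimal (the same argument as in the proof of Theorem \ref{T:q-root}); hence $w = D(w')$ is minimal iff $w'$ is, which is $(4)$.

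The main obstacle I anticipate is not any single implication but assembling the correct bookkeeping for the \textbf{entropy-zero and boundary cases}: when $h(f_c) = 0$ the "constant slope" is $1$ and several of the arguments above (which implicitly use positivity of entropy, e.g. the identification of plateaus with small Mandelbrot sets in Lemma \ref{L:plateau}, and the strictness in Lemma \ref{L:strict-mono}) degenerate. I would handle this by noting that the entropy-zero itineraries are exactly the $D$-iterates of the trivial word, so statement $(4)$ pins them down recursively and statements $(1)$--$(3)$ hold vacuously or by direct inspection there. A secondary nuisance is matching conventions: the paper's $D$ and twisted-lexicographic order must be checked to agree (up to the usual sign/parity bookkeeping) with the references \cite{BruinBrucks}, \cite{BraysDavisLindseyWu} from which these equivalences are quoted; since the lemma is stated as "well-known," I would cite those sources for the hard combinatorial core and present the above only as the translation into the present paper's notation.
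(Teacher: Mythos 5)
The paper itself offers no argument for this lemma---it is stated as ``well-known'' with a pointer to \cite{BruinBrucks}---so your overall plan of translating the statement into the paper's notation and citing the literature for the combinatorial core is, in spirit, what the authors do. The genuine problem is the bridge you build for $(1)\Leftrightarrow(3)$: you assert that, for positive entropy, ``minimal $\Leftrightarrow$ not in a small Mandelbrot set with positive-entropy root $\Leftrightarrow$ non-renormalizable.'' The second equivalence is false. A minimal parameter whose core entropy lies in $(0,\log\sqrt{2})$ is period-doubling renormalizable: it lies in the basilica copy (indeed in $k$ nested such copies when the entropy is below $2^{-k}\log 2$), but every copy containing it has a root of entropy zero, so Lemma~\ref{L:plateau} does not force non-renormalizability. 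Consequently Lemma~\ref{L:semic-nr} does not apply to these parameters, and your claim that the constant-slope model with the same kneading sequence exists precisely for minimal words breaks down on the whole range $0<h<\log\sqrt{2}$, not only at entropy zero, which is the only degenerate case your proposal addresses.

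The repair is the mechanism you already use for item (4): for $0<h<\log\sqrt{2}$ write $w=D^k(w')$ with $w'$ minimal of entropy $2^k h\geq\log\sqrt{2}$ (basilica tuning halves entropy, respects $<_{\mathcal{M}}$, and preserves minimality, as in your $(1)\Leftrightarrow(4)$ discussion), apply the high-entropy case of $(3)$ to $w'$, and then use the self-similarity of the tent family (the second iterate of the slope-$\lambda$ tent map restricted to its core is affinely conjugate to the slope-$\lambda^2$ tent map, with kneading sequences related by $D$) to conclude that $w$ is the kneading sequence of the tent map of slope $e^{h}$. Without this step---or a direct citation of the corresponding statement in \cite{BruinBrucks} or \cite{BrayDavisLindseyWu}---the $(1)\Leftrightarrow(3)$ part of your argument is incomplete. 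The remaining parts ($(1)\Leftrightarrow(2)$ via Lemma~\ref{L:twisted} and Lemma~\ref{L:plateau}, and $(1)\Leftrightarrow(4)$ via basilica tuning) are sound, modulo the usual care that the $<_{\mathcal{M}}$-smallest point of an entropy plateau is a parabolic root rather than a critically periodic parameter, so the identification of its itinerary with $w^\infty$ deserves a sentence.
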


We use the following key fact from \cite{BrayDavisLindseyWu}.

\begin{proposition}[\cite{BrayDavisLindseyWu}, Proposition 2.10] \label{P:minimal}
Let $w$ be an irreducible admissible word of positive cumulative sign, suppose that the Parry polynomial $P^{Parry}_w(z)$ has leading real root $\lambda$ larger than $\sqrt{2}$, and $P^{Parry}_w(z) =(z-1)g(z)$, where $g(z)$ is irreducible. Then $w$ is minimal. 
\end{proposition}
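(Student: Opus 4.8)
The plan is to argue by contradiction, combining the description of entropy plateaus on the real vein with a degree count forced by irreducibility of $g$. Assume $w$ is \emph{not} minimal, and let $c\in\mathcal{V}_{1/2}$ be the real critically periodic parameter with $\textup{it}(c)=w^\infty$, so that $e^{h(f_c)}=\lambda>\sqrt{2}$ and in particular $h(f_c)>0$.

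First I would pin down a smaller competitor. By Lemma~\ref{L:plateau} applied with $\tfrac{p}{q}=\tfrac12$, the non-minimal parameter $c$ lies in a small Mandelbrot set whose root has positive core entropy; let $c'$ be the root of this small Mandelbrot set, equivalently the minimal parameter of the entropy plateau of $c$. Then $c'$ is again a real critically periodic parameter, $h(f_{c'})=h(f_c)=\log\lambda$ by Corollary~\ref{C:tuned}, and $c'<_{\mathcal{M}}c$ with $c'\neq c$. Writing $(w')^\infty=\textup{it}(c')$ and $\ell:=|w'|$ for the period of $c'$, the combinatorics of tuning (as in the discussion preceding Corollary~\ref{C:tuned}) produce a real critically periodic parameter $c''$ of period $m\geq 2$ with $c=\tau_{c'}(c'')$; hence $\ell$ divides $|w|$ and $\ell<|w|$.

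Now I would run the degree count. By hypothesis $w$ has positive cumulative sign, so $(z-1)\mid P^{Parry}_w$ and $P^{Parry}_w(z)=(z-1)g(z)$ with $g$ irreducible; since $\lambda>1$ is the leading real root of $P^{Parry}_w$, we get $g(\lambda)=0$, so $g$ is, up to a scalar, the minimal polynomial of $\lambda$ over $\mathbb{Q}$, of degree $\deg g=\deg P^{Parry}_w-1=|w|-1$ (the Parry polynomial is a composition of $|w|$ affine maps of slope $\pm z$ applied to $1$, so its degree equals the number of letters of $w$). On the other hand, the leading real root of the Parry polynomial of any critically periodic word is $e^{h}$ of the corresponding map, so $\lambda$ is a root of $P^{Parry}_{w'}$, a polynomial of degree $\ell$. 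Minimality of $g$ then forces $|w|-1=\deg g\leq\ell$. Together with $\ell\mid|w|$ and $\ell<|w|$ this gives $\ell=|w|-1$ and $(|w|-1)\mid|w|$, hence $(|w|-1)\mid 1$, i.e. $|w|=2$ --- contradicting $\lambda>\sqrt2$, since a real unimodal map of period at most $2$ has zero entropy. Therefore $w$ is minimal.

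The main obstacle is the tuning step of the second paragraph: one must know precisely that non-minimality forces $c$ to be a genuine tuning $\tau_{c'}(c'')$ by the minimal parameter $c'$ of its plateau with inner period $m\geq2$, so that periods multiply and $\ell\mid|w|$ with $\ell<|w|$; this is the content of Lemma~\ref{L:plateau} together with the classical structure of entropy plateaus for real unimodal maps (Douady), but getting the period bookkeeping and the ``root of positive entropy'' condition exactly right, including small-period degeneracies, is the delicate part. The hypothesis $\lambda>\sqrt2$ is used only to guarantee $h(f_c)>0$ (and hence $\ell,m\geq2$). An alternative to the minimal-polynomial argument is to invoke directly the tuning factorization $P^{Parry}_w(z)=P^{Parry}_{w'}(z)\cdot P^{Parry}_{c''}(z^{\ell})/(z^{\ell}-1)$, the real $q=2$ analogue of Lemma~\ref{L:tuned}, and note that after removing the single factor $(z-1)$ both factors still have degree at least $1$ since $\ell\geq3$ and $m\geq2$, so $g$ is reducible; this avoids the appeal to the Parry polynomial's leading root but requires pinning down the cyclotomic correction term.
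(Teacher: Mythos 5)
There is nothing in this paper to compare your argument against: Proposition~\ref{P:minimal} is quoted from \cite{BrayDavisLindseyWu} (their Proposition 2.10) and no proof is given here, so I can only assess your proposal on its own terms. Its skeleton is sound and non-circular: Lemma~\ref{L:plateau}, Corollary~\ref{C:tuned}, Lemma~\ref{L:tuned} and Lemma~\ref{L:minimal} do not rely on Proposition~\ref{P:minimal} (which the paper uses only in Theorem~\ref{T:concat-real}), and the combination ``plateau structure $\Rightarrow$ $c=\tau_{c'}(c'')$ with $\ell\mid |w|$, $\ell\le |w|/2$'' plus ``$g$ irreducible $\Rightarrow$ $g$ is the minimal polynomial of $\lambda$ of degree $|w|-1$, which must divide $P^{Parry}_{w'}$ of degree $\ell$'' does force $|w|\le 2$, contradicting $\lambda>\sqrt2$.

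Two points need repair or explicit justification. First, you twice invoke the identification ``leading real root of the Parry polynomial $=$ $e^{h}$ of the corresponding map'': once for the possibly non-minimal word $w$ (to get $e^{h(f_c)}=\lambda$, which is what launches the whole plateau argument) and once for $w'$ (to get $P^{Parry}_{w'}(\lambda)=0$). For the minimal word $w'$ this follows from Lemma~\ref{L:minimal}(3): the constant-slope realization has a periodic critical orbit with itinerary $(w')^\infty$, and the Parry closing condition then gives $P^{Parry}_{w'}(\lambda)=0$. But for a non-minimal $w$ the tent map of slope $e^{h(f_c)}$ does \emph{not} realize $w^\infty$, so the claim is not immediate from the definition; it needs the fact that all roots of $P^{Parry}_w$ off the unit circle are eigenvalues of the Markov transition matrix of $f_c$ (so are bounded by $e^{h(f_c)}$, which is itself a root) --- in the language of this paper this is the real-vein case of Theorem~\ref{T:equalpolys} together with the Appendix, or alternatively the tuning factorization you sketch at the end. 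As written this step is asserted, and it is the one place where your proof is genuinely incomplete rather than merely terse. Second, a smaller slip: the ``root'' of the small Mandelbrot set is a parabolic parameter, hence not critically periodic and not the domain of a tuning map; you should take $c'$ to be the \emph{center} of the corresponding hyperbolic component (entropy agrees since it is constant on closures of hyperbolic components), and then $c=\tau_{c'}(c'')$, $c''\neq 0$, gives the period relation $|w|=\ell m$ with $m\ge 2$ that your degree count requires. With these fixes the argument goes through; note also that the positive-cumulative-sign hypothesis plays no role in your proof beyond what is already packaged in the factorization $P^{Parry}_w=(z-1)g$.
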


Now we combine the Proposition above, Lemma 3.8, Proposition 4.4 and 5.3 in  \cite{BrayDavisLindseyWu} and get:

\begin{theorem} \label{T:concat-real}
  Let $(w_0)^\infty$ and $(w_1)^\infty$ be two binary itineraries of real unimodal maps with periodic critical orbit, with $(w_0)^\infty <_{lex} (w_1)^\infty$, and suppose $(w_1)^\infty$ is minimal.   Then for any $N>0$, there is some word $w$ such that $(w_1^Nww_0^N)^\infty$ is the binary itinerary of the critical value of some real quadratic map, and  $(w_1^Nww_0^N)$ is a minimal binary word. 
\end{theorem}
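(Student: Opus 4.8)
The plan is to produce the word $w$ so that the infinite sequence $(w_1^N w w_0^N)^\infty$ is admissible (hence, by Milnor–Thurston, the binary itinerary of the critical value of a real critically periodic quadratic map), and then to upgrade admissibility to minimality using Proposition \ref{P:minimal}. First I would recall from \cite{BrayDavisLindseyWu} (Lemma 3.8 and Propositions 4.4, 5.3, as invoked just above the statement) the two ingredients: (i) a \emph{concatenation/interpolation lemma} which, given two admissible periodic words with $(w_0)^\infty <_{lex} (w_1)^\infty$ and $(w_1)$ minimal, supplies a ``bridge'' word $w$ so that $w_1^N w w_0^N$ is admissible — admissibility is exactly the condition $ba \le_{lex} ab$ for every splitting, and the point of flanking by high powers of $w_1$ (which is minimal, hence lexicographically smallest among equal-entropy itineraries) on the left and $w_0$ on the right is that any cyclic rotation either starts deep inside a $w_1$-block, where minimality of $w_1$ forces the inequality, or inside the $w_0$-tail, which is dominated because $(w_0)^\infty <_{lex} (w_1)^\infty$; the middle word $w$ only needs finitely many constraints, satisfiable for $N$ large; (ii) the fact that the Parry polynomial of such a concatenation factors as $(z-1)g(z)$ with $g$ irreducible — this is where Propositions 4.4 and 5.3 of \cite{BrayDavisLindseyWu} do the real work, controlling the factorization of $P^{Parry}_{w_1^N w w_0^N}$ for large $N$.

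The key steps, in order, would be: (1) Using Proposition \ref{prop:lim} (with the alphabet-$\{0,1\}$ analogue, or directly the Parry-polynomial version from \cite{BrayDavisLindseyWu}), observe that the leading real root $\lambda_N$ of $P^{Parry}_{w_1^N w w_0^N}$ converges as $N \to \infty$ to the leading root $\lambda_1 = \lambda(w_1)$ of $P^{Parry}_{w_1}$. Since $(w_1)^\infty$ is the itinerary of a real unimodal map and is minimal with $(w_0)^\infty <_{lex} (w_1)^\infty$, we must have $\lambda_1 > 1$; more importantly we get $\lambda_N > \sqrt 2$ for all large $N$ \emph{provided} $\lambda_1 \ge \sqrt 2$ — and if $\lambda_1 < \sqrt 2$ we instead pass through the doubling substitution $D$ of Definition \ref{d:unimodaldefs} and Lemma \ref{L:minimal}(4), reducing to the high-entropy case after finitely many applications of $D$ (this is the standard ``renormalization tower'' reduction, since below $\log\sqrt 2$ every itinerary is $D$ of something). (2) Invoke ingredient (i) to choose $w$ making $u_N := w_1^N w w_0^N$ admissible of positive cumulative sign (adjusting parity of $w$ by one symbol if needed, which doesn't affect the lexicographic estimates for large $N$) and irreducible (a high power $w_1^N$ at the front with a lex-smaller tail cannot be a proper repetition). (3) Invoke ingredient (ii): for $N$ large, $P^{Parry}_{u_N}(z) = (z-1)g(z)$ with $g$ irreducible. (4) Apply Proposition \ref{P:minimal} to conclude $u_N$ is minimal; since $(u_N)^\infty$ is admissible, Milnor–Thurston (\cite[Theorem 12.1]{MilnorThurston}) gives that it is the critical-value itinerary of a real quadratic map, completing the proof.

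The main obstacle is step (2) together with the irreducibility/factorization claim in step (3): constructing the bridge word $w$ so that \emph{every} cyclic subword comparison works out is combinatorially delicate, and verifying that the Parry polynomial splits as $(z-1)\cdot(\text{irreducible})$ rather than picking up spurious cyclotomic or reducible factors is genuinely the technical heart — but both are precisely what Lemma 3.8 and Propositions 4.4 and 5.3 of \cite{BrayDavisLindseyWu} are designed to deliver, so I would quote them rather than reprove them, being careful only to check that their hypotheses (positive cumulative sign, $\lambda > \sqrt 2$, $(w_1)$ minimal, $(w_0)^\infty <_{lex} (w_1)^\infty$) are all in force here. A secondary subtlety is the low-entropy reduction in step (1): one must check that $D$ commutes appropriately with the concatenation $w_1^N w w_0^N$ (so that a bridge word downstairs lifts to a bridge word upstairs), which follows from $D$ being a substitution and from Lemma \ref{L:minimal}(4), but should be stated explicitly.
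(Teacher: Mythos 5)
Your proposal follows essentially the same route as the paper's proof: in the high-entropy case ($h > \log\sqrt{2}$) you invoke the cited results of \cite{BrayDavisLindseyWu} (in particular Lemma 5.3) to get an admissible, positive-sign concatenation $w_1^N w w_0^N$ whose Parry polynomial factors as $(z-1)g(z)$ with $g$ irreducible, then conclude minimality from Proposition \ref{P:minimal} and realizability from Milnor--Thurston, and you reduce the low-entropy case via the substitution $D$ and Lemma \ref{L:minimal}(4), exactly as the paper does. Your added remarks (the root-convergence check that the leading root exceeds $\sqrt{2}$ for large $N$, and the compatibility of $D$ with concatenation) are harmless elaborations of points the paper leaves implicit, not a different argument.
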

  
\begin{proof}
First, assume the entropy corresponding to $(w_1)^\infty$ is greater than $\log(\sqrt{2})$. Then, by \cite[Proposition 4.4]{BrayDavisLindseyWu}\, we can find a word $w_1^Nw$ which is ``dominant'', and by \cite[Lemma 5.3]{BrayDavisLindseyWu} there exists some $m>N$ such that $w'=w_1^Nww_2^m$ is admissible with positive cumulative sign, and the Parry polynomial has the form $P^{Parry}_{w'}(z)=(z-1)g(z)$, where $g(z)$ is irreducible. Now Proposition~\ref{P:minimal} implies the conclusion. If the entropy is less than $\log(\sqrt{2})$, by Lemma~\ref{L:minimal} (4) and \cite[Lemma 3.8]{BrayDavisLindseyWu} we can write
$(w_1)^\infty = D^k(w_2)$ for some $k \geq 1$ so that $(w_2)^\infty$ has entropy greater than $\log(\sqrt{2})$, 
and apply the previous argument to $(w_2)^\infty$. 
\end{proof}

Next, we push Theorem \ref{T:concat-real} through the surgery map (making use of Lemma~\ref{L:R-concat}) to obtain the following theorem:

\begin{theorem}\label{main_comb}
  Fix any principal vein $\mathcal{V}_{p/q}$.  Let $(w_0)^\infty$ and $(w_1)^\infty$ be the simplified itineraries of two critically periodic parameters $c_0 <_\mathcal{M} c_1$ in $\mathcal{V}_{p/q}$.
  Then, given any $N>0$, there is a critically periodic parameter $c_2$ such that the simplified itinerary of $c_2$ starts with $N$ copies of $w_1$ and ends with $N$ copies of $w_0$, and $c_2$ is the smallest parameter on the vein with the given core entropy.
\end{theorem}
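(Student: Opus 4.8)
The plan is to transport the real combinatorial statement (Theorem \ref{T:concat-real}) through the recoding/surgery dictionary assembled in Section \ref{S:surgery}. First I would pass from the simplified itineraries $(w_0)^\infty, (w_1)^\infty$ on $\mathcal{V}_{p/q}$ to the corresponding binary itineraries of real unimodal maps via the binary recoding map $R^{-1}\circ R_q^{-1}$: by Theorem \ref{T:recode-summary}, there are real critically periodic parameters $c_0^{\mathbb{R}}, c_1^{\mathbb{R}}$ with $\widehat{\textup{It}}(c_i^{\mathbb{R}}) = (w_i)^\infty$ and $\Psi_{p/q}(c_i^{\mathbb{R}}) = c_i$. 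Let $(v_i)^\infty := \textup{it}(c_i^{\mathbb{R}})$ be the corresponding binary itineraries, so $R(v_i^\infty) = (w_i)^\infty$. Since $c_0 <_\mathcal{M} c_1$ and the surgery map $\Psi_{p/q}$ preserves the order $<_\mathcal{M}$ (Theorem \ref{T:BD}(1)), we get $c_0^{\mathbb{R}} <_\mathcal{M} c_1^{\mathbb{R}}$, hence by Lemma \ref{L:twisted} $(v_0)^\infty <_{lex} (v_1)^\infty$.

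Next I need that $(v_1)^\infty$ is minimal as a binary itinerary. Here I would use that $c_1$ can be chosen minimal on the vein without loss of generality — actually the hypothesis only gives $c_0 <_\mathcal{M} c_1$, so I should instead argue: replacing $c_1$ by the minimal parameter with the same core entropy (the root of its small Mandelbrot set, which still satisfies $c_0 <_\mathcal{M} c_1$ after possibly shrinking, or at worst has the same simplified itinerary by Lemma \ref{L:plateau} and Lemma \ref{L:tuned})'s combinatorics) we may assume $c_1$ is minimal, so by Theorem \ref{T:recode-summary}(d) and the definition of minimal binary word, $(v_1)^\infty$ is minimal. Then Theorem \ref{T:concat-real} applies: for any $N>0$ there is a finite word $v'$ such that $(v_1^N v' v_0^N)^\infty$ is the binary itinerary of a real critically periodic quadratic map and $v_1^N v' v_0^N$ is a minimal binary word.

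Now I would push this back up. Apply the recoding map $R$; since $v_0, v_1$ and (after a routine normalization replacing $v'$ by a word starting with $10$, possible since these are subwords of genuine critical itineraries which always start with $10$) $v'$ all start with $10$, Lemma \ref{L:R-concat} gives $R(v_1^N v' v_0^N) = R(v_1)^N R(v') R(v_0)^N = w_1^N \, R(v') \, w_0^N$, so this is the simplified itinerary of a real critically periodic map $\tilde c$; let $c_2 := \Psi_{p/q}(\tilde c)$, which by Theorem \ref{T:BD}(2) is critically periodic on $\mathcal{V}_{p/q}$ and by Theorem \ref{T:recode-summary}(b) has $\widehat{\textup{It}}(c_2) = w_1^N\, R(v')\, w_0^N$ — i.e. starts with $N$ copies of $w_1$ and ends with $N$ copies of $w_0$. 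Finally, minimality of $c_2$ on the vein (i.e. it is the smallest parameter with its core entropy, Definition \ref{d:minimalparam}): since $v_1^N v' v_0^N$ is a minimal binary word, the corresponding real parameter $\tilde c$ is minimal, and by Theorem \ref{T:recode-summary}(d) together with the fact that $\Psi_{p/q}$ is an order isomorphism preserving core entropy on critically periodic parameters (Corollary \ref{C:tuned} controls entropy under tuning, and minimality is characterized order-theoretically), $c_2 = \Psi_{p/q}(\tilde c)$ is minimal on $\mathcal{V}_{p/q}$.

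The main obstacle I anticipate is the bookkeeping in the reduction to the minimal $c_1$ and the normalization of the connecting word $v'$ so that Lemma \ref{L:R-concat} applies verbatim: one must make sure that shrinking $c_1$ to its minimal representative does not destroy the inequality $(v_0)^\infty <_{lex} (v_1)^\infty$, and that the word $w$ produced by Theorem \ref{T:concat-real} can be taken to begin with $10$ (equivalently, that the critical itinerary of a real quadratic map, truncated appropriately, starts with $10$) — both are standard facts about admissible words but need to be invoked carefully. Everything else is a direct transcription through the correspondence of Theorem \ref{T:recode-summary}.
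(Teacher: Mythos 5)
Your proposal follows the same route as the paper's proof: recode the simplified itineraries to binary itineraries of real critically periodic maps via the surgery/recoding dictionary of Theorem \ref{T:recode-summary}, use order-preservation to get $(u_0)^\infty <_{lex} (u_1)^\infty$, invoke Theorem \ref{T:concat-real} to produce a minimal binary word of the form $u_1^N u\, u_0^N$, push it back with Lemma \ref{L:R-concat} and the surgery map to get $c_2 \in \mathcal{V}_{p/q}^{per}$ with the prescribed prefix and suffix, and then transfer minimality through the surgery map. Your worry about normalizing the connecting word so that Lemma \ref{L:R-concat} applies is legitimate and is also left implicit in the paper.

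The step I would push back on is your ``WLOG $c_1$ is minimal'' reduction. As written it does not preserve the statement you are proving: if $c_1$ is non-minimal it lies in a small Mandelbrot set whose root $c_1'$ has, by Lemma \ref{L:tuned}, a \emph{strictly shorter} simplified itinerary (the itinerary of $c_1$ is obtained from that of $c_1'$ by a substitution), not ``the same simplified itinerary'' as your parenthetical claims; so the parameter produced for $c_1'$ would begin with $N$ copies of the wrong word, and moreover $c_0$ could lie in the same small Mandelbrot set, destroying $c_0 <_\mathcal{M} c_1'$. The paper's proof does not perform this reduction at all: it applies Theorem \ref{T:concat-real} directly to $(u_0)^\infty <_{lex} (u_1)^\infty$ (leaving the minimality hypothesis on $(u_1)^\infty$ tacit), so if you want to address that hypothesis you need a different mechanism than replacing $c_1$. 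Separately, in your final step the assertion that $\Psi_{p/q}$ ``preserves core entropy'' is false (surgery changes entropy; compare the tips of the veins); what is true, and what the paper uses, is that surgery commutes with renormalization and preserves zero-entropy parameters, so by Lemma \ref{L:plateau} it sends parameters not contained in a small Mandelbrot set with positive-entropy root to parameters with the same property, hence preserves minimality. With that substitution your minimality argument for $c_2$ matches the paper's.
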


\begin{proof}
Set $(w_0)^{\infty} = \widehat{\textrm{It}}(c_0)$ and $(w_1)^{\infty} = \widehat{\textrm{It}}(c_1)$ for $c_0,c_1 \in \mathcal{V}_{p/q}^{per}$.  
Since the surgery map  $\Psi_{p/q}: \mathcal{V}_{1/2} \to \mathcal{V}_{p/q}$ is surjective and sends critically periodic parameters to critically periodic parameters, we may fix parameters $b_0, b_1 \in \mathcal{V}_{1/2}^{per}$ such that $\Psi_{p/q}(b_i) = c_i$ for $i=0,1$. 
Let $(u_0)^\infty, (u_1)^\infty$ be the binary itineraries of $b_0, b_1$, respectively. By Theorem \ref{T:recode-summary}(c), the inequality $c_0 <_\mathcal{M} c_1$ 
implies $(u_0)^\infty <_{lex} (u_1)^\infty$. Then by Theorem \ref{T:concat-real} there exists a  word $u$ in the alphabet $\{0, 1\}$ such that $(u_1^n u u_0^m)^\infty$ is the binary itinerary of a critically periodic real unimodal map $f_{b_2}$. Then, by Lemma \ref{L:R-concat}, 
$$R(u_1^n u u_0^m) = R(u_1)^n R(u) R(u_0)^m = w_1^n w w_0^m$$
is also the first return  itinerary of a parameter $c_2$ in $\mathcal{V}_{p/q}^{per}$.

Now, to prove that $c_2$ is minimal, recall that by Lemma \ref{L:plateau} minimal parameters are precisely the ones which do not lie in the 
interior of any small Mandelbrot set with root of positive entropy. Since the surgery map commutes with renormalization and preserves the set of parameters with zero entropy, $c_2 = \Psi_{p/q}(b_2)$ does not lie in the interior of a small Mandelbrot set with root of positive entropy, hence it is minimal. 
\end{proof}

Now the Persistence Theorem follows from Theorem~\ref{main_comb} and Proposition~\ref{prop:lim}. 
More precisely, we restate and prove the Persistence Theorem as follows.

\begin{theorem} \label{t:persistencepf}
Fix coprime integers $0< p < q$.   If $(z, y)\in \Upsilon_{p/q}$, $|z|<1$, then for any $y'$ satisfying $y < y' < \lambda_{p/q}$, where $\lambda_{p/q}$ is the supremum of the growth rates achieved by parameters in $\mathcal{V}_{p/q}^{per}$, we have $(z, y')\in \Upsilon_{p/q}$.
\end{theorem}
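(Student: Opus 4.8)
The plan is to reduce the statement about the Master Teapot $\Upsilon_{p/q}$ to a purely combinatorial statement about kneading polynomials of critically periodic parameters, and then invoke the combinatorial construction of Theorem~\ref{main_comb} together with the limiting behavior of roots from Proposition~\ref{prop:lim}. So, first I would unwind the definitions: a pair $(z,y)\in\Upsilon_{p/q}$ with $|z|<1$ means that $z$ is a limit of points $z_n$ such that, for each $n$, there is a critically periodic parameter $c_n\in\mathcal V_{p/q}^{per}$ with $y = \lim e^{h(c_n)}$ and $z_n\in\mathcal Z(e^{h(c_n)})$, i.e.\ (via Lemma~\ref{l:closestrepresentative}) $z_n$ is an eigenvalue of $M_{c(\lambda_n)}$ for the minimal parameter of that entropy; by the results of Section~\ref{S:kneading-veins} relating the Markov polynomial to the $q$-principal vein kneading polynomial, $z_n$ is a root of $P_{c(\lambda_n)}$ off the unit circle. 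Since $|z|<1$ and the teapot is closed, it suffices to produce, for a \emph{dense} set of target heights $y'\in(y,\lambda_{p/q})$, a minimal critically periodic parameter on the vein whose entropy is close to $\log y'$ and whose kneading polynomial has a root close to $z$ inside the unit disk; taking closures then fills in all $y'$.

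Next, here is the heart of the argument. Fix $(z,y)\in\Upsilon_{p/q}$ with $|z|<1$, so there is a minimal critically periodic $c_0\in\mathcal V_{p/q}^{per}$ with $e^{h(c_0)}$ close to $y$ and $P_{c_0}$ having a root close to $z$ in $\mathbb D$; let $(w_0)^\infty = \widehat{\mathrm{It}}(c_0)$. Given a target $y'$ with $y<y'<\lambda_{p/q}$, choose (using monotonicity of core entropy along the vein and density of critically periodic parameters) a minimal critically periodic $c_1\in\mathcal V_{p/q}^{per}$ with $c_0<_\mathcal M c_1$ and $e^{h(c_1)}$ close to $y'$; let $(w_1)^\infty=\widehat{\mathrm{It}}(c_1)$. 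Now apply Theorem~\ref{main_comb}: for every $N$ there is a critically periodic, minimal parameter $c_2^{(N)}\in\mathcal V_{p/q}^{per}$ whose simplified itinerary is of the form $w_1^N\, w\, w_0^N$ for some connecting word $w=w^{(N)}$. By Proposition~\ref{prop:lim} (applied to $P_{w_1^N w w_0^N}$, which equals the kneading polynomial of $c_2^{(N)}$ up to cyclotomic factors by Lemma~\ref{lem:bounded}), as $N\to\infty$ the leading real root converges to $\lambda_1 := e^{h(c_1)}$, so $h(c_2^{(N)})\to h(c_1)$ is close to $\log y'$; and the roots of $P_{c_2^{(N)}}$ inside the unit disk converge to the roots of $P_{w_0}$, hence one of them converges to the root of $P_{c_0}$ near $z$. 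Since $c_2^{(N)}$ is minimal, $\mathcal Z(e^{h(c_2^{(N)})})$ is exactly the eigenvalue set of $M_{c_2^{(N)}}$, whose roots off the circle coincide with those of $P_{c_2^{(N)}}$; hence for $N$ large the point $\bigl(\text{(root near }z),\, e^{h(c_2^{(N)})}\bigr)$ lies in $\Upsilon_{p/q}$, with second coordinate arbitrarily close to $y'$ and first coordinate arbitrarily close to $z$.

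Finally, I would assemble the limit: letting the approximation parameters ($c_0$ near $z$, $c_1$ near $y'$, then $N\to\infty$) improve simultaneously through a diagonal sequence, one obtains a sequence of points of $\Upsilon_{p/q}$ converging to $(z,y')$, and since $\Upsilon_{p/q}$ is closed by definition, $(z,y')\in\Upsilon_{p/q}$. This works for every $y'$ in the open interval $(y,\lambda_{p/q})$ directly (no density argument even needed once the diagonalization is set up), which is exactly the claimed statement.

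I expect the main obstacle to be the bookkeeping in the reduction step: making sure that \emph{minimal} critically periodic parameters with prescribed (approximate) entropy actually exist densely along the vein (this uses monotonicity and continuity of core entropy plus Lemma~\ref{L:plateau}/Lemma~\ref{l:closestrepresentative} to identify $\mathcal Z(\lambda)$ with the eigenvalues of the minimal representative), and that the ``root near $z$'' produced by Proposition~\ref{prop:lim} genuinely lies off the unit circle so that it survives as an eigenvalue of $M_{c_2^{(N)}}$ and is not accidentally absorbed into a cyclotomic factor. The hypothesis $|z|<1$ is what guarantees this last point, since Proposition~\ref{prop:lim} controls precisely the roots strictly inside the unit disk, and Lemma~\ref{lem:bounded} tells us the discrepancy between $P_{c_2^{(N)}}$ and the finite-word kneading polynomial is purely cyclotomic. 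Everything else — the surgery/recoding compatibility, the identification of $\Upsilon_{p/q}$-membership with kneading roots — is already packaged in the cited results.
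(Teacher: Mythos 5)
Your proposal is correct and follows essentially the same route as the paper: approximate $(z,y)$ and the target height $y'$ by minimal critically periodic parameters, use Theorem~\ref{main_comb} to build a minimal parameter with simplified itinerary $w_1^N w\, w_0^N$, and apply Proposition~\ref{prop:lim} so that the leading root tends to $e^{h(c_1)}\approx y'$ while a root inside the disk tends to $z$, then conclude by closedness of $\Upsilon_{p/q}$. The only difference is that you spell out the bookkeeping (minimality via Lemma~\ref{l:closestrepresentative}, the passage between Markov eigenvalues and kneading roots via Theorem~\ref{T:equalpolys}) that the paper's shorter proof leaves implicit.
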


\begin{proof}
Let $w_1$ and $w_2$ be two periodic first return map itineraries corresponding to points in $\mathcal{V}_{p/q}^{per}$ whose core entropy is close to $y$ and $y'$ respectively, and that the kneading polynomial of $w_1$ has a root close to $z$. By Theorem~\ref{main_comb}, we can find another point $c\in \mathcal{V}_{p/q}^{per}$ whose first return map itinerary $w$ is of the form $w_2^nw'w_1^n$, where $n$ can be arbitrarily large. 
By Proposition \ref{prop:lim}, the roots of the $q$-principal vein kneading polynomial $P_w(z)$ inside the unit circle get arbitrarily close to the roots of the kneading determinant for $w_2^\infty$, while the roots outside the unit circle get arbitrarily close to the roots of the kneading determinant for $w_1^\infty$, as $n$ goes to infinity. This implies the persistence.
\end{proof}

As a corollary of Theorem \ref{T:q-root} and Theorem \ref{t:persistencepf} we get: 

\begin{cylindercorollary}
The teapot $\Upsilon_{p/q}$ contains the unit cylinder 
$$[1, \lambda_q] \times S^1 = \{ (\lambda, z) \ : \ 1 \leq \lambda \leq \lambda_q, |z| = 1 \}.$$
\end{cylindercorollary}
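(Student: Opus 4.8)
## Proof proposal for Corollary~\ref{C:cylinder}

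The plan is to deduce the corollary from the two structural results already established: Theorem~\ref{T:q-root} (the teapot $\Upsilon_{1/2}$ maps into $\Upsilon_{p/q}$ under taking $q$-th roots together with $q$-th roots of the height) and Theorem~\ref{t:persistencepf} (vertical persistence below $\lambda_{p/q}$). The strategy is: first show that the unit circle appears in a \emph{single} slice of $\Upsilon_{p/q}$, at some height $\lambda_0$ with $1 < \lambda_0 \le \lambda_q$, then propagate it upward by persistence, and finally observe that the closure operation in the definition of $\Upsilon_{p/q}$ takes care of the boundary heights $\lambda = 1$ and $\lambda = \lambda_q$.

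\textbf{Step 1: the unit circle sits in some slice of $\Upsilon_{1/2}$.} Recall the classical fact (the real case, $\tfrac{p}{q}=\tfrac12$) that the slices of the Master teapot $\Upsilon_{1/2}$ at heights near $2$ contain a set that is dense in, and whose closure equals, the full unit circle $S^1$; concretely, for real critically periodic parameters $c$ approaching the tip $c=-2$, the eigenvalues of $M_c$ on the unit circle (coming from the cyclotomic factors $Q(t)$ of Theorem~\ref{T:Th-Mar}) become dense in $S^1$, and the growth rates approach $\lambda_2 = 2$. By definition of $\Upsilon_{1/2}$ as a closure, this gives $\{2\} \times S^1 \subseteq \Upsilon_{1/2}$, and more precisely $(z,\lambda)\in\Upsilon_{1/2}$ for a set of pairs with $|z|=1$ whose $\lambda$-values accumulate at $2$. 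Alternatively — and this is cleaner — one invokes Theorem~\ref{T:q-root} itself with $q=1$ together with the statement of Corollary~\ref{C:cylinder} in the known real case $\Upsilon_{1/2} \supseteq [1,\lambda_2]\times S^1$, which is exactly the Persistence-Theorem–plus–tip computation of \cite{BrayDavisLindseyWu}.

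\textbf{Step 2: push the circle through the $q$-th root map.} Apply Theorem~\ref{T:q-root}. Fix any $w\in S^1$ and set $z = w^q$, so $|z| = 1$ as well; pick $\lambda \in (1, \lambda_2]$ with $(z,\lambda) \in \Upsilon_{1/2}$. Strictly, Theorem~\ref{T:q-root} is stated for $|z|\neq 1$, so one either (i) applies it to nearby pairs $(z_n,\lambda_n)\in\Upsilon_{1/2}$ with $|z_n|\neq 1$, $z_n\to z$, $\lambda_n\to\lambda$, take $q$-th roots $w_n$ with $w_n\to w$, obtain $(w_n,\sqrt[q]{\lambda_n})\in\Upsilon_{p/q}$, and pass to the closure; or (ii) rereads the proof of Theorem~\ref{T:q-root}, which via Eq.~\eqref{E:q-root} produces, for each real critically periodic $c$ with $P_c(z)=0$ and $|z|=1$, a parameter $c' = \tau_{c_2}(c)$ on the $p/q$-vein with $P_{c'}(w)=0$ whenever $w^q=z$ — the hypothesis $|z|\neq 1$ is used only to guarantee minimality of $c'$, but for $|z|=1$ we do not need membership of a specific pair, only that it lies in the \emph{closure}. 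Either way we conclude $(w, \sqrt[q]{\lambda})\in\Upsilon_{p/q}$ for every $w\in S^1$, for some height $\sqrt[q]{\lambda}\in(1, \lambda_2^{1/q}]$. Since the tip of the $p/q$-vein has core entropy $\log\lambda_q$ with $\lambda_q$ the largest root of $x^q - x^{q-1} - 2$, and a direct comparison gives $\lambda_2^{1/q} = 2^{1/q} \le \lambda_q$, all these heights lie in $[1,\lambda_q]$.

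\textbf{Step 3: persistence and closure.} Now apply Theorem~\ref{t:persistencepf}: for each $w\in S^1$, having $(w, \mu)\in\Upsilon_{p/q}$ with $|w|=1$... — here one must be slightly careful, since Theorem~\ref{t:persistencepf} as stated requires $|z|<1$ strictly. The fix is to note that points with $|w|=1$ are limits of teapot points with $|w_n| < 1$: indeed the whole unit circle already lies in $\Upsilon_{1/2}$ \emph{together with} an interior neighborhood's worth of roots (the cyclotomic roots on $S^1$ are limits of Galois conjugates strictly inside $\mathbb D$ for suitable approximating parameters), so the pushforward under $q$-th roots gives, near each $(w,\sqrt[q]\lambda)$, teapot points with modulus $<1$; persistence applies to those and the vertical segments obtained pass to the closure. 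Thus for each $w\in S^1$ the vertical segment $\{w\}\times[\sqrt[q]{\lambda}, \lambda_q)$ lies in $\Upsilon_{p/q}$; taking the union over $w\in S^1$, then the closure, yields $[1,\lambda_q]\times S^1 \subseteq \Upsilon_{p/q}$, since the closure recovers both endpoint heights $\lambda=1$ and $\lambda=\lambda_q$.

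\textbf{Main obstacle.} The only real subtlety is the mismatch between the strict inequalities $|z|\neq 1$ in Theorem~\ref{T:q-root} and $|z|<1$ in Theorem~\ref{t:persistencepf} and the fact that we want conclusions \emph{on} the unit circle. The honest way to handle this is to do everything with approximating sequences of teapot points of modulus $\neq 1$ and only invoke $\overline{\phantom{X}}$ at the end — this is legitimate precisely because $\Upsilon_{p/q}$ and $\Upsilon_{1/2}$ are defined as closures. The one input that genuinely must be cited rather than re-derived here is that $S^1$ (with a dense set of strictly-interior approximants) already appears in $\Upsilon_{1/2}$ up to heights accumulating at $2$; this is the content of the real Persistence Theorem of \cite{BrayDavisLindseyWu} together with the tip entropy computation $h = \log 2$, so the corollary is genuinely a formal consequence of Theorems~\ref{T:q-root} and~\ref{t:persistencepf} once that real base case is in hand.
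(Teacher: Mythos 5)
Your overall scheme (real base case in $\Upsilon_{1/2}$, push through Theorem~\ref{T:q-root}, then persistence and closure) is plausible, but it hinges on an input you assert rather than establish: that every point of $S^1$ (at suitable heights) is a limit of points of $\Upsilon_{1/2}$ lying \emph{strictly inside} the open unit disk. You need this twice — once to apply Theorem~\ref{T:q-root} (hypothesis $|z|\neq 1$) and once to apply Theorem~\ref{t:persistencepf} (hypothesis $|z|<1$); without interior approximants the root map alone only reaches heights up to $2^{1/q}<\lambda_q$, so persistence is unavoidable. Neither the Persistence Theorem of \cite{BrayDavisLindseyWu} nor the bare containment $[1,2]\times S^1\subseteq \Upsilon_{1/2}$ gives such approximants: a boundary-circle point of a closed set could a priori be approached only by points on or outside the circle. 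So the ``one input that genuinely must be cited'' is not the real-case cylinder theorem but a strictly stronger approximation statement, and it is left unproved. Your alternative fix (ii) also rests on a misreading of the proof of Theorem~\ref{T:q-root}: the hypothesis $|z|\neq 1$ is not there (only) to guarantee minimality of $c'$ — minimality follows from order-preservation of tuning regardless — it is needed to pass between roots of the principal-vein kneading polynomial and eigenvalues of the Markov matrix, since Theorem~\ref{T:equalpolys} identifies these only off the unit circle, and it is the Markov eigenvalues of the minimal parameter that define membership in $\Upsilon_{p/q}$. Hence for $|z|=1$ the tuning identity $P_{c'}(w)=0$ does not by itself produce teapot points, even before taking closures.

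For contrast, the paper's proof needs no circle base case at all and avoids exactly this issue: starting from a \emph{single} point $(z,\lambda)\in\Upsilon_{1/2}$ with $|z|<1$, it applies Theorem~\ref{T:q-root} $n$ times with $p/q=1/2$ and then once more into $\Upsilon_{p/q}$, producing all $2^nq$-th roots of $z$ at height $\lambda^{1/(2^nq)}$; these points lie strictly inside the disk, equidistribute toward $S^1$ as $n\to\infty$ while their heights tend to $1$, so persistence applies to them directly and the closure of the resulting vertical segments is the whole cylinder $[1,\lambda_q]\times S^1$. To salvage your route you would either have to prove the interior-approximation property of $\Upsilon_{1/2}$ along the circle (which is not the statement of the results you cite), or adopt this iterated-root argument, which manufactures the needed off-circle, interior approximants automatically.
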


\begin{proof}
Let $(\lambda, z) \in \Upsilon_{1/2}$ with $\lambda > 0$. 
Then by applying Theorem \ref{T:q-root} $n$ times with $p/q = 1/2$, the set 
$$\{ ( \sqrt[2^n]{\lambda}, w) \ : \  w^{2^n} = z \}$$
belongs to $\Upsilon_{1/2}$. 
Hence, by applying Theorem \ref{T:q-root} once more, the set 
$$S_n := \{ ( \sqrt[2^n q]{\lambda}, w) \ : \  w^{2^n q} = z \}$$
belongs to $\Upsilon_{p/q}$. Since the sets $S_n$ accumulate onto $\{ 1 \} \times S^1$, then 
$\{ 1 \} \times S^1$ is contained in $\Upsilon_{p/q}$. 
Then by persistence (Theorem \ref{t:persistencepf}), the whole set 
$[1, \lambda_q] \times S^1$ is contained in $\Upsilon_{p/q}$. 
\end{proof}

\section{Combinatorial veins and the Thurston set} \label{ss:combinatorialveins}
 
The \emph{$\frac{p}{q}$-principal combinatorial vein}, which we denote $\Theta_{p/q}$, consists of the  closure of the set of all angles 
$\theta \in \mathbb{Q}/\mathbb{Z}$ such that the external parameter ray $R_{\mathcal{M}}(\theta)$ lands on a point in $\mathcal{V}_{p/q}$.  
Recall that $\Theta_{p/q}^{per}$ is the set of all angles $\theta \in \Theta_{p/q}$ such that $\theta$ is periodic under the doubling map, 
and $\Theta_{p/q}$ is the closure of $\Theta_{p/q}^{per}$.

We define an equivalence relation $\sim$ on $\Theta_{p/q}$ as follows.  We say $\theta_1 \sim \theta_2$ if and only if both are rational and there is 
a chain of adjacent hyperbolic components so that $\theta_1$ lands on the first one and $\theta_2$ lands on the last one.  Set 
$$\mathcal{V}_{p/q}^{comb} \coloneqq \Theta_{p/q}/\sim.$$
We use $[\theta]$ to denote the $\sim$-equivalence class of $\theta \in \Theta_{p/q}$. 

Let $\theta_{p/q}$ be the angle of the external ray landing at the tip of the $\frac{p}{q}$-principal vein. 
Now, the set $\Theta_{p/q} \cap [0, \theta_{p/q}]$ is a closed subset of an interval, hence its complement is the countable union of open intervals.
Endpoints of such open intervals correspond to pairs of rays landing on the same component, hence they are identified under $\sim$. 
Thus, the quotient space $\mathcal{V}_{p/q}^{comb}$ is homeomorphic to an interval.  

We define the \emph{combinatorial $\frac{p}{q}$-Master Teapot} to be the set 
$$
\Upsilon_{p/q}^{comb} \coloneqq \overline{ \left \{(z,\eta) \in \mathbb{C} \times \mathcal{V}_{p/q}^{comb} \mid \textrm{there exists } \theta \in \Theta_{p/q}^{per} \textrm{ s.t. } \textrm{det}(M_{\theta} - zI) = 0, \eta = [\theta] \right \} }.
$$

 \begin{remark}
Multiple different critically periodic parameters in a principal vein can have the same core entropy while having different characteristic polynomials $\chi(t) =  \textrm{det}(M_c -t I)$.  The vertical coordinate of the Master Teapot $\Upsilon_{p/q}$ identifies all critically periodic parameters that have the same core entropy, and plots only the roots that are shared by all the different characteristic polynomials $\textrm{det}(M_c -t I)$ associated to that growth rate.  The vertical coordinate of the combinatorial Master Teapot $\Upsilon_{p/q}^{comb}$ distinguishes between different parameters that have the same growth rate.  
\end{remark}

\begin{lemma} \label{L:same-eigen}
Fix integers $0<p<q$ coprime.  For angles $\theta_1,\theta_2 \in \Theta^{per}_{p/q}$, if $[\theta_1] = [\theta_2]$ in $\mathcal{V}^{comb}_{p/q}$, then $M_{\theta_1}$ and $M_{\theta_2}$ have the same eigenvalues, except possibly on the unit circle.
\end{lemma}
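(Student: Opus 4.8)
The plan is to reduce the statement to the combinatorial structure of the principal vein together with the invariance results already proved in this paper and in the Milnor--Thurston theory. First I would unpack the hypothesis $[\theta_1]=[\theta_2]$: by definition of the equivalence relation $\sim$ there is a finite chain of adjacent hyperbolic components $H_1, H_2, \dots, H_r$ along the $\tfrac pq$-vein such that $\theta_1$ lands at the root of $H_1$ and $\theta_2$ lands at the root of $H_r$. Since the core entropy (equivalently, the growth rate $\lambda = e^{h(\theta)}$) is constant on the closure of any hyperbolic component and continuous along the vein, all of the $H_i$ have the same growth rate $\lambda$, i.e. $e^{h(\theta_1)} = e^{h(\theta_2)} = \lambda$. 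In particular $\theta_1$ and $\theta_2$ correspond to critically periodic parameters $c_1, c_2 \in \mathcal{V}_{p/q}^{per}$ lying in (the closure of) a common small Mandelbrot set, by Lemma~\ref{L:plateau}: the set of parameters along the vein with given core entropy $\lambda>0$ is exactly the closure of a small Mandelbrot set $\tau(\mathcal{M})$ with root $c(\lambda)$.

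Next I would invoke Lemma~\ref{L:tuned} (and its consequence recorded in Lemma~\ref{l:closestrepresentative}). Each $c_i$ is a tuning $c_i = \tau_{c(\lambda)}(b_i)$ of some real critically periodic parameter $b_i$, so by Lemma~\ref{L:tuned} the $q$-principal vein kneading polynomial factors as
$$P_{c_i}(t) = P_{c(\lambda)}(t) \cdot \frac{P_{b_i}(t^{\ell})}{1 + t^{\ell}},$$
where $\ell$ is the period of $f_{c(\lambda)}$. By the Theorem of \S\ref{S:kneading-veins} relating $P_{c_i}$ to the Markov matrix, the nonzero roots of $P_{c_i}(t)$ are exactly the nonzero eigenvalues of $M_{\theta_i}$ (and the roots on $S^1$ we are allowed to discard). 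The extra factor $P_{b_i}(t^{\ell})/(1+t^{\ell})$ has all its roots strictly inside or outside the unit circle only if $h(b_i)>0$; but since $c_1$ and $c_2$ lie in the \emph{same} small Mandelbrot set with a root of positive entropy, and since the surgery/recoding dictionary of Theorem~\ref{T:recode-summary} converts ``adjacent along the vein with equal entropy'' into ``$b_1$ and $b_2$ adjacent along $\mathcal{V}_{1/2}$ with equal entropy,'' the real parameters $b_1, b_2$ are themselves forced to lie in a common small Mandelbrot set of the real vein --- equivalently, their binary itineraries are related by a common renormalization structure. One then compares the eigenvalue sets directly: both $M_{\theta_1}$ and $M_{\theta_2}$ have eigenvalue set equal to (roots of $P_{c(\lambda)}$) $\cup$ (roots of $P_{b_i}(t^{\ell})/(1+t^\ell)$), and the second piece is identical for $i=1,2$ off the unit circle because the $b_i$ have the same renormalization-reduced kneading polynomial off $S^1$.

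An alternative, cleaner route --- which I would actually prefer to write up --- avoids chasing tunings explicitly: it suffices to treat the case where $\theta_1, \theta_2$ land at the two roots of a \emph{single} pair of adjacent components (the general chain follows by transitivity), and to observe that adjacency of hyperbolic components along the vein means precisely that the associated Hubbard trees, and hence the Markov transition data built from postcritical points plus the $\alpha$-fixed point, differ only by the insertion of a parabolic-type degeneration that contributes only cyclotomic factors to the characteristic polynomial. Concretely, I would show that the two Markov matrices are related by the same kind of ``finite cover / quotient'' operation analyzed in \S\ref{ss:charpolysoffinitecovers}: there is a linear surjection intertwining the two transition operators whose kernel is spanned by a finite forward-invariant set, so by Lemma~\ref{L:finite-cyclo} the characteristic polynomials differ by $t^k$ times a product of cyclotomic polynomials, which changes neither the zero eigenvalue content that we ignore nor the eigenvalues off $S^1$. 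The main obstacle, and the step requiring real care, is precisely this structural claim about adjacent components: establishing that passing from one component to an adjacent one along the vein modifies the Markov partition only by cyclotomic factors. I expect to handle it by combining the tuning formula of Lemma~\ref{L:tuned} (to identify the relevant factorization) with the recoding dictionary of Theorem~\ref{T:recode-summary} and the constancy of entropy on hyperbolic-component closures, reducing everything to the already-established fact that the principal vein kneading polynomial detects exactly the eigenvalues of $M_\theta$ off the unit circle.
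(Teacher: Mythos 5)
You have assembled the right ingredients (reduction to a single adjacent pair by transitivity, Lemma \ref{L:tuned}, and the identification of the roots of the principal vein kneading polynomial with the eigenvalues of $M_\theta$ via Theorem \ref{T:equalpolys}), but the one step that actually makes the argument work is missing from both of your routes. The paper's proof is short precisely because of the observation you defer: if $\theta_1,\theta_2$ land at roots of \emph{adjacent} components on the vein, with $c_1$ closer to the main cardioid, then $c_2$ is the tuning of $c_1$ by the basilica, whose kneading polynomial is $1-t^2$; Lemma \ref{L:tuned} then gives $P_{c_2}(t)=P_{c_1}(t)\,\frac{1-t^{2\ell}}{1+t^\ell}=P_{c_1}(t)(1-t^\ell)$, so the two polynomials differ only by a cyclotomic factor, and Theorem \ref{T:equalpolys} transfers this to the eigenvalues of $M_{\theta_1},M_{\theta_2}$. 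In your preferred second route you explicitly name this structural claim about adjacent components as ``the main obstacle'' and say you ``expect to handle it'' --- but that claim \emph{is} the content of the lemma, so as written the proof is incomplete; once the basilica-tuning identification is made, no appeal to the quotient machinery of \S\ref{ss:charpolysoffinitecovers} or to Lemma \ref{L:finite-cyclo} is needed.

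Your first route has the same gap in disguise, plus two further problems. Writing $c_i=\tau_{c(\lambda)}(b_i)$ and factoring $P_{c_i}(t)=P_{c(\lambda)}(t)\cdot P_{b_i}(t^\ell)/(1+t^\ell)$ is fine where it applies, but the assertion that ``the $b_i$ have the same renormalization-reduced kneading polynomial off $S^1$'' is unproved and is essentially the statement of the lemma transported to the real vein: the $b_i$ are in general distinct real critically periodic parameters, possibly of positive entropy (this happens whenever the equivalence class lies strictly inside the plateau, e.g.\ a primitive component inside the small copy together with its doubling cascade), and the only reason their kneading polynomials agree off $S^1$ is again that consecutive parameters in the chain are basilica tunings of one another --- the very fact you have not established. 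Moreover, the appeal to Lemma \ref{L:plateau} and Lemma \ref{l:closestrepresentative} requires the common growth rate to satisfy $\lambda>1$; the equivalence classes in the zero-entropy part of the vein (the $p/q$-satellite of the main cardioid and its doubling cascade) are not covered by that route, whereas the direct basilica-tuning computation handles all cases uniformly.
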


\begin{proof} 
Suppose that $\theta_1$ and $\theta_2$ land on roots of adjacent hyperbolic components on the $p/q$-principal vein, 
and suppose by symmetry that $\theta_1$ lands closer to the main cardioid that $\theta_2$. Let $c_1, c_2$ be the corresponding 
critically periodic parameters.
Then $c_2$ is the tuning of $c_1$ by the basilica. Hence, by Lemma \ref{L:tuned}, since the basilica has kneading polynomial 
$P_{f_c}(t) = 1 - t^2$, we have $$P_{f_{c_2}}(t) = P_{f_{c_1}}(t) \frac{1 - t^{2\ell}}{1 + t^\ell} = P_{f_{c_1}}(t) ( 1 - t^\ell).$$
Therefore $P_{f_{c_1}}$ and $P_{f_{c_2}}$ have the same roots except possibly on the unit circle.
By Theorem \ref{T:equalpolys}, these roots are the same as the eigenvalues of $M_{\theta_1}$, $M_{\theta_2}$. 
\end{proof}

As a consequence of Theorem \ref{t:continuousdiskextension}, we obtain that the part of a  combinatorial Master Teapot outside the unit cylinder is connected:
 
\begin{proposition}  \label{P:connected}
For any $(p, q)$ coprime, the set 
$$\Upsilon_{p/q}^{comb,+}  := \Upsilon_{p/q}^{comb} \cap \{ (z, \eta) \in \mathbb{C} \times \mathcal{V}_{p/q}^{comb} : \ |z| \geq 1\}$$
is path connected.
\end{proposition}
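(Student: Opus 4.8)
The plan is to show that $\Upsilon_{p/q}^{comb,+}$ is path connected by realizing it as a continuous image of a path-connected parameter space, via the continuity statement of Theorem~\ref{t:continuousdiskextension}. First I would recall that $\mathcal{V}_{p/q}^{comb}$ is homeomorphic to a (closed) interval, and in particular is path connected, with $\Theta_{p/q}^{per}$ dense in it. The key is to build a continuous map on this interval whose graph (intersected with $|z|\ge 1$) is exactly $\Upsilon_{p/q}^{comb,+}$. For each $\eta \in \mathcal{V}_{p/q}^{comb}$ consider the set
\[
W^+(\eta) := S^1 \cup \left( \{ z \in \mathbb{C} \setminus \overline{\mathbb{D}} \mid \det(M_\theta - zI) = 0 \}\right),
\]
where $\theta \in \Theta_{p/q}^{per}$ is any angle with $[\theta] = \eta$; by Lemma~\ref{L:same-eigen} this is well defined on $\sim$-classes (the eigenvalues outside the unit circle do not depend on the choice of representative), and for irrational $\eta$ it is defined by taking limits of nearby rational angles. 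I would then invoke Theorem~\ref{t:continuousdiskextension}: the map $Z^+$ extends continuously from $\mathbb{R}/\mathbb{Z}$ to $Com^+(\mathbb{C})$, and since external angles landing on $\mathcal{V}_{p/q}$ vary continuously along an interval's worth of directions, the composite $\eta \mapsto W^+(\eta)$ is a continuous map from the interval $\mathcal{V}_{p/q}^{comb}$ into $Com^+(\mathbb{C})$ with the Hausdorff topology.

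Next I would assemble the teapot from these slices. Define
\[
\Phi := \left\{ (z,\eta) \in \mathbb{C} \times \mathcal{V}_{p/q}^{comb} \mid z \in W^+(\eta) \right\}.
\]
Because $\eta \mapsto W^+(\eta)$ is continuous into the Hausdorff topology on compact subsets of $\mathbb{C}\setminus\overline{\mathbb{D}}$ (together with $S^1$, which is a fixed compact piece), the set $\Phi$ is closed, and one checks that $\Phi$ coincides with $\Upsilon_{p/q}^{comb,+}$: the rational slices $W^+([\theta])$ for $\theta\in\Theta_{p/q}^{per}$ are exactly the eigenvalue sets appearing in the definition of $\Upsilon_{p/q}^{comb}$ (restricted to $|z|\ge 1$, with $S^1$ adjoined), and since $\Theta_{p/q}^{per}$ is dense in $\mathcal{V}_{p/q}^{comb}$ and everything is closed, the closure in the definition of $\Upsilon_{p/q}^{comb}$ fills in precisely the remaining slices by the continuity above. (Here the adjunction of $S^1$ is harmless: $S^1 \subseteq \Upsilon_{p/q}^{comb}$ by Corollary~\ref{C:cylinder}, applied with the combinatorial teapot, or directly because each $M_\theta$ has a cyclotomic factor; in any case $S^1$ is contained in every slice.)

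For path connectedness I would argue as follows. Fix a basepoint: the unit circle $S^1 \times \{\eta_0\}$ lies in $\Phi$ for the class $\eta_0$ closest to the main cardioid (where the growth rate is $1$), and $S^1$ is itself connected, so it suffices to connect an arbitrary point $(z_0,\eta_0')\in\Phi$, $|z_0|\ge 1$, to $S^1\times\mathcal{V}_{p/q}^{comb}$ and then move along the unit-cylinder piece, which is path connected by Corollary~\ref{C:cylinder}. To connect $(z_0,\eta_0')$ to the cylinder: since $z_0 \in W^+(\eta_0')$, either $|z_0|=1$ and we are done, or $z_0$ is an eigenvalue outside $\overline{\mathbb{D}}$; in the latter case I would run $\eta$ continuously from $\eta_0'$ toward $\eta_0$, and use the Hausdorff continuity of $\eta\mapsto W^+(\eta)$ to select, by a standard argument, a continuous path $\eta\mapsto z(\eta)$ with $z(\eta)\in W^+(\eta)$, $z(\eta_0')=z_0$, which ends either at a point of modulus $1$ (joining the cylinder) — which must happen because at $\eta_0$ the only eigenvalue of modulus $\ge 1$ is $1$ itself. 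The existence of such a continuous selection from a Hausdorff-continuous family of compact sets along an interval is the technical heart; I expect this selection/path-lifting argument to be the main obstacle, and I would handle it exactly as the analogous statement is handled in the real case in \cite{TiozzoGaloisConjugates}: one uses that roots of a continuously varying family of polynomials (the spectral determinants / characteristic polynomials, whose coefficients vary continuously by Theorem~\ref{t:continuousdiskextension} and Lemma~\ref{L:same-eigen}) can be followed continuously, together with the fact that no root can escape to or emerge from infinity along the compact interval $\mathcal{V}_{p/q}^{comb}$ because the leading coefficient is controlled. Concatenating the lift with a path inside $S^1\times\mathcal{V}_{p/q}^{comb}$ yields a path in $\Upsilon_{p/q}^{comb,+}$ from $(z_0,\eta_0')$ to the basepoint, completing the proof.
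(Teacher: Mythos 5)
Your proposal is correct and follows essentially the same route as the paper: continuity of $Z^+$ (Theorem~\ref{t:continuousdiskextension}) combined with Lemma~\ref{L:same-eigen} to factor through a continuous map $\overline{Z^+}$ on the interval $\mathcal{V}_{p/q}^{comb}$, a path over the vein ending at the slice above the root where the only modulus-$\geq 1$ value lies on $S^1$ (using monotonicity of the leading eigenvalue), and the unit cylinder transferred to the combinatorial teapot via Corollary~\ref{C:cylinder} to finish. The continuous-selection step you flag as the technical heart is asserted in the paper at essentially the same level of detail (as a consequence of Hausdorff continuity of $\overline{Z^+}$), so your treatment matches the paper's.
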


\begin{proof}
Consider any point $(z_*, \eta_*) \in \Upsilon_{p/q}^{comb,+}$. Let $\eta_0 \in \mathcal{V}_{p/q}^{comb}$ denote the  $\sim$-equivalence class of the angle in $\mathbb{R}/\mathbb{Z}$ of the external ray that lands at the root (in the main cardioid) of $\mathcal{V}_{p/q}$. 
By Theorem \ref{t:continuousdiskextension}, the map $Z^+ : \Theta_{p/q} \to Com^+(\mathbb{C})$ is continuous, and by Lemma \ref{L:same-eigen}
the map $Z^+$ is constant on equivalence classes of $\sim$, so the map $Z^+$ factors to a continuous map 
$$ \overline{Z^+}: \mathcal{V}_{p/q}^{comb} \to Com^+(\mathbb{C}).$$ 
As a consequence, there exists a continuous path in $\Upsilon_{p/q}^{comb,+}$ 
joining $(z_*, \eta_*)$ to some point of the form $(z', \eta_0) \in \Upsilon_{p/q}^{comb,+}$.   Note that
$$\Upsilon_{p/q}^{comb,+} \subseteq \{ (z, \eta) \ : \ 1\leq  |z| \leq \lambda(\eta) \},$$ where 
$$\lambda(\eta) \coloneqq \sup \{|z|  \ : \  \textrm{det}(M_{\theta}-zI)=0, \theta \in \Theta_{p/q}, [\theta] = \eta\}$$
 denotes the largest eigenvalue of all matrices $M_{\theta}$ such that $[\theta] = \eta$.  Furthermore, $\lambda$ is monotone increasing on $\mathcal{V}_{p/q}^{comb}$ and 
 $\lambda(\eta_0) = 1$.  Hence $|z'|=1$. 
 
By Corollary \ref{C:cylinder} the Master Teapot $\Upsilon_{p/q}$ contains the unit cylinder. 
Moreover, taking the growth rate defines a continuous map 
$$\varphi:\mathcal{V}_{p/q}^{comb} \times \mathbb{C} \to [1, \lambda_{p/q}] \times \mathbb{C}$$
and by construction $\varphi^{-1}(\Upsilon_{p/q}) \subseteq \Upsilon_{p/q}^{comb}$, hence $\Upsilon_{p/q}^{comb}$
also contains the unit cylinder. Thus, every point in $\Upsilon_{p/q}^{comb,+}$ is connected by a continuous path in $\Upsilon_{p/q}^{comb,+}$ to the unit cylinder; hence, $\Upsilon_{p/q}^{comb,+}$ is path connected.
\end{proof}

\begin{proof}[Proof of Theorem \ref{T:bagel-connected}]
Since $\Sigma_{p/q} \cap \{ z \ : \ |z| \geq 1\}$ is the projection of $\Upsilon_{p/q}^{comb,+}$ onto the $z$-coordinate, connectivity of 
$\Sigma_{p/q} \cap \{ z \ : \ |z| \geq 1\}$ follows from connectivity of $\Upsilon_{p/q}^{comb,+}$.
\end{proof}

\section*{Appendix. Relating the Markov polynomial and Milnor-Thurston kneading polynomial for critically periodic real maps}

For real postcritically finite parameters, we obtain a closer relationship between the characteristic polynomial for the Markov partition and the 
Milnor-Thurston kneading polynomial. 

If $f$ is critically periodic of period $p$, Milnor-Thurston's kneading determinant $D_{MT}(t)$ is of the form 
$$D_{MT}(t) = \frac{P_{MT}(t)}{1-t^p}$$
where $P_{MT}(t)$ is a polynomial of degree $p-1$, which we call the \emph{kneading polynomial}. 

\begin{proposition} 
Let $f$ be a critically periodic real quadratic polynomial of period $p$, and let $A$ be the transition matrix for the partition of the Hubbard tree minus the postcritical set into its connected components. Then we have the identity
$$\det (I-tA) = P_{MT}(t).$$
\end{proposition}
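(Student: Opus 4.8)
The plan is to compare both sides to a common object: the characteristic polynomial of the transition matrix on a Markov partition. The point is that for a real critically periodic map the Milnor--Thurston kneading determinant, after clearing the denominator $1-t^p$, records exactly the cycle structure of the linear map induced by $f$ on the $\mathbb{R}$-span of the Markov intervals, with the signs coming from orientation. So I would first set up the Markov partition $\mathcal{P}$ obtained by cutting the Hubbard tree $T=[c_1,c_2]$ (a real interval) at the points of the critical orbit $c_1<\dots$ reordered on the line, and fix an orientation; let $A$ be the induced transition matrix and $M$ the signed transition operator on $E:=\mathbb{R}^{\mathcal{P}}$, where $M$ records $\pm 1$ according to whether $f$ restricted to the interval preserves or reverses orientation. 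Since $f$ is monotone on each Markov interval, each row of $M$ is a sum of $\pm$ unit vectors, so $M$ and $A$ differ only by sign changes of basis vectors; but sign changes of basis vectors are conjugation by a diagonal $\pm1$ matrix, hence $\det(I-tA)=\det(I-tM)$. (This is the standard observation that the incidence and signed-incidence matrices of a monotone Markov map have the same characteristic polynomial; I would spell it out in one line.)

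Next I would identify $\det(I-tM)$ with $P_{MT}(t)$ directly from the Milnor--Thurston machinery. Recall Milnor--Thurston define, for the orbit of the critical point, the kneading increments $\nu_i(t)=\sum_{k\ge 0}\epsilon_k(x_i)\,\theta_k t^k$ and the kneading matrix/determinant; in the single critical point case $D_{MT}(t)$ is (up to sign and the universal factor) the generating function counting, with orientation signs, the itinerary of the turning point. When $f$ is critically periodic of period $p$, the critical orbit closes up and $D_{MT}(t)=P_{MT}(t)/(1-t^p)$ with $\deg P_{MT}=p-1$. On the other hand, expanding $\det(I-tM)$ as an alternating sum over multi-cycles in the signed transition graph (the spectral determinant formula of \S\ref{ss:spectraldeterminant}, now with signs) and using that the Markov graph of a unimodal map on $p-1$ intervals has a very rigid cycle structure — every simple cycle is determined by where it ``turns around'', i.e. by the position of the critical interval in it — one gets precisely the same alternating generating function. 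Concretely, I would show both polynomials satisfy the same recursion in the length of the itinerary of $c_1$: this is exactly the recursion $m(f^{j+1}(c_1))$ in terms of $m(f^j(c_1))$ that already appears in the proof of the ``Relationship with Markov matrix'' theorem in \S\ref{S:kneading-veins} specialized to $q=2$, where $F_{0,2,t}(x)=tx$, $F_{1,2,t}(x)=2-tx$ match the Milnor--Thurston maps $f_0,f_1$ of Definition~\ref{d:unimodaldefs}. So the cleanest route is: the eigenvector construction in that theorem, run with the $q=2$ linear models, exhibits a root of $\det(I-tM)$ at each root of $P_c(t)$ and conversely, and $P_c(t)$ for the binary itinerary is by construction (Definition~\ref{d:unimodaldefs}, the Parry polynomial normalization) equal to $P_{MT}(t)$ up to a cyclotomic factor; a degree count ($\deg P_{MT}=p-1=\dim E$ when the Markov partition has $p-1$ pieces) then upgrades ``same roots'' to ``equal polynomials'', taking into account that $M$ has no nontrivial Jordan blocks off the relevant spectrum because the only possible extra factors are cyclotomic and are absent by the degree bound.

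I would organize the write-up as: (1) reduce $\det(I-tA)$ to $\det(I-tM)$ via the diagonal conjugation; (2) recall the normalization $D_{MT}=P_{MT}/(1-t^p)$ and that $\deg P_{MT}=p-1$; (3) observe the Markov partition of a real critically periodic map of period $p$ has exactly $p-1$ intervals, so $\dim E=p-1$ and $\deg\det(I-tM)\le p-1$; (4) run the signed eigenvector argument (the $q=2$ case of the proof in \S\ref{S:kneading-veins}, or equivalently a direct comparison with the Milnor--Thurston kneading recursion) to show $\det(I-tM)$ and $P_{MT}(t)$ have the same roots off the unit circle and in fact the same roots; (5) conclude equality by comparing degrees and leading coefficients (both monic of degree $p-1$, once one checks $A$ — equivalently $M$ — is invertible, i.e. the critical point is not eventually fixed, which holds since $f$ is strictly critically periodic of period $p>1$; the $p=1$ case is trivial). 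The main obstacle I anticipate is step (4) done honestly: matching the sign conventions of Milnor--Thurston's kneading determinant (their $\epsilon_k$, the factor $\theta_k$, and the overall sign/normalization) with the orientation signs $\epsilon_j$ appearing in the $F_{j,2,t}$ and in the multi-cycle expansion of $\det(I-tM)$, and making sure the ``turning point'' bookkeeping in the cycle expansion lines up termwise with the kneading series rather than merely up to an overall unit. A secondary subtlety is ruling out spurious cyclotomic factors / Jordan blocks, which the degree count $p-1=\dim E$ handles but which must be stated carefully (the earlier remark in \S\ref{S:kneading-veins} flags exactly this gap in the general-$q$ case, so here one genuinely uses that the real Markov partition is minimal).
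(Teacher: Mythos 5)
Your proposal diverges from the paper's proof and has two genuine gaps. First, step (1) is false: the unsigned transition matrix $A$ and the signed matrix $M=EA$ (with $E$ the diagonal of row signs recording orientation) are \emph{not} related by a diagonal $\pm1$ conjugation — conjugating by $D=\mathrm{diag}(d_i)$ multiplies $A_{ij}$ by $d_id_j$, which cannot produce row-constant signs in general — and indeed they have different characteristic polynomials. For the real period-$3$ parameter the Hubbard tree cut at the postcritical set has two intervals with
$$A=\begin{pmatrix}1&1\\1&0\end{pmatrix},\qquad \det(I-tA)=1-t-t^2=P_{MT}(t),\qquad \det(I-tM)=\det\begin{pmatrix}1+t&t\\-t&1\end{pmatrix}=1+t+t^2,$$
so it is the \emph{unsigned} determinant that matches $P_{MT}$, and the signed detour is both unjustified and wrong. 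Second, the heart of your argument — that the $q=2$ specialization of the principal-vein/Parry polynomial equals $P_{MT}(t)$ up to a cyclotomic factor ``by construction'' — is precisely the nontrivial content here, not a definition; you yourself flag the sign/normalization bookkeeping as the main obstacle, but the proposal never supplies it. Moreover, even granting ``same roots,'' your degree-and-leading-coefficient count does not upgrade this to equality of polynomials: two degree-$(p-1)$ polynomials with the same root \emph{set} can differ in multiplicities, and the eigenvector argument of \S\ref{S:kneading-veins} gives no multiplicity information (the paper's remark after that theorem flags exactly this Jordan-block issue).

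The paper's proof avoids both problems by working with zeta functions rather than eigenvectors: from $\det(I-tA)=\exp\bigl(-\sum_{k\geq1}\tfrac{\operatorname{Tr}A^k}{k}t^k\bigr)$ and the observation that $\operatorname{Tr}(A^k)=\widehat n(f^k)$, the number of monotone classes of fixed points of $f^k$, one gets $\det(I-tA)=1/\widehat\zeta(t)$ as an identity of power series (hence with multiplicities built in); Milnor--Thurston's Corollary 10.7 gives $1/\widehat\zeta(t)=(1-t)(1-t^p)D_{MT}(t)$ for the map on $[-\beta,\beta]$, and the extra factor $(1-t)$ is accounted for by comparing the Markov matrix on $[-\beta,\beta]$ (size $p+1$) with the one on the Hubbard tree $[f(c),f^2(c)]$ (size $p-1$), which differ exactly by the $\beta$-fixed point. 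If you want to salvage your route, you would need an honest termwise identification of the Milnor--Thurston kneading increments with the piecewise-linear model data \emph{and} a separate argument controlling multiplicities; the zeta-function computation accomplishes both in one stroke.
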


\begin{proof}
Recall that the Artin-Mazur zeta function of $f$ is defined as 
$$\zeta(t) := \exp \left( \sum_{k = 1}^\infty \frac{\# \textup{Fix}(f^k)}{k} t^k \right).$$
Moreover, Milnor-Thurston \cite{MilnorThurston} also consider the \emph{reduced zeta function} 
$$\widehat{\zeta}(t) := \exp \left( \sum_{k = 1}^\infty \frac{ \widehat{n}(f^k)}{k} t^k \right)$$
where $\widehat{n}(f^k)$ is the number of monotone classes of fixed points of $f^k$.
Two points $x, y$ lie in the same monotone equivalence class for $f^k$ if $f^k$ maps the whole interval $[x, y]$ monotonically. 

Moreover, for any matrix $A$, recall the formula (see e.g. \cite{TiozzoContinuity}, Lemma 4.4)
$$\det(I - t A) = \exp \left( - \sum_{k = 1}^\infty \frac{\textup{Tr }A^k}{k} t^k \right).$$
Note that, if $A$ is the Markov matrix for $f$, then for each $k$ we have 
$$\textup{Tr}(A^k) = \widehat{n}(f^k).$$
Indeed, the trace of $A^k$ is the number of (based) closed paths of length $k$ in the graph associated to $A$, 
hence it corresponds to a cycle of length $k$ of intervals with respect to the partition given by the postcritical set, 
and two such points have the same coding if and only if the belong to the same monotonicity class. 
Hence, we obtain 
\begin{equation} \label{E:detA}
\det(I - t A) = \frac{1}{\widehat{\zeta}(t)}.
\end{equation}
Note that for the Markov matrix, there are two choices: 
\begin{enumerate}
\item Consider $f : I_0 \to I_0$ with $I_0 := [f(c), f^2(c)]$ the Hubbard tree, where $c$ is the critical point. 
In this case, the Markov matrix $A_0$ has size $p-1$. 
\item Consider $f : I_1 \to I_1$ with $I_1 := [- \beta, \beta]$ where $\beta$ is the $\beta$-fixed point.
In this case, the Markov matrix $A_1$ has size $p+1$. 
\end{enumerate}
Note that there is exactly one periodic point (the $\beta$-fixed point) which is in $I_0$ but not in $I_1$. 
Hence
\begin{equation}\label{E:A1}
\det(I - t A_1 ) = (1-t) \det(I - t A_0).
\end{equation}
Note that Milnor-Thurston take as domain of definition of $f$ the largest interval $I_1$. Now, by \cite[Corollary 10.7]{MilnorThurston} we have 
\begin{equation} \label{E:zeta}
\frac{1}{\widehat{\zeta}(t)} = (1-t) (1-t^p) D_{MT}(t)
\end{equation}
where $\widehat{\zeta(t)}$ is the reduced zeta function for the action on $I_1$. Then by comparing \eqref{E:detA}, \eqref{E:A1} and \eqref{E:zeta} we have 
$$\det( I - t A_0) = \frac{\det (I - t A_1)}{1 - t} = \frac{1}{\widehat{\zeta}(t) (1-t)} =  (1-t^p) D_{MT}(t) = P_{MT}(t)$$
which is the desired identity, since in the statement of the proposition we took $A = A_0$. 
\end{proof}

 
\bibliographystyle{alpha}
\bibliography{CoreEntropyTeapots.bib}

\bigskip


\noindent Kathryn Lindsey, Boston College,  Department of Mathematics, Maloney Hall, Chestnut Hill, MA, 02467,
USA, \mbox{\url{lindseka@bc.edu}}

\smallskip
\noindent Giulio Tiozzo, University of Toronto, Department of Mathematics, 40 St George St, Toronto, ON, Canada,
\mbox{\url{tiozzo@math.utoronto.ca}}

\smallskip
\noindent Chenxi Wu, University of Wisconsin at Madison, Department of Mathematics, Madison, 53706, WI, USA, 
\mbox{\url{wuchenxi2013@gmail.com}}

\end{document}
